\newif\ifprivate
\newcommand{\calA}{\mathcal{A}}
\newcommand{\calB}{\mathcal{B}}
\newcommand{\calS}{\mathcal{S}}
\DeclareMathOperator{\CF}{CF}
\DeclareMathOperator{\CFP}{K}
\newcommand{\floor}[1]{\left\lfloor#1\right\rfloor}
\newcommand{\lambdabar}{\overline{\lambda}}
\newcommand{\liref}[2]{Lemma~\ref{#1}~(\ref{#2})}
\newcommand{\lireftwo}[3]{Lemma~\ref{#1}~(\ref{#2} and~\ref{#3})}
\newcommand{\lirefthree}[4]{Lemma~\ref{#1}~(\ref{#2}, \ref{#3} and~\ref{#4})}
\newcommand{\rref}[1]{(\ref{#1})}
\newcommand{\rholimit}{\rho_{\mathit{lim}}}
\newcommand{\type}{\mathop{\mathsf{type}}}
\newtheorem{lemma}{Lemma}[section]
\newtheorem{proposition}[lemma]{Proposition}
\newtheorem{theorem}[lemma]{Theorem}
\theoremstyle{remark}
\newtheorem{remark}[lemma]{Remark}
\theoremstyle{definition}
\newtheorem{definition}[lemma]{Definition}
\newtheorem*{todo}{Todo}
\numberwithin{equation}{section}
\newcounter{replacement}
\renewcommand{\thereplacement}{R\arabic{replacement}}
\begin{document}
%\maketitle

\title{The number of maximum matchings in a tree}

\author{Clemens Heuberger}\thanks{C. Heuberger is supported by the Austrian
Science Foundation FWF, project S9606, that is part of the
Austrian National Research Network ``Analytic Combinatorics
and Probabilistic Number Theory.''
This paper was partly written while C.~Heuberger was a visitor at Stellenbosch University.
%  Center of Experimental Mathematics at the University of Stellenbosch. He
%  thanks the center
% for its hospitality.
}
\address{Institut f\"ur Mathematik B\\Technische Universit\"at Graz\\Austria}
\email{clemens.heuberger@tugraz.at}
\author{Stephan Wagner}\thanks{This material is based upon work supported financially by the National Research Foundation of South Africa under grant number 70560.}
\address{Department of Mathematical Sciences\\Stellenbosch University\\South Africa}
\email{swagner@sun.ac.za}
\begin{abstract}
We determine upper and lower bounds for the number of maximum matchings (i.e., matchings of maximum cardinality) $m(T)$ of a tree $T$ of given order. While the trees that attain the lower bound are easily characterised, the trees with largest number of maximum matchings show a very subtle structure. We give a complete characterisation of these trees and derive that the number of maximum matchings in a tree of order $n$ is at most $O(1.391664^n)$ (the precise constant being an algebraic number of degree $14$). As a corollary, we improve on a recent result by G\'orska and Skupie\'n on the number of maximal matchings (maximal with respect to set inclusion). 
\end{abstract}
\subjclass[2010]{05C70; % Factorisation, matching, partitioning, covering and packing
05C05; % Trees
05C35% Extremal problems
}
\keywords{maximum matchings, trees, bounds, structural characterisation}
\date{\today}

\maketitle
\ifprivate \thispagestyle{myheadings}\pagestyle{myheadings} \markboth{\jobname{}
rev. \SVNRevision{} ---
  \SVNDate{} \SVNTime}{\jobname{} rev. \SVNRevision{} --- \SVNDate{} \SVNTime}
\fi

\section{Introduction and statement of main results}

Many problems in graph theory can be described as follows: for a certain class of graphs and a graph parameter, determine the largest and smallest possible value of the parameter, given the order of a graph (and possibly other conditions). One family that is particularly well-studied in this regard is the family of trees, not only because of their simplicity, but also in view of their many applications in various areas of science.

On the other hand, lots of natural graph parameters are defined as the number of vertex or edge subsets of a certain kind; we mention, for example, the number of independent vertex subsets \cite{lin1995trees,prodinger1982fibonacci}, the number of matchings \cite{gutman1980graphs}, the number of dominating or efficient dominating sets \cite{brod2006australasian,brod2008recurrence} or the number of subtrees \cite{kirk2008largest,szekely2007binary}. Some of them play an important role in applications as well, for instance the number of matchings that is known as \emph{Hosoya index} in mathematical chemistry \cite{gutman1986mathematical,hosoya1986topological} and is also connected to the \emph{monomer-dimer model} of statistical physics \cite{heilmann1972theory}. The same can be said of the number of independent sets, which is studied under the name \emph{Merrifield-Simmons index} in chemistry \cite{merrifield1989topological} and which is related to Hard Models in physics \cite{baxter1980hardsquare}. For both these parameters, the minimum and maximum among all trees of given order are well known and are obtained for the star and the path respectively. A tremendous number of publications deals with related problems, concerning restricted classes of trees or tree-like graphs; the interested reader is referred to \cite{wagner2010maxima} and the references therein.

It is natural to consider variants of these graph parameters: instead of the number of matchings, one might be interested in the number of maximal matchings (maximal with respect to inclusion) or maximum matchings (matchings of largest possible cardinality). The same holds, of course, for the number of independent sets.

The number of maximal independent sets is treated in \cite{sagan1988independent,wilf1986number}---the maximum turns out to occur for an extended star. More recently, maximal matchings were studied by G\'orska and Skupie\'n \cite{gorska2007trees}, who determined exponential upper and lower bounds for the maximum number of maximal matchings among all trees of given order. To the best of our knowledge, however, there are no analogous results on the number of maximum matchings, i.e., matchings of largest possible cardinality. Clearly, any maximum (cardinality) matching is also maximal with respect to inclusion, but the converse is not true. In fact, graphs for which every maximal matching is also a maximum matching are known as \emph{equimatchable} \cite{lovasz1986matching}.

In the following, we denote the number of maximum matchings in a graph $G$ by $m(G)$. Our goal is to characterise the trees of given order $n$ for which the maximum and the minimum of this parameter are attained. This problem also has an algebraic interpretation: it is well known that the characteristic polynomial of a tree $T$ of order $|T| = n$ coincides with the \emph{matching polynomial} \cite{lovasz1986matching}
$$\phi(T,x) = \sum_{k=0}^{\lfloor n/2 \rfloor} (-1)^k a_k(T) x^{n-2k},$$
where $a_k(T)$ is the number of matchings of cardinality $k$ in $T$. This is a special case of a general theorem on the coefficients of the characteristic polynomial---see for instance \cite{cvetkovic1995spectra}. It follows that $m(T)$ is precisely the (absolute value of the) last nonzero coefficient of $\phi(T,x)$ and thus the product of the absolute values of all nonzero eigenvalues. In this sense, $m(T)$ is a multiplicative analogue of the so-called \emph{energy} of a graph \cite{gutman1999energy,gutman1986mathematical}, which is defined as the sum of the absolute values of all eigenvalues.

The lower bound for $m(T)$ is almost trivial, and the trees that attain it can also be characterised easily:

\begin{theorem}\label{thm:lower}
For any tree $T$ of even order $n$, $m(T) \geq 1$ with equality if and only if $T$ has a perfect matching. For a tree $T$ of odd order $n > 1$, $m(T) \geq 2$ with equality if and only if $T$ is obtained from a tree $T'$ of order $n-1$ with a perfect matching by doubling one of the leaves (i.e., choosing a leaf $v$ and attaching a second leaf to $v$'s unique neighbour).
\end{theorem}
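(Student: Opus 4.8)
\emph{Even order, and the lower bound for odd order.} Write $\nu(G)$ for the size of a maximum matching of $G$. The bound $m(T)\ge1$ is trivial, and if $T$ has a perfect matching then this is its only maximum matching: a quick induction (a leaf $v$ and its neighbour $u$ are forced to be matched to each other, and $T-u-v$ is a forest) shows that a forest has at most one perfect matching. Conversely, if $T$ of even order has no perfect matching, take a maximum matching $M$ and a vertex $x$ it leaves uncovered; a neighbour $y$ of $x$ must be covered (else $M\cup\{xy\}$ is larger), say $yz\in M$ with $z\ne x$, and then $(M\setminus\{yz\})\cup\{xy\}$ is a second maximum matching, so $m(T)\ge2$. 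For odd $n>1$ the same swap applies with no hypothesis: an uncovered vertex $x$ exists because $n$ is odd, it has a neighbour because $n>1$, and that neighbour is covered --- so $m(T)\ge2$.

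\emph{Odd order: the trees in the statement attain $m(T)=2$.} Let $T$ come from $T'$ (order $n-1$, unique perfect matching $M'$) by attaching a new leaf $v'$ at the neighbour $u$ of a leaf $v$ of $T'$. Then $\nu(T)=(n-1)/2$, so every maximum matching misses exactly one vertex, and in particular covers $u$. Looking at the edge of a maximum matching $M$ at $u$: it cannot go to a neighbour of $u$ other than $v,v'$ (that would leave both $v$ and $v'$ uncovered); and if it is $uv$ (resp.\ $uv'$), then the other of $v,v'$ is the unique uncovered vertex and, since $T'-u-v$ has a unique perfect matching, $M=M'$ (resp.\ $M=(M'\setminus\{uv\})\cup\{uv'\}$). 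Hence $m(T)=2$.

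\emph{Odd order, the converse.} I would use induction on $n$; the case $n=3$ ($P_3$, obtained from $P_2$ by doubling a leaf) is immediate. Let $n\ge5$ and $m(T)=2$. First, the deficiency $n-2\nu(T)$ equals $1$: if some maximum matching left three vertices uncovered, the swap above at each of them would give, together with $M$, four pairwise distinct maximum matchings. A star $K_{1,n-1}$ has $m=n-1\ge4$, so $T$ is not a star; choosing an endpoint $v$ of a longest path and its neighbour $u$, every neighbour of $u$ other than the next vertex $u'$ on the path is a leaf, and $u'$ is not a leaf. Since the deficiency is $1$, $u$ has at most two leaf-neighbours. If it has two, $v_1$ and $v_2$, then $u$ is never matched to $u'$ (else both $v_i$ uncovered), so in every maximum matching $u$ is matched to some $v_i$ with the other $v_j$ the unique uncovered vertex; then $M$ is a perfect matching of $T-v_j$, and $T$ is $T-v_j$ with the leaf $v_i$ doubled.

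\emph{The last case and the main difficulty.} If $u$ has exactly one leaf-neighbour $v$, then $\deg u=2$ and $T'':=T-u-v$ is again a tree. Sorting maximum matchings of $T$ by the edge at $u$ gives $m(T)=m(T'')+\varepsilon\,m(T''-u')$, where $\varepsilon=1$ if $\nu(T''-u')=\nu(T'')$ and $\varepsilon=0$ otherwise. Now $\varepsilon=1$ is impossible: it would force $m(T'')=1$, yet $T''$ is a tree of odd order $n-2>1$ and so $m(T'')\ge2$ by the lower bound. Hence $\varepsilon=0$ and $m(T'')=2$, and by induction $T''$ comes from a tree $C'$ with a perfect matching by doubling a leaf $p$, producing a twin leaf $p'$ at the common neighbour $q$. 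Since $\varepsilon=0$ we have $u'\notin\{p,p'\}$, so $p$ is still a leaf of $T$, $p'$ is still attached at $q$, and $\{uv\}$ together with a perfect matching of $C'$ is a perfect matching of $T-p'$; thus $T$ is $T-p'$ with the leaf $p$ doubled. The crux of the whole argument is precisely this step: the doubled leaf forced by $m(T)=2$ need not sit on the pendant path one picks, so one must pass to $T-u-v$ and recurse, and one must check that the a priori possible extra term $\varepsilon\,m(T''-u')$ cannot occur.
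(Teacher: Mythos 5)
Your proof is correct. The lower-bound swap argument and the observation that the deficiency must equal $1$ when $m(T)=2$ coincide with the paper's, and you additionally verify the ``if'' direction of the odd-order characterisation, which the paper's proof leaves implicit. Where you diverge is in proving the ``only if'' direction for odd $n$. The paper argues directly about the unique uncovered vertex $v$: $v$ must be a leaf (swapping at two of its neighbours would produce a third maximum matching), and the vertex $v'$ matched to $v$'s neighbour $w$ must likewise be a leaf (apply the same observation to the swapped matching), so the required structure falls out in two short steps with no induction. You instead start from an endpoint of a longest path, which need not be the uncovered vertex; this forces an induction on $n$, deleting the pendant $P_2$ and recursing on $T''=T-u-v$, with the extra argument that the term $\varepsilon\,m(T''-u')$ must vanish because $T''$ is itself a tree of odd order with $m(T'')\ge 2$. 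The paper's local argument is shorter, but your inductive route is sound, and your bookkeeping in the recursion --- why $\varepsilon=0$, why $u'\notin\{p,p'\}$, and how the doubled pair $(p,p')$ of $T''$ carries over to exhibit $T$ as a leaf-doubling --- is careful and correct.
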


We note that a path of even order is an example of a tree of even order admitting a perfect matching.

The analogous problem asking for the largest possible number of maximum matchings appears to be much harder. The bound provided by G\'orska and Skupie\'n for the number of maximal matchings immediately provides an upper bound for the number of maximum matchings, so that we have $m(T) = O(1.395337^n)$ (the constant being a root of the algebraic equation $x^4 - 2x - 1$) by the result stated in \cite{gorska2007trees}. We improve this to the following:

\begin{theorem}\label{thm:upper}
For $n \neq \{6,34\}$, there is a unique tree $T_n^*$ of order $n$ that maximises $m(T)$. For $n = 6$ and $n = 34$, there are two such trees. Asymptotically,
$$m(T_n^*)\sim c_{n\bmod 7}\lambda^{n/7},$$
where $\lambda = \frac12 (11+\sqrt{85}) \approx 10.1097722286464$ is the larger root of the polynomial $x^2-11x+9$ and the constants $c_j$, $j\in\{0,\ldots,6\}$, are given in Table~\ref{tab:asymptotic-constants}.
\end{theorem}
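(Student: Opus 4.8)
The plan is to reduce the extremal problem to an optimisation over a finite set of ``local states'', to force the extremal structure by surgery, and then to read off the asymptotics from a linear recurrence. First I would pass to rooted trees. For a rooted tree $(T,r)$ there is a well-known dichotomy (a consequence of the Gallai--Edmonds structure theory, but also provable by a short induction): either $\nu(T-r)=\nu(T)$, in which case some maximum matching of $T$ misses $r$ and we call $r$ \emph{inessential}, or $\nu(T-r)=\nu(T)-1$, in which case every maximum matching covers $r$ and we call $r$ \emph{essential}. I would attach to $(T,r)$ a short data vector consisting of $\nu(T)$, the status of $r$, the number of maximum matchings of $T$ that cover $r$, and the number that miss $r$ (for an essential root the latter is zero, and is replaced by the number of matchings of $T-r$ of size $\nu(T)-1$, which acts as a ``reserve''). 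The key computation is that when a branch $(S,s)$ is joined to $r$ by a new edge $rs$, one has $\nu(T')=\max\bigl(\nu(T)+\nu(S),\ \nu(T-r)+\nu(S-s)+1\bigr)$, and the updated counts together with the new status of $r$ depend only on the data vectors of $T$ and of $S$. Thus growing a tree from its leaves inward amounts to iterating a fixed finite family of bilinear update rules --- a transfer-matrix description of $m$.

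With this machinery, the next step is to identify the extremal shape. I expect the optimal tree to consist of a path-like spine carrying repeated copies of one specific rooted gadget on $7$ vertices, with small ``end caps'' that depend on $n\bmod 7$. The argument is a local exchange: given a putative extremal tree that is not of this form, pick a minimal subtree violating the pattern, replace it by the conjectured optimal subtree on the same number of vertices, and show $m$ strictly increases. Because the update rules are monotone in the appropriate partial order on data vectors, this reduces --- after using induction on $n$ to assume that all proper subtrees are already in optimal form --- to comparing boundedly many explicit products of small vectors; these comparisons can be carried out by hand or by machine.

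The exchange inequalities will be strict in all but finitely many sporadic configurations. Tracking precisely when equality is possible isolates the cases of non-uniqueness; since $6$ and $34$ are both $\equiv 6\pmod 7$, I expect the tie in the residue class $6$ to persist only for the two smallest relevant sizes and to break thereafter, which is confirmed by an explicit computation once the induction has been pushed far enough. Finally, for each residue $j=n\bmod 7$ the sequence $\bigl(m(T_{7k+j}^{*})\bigr)_k$ satisfies the linear recurrence furnished by the transfer matrix of the $7$-vertex block; its characteristic polynomial has $x^2-11x+9$ as the relevant factor, with dominant root $\lambda=\tfrac12(11+\sqrt{85})$, and standard linear-recurrence asymptotics then give $m(T_n^{*})\sim c_{n\bmod 7}\,\lambda^{n/7}$, the constants $c_j$ being determined by the end caps and matching Table~\ref{tab:asymptotic-constants}.

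The main obstacle is the exchange step. Maximum matchings are a genuinely global quantity: adding a single edge can raise the matching number, and whether it does depends on the essential/inessential status of \emph{both} endpoints, so naive cut-and-paste does not preserve $m$. The real work is to choose the state encoding fine enough that the update rules are strictly monotone in a usable order yet coarse enough to remain finite-dimensional, and then to tame the combinatorial explosion of cases --- in particular, ruling out an unexpected infinite family of ties and confirming that non-uniqueness occurs exactly for $n\in\{6,34\}$ requires carrying the induction out to a sufficiently large explicit bound before the finite check takes over.
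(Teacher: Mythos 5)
Your plan tracks the paper's strategy at a high level: the essential/inessential root dichotomy is exactly the paper's type $A$/type $B$, your data vector (matching number, root status, covering count, missing count) corresponds to the paper's $(\mu, \type, m_1, m_0)$, the $2\times 2$ transfer matrix for the $7$-vertex chain block does have $x^2-11x+9$ as its characteristic polynomial, and the final asymptotics are indeed read off from a linear recurrence just as you say.

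The genuine gap is in the inductive exchange step, which is where almost all of the work lives. You propose to ``use induction on $n$ to assume that all proper subtrees are already in optimal form.'' This is false in the form you need it: a rooted subtree $(T_s,s)$ of an optimal tree $T$ is \emph{not} in general the rooted tree of its order and type that maximises $m$. It only maximises the perturbed functional $m(T_s)+\alpha\, m_0(T_s)$, where $\alpha = \rho(T_t) = m_0(T_t)/m(T_t)$ is a parameter set by the \emph{complementary} piece $T_t$ across the cut edge (Proposition~\ref{proposition:subtrees-of-optimal-trees-are-alpha-optimal}). Different $\alpha$ select genuinely different locally extremal subtrees, so the induction does not close: one cannot replace a subtree by the ``conjectured optimal subtree on the same number of vertices'' without first controlling the value of $\alpha$ imposed from outside. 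The notion of $\alpha$-optimality is precisely the patch that makes the exchange rigorous, and the case distinctions over $\alpha$-ranges are what generate the seven residue classes, the balance conditions on chain lengths, and the sporadic small cases. A second, related omission is the bipartition condition (Proposition~\ref{proposition:full-bipartite}) --- that in an optimal tree the $A/B$-types coincide with the tree's $2$-colouring, so they strictly alternate --- which is what allows the recursion of Lemma~\ref{lemma:no-adjacent-vertices-of-type-A} to be applied at every edge; and you have no tool to quantify the effect of an exchange at arbitrary distance along a path, which the paper supplies via a continuant/continued-fraction identity (Lemma~\ref{lemma:exchange}). Your own observation that ``naive cut-and-paste does not preserve $m$'' correctly names the difficulty, but the proposal does not supply the mechanism ($\alpha$-optimality plus bipartition plus the path exchange lemma) that overcomes it, and a partial order on raw data vectors will not substitute, because which of two vectors is preferable depends on $\alpha$.
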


\begin{table}[htbp]
  \newlength{\fractionheight}
  \settoheight{\fractionheight}{$\displaystyle\frac{11\alpha-18}{85\alpha^{1/7}}$}
  \begin{equation*}
  \begin{array}{c|>{\rule{0pt}{1.3\fractionheight}\displaystyle}cc}
    j&\multicolumn{1}{c}{c_j}&\\\hline
    0&\frac{67\lambda-71}{765}&\approx0.792620574273610\\
    1&\frac{11\lambda-18}{85\lambda^{1/7}}&\approx0.787947762616490\\
    2&\frac{101047\lambda-90171}{614125\lambda^{2/7}}&\approx0.783080426542439\\
    3&\frac{4996\lambda-4448}{21675\lambda^{3/7}}&\approx0.788434032505851\\
    4&\frac{27\lambda-21}{85\lambda^{4/7}}&\approx0.790280714748050\\
    5&\frac{3209\lambda-2817}{7225\lambda^{5/7}}&\approx0.785510324593434\\
    6&\frac{6451616\lambda-5743408}{10440125\lambda^{6/7}}&\approx0.784269603628599
  \end{array}
\end{equation*}
  \caption{Constants $c_j$ in the asymptotics of $m(T_n^*)$.}
  \label{tab:asymptotic-constants}
\end{table}

While the improvement in the constant (from $1.395337$ to $\lambda^{1/7} \approx 1.391664$) seems modest, the main part of the theorem is the characterisation of the trees $T_n^*$, which will be stated explicitly in Section~\ref{sec:description}. Figure~\ref{fig:optimal-tree-181} shows $T_{181}^*$ as an example. Since maximum matchings are automatically maximal matchings, the theorem also improves on the lower bound for the maximum number of maximal matchings that was given by G\'orska and Skupie\'n in \cite{gorska2007trees}, which is $\Omega(1.390972^n)$ (the precise constant being $\sqrt[14]{51+5\sqrt{102}}$).

\begin{figure}[htbp]
  \centering
  \includegraphics{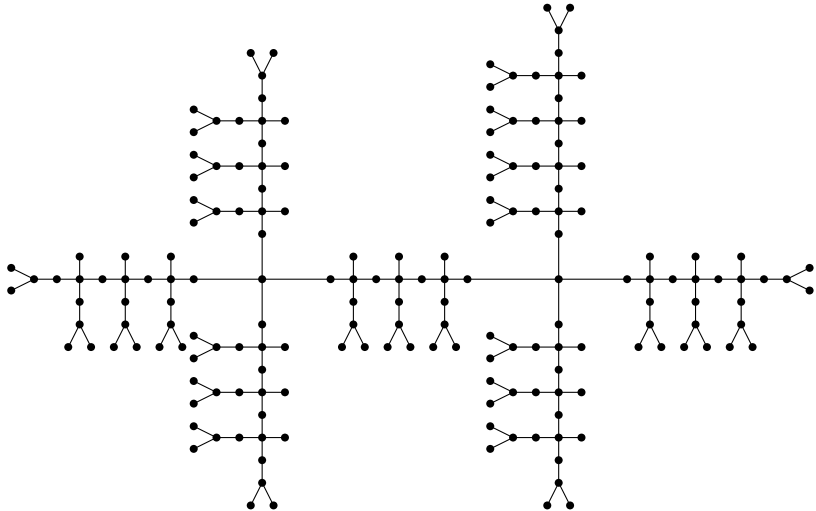}
  \caption{Unique optimal tree of order 181.}
  \label{fig:optimal-tree-181}
\end{figure}

The paper is organised as follows: in the following section, we deal with the simple lower bound (Theorem~\ref{thm:lower}), the rest is devoted to the proof of Theorem~\ref{thm:upper}. The structure of the ``optimal'' trees $T_n^*$ is described explicitly in Section~\ref{sec:description}, making use of the concept of an \emph{outline graph}. Then, some important preliminary results (Section~\ref{sec:pre}) and information about the local structure (Section~\ref{sec:local}) are gathered. The global structure is discussed in Section~\ref{sec:global}. The proof is rather long and technical---one of the reasons we consider this inevitable is the fact that seven different cases occur in the structure of the optimal trees, and that there is also a number of exceptions from the general pattern (note the case $n = 34$ in Theorem~\ref{thm:upper}: the precise characterisation of the structure is only valid for $n \geq 35$). Another reason is that there are many trees that almost reach the upper bound, as can be seen from some of the estimates made on the way to our main result.

\section{The lower bound}

Let us start with the simple lower bound; as stated in Theorem~\ref{thm:lower}, the minimum of $m(T)$ is either $1$ or $2$, depending on the parity of the order:

\begin{proof}
In the case of even $n$, the inequality is trivial, so that we only have to determine the cases of equality. If $T$ has a perfect matching, then this perfect matching can be reconstructed uniquely, starting from the leaves. Hence equality holds in this case. Otherwise, consider a tree $T$ of order $n$ and a maximum matching $M$. Since it is not a perfect matching, there is a vertex $v$ that is not covered by the matching. Now choose an arbitrary neighbour $w$ of $v$. Then $w$ must be covered by the matching $M$, since one could otherwise add the edge $vw$ to $M$ to obtain a larger matching, contradicting the choice of $M$. Now replace the edge that covers $w$ by the edge $vw$ to obtain a second matching of the same cardinality as $M$, which shows that $m(T) \geq 2$ unless $T$ has a perfect matching.

Now let us determine which trees of odd order satisfy $m(T) = 2$. Consider once again a maximum matching. Since the above argument can be carried out for any vertex that is not covered by $M$, we can only have $m(T) = 2$ if there is exactly one vertex $v$ that is not covered. Furthermore, $v$ must be a leaf: otherwise, we could apply the exchange procedure for each of its neighbours to obtain at least $3$ distinct maximum matchings. Let $w$ be $v$'s unique neighbour and assume that $w$ is covered by an edge $v'w$ in $M$. Then $v'$ must also be a leaf, since we could otherwise replace $v'w$ by $vw$ and repeat the argument. This shows that equality can only hold in the described case.
\end{proof}

As we will see in the following sections, the analogous question for the maximum of $m(T)$ is much harder and requires a completely different approach. Let us first give a precise description of the trees $T_n^*$ in Theorem~\ref{thm:upper}.

\section{The upper bound: description of the optimal trees}\label{sec:description}
As mentioned in the introduction, we define $m(T)$ to be the number of matchings
of maximal cardinality of a tree $T$. A tree $T$ is called an \emph{optimal tree}
if it maximises $m(T)$ over all trees of the same order.

The results on the global structure are formulated in terms of \emph{leaves},
\emph{forks}, and \emph{chains}.

\begin{definition}\label{definition:chain}
  \begin{enumerate}
  \item The graph of order $1$ is also denoted by $L$ (leaf).
  \item The rooted tree in Figure~\ref{definition:chain:fork}
    (with root $r$) is denoted by $F$ (fork).
  \item Chains are defined recursively: for a rooted tree $(T,r)$, we define the rooted tree $(CT,s)$ as in Figure~\ref{definition:chain:chain}. For $k\ge 1$ and a rooted tree $T$, we set
    \begin{equation*}
      C^kT:=C(C^{k-1}T) \text{ and }C^0T=T.
    \end{equation*}
  \end{enumerate}
\end{definition}
\begin{figure}[htbp]
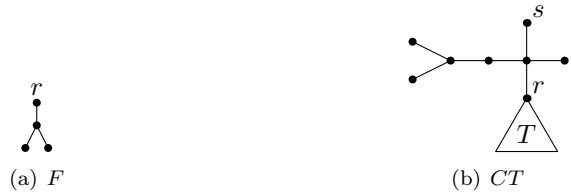

  \centering
  \rule{0cm}{0cm}\hfill\subfigure[$F$\label{definition:chain:fork}]{\parbox[t]{3cm}{\centering\includegraphics{fork.1}}}\hfill
  \subfigure[$CT$\label{definition:chain:chain}]{\parbox[t]{3cm}{\centering\includegraphics{chain.1}}}
  \hfill\rule{0cm}{0cm}
  \caption{Fork and chain (Definition~\ref{definition:chain})}
  \label{fig:chain}
\end{figure}

Using these definitions, we can see five copies of $C^3F$ and one copy of
$C^4F$ as rooted subtrees of the optimal tree in
Figure~\ref{fig:optimal-tree-181}.

Formulating as much as possible using the notations $L$, $F$ and $C^k$ turns
out to give compact representations for optimal trees. Let us formalise this
concept:

\begin{definition}\label{definition:outline-graph}
  Let $T$ be a tree. We construct the \emph{outline graph} of $T$ as follows:
  first, all occurrences $C^kF$ and $C^\ell L$ as rooted subtrees of $T$ are replaced
  by special leaves ``$C^kF$'' and ``$C^\ell L$'',
  respectively (where replacement takes place by decreasing order of the
  replaced rooted subtree). In a second step, we consider all occurrences of subtrees $C^kT'$
  where $T'$ has a unique branch $T''$. Every such subtree is replaced by
  the subtree $T''$, linked to the rest by a special edge ``$C^k_*$''.
\end{definition}

As an example, the outline graph of the tree from
Figure~\ref{fig:optimal-tree-181} is shown in Figure~\ref{fig:optimal-tree-181-outline}.

\begin{figure}[htbp]
  \centering
  \includegraphics{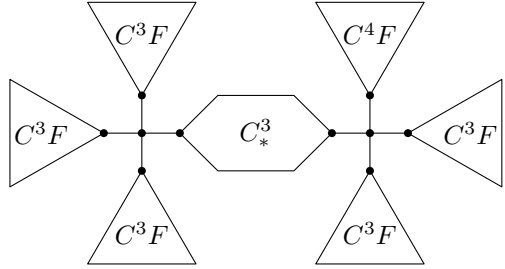}
  \caption{Outline of the unique optimal tree of order $181$.}
  \label{fig:optimal-tree-181-outline}
\end{figure}

%The growth of the maximum cardinality of a matching is essentially determined
%by the following real quadratic number:
%
%\begin{definition}
%  The roots of the polynomial $x^2-11x+9$ are denoted by $\lambda$ and
%  $\lambdabar$ with $0<\lambdabar<\lambda$.
%\end{definition}
%We have $\lambda\approx 10.1097722286464$ and $\lambdabar\approx
%0.890227771353556$.

We are now able to state our main theorem fully describing optimal trees.

\begin{theorem}\label{theorem:global-structure}
  Let $n \ge 4$ and $n\notin\{6,10,13,20,34\}$. Then there is a
  unique optimal tree $T^*_n$ of order $n$.
  \begin{enumerate}
  \item If $n\equiv 1\pmod 7$, then $T^*_n=C^{(n-1)/7}L$.
  \item If $n\equiv 2\pmod 7$, then $T^*_n$ is shown in
    Figure~\ref{fig:optimal-trees-2}, where
    \begin{align*}
      k_0&=\max\left\{0,\floor{\frac{n-37}{35}}\right\},&
      k_j&=
      \begin{cases}
        \floor{\frac{n-2+7j}{35}}&\text{ if $n\ge 37$,}\\
        \floor{\frac{n-9+7j}{35}}&\text{ if $n\le 30$}
      \end{cases}
    \end{align*}
    for $j\in\{1,2,3,4\}$.
  \item If $n\equiv 3\pmod 7$, then $T^*_n$ is shown in
    Figure~\ref{fig:optimal-trees-3}, where
    \begin{equation*}
      k_j=\floor{\frac{n-17+7j}{28}}
    \end{equation*}
    for $j\in\{0,1,2,3\}$.
  \item If $n\equiv 4\pmod 7$, then $T^*_n=C^{(n-4)/7}F$.
  \item If $n\equiv 5\pmod 7$, then $T^*_n$ is shown in
    Figure~\ref{fig:optimal-trees-5}, where
    \begin{equation*}
      k_j=\floor{\frac{n-5+7j}{21}}
    \end{equation*}
    for $j\in\{0,1,2\}$.
  \item If $n\equiv 6\pmod 7$, then $T^*_n$ is shown in Figure~\ref{fig:optimal-trees-6}, where
    \begin{equation*}
      k_j=\floor{\frac{n-27+7j}{49}}
    \end{equation*}
    for $0\le j\le 6$.
  \item If $n\equiv 0\pmod 7$, then $T^*_n$ is shown in
    Figure~\ref{fig:optimal-trees-7}, where
    \begin{equation*}
      k=\frac{n-7}7.
    \end{equation*}
  \end{enumerate}
  If $n\in\{1,2,3,10,13,20\}$, there is also a unique optimal tree $T^*_n$ of
  order $n$. For $n\in\{1,2,3\}$, there is only one tree of order $n$. 
  For $n\in\{10,13,20\}$, $T_n^*$ is shown in
  Figure~\ref{fig:optimal-trees-special-values}.

  For $n\in\{6,34\}$, there are two non-isomorphic optimal trees $T^*_{n,1}$
  and $T^*_{n,2}$ of order $n$. For $n=6$, $T^*_{6,1}$
  (the star of order $6$) and $T^*_{6,2}$ are
  shown in Figure~\ref{fig:optimal-trees-special-values}. 

  For $n=34$, both
  $T_{34,1}^*$ and $T_{34,2}^*$ have the shape as in
  Figure~\ref{fig:optimal-trees-6}. We have
  $(k_0,k_1,k_2,k_3,k_4,k_5,\allowbreak k_6)=(0,0,0,0,0,0,1)$ for $T_{34,1}^*$ (this
  corresponds to the general case $n\equiv 6\pmod 7$ as described above) and
  $(k_0,k_1,k_2,k_3,k_4,k_5,k_6)=(1,0,0,0,0,0,0)$ for $T_{34,2}^*$.
\end{theorem}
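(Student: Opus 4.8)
The plan is to derive this characterisation from three ingredients: a recursive toolkit for computing $m$ on rooted trees, a collection of local exchange lemmas that pin down the coarse shape of an optimal tree, and a finite optimisation over the remaining discrete parameters, with the handful of small orders settled by explicit computation.

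\textbf{Preliminaries.} For a rooted tree $(T,r)$ I would record whether every maximum matching covers $r$ or not — this is the value of $\type(T,r)$, equivalently whether $\mu(T-r)=\mu(T)-1$ or $\mu(T-r)=\mu(T)$ — together with the counts $m(T)$ and $m(T-r)$. Adding a pendant edge at the root, identifying the roots of two rooted trees, and substituting one rooted subtree for another all act on this triple by simple rules (bilinear, with a case distinction according to $\type$); these rules single out a ``ratio'' $\rho(T,r)$, morally $m(T-r)/m(T)$ or its reciprocal depending on $\type$, whose transformation under a chain step $T\mapsto CT$ is a M\"obius map with attracting fixed point $\rholimit$. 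The relevant $2\times 2$ linear map has dominant eigenvalue the larger root $\lambda$ of $x^2-11x+9$, and one copy of $C$ costs exactly seven vertices, so a chain link multiplies the governing count asymptotically by $\lambda$; this is precisely why $\lambda^{1/7}$ and the seven residue classes modulo $7$ appear, and why for $n\equiv 1\pmod 7$ the extremal tree is simply $C^{(n-1)/7}L$. I would also note that $m$ and these auxiliary quantities are computable for any fixed tree by the same recursion, which is what licenses the later finite checks.

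\textbf{Local structure.} I would then prove a series of exchange lemmas of the form: if an optimal tree $T$ contains a prescribed rooted configuration $X$ as a branch, then replacing $X$ by a prescribed $Y$ with $|Y|=|X|$ does not decrease $m(T)$, and decreases it strictly unless $|T|$ lies in a short exceptional list. Applied repeatedly, these lemmas confine all branching of an optimal tree to a bounded core of vertices of degree (generically) three, force every branch hanging off such a vertex to be some $C^kL$ or $C^kF$, and force the path joining two branching vertices to be a pure chain $C^k$. Consequently the outline graph of an optimal tree must be one of finitely many explicit shapes — one for each residue of $n$ modulo $7$ — in which the chain exponents $k_0,k_1,\dots$ are the only remaining parameters; the ``wrong'' outline shapes are eliminated outright by comparing their best achievable $m$ with the growth estimates of the preliminary step.

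\textbf{Global optimisation and exceptions.} With the outline shape fixed, $m(T)$ becomes an explicit function of $(k_0,k_1,\dots)$ subject to a linear constraint of the form $\sum(\text{chain sizes})=n$; since each chain contributes a factor that equals $\lambda^{k_j}$ up to a bounded, eventually periodic correction, the maximum is attained when the exponents are spread as evenly as the residue permits, which yields precisely the floor formulas in the statement, and uniqueness once the corrections are made effective. The boundary phenomena — an exponent forced to be small, or a branch that should be dropped altogether — are exactly what produce the exceptional orders $n\in\{6,10,13,20,34\}$ and the ties at $n\in\{6,34\}$; for these, and for all $n$ below the range in which the formulas make sense, I would evaluate $m$ on the finitely many trees in question via the recursion of the first step. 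The main obstacle, as the authors themselves emphasise, is the local-structure phase together with the exclusion of competing outline shapes: the competitors are numerous and some differ from the optimum by a relative amount that tends to zero, so every inequality must be quantified and every exceptional order carried consistently through all of the case distinctions.
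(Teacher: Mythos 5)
Your plan matches the paper's proof strategy in its essentials: reduction of $m$ to the rooted data $(\type,m,m_0)$ and the ratio $\rho$, the Möbius dynamics of a chain step with the $2\times2$ matrix whose dominant eigenvalue $\lambda$ solves $x^2-11x+9=0$, local exchange arguments to force the outline structure, and a final discrete balancing of chain exponents that yields the floor formulas, with small and boundary cases settled by direct computation. The paper's actual execution packages the local stage as conditions LC1--LC6 via the notion of $\alpha$-optimality and continuant/continued-fraction identities for the exchange lemma, bounds the outline's branching by showing that degree-$4$ type-$B$ vertices (not, as you say, ``generically degree three'') constitute the branch points and that at most two of them carry three special leaves, and then organises the global case analysis by the dichotomy of whether $CL$ occurs as a rooted subtree; but all of this implements the plan you describe rather than deviating from it.
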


\begin{figure}[htbp]
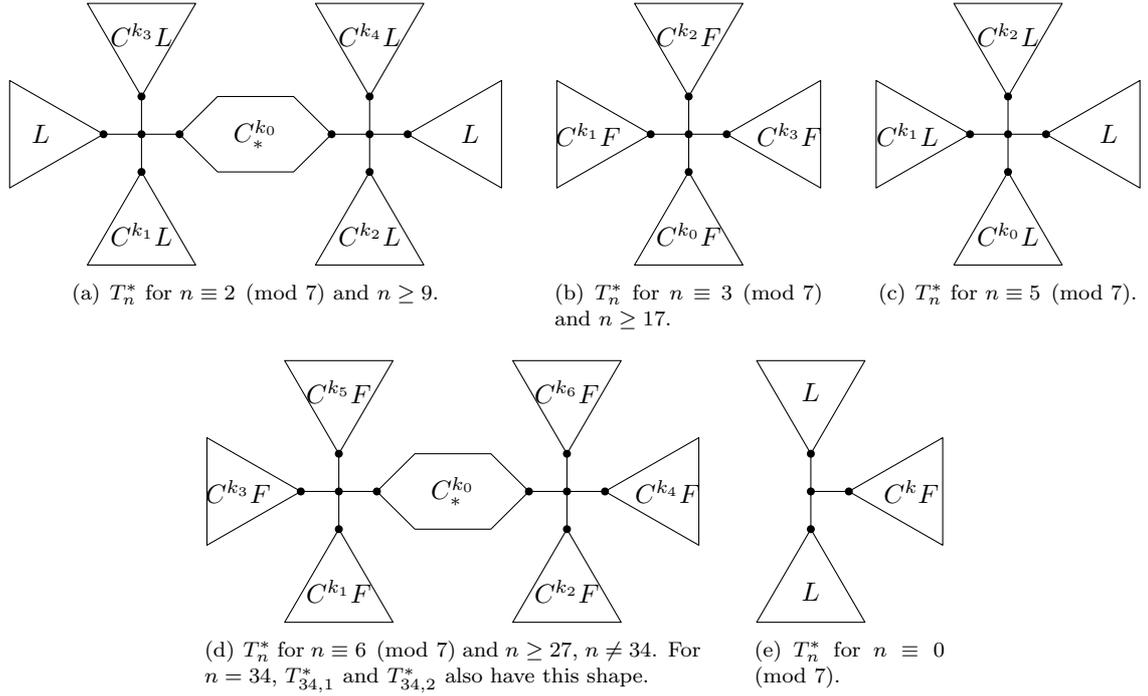

  \centering
  \subfigure[$T_n^*$ for $n\equiv 2\pmod 7$ and $n\ge 9$.]{\label{fig:optimal-trees-2}\includegraphics{optimal-outline.2}}\hfill
  \subfigure[$T_n^*$ for $n\equiv 3\pmod 7$ and $n\ge 17$.]{\label{fig:optimal-trees-3}\includegraphics{optimal-outline.3}}\hfill
  \subfigure[$T_n^*$ for $n\equiv 5\pmod 7$.]{\label{fig:optimal-trees-5}\includegraphics{optimal-outline.5}}\hfill
  \subfigure[$T_n^*$ for $n\equiv 6\pmod 7$ and $n\ge 27$, $n\neq 34$. For
  $n=34$, $T_{34,1}^*$ and $T_{34,2}^*$ also have this shape.]{\label{fig:optimal-trees-6}\includegraphics{optimal-outline.6}}\qquad
  \subfigure[$T_n^*$ for $n\equiv 0\pmod 7$.]{\label{fig:optimal-trees-7}\includegraphics{optimal-outline.7}}
  \caption{Optimal trees.}
  \label{fig:optimal-trees}
\end{figure}

\begin{figure}[htbp]
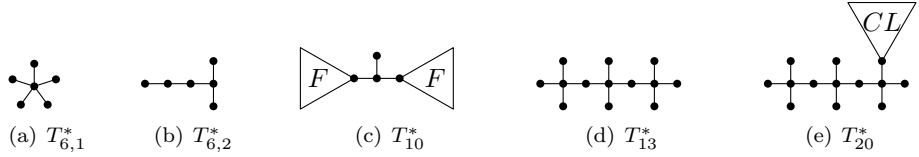

  \centering
  \subfigure[$T_{6,1}^*$]{\includegraphics{optimal_tree_6_1.1}\quad}\qquad
  \subfigure[$T_{6,2}^*$]{\includegraphics{optimal_tree_6_2.1}\quad}\qquad
  \subfigure[$T_{10}^*$]{\includegraphics{optimal_tree_10_1.1}\quad}\qquad
  \subfigure[$T_{13}^*$]{\includegraphics{optimal_tree_13_1.1}\quad}\qquad
  \subfigure[$T_{20}^*$]{\includegraphics{optimal_tree_20_1.1}}
  \caption{Optimal trees for $n\in\{6,10,13,20\}$.}
  \label{fig:optimal-trees-special-values}
\end{figure}

\begin{remark}
The quasi-periodicity of length $7$ is somewhat reminiscent of the situation encountered for dominating sets \cite{brod2006australasian,brod2008recurrence}, even though there are certain differences.
\end{remark}

\section{The upper bound: preliminaries}\label{sec:pre}

\subsection{The bipartition condition}
A tree may always be seen as a bipartite graph. In the case of an optimal tree,
however, the bipartition of the vertices corresponds to a specific behaviour in
terms of maximum matchings, as will be shown in this section. This will also
allow us to somewhat decompose the problem.

We start with a few definitions.

\begin{definition}
  Let $T$ be a forest. The \emph{matching number} $\mu(T)$ is the maximum
  cardinality of a matching of $T$. Hence a matching of $T$ is a maximum matching if it has cardinality $\mu(T)$.
  Denoting the empty graph by $\emptyset$, it is convenient to set $\mu(\emptyset)=0$ and $m(\emptyset)=1$.
\end{definition}

\begin{definition}
  A forest $T$ is called an \emph{optimal forest} if it maximises $m(T)$ over all forests
  of the same order.
\end{definition}

We now define the type of a vertex. These types will later be seen to
correspond to the bipartition of the set of vertices of optimal trees.

\begin{definition}
  Let $T$ be a forest. A vertex $v$ is said to be of type $A$ if $T$ admits a
  maximum matching that does not cover $v$. Otherwise, $v$ is said to be of type $B$.
\end{definition}

A first step towards the main result on the bipartition holds for all trees: there are no
edges between vertices of type $A$:

\begin{lemma}\label{lemma:no-adjacent-vertices-of-type-A}
  Let $T$ be a tree, $s\in V(T)$ of type $A$, and $t$ a neighbour of
  $s$ in $T$. Then $t$ is of type $B$. Denoting the connected components of
  $T-st$ by $T_s$ and $T_t$ with $s\in T_s$ and $t\in T_t$, cf.\ Figure~\ref{fig:adjacent-vertices}, we have
  \begin{align*}
    \mu(T_s-s)&=\mu(T_s),&\mu(T_t-t)&=\mu(T_t)-1,\\
    \mu(T)&=\mu(T_s)+\mu(T_t),&m(T)&=m(T_s)m(T_t)+m(T_s-s)m(T_t-t).
  \end{align*}
\end{lemma}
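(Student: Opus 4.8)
The plan is to prove the statement by exploiting the structure of maximum matchings of $T$ according to whether they use the edge $st$, combined with the fact that $s$ is of type $A$. First I would recall the following elementary decomposition principle for a tree with a distinguished edge $st$: every matching of $T$ either contains the edge $st$, in which case it restricts to a maximum-size matching of $T_s - s$ together with one of $T_t - t$, or it does not contain $st$, in which case it restricts to a matching of $T_s$ and a matching of $T_t$. Since we are free to combine matchings of the two components independently, this immediately gives $\mu(T) = \max\{\mu(T_s) + \mu(T_t),\ \mu(T_s - s) + \mu(T_t - t) + 1\}$, and the number of maximum matchings is the sum of the counts coming from whichever of these two alternatives attains the maximum (with both contributing if they are tied). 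So the whole lemma reduces to identifying which case occurs and evaluating $\mu(T_s - s)$, $\mu(T_t - t)$, $\mu(T_s)$ and $\mu(T_t)$.

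The key input is the hypothesis that $s$ is of type $A$, i.e.\ $T$ has a maximum matching $M$ not covering $s$. Such an $M$ cannot contain $st$, so $M$ restricts to a matching of $T_s$ missing $s$ and a matching of $T_t$; hence $\mu(T_s) + \mu(T_t) \ge |M| = \mu(T)$, and combined with the general bound above this forces $\mu(T) = \mu(T_s) + \mu(T_t)$ and, moreover, $\mu(T_s - s) = \mu(T_s)$ (because the $T_s$-part of $M$ is a maximum matching of $T_s$ avoiding $s$). This establishes $\mu(T_s - s) = \mu(T_s)$ and $\mu(T) = \mu(T_s) + \mu(T_t)$. For $t$ being of type $B$ and $\mu(T_t - t) = \mu(T_t) - 1$: if we had $\mu(T_t - t) = \mu(T_t)$, then a maximum matching of $T_t$ avoiding $t$ could be combined with $M$'s $T_s$-part plus the edge $st$ to produce a matching of $T$ of size $\mu(T_s) + \mu(T_t) + 1 > \mu(T)$, a contradiction; so $\mu(T_t - t) = \mu(T_t) - 1$. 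Then $t$ is of type $B$ in $T$: any maximum matching of $T$ avoiding $t$ would have to avoid $st$ as well, hence restrict to a maximum matching of $T_s$ and a maximum matching of $T_t$ missing $t$, giving size $\mu(T_s) + (\mu(T_t)-1) < \mu(T)$, impossible.

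Finally, the formula for $m(T)$ follows by counting the two families of maximum matchings. A maximum matching of $T$ not containing $st$ restricts to a matching of $T_s$ of size at most $\mu(T_s)$ and a matching of $T_t$ of size at most $\mu(T_t)$, and since the total must be $\mu(T_s) + \mu(T_t)$ both restrictions must be maximum; conversely any such pair glues to a maximum matching of $T$. This contributes $m(T_s) m(T_t)$. A maximum matching of $T$ containing $st$ restricts to a matching of $T_s - s$ of size at most $\mu(T_s - s) = \mu(T_s)$ and one of $T_t - t$ of size at most $\mu(T_t - t) = \mu(T_t) - 1$, and the total (including $st$) is $\mu(T_s) + \mu(T_t) = \mu(T)$, so again both restrictions are maximum; conversely any such pair plus $st$ is a maximum matching of $T$. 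This contributes $m(T_s - s)\, m(T_t - t)$. Adding the two disjoint families yields $m(T) = m(T_s) m(T_t) + m(T_s - s) m(T_t - t)$. I do not anticipate a serious obstacle here; the only point requiring a little care is the bookkeeping that in each of the two families \emph{both} component-restrictions are simultaneously forced to be maximum (rather than one being allowed to be sub-maximum while the other compensates), which is exactly where the equalities $\mu(T) = \mu(T_s) + \mu(T_t)$ and $\mu(T_s - s) = \mu(T_s)$, $\mu(T_t - t) = \mu(T_t) - 1$ are used, together with the convention $m(\emptyset) = 1$ to handle degenerate components.
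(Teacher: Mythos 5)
Your proof is correct and follows essentially the same approach as the paper: decompose maximum matchings according to whether they contain the edge $st$, obtain $\mu(T)=\max\{\mu(T_s)+\mu(T_t),\,\mu(T_s-s)+\mu(T_t-t)+1\}$, use the type-$A$ hypothesis on $s$ to pin down the equalities for $\mu$, and then count the two disjoint families of maximum matchings. The only presentational difference is that the paper deduces $\mu(T_s-s)=\mu(T_s)$ and $\mu(T_t-t)=\mu(T_t)-1$ directly by substituting $\mu(T)=\mu(T_s-s)+\mu(T_t)$ into the displayed max formula, whereas you reach the same equalities via a short contradiction argument; both are sound.
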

\begin{figure}[htbp]
  \centering
  \includegraphics{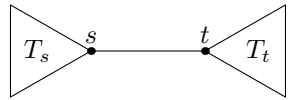}
  \caption{Decomposition of $T$ for Lemma~\ref{lemma:no-adjacent-vertices-of-type-A} and Proposition~\ref{proposition:full-bipartite}.}
  \label{fig:adjacent-vertices}
\end{figure}
\begin{proof}
  We first note that 
  \begin{equation*}
    \mu(T-v)\le \mu(T)\le \mu(T-v)+1
  \end{equation*}
  holds for any vertex $v$ of any tree $T$, 
  as any maximum matching of $T-v$ is a matching of $T$ and any maximum
  matching of $T$ minus possibly the edge covering $v$ is a matching of $T-v$.

  Any maximum matching $M$ of the tree $T$ either contains the edge $st$
  or it does not contain the edge $st$. In the first case, $M$ decomposes into a
  maximum matching of $T_s-s$, a maximum matching of $T_t-t$ and the edge $st$,
  which implies that $\mu(T)=\mu(T_s-s)+\mu(T_t-t)+1$. In the second case, $M$
  decomposes into a maximum matching of $T_s$ and a maximum matching of $T_t$,
  which implies that $\mu(T)=\mu(T_s)+\mu(T_t)$. We conclude that
  \begin{equation}\label{eq:bipartite-maximum-formula}
    \mu(T)=\max\{\mu(T_s-s)+\mu(T_t-t)+1,\mu(T_s)+\mu(T_t)\}.
  \end{equation}

  As $s$ is of type $A$, there is a maximum matching $M$ of $T$ not covering
  $s$, hence we have $\mu(T)=\mu(T_s-s)+\mu(T_t)$. In view of
  \eqref{eq:bipartite-maximum-formula}, this implies that $\mu(T_t)\ge
  \mu(T_t-t)+1$, i.e., $\mu(T_t-t)=\mu(T_t)-1$, and $\mu(T_s-s)\ge \mu(T_s)$,
  i.e., $\mu(T_s-s)=\mu(T_s)$. 

  In this case, we also have
  $\mu(T-t)=\mu(T_t-t)+\mu(T_s)<\mu(T_t)+\mu(T_s)=\mu(T)$, i.e., $t$ is of type
  $B$. Finally, $m(T_s)m(T_t)$ counts the number of maximum matchings of $T$
  not containing $st$ and $m(T_s-s)m(T_t-t)$ counts the number of maximum
  matchings of $T$ containing $st$, their sum is therefore $m(T)$.
\end{proof}

We now show that in almost all cases, optimal forests are trees, so we may
restrict our attention to trees afterwards. Nevertheless, at one point, we will
also use this result as a technical tool when considering trees.

\begin{lemma}\label{lemma:optimal-forest-connected}
  Let $T$ be an optimal forest of order at least $3$. Then $T$ is connected,
  i.e., $T$ is a tree.
\end{lemma}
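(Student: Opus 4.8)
The plan is to prove the statement by a gluing argument: if an optimal forest $T$ of order at least $3$ were disconnected, we will exhibit a forest of the same order with strictly more maximum matchings, contradicting optimality. First I would reduce to the case of exactly two components, say $T = T_1 \,\dot\cup\, T_2$ (a disconnected forest with more components can be handled by grouping). The key quantitative fact we need is a formula for how $m$ and $\mu$ behave when we join two rooted forests by an edge. Specifically, if we pick a vertex $u \in V(T_1)$ and a vertex $v \in V(T_2)$ and form $T' = T_1 \cup T_2 + uv$, then a maximum matching of $T'$ either avoids the edge $uv$ — and there are $m(T')_{\text{no }uv}$ such matchings among those of maximum possible size — or uses $uv$, contributing via $m(T_1 - u)\,m(T_2 - v)$ for the size $\mu(T_1 - u) + \mu(T_2 - v) + 1$. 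Since $\mu(T_1 \cup T_2) = \mu(T_1) + \mu(T_2)$, and $\mu(T_i - w) \in \{\mu(T_i) - 1, \mu(T_i)\}$ for any vertex $w$, joining by an edge can only increase or keep the matching number, and one gets
\begin{equation*}
  m(T') \ge m(T_1)m(T_2) = m(T),
\end{equation*}
with the inequality strict whenever the added edge lies in some maximum matching of $T'$ — i.e.\ whenever $\mu(T_1 - u) + \mu(T_2 - v) + 1 \ge \mu(T_1) + \mu(T_2)$, which holds precisely when at least one of $u, v$ is of type $A$ in its component.

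The heart of the argument is therefore to choose $u$ and $v$ cleverly so that the join strictly increases $m$. Every nonempty forest has a vertex of type $A$: indeed, take a maximum matching $M$ and any vertex not covered by it — such a vertex exists unless the component has a perfect matching, and in the perfect-matching case a leaf is of type $A$ because we may uncover it (its neighbour can be rematched only if... — more simply, in a tree the leaves adjacent to the single uncovered vertex, or in the perfect-matching case one checks directly that a leaf at the end of a pendant path of even length works). Actually the cleanest route: if $T_1$ has no vertex of type $A$ then $T_1$ has a perfect matching, hence $|T_1|$ is even; but then pick $u$ to be a leaf of $T_1$, let $u'$ be its neighbour, and note $\mu(T_1 - u) = \mu(T_1)$ fails — so instead we should join at a vertex of the \emph{other} type. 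Let me restate the strategy: pick $u$ of type $A$ in $T_1$ (which exists whenever $T_1$ is nonempty and, if $T_1$ has a perfect matching, we instead argue via type $B$ on the $T_2$ side). The point is that it cannot happen that \emph{both} $T_1$ and $T_2$ consist entirely of type-$B$ vertices, because a forest all of whose vertices are of type $B$ has a perfect matching, and then we are in a sub-case handled separately.

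I would then split into cases according to which components have perfect matchings. Case 1: some component, say $T_1$, is not perfectly matched; then it has a type-$A$ vertex $u$, and joining it by an edge to \emph{any} vertex $v$ of $T_2$ strictly increases $m$. Case 2: every component has a perfect matching; then every $|T_i|$ is even, so $m(T) = \prod m(T_i) = 1$ by Theorem~\ref{thm:lower}, while any tree of the same (even) order $\ge 4$ with a perfect matching also has $m = 1$ — so this does not immediately give a contradiction, and here we must instead produce a \emph{tree} (still order $n \ge 3$, necessarily $n \ge 4$ even) that has $m \ge 2$, e.g.\ any tree that is not perfectly matched, which exists for every $n \ge 4$. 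That contradicts optimality of a forest with $m = 1$. The small orders $n = 3$ (only forests: $P_3$, and $K_1 \cup P_2$ — the latter has $m = 1$, the former $m = 2$, so the optimum is the tree $P_3$) are checked by hand.

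The main obstacle I anticipate is the bookkeeping around type-$A$ existence and the edge-cases where a component is a single vertex $L$: if $T_1 = L$ (so $u$ is type $A$ trivially, since $\mu(L) = 0 = \mu(L - u) $), joining $u$ to a type-$A$ vertex of $T_2$ still works, and joining to a type-$B$ vertex $v$ of $T_2$ gives $\mu(T_1 - u) + \mu(T_2 - v) + 1 = 0 + \mu(T_2) + 1 > \mu(T_2)$, so it is still strict — good. The other delicate point is confirming the claim ``a forest with no type-$A$ vertex has a perfect matching'': this follows since an uncovered vertex in a maximum matching is type $A$, so absence of type-$A$ vertices forces every vertex covered, i.e.\ a perfect matching. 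With these pieces in place the case analysis closes, and I would write it up using the join formula as the single computational lemma, citing Lemma~\ref{lemma:no-adjacent-vertices-of-type-A} for the structure of $m$ across an edge incident to a type-$A$ vertex.
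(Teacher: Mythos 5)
Your overall route is the same as the paper's: reduce to two components, locate a type-$A$ vertex in one and a type-$B$ vertex in the other, join them by an edge, and invoke Lemma~\ref{lemma:no-adjacent-vertices-of-type-A}. However, there is a genuine error in the central quantitative claim. You assert that $m(T')\ge m(T_1)m(T_2)$ always holds, with strict inequality precisely when $\mu(T_1-u)+\mu(T_2-v)+1 \ge \mu(T_1)+\mu(T_2)$, which you further claim happens as soon as ``at least one of $u,v$ is of type $A$.'' This is false. If \emph{both} $u$ and $v$ are of type $A$, then $\mu(T_1-u)+\mu(T_2-v)+1 = \mu(T_1)+\mu(T_2)+1 > \mu(T_1)+\mu(T_2)$, so every maximum matching of $T'$ must use the new edge, hence $m(T')=m(T_1-u)m(T_2-v)$, which can be strictly smaller than $m(T_1)m(T_2)$. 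Concretely, $T_1=L$, $T_2=P_3$, with $u$ the isolated vertex and $v$ an endpoint of $P_3$ (both type $A$): then $T'=P_4$, so $m(T')=1<2=m(L)\,m(P_3)$. Similarly $T_1=T_2=P_3$ joined at two endpoints gives $P_6$ with $m=1<4$. The correct criterion for strict increase is that \emph{exactly one} of $u,v$ is of type $A$; only then does the equality $\mu(T_1-u)+\mu(T_2-v)+1 = \mu(T_1)+\mu(T_2)$ hold, so both classes of maximum matchings contribute and $m(T')=m(T_1)m(T_2)+m(T_1-u)m(T_2-v)>m(T)$. This is exactly the content of Lemma~\ref{lemma:no-adjacent-vertices-of-type-A}.

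As written, your Case~1 (``some component $T_1$ is not perfectly matched; join its type-$A$ vertex to any vertex of $T_2$'') therefore fails whenever the chosen vertex of $T_2$ is also of type $A$, and your remark that ``joining $u$ to a type-$A$ vertex of $T_2$ still works'' is exactly the false step. (Your arithmetic there is also off: joining $L$ to a type-$B$ vertex $v$ of $T_2$ gives $\mu(T_1-u)+\mu(T_2-v)+1 = 0+(\mu(T_2)-1)+1 = \mu(T_2)$, the equality case, not $\mu(T_2)+1$.) The repair is to pick a type-$A$ vertex in one component and a type-$B$ vertex in the \emph{other}, which forces you to deal explicitly with the small components $L$ and $P_2$: the paper does this by first ruling out perfect matchings in optimal forests of order $\ge3$ via the star comparison $m(S_n)=n-1>1$, so a type-$A$ vertex exists, and then observing that a type-$A$ vertex's neighbour is type $B$, so any optimal tree of order $\ge2$ also has a type-$B$ vertex. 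With that in place, and after excluding the case of two order-$2$ components, the join argument closes as in the paper.
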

\begin{proof}
  Let $T_1$ and $T_2$ be connected components of $T$. For simplicity, we may
  assume that these are the only connected components of $T$; otherwise, we use
  the following argument inductively.

  As $T$ is optimal, each of its connected components has to be optimal.

  If both $T_1$ and $T_2$ are of order $1$,
  then they both only admit the empty matching, inserting an edge between these
  two vertices does not alter the number of maximum cardinality matchings. 

  Next, we note that for $n\ge 3$, the star $S_n$ on $n$ vertices satisfies
  $m(S_n)=n-1>1$. Thus an optimal forest of order at least $3$ does not admit a
  perfect matching, as perfect matchings of trees are unique (see Theorem~\ref{thm:lower}).
  This implies that an optimal forest of order at   least $3$ has a vertex of type $A$.
  As the unique vertex of a tree of order $1$ is also of type $A$, we conclude that
  all optimal trees except the tree of order $2$ have a vertex of type $A$.

  As any neighbour of any vertex of type $A$ is of type $B$ by
  Lemma~\ref{lemma:no-adjacent-vertices-of-type-A} and the vertices of the tree of order $2$
  also are of type $B$, we conclude that every optimal tree of order at least
  $2$ has a vertex of type $B$.

  If $T_1$ and $T_2$ are both of order $2$, then there is no vertex of type
  $A$, thus $T$ is not optimal.

  So we may now assume that $v\in T_1$ is of type $A$ and $w\in T_2$ is of type
  $B$. If we insert the edge $vw$, we obtain a new graph $T'=T+vw$. As in
  Lemma~\ref{lemma:no-adjacent-vertices-of-type-A}, we obtain
  \begin{equation*}
    \mu(T')=\max\{\mu(T_1-v)+\mu(T_2-w)+1, \mu(T_1)+\mu(T_2) \}.
  \end{equation*}
  As $v$ is of type $A$ (with respect to $T_1$) and $w$ is of type $B$ (with
  respect to $T_2$), we have $\mu(T_1-v)=\mu(T_1)$ and
  $\mu(T_2-w)+1=\mu(T_2)$. This implies that $\mu(T')=\mu(T_1-v)+\mu(T_2)$,
  i.e., $v$ is of type $A$ with respect to $T'$ and
  Lemma~\ref{lemma:no-adjacent-vertices-of-type-A} can be applied to yield
  \begin{equation*}
    m(T')=m(T_1)m(T_2)+m(T_1-v)m(T_2-w)>m(T_1)m(T_2)=m(T),
  \end{equation*}
  contradiction.

  Thus the only disconnected optimal forest is the forest consisting of exactly two isolated vertices.
\end{proof}

We can now formalise what we will call the bipartition condition.

\begin{definition}
  Let $T$ be a tree. We say that $T$ fulfils the \emph{bipartition condition} if the two classes
  in $T$'s unique bipartition contain precisely the vertices of type $A$ and $B$ respectively.
\end{definition}

It turns out that indeed almost all optimal trees satisfy this condition.

\begin{proposition}\label{proposition:full-bipartite}
  Let $T$ be an optimal tree of order at least $3$. Then $T$ fulfils the
  bipartition condition.

  Let $st$ be an edge of $T$ where $s$ is of type $A$ and $t$ is of type $B$.
  The connected components of $T-st$ are denoted by $T_s$ and $T_t$ with $s\in
  T_{s}$ and $t\in T_t$. Then $s$ and $t$ are of types $A$ and $B$ with respect
  to the trees $T_s$ and $T_t$, respectively. Furthermore, 
  \begin{equation}\label{eq:full-bipartite-m-formula}
    m(T)=m(T_s)m(T_t)+m(T_s-s)m(T_t-t).
  \end{equation}
\end{proposition}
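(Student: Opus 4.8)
The plan is to start by noting that the assertions about an edge $st$ with $s$ of type $A$ and $t$ of type $B$ are already contained in Lemma~\ref{lemma:no-adjacent-vertices-of-type-A}: applied to that edge it yields $\mu(T_s-s)=\mu(T_s)$ (so $s$ is of type $A$ with respect to $T_s$), $\mu(T_t-t)=\mu(T_t)-1$ (so $t$ is of type $B$ with respect to $T_t$), and precisely formula~\eqref{eq:full-bipartite-m-formula}. Thus the substance of the proposition is the bipartition condition. Since the bipartition of a tree is unique and Lemma~\ref{lemma:no-adjacent-vertices-of-type-A} already shows that no edge joins two vertices of type $A$, it suffices to prove that no edge of an optimal tree $T$ of order $n\ge3$ joins two vertices of type $B$; the type-$A$ and the type-$B$ vertices then form two independent sets that partition $V(T)$, hence give the unique bipartition of $T$. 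Along the way I would use two elementary consequences of Lemma~\ref{lemma:no-adjacent-vertices-of-type-A}: every tree of order at least $2$ has a vertex of type $B$ (otherwise it would have no edges), and a tree all of whose vertices are of type $B$ admits a perfect matching (a maximum matching missing a vertex would exhibit a type-$A$ vertex) and hence has $m=1$ (Theorem~\ref{thm:lower}).

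So assume for a contradiction that $st$ is an edge of such a $T$ with $s$ and $t$ both of type $B$, and let $T_s\ni s$, $T_t\ni t$ be the components of $T-st$. The key (and, I expect, trickiest) step is to show $m(T)\le m(T_s)m(T_t)$. Because $s$ and $t$ are of type $B$, we have $\mu(T_s-s)+\mu(T_t)=\mu(T)-1=\mu(T_s)+\mu(T_t-t)$, so $\mu(T_s)-\mu(T_s-s)=\mu(T_t)-\mu(T_t-t)=:d$ with $d\in\{0,1\}$, and comparing with~\eqref{eq:bipartite-maximum-formula} gives two cases. If $d=1$, any matching containing $st$ has at most $\mu(T)-1$ edges, so no maximum matching of $T$ uses $st$; a maximum matching therefore restricts to maximum matchings of $T_s$ and of $T_t$, and $m(T)=m(T_s)m(T_t)$. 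If $d=0$, then $\mu(T)=\mu(T_s)+\mu(T_t)+1$, every maximum matching uses $st$ and, with $st$ deleted, restricts to maximum matchings of $T_s-s$ and $T_t-t$, so $m(T)=m(T_s-s)m(T_t-t)$; but $d=0$ means precisely that $s$ and $t$ are of type $A$ with respect to $T_s$ and $T_t$, whence $m(T_s-s)\le m(T_s)$ and $m(T_t-t)\le m(T_t)$, and $m(T)\le m(T_s)m(T_t)$ once more.

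To conclude, note that $T_s$ and $T_t$ cannot both possess a perfect matching, since their union would then be a perfect matching of $T$, forcing $m(T)=1<n-1=m(S_n)$ and contradicting optimality. Hence one of them, say $T_s$, has no perfect matching, so by the second elementary fact it contains a vertex $p$ of type $A$ with respect to $T_s$; pick a vertex $q$ of type $B$ in the other component, or, if $T_t$ has none and is therefore a single vertex (which is of type $A$), interchange the roles and take the type-$B$ vertex $q$ from $T_s$, which has order $n-1\ge2$. With $\{X,Y\}=\{T_s,T_t\}$, $p\in X$ of type $A$ in $X$ and $q\in Y$ of type $B$ in $Y$, form the tree $T'$ from the disjoint union of $T_s$ and $T_t$ by adding the edge $pq$; then $|T'|=n$, and by the argument used in the proof of Lemma~\ref{lemma:optimal-forest-connected} ($p$ stays of type $A$, then apply Lemma~\ref{lemma:no-adjacent-vertices-of-type-A}),
\[
  m(T')=m(X)m(Y)+m(X-p)m(Y-q)\ge m(T_s)m(T_t)+1>m(T_s)m(T_t)\ge m(T),
\]
which is impossible. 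Hence no edge of $T$ joins two vertices of type $B$, and the bipartition condition follows. I expect the real obstacle to be the estimate $m(T)\le m(T_s)m(T_t)$: the analogue for an edge between two vertices of type $A$ is handed to us, but here the cases $d=0$ and $d=1$ — the edge $st$ lying in every maximum matching versus in none — behave differently, and one must also dispose of the degenerate situation in which a component consists of a single vertex.
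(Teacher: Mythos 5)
Your proposal is correct, and it takes a genuinely different route from the paper for the core step (excluding an edge between two type-$B$ vertices). The paper branches on the sign of $\mu(T_s-s)+\mu(T_t-t)+1-\bigl(\mu(T_s)+\mu(T_t)\bigr)$, disposing of the equality case by exhibiting a type-$A$ vertex directly, and disposing of the two strict cases by exhibiting disconnected forests of the same order and the same $m$-value (the forest $T-st$, respectively the forest obtained by deleting all edges at $s$ and $t$) and invoking Lemma~\ref{lemma:optimal-forest-connected} as a black box. You instead use the type-$B$ assumption on \emph{both} endpoints at the outset to get $\mu(T_s)-\mu(T_s-s)=\mu(T_t)-\mu(T_t-t)=:d\in\{0,1\}$, which makes the paper's equality case vacuous and lets you fold the remaining two cases into the single estimate $m(T)\le m(T_s)m(T_t)$; you then win by directly rewiring $T_s$ and $T_t$ into a tree $T'$ with $m(T')>m(T_s)m(T_t)$, reusing the insertion argument from the proof of Lemma~\ref{lemma:optimal-forest-connected} rather than the lemma's statement. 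The trade-off is essentially one of modularity: the paper keeps the case analysis shallow and defers the rewiring to the lemma, while your version is more self-contained (a single unified inequality, a single explicit rewiring) at the cost of carrying the degenerate one-vertex-component case and the observation that $T_s$, $T_t$ cannot both have perfect matchings. Both are sound; the key lemmas (Lemma~\ref{lemma:no-adjacent-vertices-of-type-A} and the type-$A$/type-$B$ insertion trick) are shared.
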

\begin{proof}
  Assume that $s$ and $t$ are two adjacent vertices of type $B$. 

  If we have $\mu(T_s-s)+\mu(T_t-t)+1=\mu(T_s)+\mu(T_t)$, then (w.l.o.g.)
  $\mu(T_s-s)=\mu(T_s)$ and $\mu(T_t-t)=\mu(T_t)-1$. In this case, we obtain
  $\mu(T)=\mu(T_s-s)+\mu(T_t)$, i.e., $s$ is of type $A$. Contradiction.

  Next, we consider the case that $\mu(T_s-s)+\mu(T_t-t)+1<\mu(T_s)+\mu(T_t)=\mu(T)$,
  i.e., the case that $st$ is not contained in any maximum matching of
  $T$. Deleting the edge $st$ resulting in a forest $T'=T-st$ does
  not alter the number of maximum matchings, i.e., $m(T)=m(T')$. By
  Lemma~\ref{lemma:optimal-forest-connected}, $T'$ and therefore $T$  are not
  optimal, contradiction.

  Finally, we consider the case
  $\mu(T_s-s)+\mu(T_t-t)+1>\mu(T_s)+\mu(T_t)=\mu(T)$, i.e., the case that $st$
  is contained in every maximum matching of $T$. Deleting all edges incident to
  $s$ or $t$ leads to a disconnected forest of the same order and the same number
  of maximum matchings. Contradiction.

  Thus exactly one of $s$ and $t$, say $s$, is of type $A$ by
  Lemma~\ref{lemma:no-adjacent-vertices-of-type-A} and the remaining assertions
  of this proposition are restatements of the results of
  Lemma~\ref{lemma:no-adjacent-vertices-of-type-A}.
\end{proof}

\subsection{Rooted Trees}
For many of our arguments, we will designate a vertex of a tree as the root and
recursively consider subtrees. To this end, we collect a few definitions as
well as some recursive formul\ae{} for the number of maximum matchings.

We assume that all rooted trees are non-empty. A rooted tree with underlying
tree $T$ and root $r$ will be denoted by the pair $(T,r)$; frequently, we will
simply write $T$ if the root is clear from the context. An important operation that we
will frequently apply is to choose another vertex $s\in V(T)$ as the new root. We will usually
denote the resulting rooted tree by a new symbol $(T',s)$ (and thus abbreviated
to $T'$) although the underlying unrooted trees $T$ and $T'$ are identical.

As usual, the branches of a rooted tree $(T,r)$ of the shape as
Figure~\ref{fig:rooted-tree-with-branches} are the rooted trees $(T_1,r_1)$, \ldots, $(T_k,r_k)$.
\begin{figure}[htbp]
  \centering
  \includegraphics{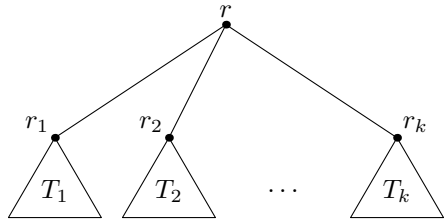}
  \caption{Rooted tree with branches.}
  \label{fig:rooted-tree-with-branches}
\end{figure}

A \emph{rooted subtree} $(T',v)$
of an unrooted tree $T$ is a connected component of $T-vw$ for some edge $vw$ of $T$ such
that $v\in T'$. Note that this definition forces
$T'$ to be a proper subtree of $T$.

A rooted subtree $(T',v)$ of a rooted tree $(T,r)$ is the
connected component of $T-vw$ containing $v$, where $w$ has to be the parent of
$v$, i.e., $T'$ is the subgraph induced by all the successors of $v$. We will
also write $T'=T(v)$ in this case. 

Let $T$ be a tree and $v$ be a vertex of $T$ with neighbours $r_1$, \ldots, 
$r_k$. The connected components of $T-v$ are denoted by $T_1$, \ldots, $T_k$ such that $r_j\in T_j$ for all $j$. Then the
rooted trees $(T_1,r_1)$, \ldots, $(T_k,r_k)$ are said to be the \emph{rooted
  connected components of $T-v$} (and usually, the roots $r_j$ will  not be mentioned).

\begin{definition}
  Let $(T,r)$ be a rooted tree. 
  \begin{enumerate}
  \item We define $m_1(T)$ to be the number of maximum
    matchings of $T$ covering the root $r$.
  \item We define $m_0(T)$ to be $m(T-r)$, the number of maximum matchings of $T-r$.
  \item The \emph{type} of $T$ is defined to be the type of the root as a
    vertex of the unrooted tree, i.e., $(T,r)$ is of type $A$ if
    $\mu(T-r)=\mu(T)$ and of type $B$ if $\mu(T-r)=\mu(T)-1$. We sometimes write
    $\type(T)=A$ and $\type(T)=B$, respectively.
  \end{enumerate}
\end{definition}

Thus $(T,r)$ is of type $A$ if and only if it admits a maximum matching not
covering the root $r$.

We have
\begin{equation}\label{eq:special-values-L}
 \mu(L)=0,  \qquad m(L)=1, \qquad m_0(L)=1, \qquad m_1(L)=0
\end{equation}
for the rooted tree $L$ of order $1$, which implies that it is a rooted tree of type
$A$. 

\begin{definition}
    We define the \emph{bipartition condition for rooted trees} recursively as follows:
    a rooted tree of order $1$ (rooted at its only vertex) is said to satisfy the bipartition
    condition. If $(T,r)$ is a rooted tree with branches $(T_1,r_1)$, \ldots,
    $(T_k,r_k)$, then the rooted tree $(T,r)$ is said to fulfil the bipartition condition if
    all branches $(T_j,r_j)$ fulfil the bipartition condition and the type of $(T,r)$ is not
    equal to the type of any of the branches $(T_j,r_j)$.
\end{definition}

\begin{remark}
  Let $T$ be an optimal tree of order at least $3$ and $(S,r)$ be a rooted
  subtree of $T$. Then the type of $r$ as vertex of $T$ coincides with the type
  of $S$ and $S$ fulfils the bipartition condition for rooted trees by Proposition~\ref{proposition:full-bipartite}.
\end{remark}

The main goal behind the definition of the two different types is to provide a recursive method to compute
$m(T)$. Note first that for a rooted tree $(T,r)$, we have
\begin{equation*}
  m(T)=\begin{cases}
    m_0(T)+m_1(T),&\text{ if $(T,r)$ is of type $A$},\\
    m_1(T),&\text{ if $(T,r)$ is of type $B$}.
    \end{cases}
\end{equation*}
We now give recursive formul\ae{} for these quantities in terms of the
branches of a rooted tree. Here, for technical reasons, we do not assume the bipartition condition for
rooted trees, but a weaker version only, and derive the bipartition condition for
rooted trees. 

\begin{lemma}\label{lemma:recursive-formulae-rooted-tree}
  Let $(T,r)$ be a rooted tree and $(T_1,r_1)$, \ldots, $(T_k,r_k)$
  its branches.  We assume
  that $T_1$, \ldots, $T_k$ are of the same type. Then $T$ is of the other type
  and we have
  \begin{align}
    m_0(T)&=\prod_{j=1}^k m(T_j)\label{equation:m_0-formula},\\
    m_1(T)&=m_0(T)\cdot \sum_{j=1}^k\frac{m_0(T_j)}{m(T_j)}.\label{equation:m_1-formula}
  \end{align}
\end{lemma}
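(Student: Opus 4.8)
The plan is to prove the formulae by a case analysis on how a maximum matching of $(T,r)$ interacts with the edges $rr_1,\ldots,rr_k$ joining the root to the roots of its branches, after first pinning down the type of $T$ and the correct values of $\mu$.

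First I would establish the matching-number arithmetic. Since all branches $T_j$ are of the same type, write $\varepsilon=1$ if they are all of type $A$ (so $\mu(T_j-r_j)=\mu(T_j)$) and $\varepsilon=0$ if they are all of type $B$ (so $\mu(T_j-r_j)=\mu(T_j)-1$). A matching of $T$ either leaves $r$ uncovered — then it is a disjoint union of matchings of the $T_j$, of size at most $\sum_j\mu(T_j)$ — or it uses exactly one edge $rr_i$, contributing $1+\mu(T_i-r_i)+\sum_{j\ne i}\mu(T_j)$. Comparing these across $i$ and using the definition of $\varepsilon$, one gets $\mu(T)=\sum_j\mu(T_j)$ when $\varepsilon=1$, in which case $r$ is uncoverable by a maximum matching iff\ldots; more precisely, if $\varepsilon=1$ then every maximum matching leaves $r$ uncovered (adding $rr_i$ would force dropping a covering edge of $T_i$ at $r_i$, and since $\mu(T_i-r_i)=\mu(T_i)$ this is only break-even, and in fact one checks it cannot increase the total), so $\mu(T-r)=\mu(T)$ and $T$ is of type $A$ — wait, I must be careful: the cleanest route is to show directly that $\mu(T-r)=\sum_j\mu(T_j)=\mu(T)$ exactly when $\varepsilon=1$ and $\mu(T-r)=\sum_j\mu(T_j)=\mu(T)-1$ exactly when $\varepsilon=0$, which gives ``$T$ is of the other type'' in both cases. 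This is the step that needs the most care, since one must rule out that covering $r$ via some branch of the ``wrong'' local behaviour changes $\mu(T)$; the key point is uniformity of the branch types, so there is no branch that ``wants'' to give up its root for free.

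Next I would count. For \eqref{equation:m_0-formula}: a maximum matching of $T-r$ is exactly a choice of a maximum matching in each $T_j$ (using $\mu(T-r)=\sum_j\mu(T_j)$, which is the identity just proved, so the pieces are forced to be maximum), giving $m_0(T)=m(T-r)=\prod_j m(T_j)$. For \eqref{equation:m_1-formula}: a maximum matching covering $r$ uses exactly one edge $rr_i$; once $i$ is fixed it restricts to a maximum matching of $T_i-r_i$ (size $\mu(T_i-r_i)$) together with maximum matchings of the other $T_j$; for the total to reach $\mu(T)$ one checks the arithmetic forces $\varepsilon=1$, i.e.\ $\mu(T_i-r_i)=\mu(T_i)$, so $T_i$ must be of type $A$ and $m(T_i-r_i)=m_0(T_i)$ counts these; hence the number of maximum matchings of $T$ using $rr_i$ is $m_0(T_i)\prod_{j\ne i}m(T_j)$. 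Summing over $i$ and factoring out $\prod_j m(T_j)=m_0(T)$ gives $m_1(T)=m_0(T)\sum_i m_0(T_i)/m(T_j)$ — here if all branches are of type $B$ then $m_1(T)=0$, consistent with the $\varepsilon=0$ subcase (and $m_0(T_i)=m(T_i-r_i)$ still, though the sum then just has to vanish term-by-term because a type-$B$ branch cannot host the exchange); I would phrase the summand as $m_0(T_j)/m(T_j)$ throughout and note it is $0$ precisely for type-$B$ branches.

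Finally I would assemble: \eqref{equation:m_0-formula} and \eqref{equation:m_1-formula} as derived, together with the type computation. The main obstacle, as flagged, is the type/$\mu$ bookkeeping in the mixed situation — making sure that when $\varepsilon=0$ the root genuinely cannot be left uncovered in a maximum matching while the edge $rr_i$ still is used by some maximum matchings (so that the $m_1$ sum is a sum of zeros rather than something that must be argued to be zero by cancellation), and conversely when $\varepsilon=1$ that covering the root never helps. Everything downstream is a clean bijective count using that the branch-matchings appearing in a maximum matching of $T$ are themselves forced to be maximum, which follows from the additivity $\mu(T)=\sum_j\mu(T_j)$ (resp.\ $\sum_j\mu(T_j)$ with the edge absorbing the one missing unit).
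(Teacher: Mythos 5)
Your type computation is inverted, and the inversion propagates. You claim that when $\varepsilon=1$ (all branches of type $A$) one has $\mu(T)=\sum_j\mu(T_j)$ and that every maximum matching of $T$ leaves $r$ uncovered, making $T$ of type $A$---but this contradicts the very statement you are proving (``$T$ is of the other type''). The correct arithmetic is the reverse: if $T_i$ is of type $A$, then $\mu(T_i-r_i)=\mu(T_i)$, so combining a maximum matching of $T_i-r_i$ (which has the same size as a maximum matching of $T_i$) with maximum matchings of the other $T_j$ and the extra edge $rr_i$ gives a matching of size $1+\sum_j\mu(T_j)$. Hence adding $rr_i$ is a strict gain of $1$, not ``break-even'': $\mu(T)=\mu(T-r)+1$ and $T$ is of type $B$. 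Symmetrically, when all branches are of type $B$, $1+\mu(T_i-r_i)+\sum_{j\ne i}\mu(T_j)=\sum_j\mu(T_j)$ is break-even, $\mu(T)=\mu(T-r)$, and $T$ is of type $A$.

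This inversion produces a second error in the count for \eqref{equation:m_1-formula}. You assert that if all branches are of type $B$, then $m_1(T)=0$ because ``a type-$B$ branch cannot host the exchange''. This is false: in that case every option $1+\mu(T_i-r_i)+\sum_{j\ne i}\mu(T_j)$ equals $\mu(T)$, so for every $i$ the edge $rr_i$ \emph{does} occur in maximum matchings of $T$, and their number is $m_0(T_i)\prod_{j\ne i}m(T_j)>0$. The paper's proof explicitly notes that ``each of the edges $rr_j$ can be used in a maximum matching'' in \emph{both} type regimes, and only then sums over $j$; your argument must do the same rather than restrict the count to $\varepsilon=1$. Your confusion likely comes from reading $m_0(T_j)$ as ``the number of maximum matchings of $T_j$ avoiding $r_j$''---which would be $0$ for a type-$B$ branch---whereas the definition is $m_0(T_j)=m(T_j-r_j)$, the number of maximum matchings of the \emph{deleted-root} forest, which is always at least $1$.
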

\begin{proof}
  If $(T,r)$ is of order $1$, then there are no branches, and the product in
  \eqref{equation:m_0-formula} and the sum in \eqref{equation:m_1-formula} are
  empty, which coincides with the values for $L$ given in
  \eqref{eq:special-values-L}. Thus we may focus on the case that the order
  of $(T,r)$ is at least $2$.

  As $T-r$ consists of the connected components $T_1$, \ldots, $T_k$, we
  clearly have $\mu(T-r)=\sum_{i=1}^k \mu(T_i)$ and
  \eqref{equation:m_0-formula}. Furthermore,
  \begin{equation*}
    \mu(T)=\max\Bigl(\{\mu(T-r)\} \cup
    \Bigl\{1+\mu(T_j-r_j)+\sum_{i\neq j}\mu(T_i): j\in\{1,\ldots,k\}\Bigr\}\Bigr),
  \end{equation*}
  as a maximum matching
  either does not cover $r$ or contains the edge $rr_j$ for some $j$.

  If all branches are of type $B$, i.e.,  $\mu(T_j-r_j)=\mu(T_j)-1$ for all
  $j$, then $1+\mu(T_j-r_j)+\sum_{i\neq j}\mu(T_i)=\sum_i\mu(T_i)=\mu(T-r)$ for
  all $j$. This implies that $\mu(T)=\mu(T-r)$, $T$ is of type $A$ and each of the
  edges $rr_j$ can be used in a maximum matching.

  If all branches are of type $A$, i.e., $\mu(T_j-r_j)=\mu(T_j)$ for all $j$,
  then $1+\mu(T_j-r_j)+\sum_{i\neq j}\mu(T_i)=1+\sum_i\mu(T_i)=1+\mu(T-r)$ for
  all $j$. This implies that $\mu(T)=\mu(T-r)+1$, $T$ is of type $B$ and again,
  each of the edges $rr_j$ can be used in a maximum matching.

  There are $m(T_1)\ldots
  m(T_{j-1})m_0(T_j)m(T_{j+1})\ldots m(T_k)$ maximum matchings of $T$ containing the
  edge $rr_j$. Summing over all $j$ yields \eqref{equation:m_1-formula}.
\end{proof}

If $T_1$, \ldots, $T_k$ are rooted trees of type $A$, then the rooted tree with
branches $T_1$, \ldots, $T_k$ is also denoted by $\calB(T_1,\ldots,
T_k)$. It is of type $B$ by Lemma~\ref{lemma:recursive-formulae-rooted-tree}.

Similarly, if $T_1$, \ldots, $T_k$ are rooted trees of type $B$, then the rooted tree with
branches $T_1$, \ldots, $T_k$ is also denoted by $\calA(T_1,\ldots,T_k)$. 
It is of type $A$ by Lemma~\ref{lemma:recursive-formulae-rooted-tree}. If
$k=1$, we will omit the parentheses and simply write $\calA T_1$.

The crucial quantity in our investigation will be the following quotient:

\begin{definition}
  For a rooted tree $(T,r)$, we set $\rho(T)=m_0(T)/m(T)$. 
\end{definition}

We note that by definition, $\rho(T)>0$ for all rooted trees $(T,r)$. 

We now reformulate the recursive formul\ae{} for $m$ and $m_0$ to yield
recursive formul\ae{} for $\rho$.

\begin{lemma}\label{lemma:rho-formulae}
  Let $(T,r)$ be a rooted tree fulfilling the bipartition condition with branches $(T_1,r_1)$, \ldots,
  $(T_k,r_k)$. Then
  \begin{equation*}
    \rho(T)=\begin{cases}
      \dfrac{1}{ 1+\sum\limits_{j=1}^k \rho(T_j)},&\text{ if $(T,r)$ is of type $A$},\\
      \dfrac{1}{ \sum\limits_{j=1}^k \rho(T_j)},&\text{ if $(T,r)$ is of type $B$}.
      \end{cases}
  \end{equation*}
\end{lemma}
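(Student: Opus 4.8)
The plan is to derive the formula for $\rho(T)$ directly from the recursive formulae for $m_0$ and $m_1$ established in Lemma~\ref{lemma:recursive-formulae-rooted-tree}, together with the case distinction recorded just before this lemma, namely $m(T)=m_0(T)+m_1(T)$ if $(T,r)$ is of type~$A$ and $m(T)=m_1(T)$ if $(T,r)$ is of type~$B$. Since $(T,r)$ fulfils the bipartition condition, all its branches $(T_1,r_1),\ldots,(T_k,r_k)$ are of the type opposite to that of $(T,r)$; in particular they are all of the same type, so Lemma~\ref{lemma:recursive-formulae-rooted-tree} applies and gives $m_0(T)=\prod_{j=1}^k m(T_j)$ and $m_1(T)=m_0(T)\sum_{j=1}^k \rho(T_j)$, using that $\rho(T_j)=m_0(T_j)/m(T_j)$ by definition.

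First I would treat the type~$B$ case. Here $m(T)=m_1(T)=m_0(T)\sum_{j=1}^k\rho(T_j)$, and since $m_0(T)>0$ (it is a product of positive integers) we may divide to obtain $\rho(T)=m_0(T)/m(T)=1/\sum_{j=1}^k\rho(T_j)$. Note that $k\ge 1$ here, because a type~$B$ rooted tree cannot have order~$1$ (the vertex of $L$ is of type~$A$ by \eqref{eq:special-values-L}), so the sum in the denominator is nonempty and, being a sum of strictly positive terms, is strictly positive; hence the quotient is well-defined.

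Next I would treat the type~$A$ case. Now $m(T)=m_0(T)+m_1(T)=m_0(T)\bigl(1+\sum_{j=1}^k\rho(T_j)\bigr)$, so dividing by $m(T)$ again gives $\rho(T)=m_0(T)/m(T)=1/\bigl(1+\sum_{j=1}^k\rho(T_j)\bigr)$. This also correctly handles the base case $T=L$: there $k=0$, the empty sum is $0$, and the formula yields $\rho(L)=1$, which agrees with $m_0(L)/m(L)=1/1$ from \eqref{eq:special-values-L}. So no separate base-case argument is needed beyond this observation.

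There is no real obstacle here; the only points requiring a word of care are the well-definedness of the division (which holds because $m_0(T)=\prod_j m(T_j)\ge 1$ and, in the type~$B$ case, because $k\ge 1$ so the denominator is a nonempty sum of positive reals) and the verification that the stated hypotheses of the present lemma — the bipartition condition for $(T,r)$ — indeed supply the ``all branches of the same type'' hypothesis needed to invoke Lemma~\ref{lemma:recursive-formulae-rooted-tree}. Once these are noted, the proof is a two-line computation in each of the two cases.
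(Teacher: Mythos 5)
Your proof is correct and follows exactly the route the paper indicates: it unpacks the one-line remark that the formula is ``a simple consequence of \eqref{equation:m_0-formula} and \eqref{equation:m_1-formula}'' by combining those two formulae with the case distinction $m(T)=m_0(T)+m_1(T)$ (type $A$) resp.\ $m(T)=m_1(T)$ (type $B$) and dividing. The extra remarks on well-definedness and the empty-sum base case are harmless elaborations of the same argument.
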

\begin{proof}
  This is a simple consequence of \eqref{equation:m_0-formula} and
  \eqref{equation:m_1-formula}.
\end{proof}

\subsection{\texorpdfstring{$\alpha$}{\textalpha}-optimality}

It turns out that a rooted subtree of an optimal tree no longer needs to be
optimal. Instead, we introduce the auxiliary notion of $\alpha$-optimality.

\begin{definition}
  Let $\alpha$ be a non-negative real number. A rooted tree $(T,r)$ is said to
  be $\alpha$-optimal if it fulfils the bipartition condition and if 
  \begin{multline}\label{eq:alpha-optimality}
    m(T)+\alpha m_0(T)=
    \max\{ m(T')+\alpha m_0(T') :(T',r') \text{ is a rooted tree}\\ \text{fulfilling the bipartition condition with }|T|=|T'|\text{ and }\type(T)=\type(T') \}.
  \end{multline}
\end{definition}

Note that $0$-optimality is just ordinary optimality. This definition is motivated by the fact that any rooted subtree of an
optimal tree is indeed $\alpha$-optimal for an appropriate value of $\alpha$:

\begin{proposition}\label{proposition:subtrees-of-optimal-trees-are-alpha-optimal}
  Let $T$ be an optimal tree, $st$ an edge of $T$ and $T_s$ and $T_t$ the
  connected components of $T-st$, with $s\in T_s$ and $t\in T_t$. Then 
  $(T_s,s)$ is a $\rho(T_t)$-optimal tree and $(T_t,t)$ is a
  $\rho(T_s)$-optimal tree.
\end{proposition}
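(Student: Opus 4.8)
The plan is to reduce everything to the product formula \eqref{eq:full-bipartite-m-formula}, which already expresses $m(T)$ in terms of the two pieces $T_s$ and $T_t$, and then to recognise the $\alpha$-weighted objective from the definition of $\alpha$-optimality as a \emph{factor} of $m(T)$. First I would dispose of the trivial case $|T|\le 2$ and assume $|T|\ge 3$, so that Proposition~\ref{proposition:full-bipartite} applies: $T$ fulfils the bipartition condition, we may assume (after possibly swapping $s$ and $t$) that $s$ is of type $A$ and $t$ of type $B$, the rooted trees $(T_s,s)$ and $(T_t,t)$ are of type $A$ and $B$ respectively, they fulfil the bipartition condition for rooted trees (by the Remark following that definition, or directly), and
\begin{equation*}
  m(T)=m(T_s)m(T_t)+m_0(T_s)m_0(T_t).
\end{equation*}
Dividing out $m(T_t)$ resp.\ $m(T_s)$ and using $\rho(T_t)=m_0(T_t)/m(T_t)$, $\rho(T_s)=m_0(T_s)/m(T_s)$, this rewrites as
\begin{equation*}
  m(T)=m(T_t)\bigl(m(T_s)+\rho(T_t)m_0(T_s)\bigr)=m(T_s)\bigl(m(T_t)+\rho(T_s)m_0(T_t)\bigr),
\end{equation*}
and the bracketed expressions are exactly the left-hand sides of \eqref{eq:alpha-optimality} for $(T_s,s)$ with $\alpha=\rho(T_t)$ and for $(T_t,t)$ with $\alpha=\rho(T_s)$.

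Next I would argue by contradiction. Suppose $(T_s,s)$ is not $\rho(T_t)$-optimal. Since $(T_s,s)$ is a legitimate competitor in \eqref{eq:alpha-optimality} (it fulfils the bipartition condition, has order $|T_s|$ and type $A$), there must be a rooted tree $(T_s',s')$ fulfilling the bipartition condition with $|T_s'|=|T_s|$, $\type(T_s')=A$, and $m(T_s')+\rho(T_t)m_0(T_s')>m(T_s)+\rho(T_t)m_0(T_s)$. Form the tree $T'$ obtained from the disjoint union of $T_s'$ and $T_t$ by adding the edge $s't$. Because $s'$ is of type $A$ with respect to $T_s'$ and $t$ is of type $B$ with respect to $T_t$, the computation behind \eqref{eq:bipartite-maximum-formula} (as carried out in the proofs of Lemma~\ref{lemma:no-adjacent-vertices-of-type-A} and Lemma~\ref{lemma:optimal-forest-connected}, using $\mu(T_s'-s')=\mu(T_s')$ and $\mu(T_t-t)=\mu(T_t)-1$) shows that $s'$ is still of type $A$ with respect to $T'$ and that
\begin{equation*}
  m(T')=m(T_s')m(T_t)+m_0(T_s')m_0(T_t)=m(T_t)\bigl(m(T_s')+\rho(T_t)m_0(T_s')\bigr)>m(T).
\end{equation*}
Since $|T'|=|T_s'|+|T_t|=|T_s|+|T_t|=|T|$, this contradicts the optimality of $T$; hence $(T_s,s)$ is $\rho(T_t)$-optimal. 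By the symmetric argument — replacing $T_t$ by a competitor $(T_t',t')$ of type $B$ with $|T_t'|=|T_t|$ and regluing it to $T_s$ along the edge $st'$, after which $s$ is still of type $A$ — one obtains that $(T_t,t)$ is $\rho(T_s)$-optimal.

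I expect the only mildly delicate point to be the verification that the regluing does not change the behaviour of the new root, i.e.\ that $s'$ (respectively $s$) remains of type $A$ in the reassembled tree, so that the product formula for $m$ is valid there. But this is literally the argument giving \eqref{eq:bipartite-maximum-formula} and the displayed $m$-formula of Lemma~\ref{lemma:no-adjacent-vertices-of-type-A}, applied with the type information $\type(T_s')=A$ and $\type(T_t)=B$, so it requires no new work. Everything else is the bookkeeping of rewriting \eqref{eq:full-bipartite-m-formula} in terms of $\rho$ together with the observation that $(T_s,s)$ and $(T_t,t)$ are admissible competitors in the respective optimality conditions.
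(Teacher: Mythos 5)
Your proof is correct and follows the same route as the paper's: rewrite \eqref{eq:full-bipartite-m-formula} as $m(T)=m(T_t)\bigl(m(T_s)+\rho(T_t)m_0(T_s)\bigr)$ and argue that a better competitor for $T_s$ in the $\alpha$-weighted sense would, after regluing along the edge to $t$, yield a tree of the same order with strictly larger $m$, contradicting optimality. The paper's version is terser (it does not spell out the regluing or re-verify the type of the new root), but your careful check that $s'$ stays of type $A$ in $T'$ is exactly the point the paper is implicitly relying on.
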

\begin{proof}
  If the order of $T$ is $\leq 2$, the statement holds trivially.

  Reformulating \eqref{eq:full-bipartite-m-formula} in terms of the function
  $\rho$ shows that
  \begin{align*}
    m(T)&=m(T_t)(m(T_s)+\rho(T_t)m_0(T_s)).
  \end{align*}
  If $T_s$ was not $\rho(T_t)$-optimal, we could replace it by a
  $\rho(T_t)$-optimal tree and this would increase $m(T)$, contradiction. The
  same argument applies to $T_t$.
\end{proof}

We note the fact that $\rho(T)\le 1$ holds for all rooted trees of type $A$ by Lemma~\ref{lemma:rho-formulae}, where
equality holds if and only if $T=L$. Thus, by
Proposition~\ref{proposition:subtrees-of-optimal-trees-are-alpha-optimal}, we
may restrict ourselves to the investigation of $\alpha$-optimal trees of type $A$
with $\alpha\in[0,\infty)$ as well as $\alpha$-optimal trees of type $B$ with
$\alpha\in[0,1]$.

A few rooted trees will be considered repeatedly in our proofs. These are shown
in Figure~\ref{figure:important-rooted-trees}. One could indeed show that these
trees are $\alpha$-optimal for some $\alpha>0$, but we do not need this
information. On the other hand, we will later need to know that some rooted trees are
not $\alpha$-optimal for some ranges of $\alpha$. We list these trees (together
with a replacement $T'$) in Table~\ref{tab:replacements-alpha-optimal} in the appendix. 
Similarly, we list a few non-optimal trees in
Table~\ref{tab:replacements-optimal}, where $T_n^*$ is given in Theorem~\ref{theorem:global-structure}.
We will simply refer to the entries of these two tables by
\rref{replacement-full:4-claw-is-bad} to
\rref{replacement:outline-ends-LCFT3T4}. These tables can be verified using a
Sage~\cite{Stein-others:2010:sage-mathem} program available in \cite{Heuberger-Wagner:max-card-matching-sage}.

\let\incgra=\includegraphics
\newcommand{\alphaoptimal}[6]{\parbox[b]{0.19\linewidth}{\noindent
    \incgra{#1}\par\noindent #2: $(#4,#5)$\iffalse\\\noindent $\alpha#3$\fi}}
\newcommand{\alphaoptimalb}[6]{\parbox[b]{0.32\linewidth}{\noindent
    \incgra{#1}\par\noindent #2: $(#4,#5)$\iffalse\\\noindent $\alpha#3$\fi}}
\begin{figure}[htbp]
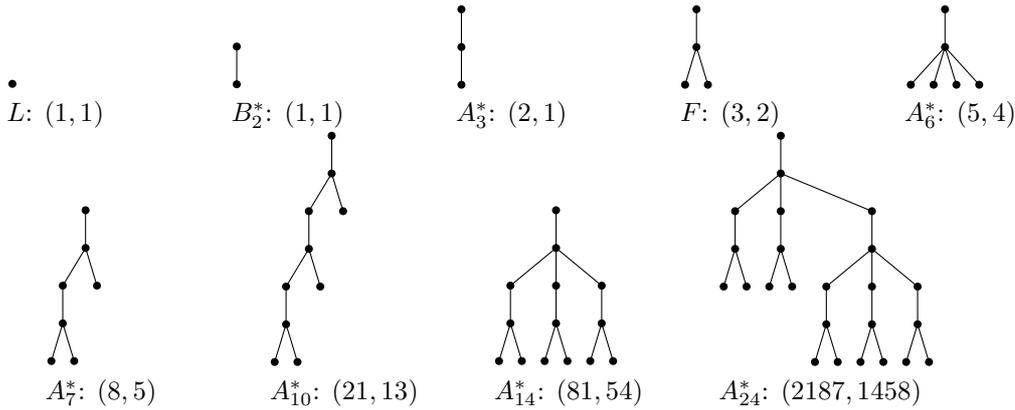

  \centering
\alphaoptimal{alpha-optimal-tree-type_A-d_1-alphamin_0_1-alphamax_Infinity_1-m_1-m0_1-rho_1_1.1}{$L$}{\in(0,\infty )}{1}{1}{1}
\alphaoptimal{alpha-optimal-tree-type_B-d_2-alphamin_0_1-alphamax_1_1-m_1-m0_1-rho_1_1.1}{$B_{2}^*$}{\in(0,1]}{1}{1}{1}
\alphaoptimal{alpha-optimal-tree-type_A-d_3-alphamin_0_1-alphamax_Infinity_1-m_2-m0_1-rho_1_2.1}{$A_{3}^*$}{\in(0,\infty )}{2}{1}{\frac{1}{2}}
\alphaoptimal{alpha-optimal-tree-type_A-d_4-alphamin_0_1-alphamax_Infinity_1-m_3-m0_2-rho_2_3.1}{$F$}{\in(0,\infty )}{3}{2}{\frac{2}{3}}
\alphaoptimal{alpha-optimal-tree-type_A-d_6-alphamin_0_1-alphamax_Infinity_1-m_5-m0_4-rho_4_5.1}{$A_{6}^*$}{\in(0,\infty )}{5}{4}{\frac{4}{5}}
\alphaoptimal{alpha-optimal-tree-type_A-d_7-alphamin_0_1-alphamax_Infinity_1-m_8-m0_5-rho_5_8.1}{$A_7^*$}{}{8}{5}{}
\alphaoptimal{alpha-optimal-tree-type_A-d_10-alphamin_0_1-alphamax_1_1-m_21-m0_13-rho_13_21.1}{$A_{10}^*$}{}{21}{13}{}
\alphaoptimal{AAR-14-1.1}{$A_{14}^*$}{\in(0,2]}{81}{54}{\frac{2}{3}}
\alphaoptimalb{AAR-24-1.1}{$A_{24}^*$}{\in(0,\frac{28}{51}]}{2187}{1458}{\frac{2}{3}}
\iffalse
\includegraphics{alpha-optimal-tree-type_A-d_1-alphamin_0_1-alphamax_Infinity_1-m_1-m0_1-rho_1_1.1}
\includegraphics{alpha-optimal-tree-type_A-d_3-alphamin_0_1-alphamax_Infinity_1-m_2-m0_1-rho_1_2.1}
\includegraphics{alpha-optimal-tree-type_A-d_4-alphamin_0_1-alphamax_Infinity_1-m_3-m0_2-rho_2_3.1}
\includegraphics{alpha-optimal-tree-type_A-d_6-alphamin_0_1-alphamax_Infinity_1-m_5-m0_4-rho_4_5.1}
\includegraphics{alpha-optimal-tree-type_B-d_2-alphamin_0_1-alphamax_1_1-m_1-m0_1-rho_1_1.1}
\includegraphics{alpha-optimal-tree-type_A-d_7-alphamin_0_1-alphamax_Infinity_1-m_8-m0_5-rho_5_8.1}
\includegraphics{alpha-optimal-tree-type_A-d_10-alphamin_0_1-alphamax_1_1-m_21-m0_13-rho_13_21.1}
\includegraphics{AAR-14-1.1}
\includegraphics{AAR-24-1.1}
\fi
\caption{Some important rooted trees. All trees are given with the
    pair $(m(T),m_0(T))$.}
  \label{figure:important-rooted-trees}
\end{figure}

\subsection{Exchanging Subtrees}

In order to derive information on the structure of optimal trees, we will
compare optimal trees with trees where some rooted subtrees have been
exchanged. In order to estimate the effect of such exchange operations, we need
an extension of our recursive formul\ae{} \eqref{equation:m_0-formula} and
\eqref{equation:m_1-formula} to finer decompositions of a tree. These
extensions will be formulated in terms of continuants and continued fractions. 

We therefore fix some notations and definitions in the context of continuants
and continued fractions. We follow Graham, Knuth and
Patashnik~\cite{Graham-Knuth-Patashnik:1994}, Section~6.7.

\begin{definition}[{\cite[(6.127)]{Graham-Knuth-Patashnik:1994}}]
  The \emph{continuant polynomial} $\CFP_n(x_1,\ldots,x_n)$ has $n$ parameters,
  and it is defined by the following recurrence:
  \begin{equation}\label{eq:continuant-recursion-right}
    \CFP_n(x_1,\ldots,x_n)=\CFP_{n-1}(x_1,\ldots,x_{n-1})x_n+\CFP_{n-2}(x_1,\ldots,x_{n-2})
  \end{equation}
  for $n\ge 2$ and $\CFP_0()=1$, $\CFP_1(x_1)=x_1$.
\end{definition}
We will omit the index $n$ in $\CFP_n$ whenever it is clear from the
context. 

We need the following additional properties of continuants:
\begin{lemma}
  We have
  \begin{align}
    \CFP(x_1,\ldots,x_n)&=\CFP(x_n,\ldots, x_1)\label{eq:continuant-symmetry}\\
    \CFP_n(x_1,\ldots,x_n)&=x_1\CFP_{n-1}(x_2,\ldots,x_n)+\CFP_{n-2}(x_3,\ldots,x_n),\label{eq:continuant-recursion-left}
  \end{align}
\end{lemma}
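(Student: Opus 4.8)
The plan is to realise continuants as entries of a product of $2\times 2$ matrices, after which both identities drop out by transposition and routine bookkeeping. Put
\begin{equation*}
  M(x)=\begin{pmatrix} x & 1\\ 1 & 0\end{pmatrix},
\end{equation*}
and claim that for every $n\ge 1$,
\begin{equation*}
  M(x_1)M(x_2)\cdots M(x_n)=
  \begin{pmatrix}
    \CFP_n(x_1,\ldots,x_n) & \CFP_{n-1}(x_1,\ldots,x_{n-1})\\
    \CFP_{n-1}(x_2,\ldots,x_n) & \CFP_{n-2}(x_2,\ldots,x_{n-1})
  \end{pmatrix},
\end{equation*}
where the $(2,2)$-entry for $n=1$ (a ``continuant of $-1$ arguments'') is read as $0$, which is exactly what \eqref{eq:continuant-recursion-right} forces at $n=1$. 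This is proved by induction on $n$: the case $n=1$ is immediate, and the inductive step consists of multiplying the displayed matrix by $M(x_{n+1})$ on the right and recognising the two entries of the new first column via the defining recursion \eqref{eq:continuant-recursion-right} — once applied to the list $x_1,\ldots,x_{n+1}$ and once to $x_2,\ldots,x_{n+1}$; the new second column is literally the old first column and needs no argument.

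For the symmetry \eqref{eq:continuant-symmetry}, note that each $M(x_i)$ is symmetric, so transposing the matrix identity gives $M(x_n)M(x_{n-1})\cdots M(x_1)=\bigl(M(x_1)\cdots M(x_n)\bigr)^{\mathsf T}$. Comparing $(1,1)$-entries: by the claim applied to the reversed list the left-hand side has $(1,1)$-entry $\CFP_n(x_n,\ldots,x_1)$, while the right-hand side has $(1,1)$-entry $\CFP_n(x_1,\ldots,x_n)$; the cases $n\le 1$ are trivial anyway.

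Finally, \eqref{eq:continuant-recursion-left} follows by applying the symmetry just established to the right recursion. Writing \eqref{eq:continuant-recursion-right} for the reversed list $x_n,\ldots,x_1$ yields
\begin{equation*}
  \CFP_n(x_n,\ldots,x_1)=\CFP_{n-1}(x_n,\ldots,x_2)\,x_1+\CFP_{n-2}(x_n,\ldots,x_3),
\end{equation*}
and replacing each of the three continuants by its reverse via \eqref{eq:continuant-symmetry} is precisely \eqref{eq:continuant-recursion-left}. (One could equally prove \eqref{eq:continuant-recursion-left} first by a direct induction on $n$, substituting \eqref{eq:continuant-recursion-right} into itself and then re-collapsing two subexpressions by \eqref{eq:continuant-recursion-right}, and afterwards deduce \eqref{eq:continuant-symmetry} by induction from the two recursions.) There is no genuine obstacle here; the only point demanding care is keeping the index ranges and the conventions for continuants of lists of length $0$ (and, transiently, $-1$) straight in the matrix identity.
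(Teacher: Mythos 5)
Your proof is correct, but it takes a genuinely different route from the paper's, which simply cites Graham, Knuth and Patashnik (eq.\ (6.131)) for the symmetry relation \eqref{eq:continuant-symmetry} and then—exactly as in your final paragraph—derives \eqref{eq:continuant-recursion-left} from \eqref{eq:continuant-symmetry} and the defining recursion \eqref{eq:continuant-recursion-right}. Where you differ is that you supply a self-contained derivation of the symmetry via the transfer-matrix identity
\begin{equation*}
  M(x_1)\cdots M(x_n)=
  \begin{pmatrix}
    \CFP_n(x_1,\ldots,x_n) & \CFP_{n-1}(x_1,\ldots,x_{n-1})\\
    \CFP_{n-1}(x_2,\ldots,x_n) & \CFP_{n-2}(x_2,\ldots,x_{n-1})
  \end{pmatrix},
  \qquad M(x)=\begin{pmatrix} x & 1\\ 1 & 0\end{pmatrix},
\end{equation*}
whose proof by induction you sketch correctly (the $(1,1)$- and $(2,1)$-entries of the updated matrix recombine by \eqref{eq:continuant-recursion-right}, the second column is carried over), and then read off symmetry by transposing, each $M(x_i)$ being symmetric. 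The paper's approach is shorter because it outsources the symmetry to a textbook; yours is longer but self-contained, and the matrix encoding carries additional dividends (for instance the determinant $\det(M(x_1)\cdots M(x_n))=(-1)^n$ immediately gives the ``determinant identity'' for continuants) that the paper never needs. Both arguments are sound; the only point demanding care in yours—handling the degenerate length-$(-1)$ continuant as $0$ for the base case $n=1$—you already flag explicitly.
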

\begin{proof}
  The symmetry relation~\eqref{eq:continuant-symmetry} is
  \cite[(6.131)]{Graham-Knuth-Patashnik:1994}, the
  recursion~\eqref{eq:continuant-recursion-left} is a consequence of the
  symmetry relation~\eqref{eq:continuant-symmetry} and the defining
  recursion~\eqref{eq:continuant-recursion-right}, cf.\ \cite[(6.132)]{Graham-Knuth-Patashnik:1994}.
\end{proof}

The following lemma shows how continuants can be used to determine $m(T)$.
We use the Iversonian notation $[\mathit{expr}]=1$ if
$\mathit{expr}$ is true and $[\mathit{expr}]=0$ otherwise, cf.\ Knuth~\cite{Knuth:1992:two-notes-notat}.

\begin{figure}[htbp]
  \centering
  \includegraphics{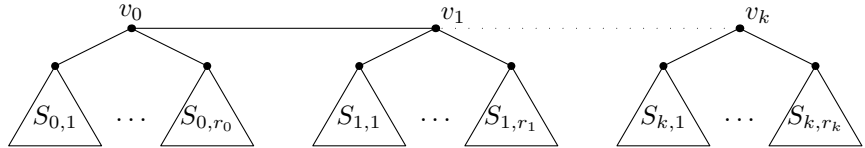}
  \caption{Shape of $T$ for the exchange lemma.}
  \label{fig:shape-exchange-lemma}
\end{figure}

\begin{lemma}\label{lemma:m-m-0-via-continuants}
  Let $T$ be a tree fulfilling the bipartition condition of the shape given
  in Figure~\ref{fig:shape-exchange-lemma}
  for some $k\ge 0$, integers $r_i\ge 0$ for $0\le i\le k$, and rooted trees
  $S_{i,j}$, $0\le i\le k$, $1\le j\le r_i$.

  Then 
  \begin{equation*}
    m(T)=\CFP(\rho_0,\rho_{1},\ldots,\rho_{k-1},\rho_k)\prod_{h=0}^k\prod_{j=1}^{r_h}m(S_{h,j}),\\
  \end{equation*}
  where 
  \begin{equation*}
    \rho_i=[\type v_i=A]+\sum_{j=1}^{r_i}\rho(S_{i,j}).
  \end{equation*}

\end{lemma}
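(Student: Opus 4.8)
The plan is to prove the formula by induction on $k$, the number of vertices $v_0,\ldots,v_k$ lying on the distinguished path in Figure~\ref{fig:shape-exchange-lemma}. The key tool is the recursive structure of the tree together with the two recursion relations for continuants, \eqref{eq:continuant-recursion-right} and \eqref{eq:continuant-recursion-left}. First I would set up notation carefully: let $T$ be the tree as depicted, with path vertices $v_0,\ldots,v_k$, with $v_i$ carrying the extra pendant branches $S_{i,1},\ldots,S_{i,r_i}$, and with $\rho_i=[\type v_i=A]+\sum_{j=1}^{r_i}\rho(S_{i,j})$. The base case $k=0$ is exactly the rooted tree $\calB(\ldots)$ or $\calA(\ldots)$ built from the $S_{0,j}$ (together with the convention about $\type v_0$), so the claim reduces to Lemma~\ref{lemma:recursive-formulae-rooted-tree} combined with $\CFP_1(\rho_0)=\rho_0$ and the observation that $m(T)=m_0(T)(1+\sum\rho(T_j))$ when $\type v_0=A$ and $m(T)=m_0(T)\sum\rho(T_j)$ when $\type v_0=B$, matching $\rho_0 \prod m(S_{0,j})$ after clearing the $m_0/m$ quotients; here one uses $m_0(T)=\prod_j m(S_{0,j})\cdot(\text{contributions of }S\text{'s})$ carefully, or more cleanly one just checks $m(T)=\rho_0\cdot\prod m(S_{0,j})$ directly for a star-like tree of one path vertex.

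For the inductive step, root $T$ at $v_0$ and view it as the rooted tree obtained by attaching to $v_0$ the branches $S_{0,1},\ldots,S_{0,r_0}$ together with the rooted subtree $T(v_1)$ hanging at $v_1$. By the bipartition condition, $\type v_1$ is opposite to $\type v_0$; moreover $T(v_1)$ is itself a tree of the shape in Figure~\ref{fig:shape-exchange-lemma} with $k-1$ path vertices $v_1,\ldots,v_k$, so the induction hypothesis gives $m(T(v_1))=\CFP(\rho_1,\ldots,\rho_k)\prod_{h=1}^k\prod_j m(S_{h,j})$ and, again by induction applied to the subtree $T(v_2)$ (or directly from the recursion), $m_0(T(v_1))=m(T(v_1)-v_1)=\CFP(\rho_2,\ldots,\rho_k)\prod_{h=2}^k\prod_j m(S_{h,j})$ — in other words $\rho(T(v_1))=\CFP(\rho_2,\ldots,\rho_k)/\CFP(\rho_1,\ldots,\rho_k)$. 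Now apply the $m_0$/$m_1$ formulas \eqref{equation:m_0-formula}, \eqref{equation:m_1-formula} at $v_0$: collecting the edge $v_0v_1$ versus the edges $v_0$-to-$S_{0,j}$ gives
\begin{equation*}
  m(T)=\Bigl(\rho_0 \, m(T(v_1))+[\type v_0=B]\cdot 0 \cdots\Bigr)\prod_{j=1}^{r_0}m(S_{0,j})
\end{equation*}
— more precisely, $m(T)=\bigl(\rho_0\,m(T(v_1))+m_0(T(v_1))\bigr)\prod_j m(S_{0,j})$, since the edge $v_0v_1$ can be omitted (contributing $m(T(v_1))$ times the $\rho$-adjusted count at $v_0$) or used (contributing $m_0(T(v_1))=m(T(v_1)-v_1)$ times the product over the $S_{0,j}$). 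Substituting the two induction formulas and factoring out $\prod_{h=0}^k\prod_j m(S_{h,j})$ leaves exactly
\begin{equation*}
  \rho_0\,\CFP(\rho_1,\ldots,\rho_k)+\CFP(\rho_2,\ldots,\rho_k)=\CFP(\rho_0,\rho_1,\ldots,\rho_k),
\end{equation*}
which is precisely the left-recursion \eqref{eq:continuant-recursion-left}. This closes the induction.

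The main obstacle is bookkeeping rather than conceptual: one must be scrupulous about the definition of $\rho_i$ depending on $\type v_i$, about the fact that the two endpoints of the path behave slightly differently from interior vertices (the pendant branches at $v_0$ and $v_k$ are genuine branches, whereas at an interior $v_i$ the subtree $T(v_{i+1})$ plays the role of an additional branch whose $\rho$-value is the continuant quotient), and about keeping track that $m_0$ of a subpath is itself a continuant with the first argument dropped — this is what makes the recursion \eqref{eq:continuant-recursion-left} rather than \eqref{eq:continuant-recursion-right} the relevant one. A clean way to organise this is to prove simultaneously by induction on $k$ the two statements $m(T)=\CFP(\rho_0,\ldots,\rho_k)\prod m(S_{h,j})$ and, for the rooted tree $(T,v_0)$, $m_0(T)=\CFP(\rho_1,\ldots,\rho_k)\prod m(S_{h,j})$ (equivalently $\rho(T,v_0)=\CFP(\rho_1,\ldots,\rho_k)/\CFP(\rho_0,\ldots,\rho_k)$), since the second is needed to feed the first at the next level; both then follow from \eqref{equation:m_0-formula}, \eqref{equation:m_1-formula} and the continuant recursions in one stroke. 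The symmetry relation \eqref{eq:continuant-symmetry} shows the result is independent of which end of the path we designate as $v_0$, which is a useful consistency check but not strictly needed for the proof.
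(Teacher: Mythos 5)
Your proof is correct and follows essentially the same route as the paper: both prove the stronger pair of formulas for $m$ and $m_0$ simultaneously by induction along the path (your ``induction on $k$'' is the paper's ``reverse induction on $i$'' applied to the subtrees $T(v_i)$), reducing the inductive step to \eqref{equation:m_0-formula}, \eqref{equation:m_1-formula}, and the left continuant recursion \eqref{eq:continuant-recursion-left}. One small slip: $m_0(T(v_1))$ should carry the factor $\prod_{h=1}^{k}\prod_j m(S_{h,j})$ (starting at $h=1$, not $h=2$), as your quotient formula $\rho(T(v_1))=\CFP(\rho_2,\ldots,\rho_k)/\CFP(\rho_1,\ldots,\rho_k)$ implicitly requires.
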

\begin{proof}
  We set
  \begin{equation*}
    M_i=\prod_{h=i}^k \prod_{j=1}^{r_h}m(S_{h,j})
  \end{equation*}
  and consider $v_0$ as root of $T$. We claim that
  \begin{align*}
    m(T(v_i))&=M_i\CFP(\rho_i,\rho_{i+1},\ldots,\rho_{k-1},\rho_k),\\
    m_0(T(v_i))&=M_i\CFP(\rho_{i+1},\ldots,\rho_{k-1},\rho_k)
  \end{align*}
  holds for $0\le i\le k$. This can be shown by reverse induction on $i$ using only the
  recursive formul\ae{} \eqref{equation:m_0-formula},
  \eqref{equation:m_1-formula} and \eqref{eq:continuant-recursion-left}.
\end{proof}

We now turn to continued fractions.

\begin{definition}
  We set
  \begin{equation*}
    \CF(x_0,x_1,\ldots,x_n)=
    x_0+\cfrac1{x_1+\cfrac1{x_2+\cfrac1{\ddots+\cfrac1{x_n}}}}.
  \end{equation*}
  As usual, for a sequence $(x_k)_{k\ge 0}$, the infinite continued fraction
  $\CF(x_0,x_1,\ldots)$ is defined as the limit
  $\lim_{k\to\infty}\CF(x_0,x_1,\ldots,x_k)$.
\end{definition}

The connection between continuants and continued fractions is stated
in the following result.
\begin{lemma}[{\cite[(6.136)]{Graham-Knuth-Patashnik:1994}}]\label{lemma:continuant-continued-fraction}We have
  \begin{equation*}
  \CF(x_0,x_1,\ldots,x_n)=\frac{\CFP(x_0,x_1,\ldots,x_n)}{\CFP(x_1,\ldots,x_n)}.
\end{equation*}
\end{lemma}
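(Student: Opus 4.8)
The final statement to prove is Lemma~\ref{lemma:continuant-continued-fraction}, the classical identity
$$\CF(x_0,x_1,\ldots,x_n)=\frac{\CFP(x_0,x_1,\ldots,x_n)}{\CFP(x_1,\ldots,x_n)},$$
which the authors attribute to Graham, Knuth and Patashnik.

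\medskip

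The plan is to prove this by induction on $n$, using the recurrence \eqref{eq:continuant-recursion-left} for continuants together with the obvious recursive structure of the continued fraction. First I would dispose of the base case $n=0$: here $\CF(x_0)=x_0$ while $\CFP_1(x_0)/\CFP_0()=x_0/1=x_0$, so the identity holds. (One may also check $n=1$ directly as a sanity case: $\CF(x_0,x_1)=x_0+1/x_1=(x_0x_1+1)/x_1=\CFP(x_0,x_1)/\CFP(x_1)$.) For the inductive step, observe that
$$\CF(x_0,x_1,\ldots,x_n)=x_0+\frac{1}{\CF(x_1,x_2,\ldots,x_n)}.$$
By the induction hypothesis applied to the shorter sequence $x_1,\ldots,x_n$, we have $\CF(x_1,\ldots,x_n)=\CFP(x_1,\ldots,x_n)/\CFP(x_2,\ldots,x_n)$, hence
$$\CF(x_0,x_1,\ldots,x_n)=x_0+\frac{\CFP(x_2,\ldots,x_n)}{\CFP(x_1,\ldots,x_n)}=\frac{x_0\,\CFP(x_1,\ldots,x_n)+\CFP(x_2,\ldots,x_n)}{\CFP(x_1,\ldots,x_n)}.$$
The numerator is exactly $\CFP(x_0,x_1,\ldots,x_n)$ by the left-hand recursion \eqref{eq:continuant-recursion-left}, which completes the induction. (For the infinite case, the statement is purely a definition as the limit, so nothing further is needed.)

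\medskip

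I do not expect any genuine obstacle here: the only subtlety is making sure the denominators appearing along the way are nonzero so that the continued fraction is well-defined and the algebraic manipulations are legitimate — in the application to maximum matchings all the $x_i=\rho_i$ are strictly positive, so this is automatic, and in the abstract statement one simply works in the field of rational functions in the $x_i$, where $\CFP(x_1,\ldots,x_n)$ is a nonzero polynomial. Since the identity is standard and the authors explicitly cite \cite[(6.136)]{Graham-Knuth-Patashnik:1994}, the cleanest presentation is to note that it follows by the above one-line induction from the left recursion \eqref{eq:continuant-recursion-left}, or simply to refer the reader to \cite{Graham-Knuth-Patashnik:1994}.
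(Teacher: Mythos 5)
Your argument is correct, and it is the standard inductive proof of this classical identity: base case $n=0$, then apply the obvious recursion $\CF(x_0,\ldots,x_n)=x_0+1/\CF(x_1,\ldots,x_n)$ together with the left-hand continuant recursion \eqref{eq:continuant-recursion-left} to clear denominators. Note, however, that the paper itself does not prove this lemma at all; it simply cites \cite[(6.136)]{Graham-Knuth-Patashnik:1994} and moves on, so there is no internal proof to compare against. Your write-up supplies precisely the proof that GKP give (and that anyone would give), and your remark about working formally in the field of rational functions, or noting positivity of the $\rho_i$ in the intended application, is the right way to dispose of the only potential well-definedness issue. In short: correct, matches the standard/cited argument, and your closing suggestion to simply defer to \cite{Graham-Knuth-Patashnik:1994} is in fact what the authors do.
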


We are now able to formulate our main exchange lemma. It comes in several
flavours: First, the most general version is stated, which might be cumbersome
to use. Next, in a mostly symmetric case, we get a neat formulation, which will
be frequently used. Finally, we give two estimates for the asymmetric case,
which are not best possible, but sufficient for our purposes. For these
estimates, we make some assumptions on the occurring values of $\rho$ which
will be fulfilled in the applications later on.

\begin{lemma}\label{lemma:exchange}
  Let $T$ be an optimal tree of the shape given
  in Figure~\ref{fig:shape-exchange-lemma}
  for some even $k\ge 2$, integers $r_i\ge 0$ for $0\le i\le k$, and rooted trees
  $S_{i,j}$, $0\le i\le k$, $1\le j\le r_i$. We set
  \begin{equation*}
    \rho_i=[\type v_i=A]+\sum_{j=1}^{r_i}\rho(S_{i,j}).
  \end{equation*}

  Let $0\le s_0\le r_0$ and $0\le s_k\le r_k$ and set
  \begin{align*}
    x&:=\sum_{j=1}^{s_0}\rho(S_{0,j}),&
    y&:=\sum_{j=1}^{s_k}\rho(S_{k,j}),\\
    a&:=[\type v_0=A]+\sum_{j=s_0+1}^{r_0}\rho(S_{0,j}),&
    b&:=[\type v_k=A]+\sum_{j=s_k+1}^{r_k}\rho(S_{k,j})
  \end{align*}
  so that $\rho_0=x+a$ and $\rho_k=y+b$. Assume that $y+a>0$,
  $x+b>0$ and $\rho_1$, $\rho_{k-1} > 0$.

  \begin{enumerate}
  \item\label{item:exchange-general} If $x>y$, then
    \begin{equation}\label{eq:exchange-lemma-full-form}
      \CF(a,\rho_1,\rho_2,\ldots,\rho_{k-2},\rho_{k-1})\le\CF(b,\rho_{k-1},\rho_{k-2},\ldots,\rho_2,\rho_1).
    \end{equation}
  \item\label{item:exchange-symmetric} If $x>y$ and $(\rho_1,\ldots,\rho_{k-1})=(\rho_{k-1},\ldots,\rho_1)$,
    then $a\le b$.
  \item\label{item:exchange-B} If $x>y$, $\rho_j=1$ for odd $j$ and $\ell\le \rho_j\le u$ for all even $j$
    with $2\le j\le k-2$ and for fixed
    $0< \ell\le u$, then
    \begin{equation*}
      a< b+U_0(\ell,u),
    \end{equation*}
    where 
    \begin{equation*}
      U_0(\ell,u)=
        \frac1{\CF(1,u,1,u,1)}-\frac1{\CF(1,\ell,1,\ell,\ldots)}.
    \end{equation*}
    In particular, we have
    \begin{align*}
      U_0(1,2)&<0.1153,&
      U_0(2,3)&<0.0597,&
      U_0(3,4)&<0.0373.
    \end{align*}
  \item\label{item:exchange-A} If $x>y$, $\rho_j=1$ for even $j$ with $2\le j\le k-2$ and
    $\ell\le \rho_j\le u$ for all odd $j$ and for fixed $0\le \ell\le u$, then
    \begin{equation*}
      a< b+U_1(\ell,u),
    \end{equation*}
    where
    \begin{equation*}
      U_1(\ell,u)=\frac1{\CF(\ell,1,\ell)}-\frac1{\CF(u,1,u,1,\ldots)}.
    \end{equation*}
    In particular, we have
    \begin{align*}
      U_1(1,2)&<0.3007,&
      U_1(2,3)&<0.1113,&
      U_1(3,4)&<0.0596.
    \end{align*}
  \end{enumerate}
\end{lemma}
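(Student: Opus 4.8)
The plan is to use Lemma~\ref{lemma:m-m-0-via-continuants} to express $m(T)$ as a continuant, and then to compare $T$ with the tree $T'$ obtained by swapping the blocks $S_{0,1},\ldots,S_{0,s_0}$ (attached at $v_0$) with the blocks $S_{k,1},\ldots,S_{k,s_k}$ (attached at $v_k$). Concretely, $T'$ has the same shape as in Figure~\ref{fig:shape-exchange-lemma}, but the value $\rho_0=x+a$ is replaced by $y+a$ and $\rho_k=y+b$ is replaced by $x+b$; the intermediate values $\rho_1,\ldots,\rho_{k-1}$ and all the $m(S_{h,j})$ factors are unchanged. Since $T$ is optimal and $T'$ has the same order and the same type at $v_0$ (the type is determined by the Iverson brackets, which are preserved), optimality gives $m(T)\ge m(T')$. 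Writing both sides via the continuant $\CFP(\rho_0,\rho_1,\ldots,\rho_k)$ and cancelling the common product of $m(S_{h,j})$'s, this inequality becomes
\begin{equation*}
  \CFP(x+a,\rho_1,\ldots,\rho_{k-1},y+b)\ge\CFP(y+a,\rho_1,\ldots,\rho_{k-1},x+b).
\end{equation*}
The next step is to expand both continuants in the first and last arguments using \eqref{eq:continuant-recursion-right}, \eqref{eq:continuant-recursion-left} and the symmetry \eqref{eq:continuant-symmetry}. Setting $P=\CFP(\rho_1,\ldots,\rho_{k-1})$, $Q=\CFP(\rho_1,\ldots,\rho_{k-2})$, $R=\CFP(\rho_2,\ldots,\rho_{k-1})$ and $S=\CFP(\rho_2,\ldots,\rho_{k-2})$, bilinearity in the two end arguments turns the inequality into a statement of the form $(\text{coefficient})\cdot(x-y)\ge 0$, and since $x>y$ the coefficient — which works out to $aP\cdot\text{(something)} - \ldots$ — must be $\le 0$. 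Dividing through by $P\cdot R$ (both positive, because $\rho_1,\rho_{k-1}>0$ and all intermediate $\rho_j\ge 0$ forces the continuants to be positive) and invoking Lemma~\ref{lemma:continuant-continued-fraction} to rewrite the ratios $P/Q$, $R/S$, $P/R$ as continued fractions, one arrives exactly at \eqref{eq:exchange-lemma-full-form}: $\CF(a,\rho_1,\ldots,\rho_{k-1})\le\CF(b,\rho_{k-1},\ldots,\rho_1)$. This establishes part~\eqref{item:exchange-general}, and part~\eqref{item:exchange-symmetric} is then immediate, since when $(\rho_1,\ldots,\rho_{k-1})$ is a palindrome the two continued fractions have the same tail, and $\CF$ is strictly increasing in its first argument.

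For parts~\eqref{item:exchange-B} and~\eqref{item:exchange-A} the idea is to bound the two sides of \eqref{eq:exchange-lemma-full-form} separately. On the left I would use monotonicity of $\CF(a,\rho_1,\ldots,\rho_{k-1})$ in the tail: with $\rho_j=1$ at odd positions and $\rho_j\in[\ell,u]$ at even positions, truncating and replacing all the variable entries by their extreme values gives $\CF(a,\rho_1,\ldots,\rho_{k-1})\ge a+1/\CF(1,u,1,u,1)$ for part~\eqref{item:exchange-B} (one has to check the direction of monotonicity alternates with depth, so the worst case is the stated finite pattern), and similarly $\CF(a,\rho_1,\ldots)\ge a+1/\CF(\ell,1,\ell)$ for part~\eqref{item:exchange-A} after noting $a$ sits in an even slot. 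On the right, the symmetric continued fraction is bounded above by the corresponding \emph{infinite} periodic continued fraction $\CF(1,\ell,1,\ell,\ldots)$ or $\CF(u,1,u,1,\ldots)$, which converges to a concrete quadratic irrational. Combining the lower bound on the left with the upper bound on the right and rearranging yields $a<b+U_i(\ell,u)$ with the stated $U_i$; the three numerical bounds follow by evaluating these periodic continued fractions (each satisfies a quadratic, e.g.\ $\CF(1,\ell,1,\ell,\ldots)$ is a root of $t^2-t-\ell^{-1}\cdot(\cdots)$) and plugging in $\ell,u\in\{1,2,3,4\}$.

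The main obstacle I anticipate is the bookkeeping of \emph{signs and monotonicity directions} in parts~\eqref{item:exchange-B} and~\eqref{item:exchange-A}: a continued fraction $\CF(x_0,x_1,x_2,\ldots)$ is increasing in $x_j$ for even $j$ and decreasing for odd $j$, so when one truncates the tail or substitutes $\ell$ versus $u$ one must be careful to pick the substitution that genuinely extremises each side in the right direction, and the parity of the position at which the truncation occurs matters. A secondary subtlety is making sure all the continuants appearing as denominators are strictly positive so the divisions and the applications of Lemma~\ref{lemma:continuant-continued-fraction} are legitimate — this is where the hypotheses $\rho_1,\rho_{k-1}>0$ and $y+a>0$, $x+b>0$ are used. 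Once the monotonicity lemma for $\CF$ is set up cleanly (it can be proved by a one-line induction on the depth), the rest is a routine, if slightly tedious, computation.
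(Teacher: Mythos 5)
You take essentially the same route as the paper for parts~\eqref{item:exchange-general} and~\eqref{item:exchange-symmetric}: compare $T$ with the tree $T'$ obtained by swapping the blocks at $v_0$ and $v_k$ (the parity of $k$ together with $a+y>0$, $b+x>0$ keeps all types along the path unchanged, so optimality gives $m(T)\ge m(T')$), expand $m(T)-m(T')$ via Lemma~\ref{lemma:m-m-0-via-continuants} and the continuant recursions, factor out $(x-y)\CFP(\rho_1,\ldots,\rho_{k-1})$, and convert to continued fractions through Lemma~\ref{lemma:continuant-continued-fraction}. Part~\eqref{item:exchange-symmetric} then follows from palindrome symmetry of the tail.

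For parts~\eqref{item:exchange-B} and~\eqref{item:exchange-A}, however, the explicit estimates you write down have $\ell$ and $u$ interchanged, and the truncation goes in the wrong direction. Take part~\eqref{item:exchange-B}: you claim $\CF(a,\rho_1,\ldots,\rho_{k-1})\ge a+1/\CF(1,u,1,u,1)$, but inside the tail $\CF(\rho_1,\ldots,\rho_{k-1})$ the variable entries $\rho_2,\rho_4,\ldots,\rho_{k-2}$ occupy the \emph{odd} CF positions $1,3,\ldots,k-3$; since raising an odd-position entry lowers the continued fraction, one maximises the tail by replacing them with $\ell$, not $u$. Moreover, the resulting finite CF $\CF(1,\ell,\ldots,\ell,1)$ ends at the \emph{even} index $k-2$ and therefore lies \emph{below} its infinite limit, so $\CF(\rho_1,\ldots,\rho_{k-1})<\CF(1,\ell,1,\ell,\ldots)$ and the correct lower bound is $a+1/\CF(1,\ell,1,\ell,\ldots)$. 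Your stated bound fails concretely: if all even-$j$ entries equal $\ell$ and $k$ is large, the tail approaches $\CF(1,\ell,1,\ell,\ldots)>\CF(1,u,1,u,1)$. Dually, you assert that the right-hand side $\CF(b,\rho_{k-1},\ldots,\rho_1)$ is bounded above by an infinite periodic continued fraction, but that CF ends at the odd index $k-1$ and so lies \emph{above} its infinite limit; the correct upper bound is a finite truncation ending at an odd index, with $u$ at the even positions, namely $\CF(b,1,u,1,u,1)=b+1/\CF(1,u,1,u,1)$. With your bounds as stated one would conclude $a<b-U_0(\ell,u)$, which is strictly stronger than the lemma and not true in general. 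You rightly flag the sign bookkeeping as the expected hazard; the repair is exactly to reverse the roles of $\ell$ and $u$ and to replace the infinite CF on the right by the finite truncation, reproducing the paper's chain
\begin{equation*}
a+\frac1{\CF(1,\ell,1,\ell,\ldots)}<\CF(a,1,\ell,\ldots,\ell,1)\le\CF(a,\rho_1,\ldots,\rho_{k-1})\le\CF(b,\rho_{k-1},\ldots,\rho_1)\le\CF(b,1,u,\ldots,u,1)\le\CF(b,1,u,1,u,1),
\end{equation*}
and analogously (with the roles of even and odd positions swapped) for part~\eqref{item:exchange-A}.
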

\begin{proof}
  \begin{enumerate}
  \item Set $M=\prod_{h=0}^k\prod_{j=1}^{r_h}m(S_{h,j})$ and
    let $T'$ be the tree arising from $T$ by exchanging $S_{0,1}$, \ldots,
    $S_{0,s_0}$ against $S_{k,1}$, \ldots,
    $S_{k,s_k}$. As $k$ is even and $a+y>0$ and $b+x>0$, the types of all $v_j$ are the same in
    $T$ and $T'$. As $T$ is an optimal tree, we have
    \begin{align*}
      0&\le
      \frac{m(T)-m(T')}{M}\\
      &=\CFP(x+a,\rho_1,\ldots,\rho_{k-1},y+b)-\CFP(y+a,\rho_1,\ldots,\rho_{k-1},x+b)\\
      &=\bigl((x+a)(y+b)-(y+a)(x+b)\bigr)\CFP(\rho_1,\ldots,\rho_{k-1})\\
      &\qquad+\bigl((x+a)-(y+a)\bigr)\CFP(\rho_1,\ldots,\rho_{k-2})\\
      &\qquad+\bigl((y+b)-(x+b)\bigr)\CFP(\rho_2,\ldots,\rho_{k-1})\\
      &=(x-y)\CFP(\rho_1,\ldots,\rho_{k-1})\left(b-a+\frac{\CFP(\rho_1,\ldots,\rho_{k-2})}{\CFP(\rho_1,\ldots,\rho_{k-2},\rho_{k-1})}-\frac{\CFP(\rho_2,\ldots,\rho_{k-1})}{\CFP(\rho_1,\rho_2,\ldots,\rho_{k-1})}\right)\\
      &=(x-y)\CFP(\rho_1,\ldots,\rho_{k-1})\left(b-a+\frac1{\CF(\rho_{k-1},\ldots,\rho_1)}-\frac1{\CF(\rho_1,\ldots,\rho_{k-1})}\right)\\
      &=(x-y)\CFP(\rho_1,\ldots,\rho_{k-1})\left(\CF(b,\rho_{k-1},\ldots,\rho_1)-\CF(a,\rho_1,\ldots,\rho_{k-1})\right)
    \end{align*}
    by Lemma~\ref{lemma:m-m-0-via-continuants},
    \eqref{eq:continuant-recursion-right},
    \eqref{eq:continuant-recursion-left}, \eqref{eq:continuant-symmetry},
    Lemma~\ref{lemma:continuant-continued-fraction} and the obvious recursion
    formula for continued fractions. The result follows upon division by the
    positive quantity $(x-y)\CFP(\rho_1,\ldots,\rho_{k-1})$.
  \item The symmetry implies that
    $\CF(\rho_{k-1},\ldots,\rho_1)=\CF(\rho_1,\ldots,\rho_{k-1})$
    and the result follows from \eqref{eq:exchange-lemma-full-form}.
  \item If $k\le 4$, then the assertion follows from
    \liref{lemma:exchange}{item:exchange-symmetric}. So we may assume $k\ge 6$.
    By \eqref{eq:exchange-lemma-full-form} we have
    \begin{align*}
      a+\frac1{\CF(1,\ell,1,\ell,\ldots)}&=
      \CF(a,1,\ell,1,\ell,\ldots)<
      \CF(a,1,\ell,\ldots,\ell,1)\\&\le
      \CF(a,1,\rho_2,\ldots,\rho_{k-2},1)\\&\le
      \CF(b,1,\rho_{k-2},1,\rho_{k-4},1,\ldots,\rho_2,1)\\&\le
      \CF(b,1,u,1,u,1,\ldots,u,1)\\&\le
      \CF(b,1,u,1,u,1)=b+\frac1{\CF(1,u,1,u,1)},
    \end{align*}
    as decreasing the entries
    at even-numbered indices of a continued fraction $\CF(x_0,x_1,\ldots)$ decreases the continued
    fraction, and increasing entries at odd-numbered indices also decreases the
    continued fraction. 
  \item If $k\le 2$, then the assertion follows from
    \liref{lemma:exchange}{item:exchange-symmetric}. So we may assume $k\ge 4$.
    By \eqref{eq:exchange-lemma-full-form} we have
    \begin{align*}
      a+\frac1{\CF(u,1,u,1,\ldots)}&=
      \CF(a,u,1,u,1,\ldots)<
      \CF(a,u,1,\ldots,1,u)\\&\le
      \CF(a,\rho_1,1,\ldots,1,\rho_{k-1})\le
      \CF(b,\rho_{k-1},1,\ldots,1,\rho_1)\\&\le
      \CF(b,\ell,1,\ell)=
      b+\frac1{\CF(\ell,1,\ell)}.
    \end{align*}
  \end{enumerate}
\end{proof}

This exchange lemma will be used repeatedly in the following to deduce information about the structure of optimal trees. To simplify explanations, we will call the vertices $v_0$ and $v_k$ in Figure~\ref{fig:shape-exchange-lemma} \emph{pivotal vertices}. 

\section{The upper bound: local structure}\label{sec:local}

We have now gathered enough auxiliary tools to start with the proof of Theorem~\ref{theorem:global-structure} and thus
Theorem~\ref{thm:upper}. To abbreviate some statements, we introduce the following definitions.

\begin{definition}
  Let $T$ be a tree. We say that it fulfils the \emph{local
    conditions} (LC), if all of the following conditions are
  fulfilled:
  \begin{enumerate}
  \item[(LC1)] $T$ fulfils the bipartition condition,
  \item[(LC2)] each vertex of type $A$ has degree $1$ or $2$,
  \item[(LC3)] each vertex of type $B$ has degree at least $3$,
  \item[(LC4)] each vertex of degree $3$ is adjacent to at least two leaves,
  \item[(LC5)] each vertex has degree at most $4$,
  \item[(LC6)] no vertex is adjacent to $3$ leaves.
  \end{enumerate}
\end{definition}
By Proposition~\ref{proposition:full-bipartite}, an optimal tree of order $\ge
3$ fulfils LC1.

The following theorem will be shown step by step in Sections~\ref{section:structure},
\ref{section:light-trees} and \ref{section:heavy-trees}:
\begin{theorem}\label{theorem:local-structure-again}
  Let $\calS=\{T_{2}^*, T_{3}^*, T_5^*, T_{6,1}^*, T_{6,2}^*, T_8^*, T_9^*,
  T_{10}^*, T_{12}^*, T_{13}^*, T_{16}^*, T_{20}^*\}$ and $T$ be an optimal tree with
  $T\notin\calS$.

  Then $T$ fulfils the local conditions LC1--LC6.
\end{theorem}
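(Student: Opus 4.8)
The plan is to prove Theorem~\ref{theorem:local-structure-again} by a sequence of local exchange arguments, each establishing one of the conditions LC1--LC6 for an optimal tree $T$ that is large enough not to lie in the finite exceptional set $\calS$. Condition LC1 is already granted by Proposition~\ref{proposition:full-bipartite} for optimal trees of order $\ge 3$, so the remaining work is to rule out, one configuration at a time, the local substructures forbidden by LC2--LC6. In each case the strategy is the same: assuming the forbidden configuration occurs inside $T$, one isolates a small rooted subtree $(S,v)$ hanging off $T$ at a vertex whose complementary piece $T'=T-V(S)$ is rooted at the neighbour of $v$; by Proposition~\ref{proposition:subtrees-of-optimal-trees-are-alpha-optimal}, $(S,v)$ must be $\rho(T')$-optimal, where $\rho(T')$ ranges over $[0,\infty)$ for type $A$ and $[0,1]$ for type $B$ (as noted after that proposition). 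One then exhibits a replacement $S'$ of the same order, same type, fulfilling the bipartition condition, with $m(S')+\alpha m_0(S') > m(S)+\alpha m_0(S)$ for the relevant range of $\alpha$ --- these are precisely the entries of Table~\ref{tab:replacements-alpha-optimal} (and Table~\ref{tab:replacements-optimal} for the genuinely global cases), referenced by \rref{replacement-full:4-claw-is-bad} through \rref{replacement:outline-ends-LCFT3T4}, and verifiable via the accompanying Sage program. This contradicts $\alpha$-optimality of $(S,v)$, so the configuration cannot occur; the finitely many small orders where the replacement would not yield a legitimate tree (or where the replaced subtree is not proper) are exactly what populate $\calS$.

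Concretely, I would proceed in the order LC5, LC6, LC4, LC3, LC2 (roughly from the most ``bushy'' obstructions down to the subtler bipartition-type statements), since establishing bounded degree first simplifies later case distinctions. For LC5 and LC6 one argues that a vertex adjacent to too many leaves, or of degree $\ge 5$, contains a ``claw-like'' rooted subtree ($C^\ell L$ with too many branches, or a star-like piece) that is dominated in the $m+\alpha m_0$ ordering by a more ``chain-like'' rearrangement for every $\alpha$ in the admissible range --- this is the content of entries like \rref{replacement-full:4-claw-is-bad}. For LC4 (a degree-$3$ vertex with at most one leaf neighbour) and LC3 (a type-$B$ vertex of degree $2$, or of degree $1$), one again localises: a type-$B$ vertex of degree $1$ is impossible by LC1 together with Lemma~\ref{lemma:recursive-formulae-rooted-tree} (its unique branch would force the wrong type unless the branch is $L$, handled directly), and a type-$B$ vertex of degree $2$ sits inside a short $CT''$-type subtree that can be locally improved. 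LC2 (type-$A$ vertices have degree $1$ or $2$) is symmetric: a type-$A$ vertex of degree $\ge 3$ has at least two type-$B$ branches, and exchanging against a chain extension strictly increases $m+\alpha m_0$.

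Throughout, the recursive machinery from Section~\ref{sec:pre} does the bookkeeping: Lemma~\ref{lemma:recursive-formulae-rooted-tree} and Lemma~\ref{lemma:rho-formulae} reduce each local comparison to an inequality among the $\rho$-values of the branches, and Lemma~\ref{lemma:m-m-0-via-continuants} together with the exchange Lemma~\ref{lemma:exchange} handles the cases where the forbidden configuration is not a leaf cluster but a longer ``path with pendant pieces'' --- there the pivotal vertices $v_0,v_k$ straddle the configuration and the continued-fraction estimates \liref{lemma:exchange}{item:exchange-B} and \liref{lemma:exchange}{item:exchange-A} supply the needed strict inequality (the small constants $U_0(\ell,u)$, $U_1(\ell,u)$ being exactly calibrated so that the comparison goes through for the $\rho$-ranges that arise once LC5 and LC6 are known). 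I expect the main obstacle to be the careful accounting of the boundary cases: each replacement in Table~\ref{tab:replacements-alpha-optimal} is only valid when the replaced rooted subtree is a \emph{proper} subtree (so that $\alpha$-optimality applies) and when the replacement lands inside a tree of the same order, and it is precisely the failure of these conditions for small $n$ that forces the twelve exceptions $T_{2}^*,\ldots,T_{20}^*$ in $\calS$ --- verifying that this list is exactly right, neither too long nor too short, is the delicate part, and is where the explicit finite check via Sage~\cite{Heuberger-Wagner:max-card-matching-sage} is invoked. The splitting of the proof across Sections~\ref{section:structure}, \ref{section:light-trees} and \ref{section:heavy-trees} reflects exactly this: first the generic local exchanges, then the ``light'' (small-$\alpha$, many-branch) cases, then the ``heavy'' (large subtree) cases where the continued-fraction estimates are needed.
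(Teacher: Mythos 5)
Your high-level toolkit is the right one --- $\alpha$-optimality of rooted subtrees via Proposition~\ref{proposition:subtrees-of-optimal-trees-are-alpha-optimal}, the exchange Lemma~\ref{lemma:exchange} with its continued-fraction estimates, bounds on $\rho$, and the tabulated replacements --- but the proof order you propose (LC5, LC6, LC4, LC3, LC2) is essentially the reverse of the paper's, and this is not merely cosmetic: it breaks the logical dependencies. The paper proves LC2 first (Proposition~\ref{proposition:degree-bound-A}), then LC3 and LC4 via the ``light vertex'' analysis (Proposition~\ref{proposition:optimal-tree-with-light-vertex-new}), then LC6 via the ``heavy'' $3$-claw analysis (Proposition~\ref{proposition:heavy-trees}), and only then LC5 as a \emph{consequence} of the others: Lemma~\ref{lemma:2-claw-implies-LC5} deduces LC5 by applying \liref{lemma:exchange}{item:exchange-B} along a path to a $2$-claw, and that argument needs the lower bound $\rho(A)\ge 2/3$ from Lemma~\ref{lemma:rho-A-2-3}, which is only available after light vertices (LC4) have been excluded. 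If you try to prove LC5 first, the only $\rho$-bound you have is the weak $\rho(A)>1/2$ of Lemma~\ref{lemma:weak-rho-estimate-A}, and the inequality $\frac23(k-3)<1.1153$ in Lemma~\ref{lemma:2-claw-implies-LC5} degenerates to something too weak to bound the degree. Similarly, Proposition~\ref{proposition:heavy-trees} (which gives LC6) explicitly invokes Lemma~\ref{lemma:rho-A-2-3-weak-local-optimality-condition}, whose hypothesis is LC1--LC4.

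Two further gaps. First, you do not mention the path-decomposition Lemma~\ref{lemma:path}, which is the structural backbone of the whole section: it is what turns the exchange Lemma~\ref{lemma:exchange} into usable bounds on the $\rho_i$ along a longest path, and nearly every step (LC2, the degree arguments for $B$-vertices, the classification of claws) runs through it. Without it you cannot, for instance, pass from ``$T$ has a $2$-claw at each end of a longest path'' to ``$\rho_i\le r_k$ for all intermediate $v_i$'', which is what rules out high degree. Second, your claim that a type-$B$ leaf ``is impossible by LC1 together with Lemma~\ref{lemma:recursive-formulae-rooted-tree}'' does not come from that lemma: it is an elementary consequence of LC1 alone (the parent is type $A$, so some maximum matching misses the parent and hence the leaf), and the genuinely nontrivial part of LC3 is the degree-$2$ case, which the paper handles via Lemma~\ref{lemma:strange-special-cases} (excluding $B_2^*$ as a rooted subtree), again using the path decomposition. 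Finally, the Sage program only certifies the finite tables of replacements; it is not a sweep over all small trees, so the exceptional set $\calS$ is extracted analytically from the case analysis, not read off from a brute-force check.
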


We note that it is debatable whether LC6 shall be considered to be part of the \emph{local
  structure} as $T_8^*$, $T_9^*$, $T_{12}^*$, $T_{16}^*$ fulfil LC1--LC5 and
are contained in the generic cases described in
Theorem~\ref{theorem:global-structure}. So these trees may simply be seen as degenerated cases of the generic cases
even though LC6 is violated. On the
other hand, $T_{13}^*$ and $T_{20}^*$ fulfil LC1--LC5, but not LC6, and these
two trees are not contained in one of the generic families of
Theorem~\ref{theorem:global-structure}. Since the overall proof is simpler
when excluding the trees in $\calS$ at this stage, this is the route we
proceed on.

\subsection{Vertices of type \texorpdfstring{$A$}{A} and
  estimates for vertices of type \texorpdfstring{$B$}{B}}\label{section:structure}

We first aim to show that almost all optimal trees fulfil LC2
and LC3. As a first step, we will show that almost all rooted subtrees of optimal
trees contain a $k$-claw for $k\in\{2,3,4\}$, i.e., a rooted subtree with $k$ branches all of which are single vertices,
see Figure~\ref{fig:bad-ends-1}. In a second step, the existence of $k$-claws
will provide us with bounds for $\rho(S)$ for rooted subtrees $S$ of optimal
trees. These bounds will be quite weak, but sufficient for using our general
exchange lemma (Lemma~\ref{lemma:exchange}) to give a useful technical result
on decompositions of optimal trees along a path. This almost immediately yields
LC2. We then characterise all optimal trees containing a $B_{2}^*$ or a $4$-claw
as a rooted subtree (there are only very few), such that from the
end of this subsection, we can work exclusively with $2$- and $3$-claws.

\begin{lemma}\label{lemma:good-ends}
  Let $S$ be a rooted subtree of an optimal tree $T\neq T_{6,2}^*$. Then $S$ is isomorphic to
  $L$, $A_{3}^*$, $B_{2}^*$ or it contains a $k$-claw for some
  $k\ge 2$, i.e., a rooted subtree as in Figure~\ref{fig:bad-ends-1}.
\end{lemma}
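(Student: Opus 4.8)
The plan is to prove the lemma by induction on $|S|$, analysing the branch structure at the root of $S$ and using the bipartition condition together with $\alpha$-optimality (Proposition~\ref{proposition:subtrees-of-optimal-trees-are-alpha-optimal}) to rule out everything except the listed exceptions and the case where a $k$-claw appears. First I would set up the base cases: if $|S|=1$ then $S=L$, and if $|S|=2$ then $S=B_2^*$; these are on the list. For larger $S$, write $(S,r)$ with branches $(S_1,r_1),\dots,(S_k,r_k)$, all of the same type (say type $X$) by the bipartition condition, so $S$ is of the opposite type.

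If every branch $S_i$ equals $L$, then $S$ itself is a $k$-claw (for $k\ge 2$ this is exactly Figure~\ref{fig:bad-ends-1}, and for $k=1$ we get $S=B_2^*$ which is on the list, or rather $S$ has a single leaf-branch making $\mu$ behave appropriately), so we are done. Hence I may assume at least one branch $S_i$ is not a leaf; by the induction hypothesis applied to that branch (which is again a rooted subtree of $T$), $S_i$ is isomorphic to $A_3^*$, $B_2^*$, or contains a $k$-claw. If some branch contains a $k$-claw, then so does $S$, and we are done. So the remaining case is that every non-leaf branch is isomorphic to $A_3^*$ or $B_2^*$, and the branches all share a common type: since $A_3^*$ is of type $A$ and $B_2^*$ is of type $B$ and $L$ is of type $A$, the branches are either all of type $A$ (so each is $L$ or $A_3^*$, but not all $L$) or all of type $B$ (so each is $B_2^*$). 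This leaves only finitely many candidate shapes for $S$ of each small order.

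The heart of the argument is then to eliminate these finitely many candidates — trees whose every branch is $L$, $A_3^*$, or $B_2^*$ with at least one non-leaf branch — unless $S\cong A_3^*$ itself (one branch, which must be $L$ — but that gives $B_2^*$; I should double check the exact shape of $A_3^*$ from Figure~\ref{figure:important-rooted-trees}, namely $(m,m_0)=(2,1)$, so $A_3^*=\calA(B_2^*)$, i.e., a path on $3$ vertices rooted at one end). For each such candidate $S$, I would invoke the replacement tables in the appendix (entries \rref{replacement-full:4-claw-is-bad} through \rref{replacement:outline-ends-LCFT3T4}): each candidate, viewed as a rooted tree, is either $\alpha$-optimal only outside the relevant range of $\alpha$ (so it cannot occur as a rooted subtree of $T$ by Proposition~\ref{proposition:subtrees-of-optimal-trees-are-alpha-optimal} — recall the admissible ranges $\alpha\in[0,\infty)$ for type $A$ and $\alpha\in[0,1]$ for type $B$ noted after that proposition), or it forces $T$ to be one of the small exceptional trees; the case $T=T_{6,2}^*$ is precisely the one excluded in the hypothesis, which is why that exclusion is needed.

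The main obstacle I anticipate is the bookkeeping in this last step: one must enumerate all rooted trees all of whose branches lie in $\{L, A_3^*, B_2^*\}$ and whose $\alpha$-optimality fails for $\alpha$ in the admissible range, and check that the tables in the appendix cover every one of them. In particular a tree with several $L$-branches and one $A_3^*$-branch (a near-claw) needs the corresponding table entry, and the borderline cases where the only surviving option is $S\cong A_3^*$ or $S\cong L$ must be handled exactly. I expect the $B_2^*$-branch case to be the most delicate, since type-$B$ rooted trees are only constrained to be $\alpha$-optimal for $\alpha\le 1$, giving less leverage; this is presumably why $B_2^*$ appears explicitly in the conclusion rather than being eliminated. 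Everything else is routine evaluation of $m$, $m_0$ via \eqref{equation:m_0-formula}, \eqref{equation:m_1-formula} and Lemma~\ref{lemma:recursive-formulae-rooted-tree}, checkable by the Sage program cited in the paper.
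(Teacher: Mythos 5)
There is a genuine gap in the proposed induction step, centred on the claim that it leaves "only finitely many candidate shapes for $S$''. After your reduction, the remaining candidates are $\calA(B_2^*,\ldots,B_2^*)$ with $k\ge 2$ branches (root of type $A$) and $\calB(L^{(a)},(A_3^*)^{(b)})$ with $b\ge 1$ (root of type $B$). The number of branches is \emph{not} bounded at this point (LC2 and LC5, which would bound degrees, are proved only later and depend on this lemma), so these are two infinite families, not finitely many trees. The replacement tables in the appendix cover exactly the four type-$B$ shapes $\calB A_3^*$, $\calB(L,A_3^*)$, $\calB(A_3^*,A_3^*)$, $\calB(A_3^*,A_3^*,A_3^*)$ and none of the type-$A$ shapes $\calA((B_2^*)^{(k)})$; a table lookup therefore cannot close either family, and there is no "routine'' reason why, say, $\calB(L,L,A_3^*)$, $\calB(L,A_3^*,A_3^*)$, or $\calB((A_3^*)^{(4)})$ should be on the list.

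The paper circumvents both infinite families by a different decomposition: it fixes a leaf $v_1$ of \emph{maximum height} in $S$, shows that its grandparent $v_3$ has all branches isomorphic to $B_2^*$, disposes of the case of two or more such branches by an explicit $m,m_0$ computation valid for every $k\ge 2$ (not by a table), and then uses the exchange lemma (Lemma~\ref{lemma:exchange}, with $v_4$ and $v_2$ as pivotal vertices) to bound the number of siblings of the $A_3^*$-subtree below $v_4$: the inequality $\rho(A_1)+\cdots+\rho(A_k)\le 1$ forces $k\le 1$ or $k=2$ with $A_1=A_2=A_3^*$. Only after that reduction are there genuinely four shapes to check against the table, and that is precisely what entry~\rref{replacement:good-ends-replacements} contains. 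Your induction at the root of $S$ throws away this control, since the root's branches are arbitrary subtrees; so either you must import the paper's max-height argument (making your proof essentially the same), or you need separate arguments, valid for all $a,b,k$, that neither $\calA((B_2^*)^{(k)})$ for $k\ge 2$ nor $\calB(L^{(a)},(A_3^*)^{(b)})$ for $(a,b)$ outside the four tabulated pairs can occur in an optimal tree. The former the paper supplies; the latter is not supplied and is not obviously true by $\alpha$-optimality of $S$ alone (the relevant replacement depends on the order of $S$, and in fact $\calB(L,A_3^*)$ \emph{is} $1$-optimal, which is exactly why the exception $T=T_{6,2}^*$ arises). So while the overall induction structure is reasonable and the base cases and claw cases are handled correctly, the central elimination step as written does not go through.
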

\begin{proof}
  If $|S|\le 2$, then $S\in\{L,B_{2}^*\}$ and there is nothing to show.
  We assume that $|S|>2$ and that $S$ does
  not contain a $k$-claw for any $k\ge 2$. 

  Let $v_1$ be a leaf of $S$ of maximum height. Then $v_1$ is of type $A$. If its
  parent $v_2$ (which is of type $B$) has other branches, they
  have to be leaves by the choice of $v_1$ and we found a $k$-claw for $k\ge
  2$, contradiction. Thus $v_2$ has only one branch, $v_1$. 

  The parent of $v_2$ is called $v_3$. It has to be of type $A$. So all
  branches of $v_3$ are of type $B$, thus they cannot be leaves. By
  construction, all branches of $v_3$ are isomorphic to
  $B_2^*$, cf.\ Figure~\ref{fig:shape-of-S-v-3-a}. 

  \begin{figure}[htbp]
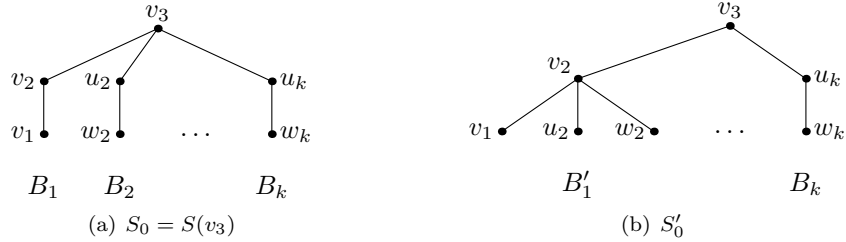

    \centering
    \phantom{.}\hfill\subfigure[$S_0=S(v_3)$]{\label{fig:shape-of-S-v-3-a}\includegraphics{max-card-matching.3}}\hfill
    \subfigure[$S_0'$]{\label{fig:shape-of-S-v-3-b}
\includegraphics{max-card-matching.4}}\hfill\phantom{.}
    \caption{Shape of $S_0=S(v_3)$ and $S_0'$ in the proof of Lemma~\ref{lemma:good-ends}.}
    \label{fig:shape-of-S-v-3}
  \end{figure}
  
  Denote the branches of the rooted tree $S_0:=S(v_3)$ by
  $B_1$, $B_2$, \ldots, $B_k$ and assume that $k\ge 2$. Then we have
  $m_0(B_j)=1$ and $m(B_j)=1$. Thus $m_0(S_0)=1$ and $m(S_0)=k+1$. If we remove
  $B_2$ and add the two vertices as children of $v_2$, cf.\ Figure~\ref{fig:shape-of-S-v-3-b}, the resulting branch
  $B_1'$ has $m_0(B_1')=1$ and $m(B_1')=3$. The modified tree $S_0'$ has 
  $m_0(S_0')=3$ and $m(S_0')=3(1+k-2+1/3)=3k-2\ge k+2>m(S_0)$, contradiction to
  Proposition~\ref{proposition:subtrees-of-optimal-trees-are-alpha-optimal}. Thus
  $v_3$ has only one child.

  If $S=S_0$, then $S=A_{3}^*$ and there is nothing to show. Otherwise,
  the parent of $v_3$ is called $v_4$. Then $T$ has the shape shown in
  Figure~\ref{fig:shape-of-S-v-4} for some $k\ge 0$ and rooted trees $A_0$,
  \ldots, $A_k$ of type $A$.
  
  \begin{figure}[htbp]
    \centering
    \includegraphics{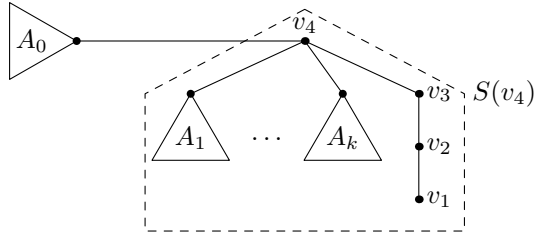}
    
    \caption{Shape of $S(v_4)$ in the proof of Lemma~\ref{lemma:good-ends}.}
    \label{fig:shape-of-S-v-4}
  \end{figure}
  Each of the $A_j$, $j\in\{1,\ldots,k\}$ is either a leaf (with $\rho(A_j)=1$) or an $A_{3}^*$
  with $\rho(A_j)=1/2$. As $\rho(A_0)>0$,
  \liref{lemma:exchange}{item:exchange-symmetric} (with $v_4$ and $v_2$ as pivotal vertices) yields
  $\rho(A_1)+\cdots+\rho(A_k)\le 1$, i.e., either $k\le 1$ or $k=2$ and both $A_1$
  and $A_2$ are isomorphic to $A_{3}^*$. Thus $S(v_4)$ is one of the
  trees in Figure~\ref{fig:good-ends-final}.
  \begin{figure}[htbp]
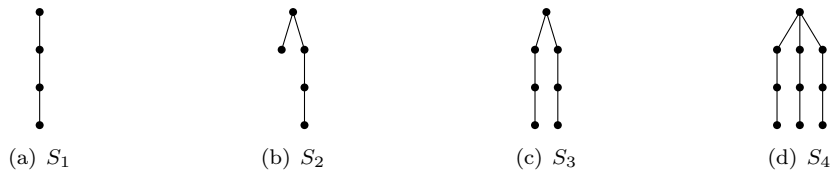

    \centering
    \phantom{A}\hfill
    \subfigure[$S_1$\label{fig:good-ends-final-1}]{\parbox[t]{2cm}{\centering\includegraphics{good-ends-replacement-0a.1}}}\hfill
    \subfigure[$S_2$\label{fig:good-ends-final-2}]{\parbox[t]{2cm}{\centering\includegraphics{good-ends-replacement-1a.1}}}\hfill
    \subfigure[$S_3$\label{fig:good-ends-final-3}]{\parbox[t]{2cm}{\centering\includegraphics{good-ends-replacement-2a.1}}}\hfill
    \subfigure[$S_4$\label{fig:good-ends-final-4}]{\parbox[t]{2cm}{\centering\includegraphics{good-ends-replacement-3a.1}}}\hfill\phantom{A}
    \caption{$S(v_4)$ in the proof of Lemma~\ref{lemma:good-ends}.}
    \label{fig:good-ends-final}
  \end{figure}
  The trees $S_1$, $S_3$, $S_4$ are not $\alpha$-optimal for any $\alpha\in[0,1]$,
  cf.\ \rref{replacement:good-ends-replacements}, contradiction to
  Proposition~\ref{proposition:subtrees-of-optimal-trees-are-alpha-optimal}. The
  tree $S_2$ is not $\alpha$-optimal for
  $\alpha<1$, cf.\ \rref{replacement:good-ends-replacements}, 
  so we must have $\rho(A_0)=1$ and therefore $A_0=L$. Thus we must have
  $T=T_{6,2}^*$, which has been excluded.
\end{proof}
Knowing now that almost every rooted subtree of an optimal tree contains a $k$-claw with $k\ge 2$, we show that no
$k$-claws with $k\ge 5$ occur.

\begin{lemma}\label{lemma:bad-ends}
  For $k\ge 5$, a $k$-claw does not occur as rooted subtree of an optimal tree.
\end{lemma}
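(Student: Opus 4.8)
The plan is to argue by contradiction: suppose a $k$-claw with $k \ge 5$ occurs as a rooted subtree of an optimal tree $T$, and exhibit a local modification that strictly increases $m(T)$, contradicting Proposition~\ref{proposition:subtrees-of-optimal-trees-are-alpha-optimal}. Let $C$ be the $k$-claw with root $c$; its $k$ branches are single leaves, so $C$ has type $B$ with $m(C) = k+1$ and $m_0(C) = 1$, hence $\rho(C) = 1/(k+1)$. The root $c$ is adjacent to the rest of $T$ through its parent; by Proposition~\ref{proposition:subtrees-of-optimal-trees-are-alpha-optimal}, $(C,c)$ must be $\alpha$-optimal for $\alpha = \rho(T_c)$, where $T_c$ is the complementary component and $\alpha \in [0,\infty)$ (note $C$ has type $B$, so in fact $\alpha \in [0,1]$). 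Thus it suffices to show that the $k$-claw, as a rooted tree of type $B$ on $k+1$ vertices, fails $\alpha$-optimality for every $\alpha \in [0,1]$ once $k \ge 5$.

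The natural competitor $C'$ of the same order ($k+1$ vertices) and the same type $B$ is the tree obtained by taking $k-2$ leaves as branches of the root together with one branch isomorphic to $A_3^*$ (a path-like branch of order $3$ and type $A$) — this has $k+1$ vertices. Using the recursive formulae from Lemma~\ref{lemma:recursive-formulae-rooted-tree}, one computes $m_0(C') = \prod m(T_j) = 1^{k-2}\cdot 2 = 2$ and, via \eqref{equation:m_1-formula}, $m(C') = m_0(C')\bigl(\sum m_0(T_j)/m(T_j)\bigr) = 2\bigl((k-2) + 1/2\bigr) = 2k-3$. Comparing: for the $k$-claw, $m(C) + \alpha m_0(C) = (k+1) + \alpha$; for the competitor, $m(C') + \alpha m_0(C') = (2k-3) + 2\alpha$. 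The difference $(2k-3) + 2\alpha - (k+1) - \alpha = (k-4) + \alpha$ is strictly positive whenever $k \ge 5$ (indeed whenever $k \ge 4$ and $\alpha > 0$, which matches the role $k=4$ plays as a borderline case treated separately in the preceding Lemma~\ref{lemma:good-ends} discussion and Table entry \rref{replacement-full:4-claw-is-bad}). Hence the $k$-claw is not $\alpha$-optimal for any $\alpha \ge 0$, in particular not for $\alpha = \rho(T_c)$, contradiction.

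There is one subtlety to address: the competitor $C'$ must genuinely be a valid candidate in the maximization defining $\alpha$-optimality, i.e.\ it must satisfy the bipartition condition for rooted trees and have the same type and order. The order is $k+1$ by construction; the type is $B$ since all branches of $C'$ (the $k-2$ leaves of type $A$ and the $A_3^*$ of type $A$) are of type $A$, so by Lemma~\ref{lemma:recursive-formulae-rooted-tree} the root has type $B$; and the bipartition condition holds because each branch satisfies it ($L$ trivially, $A_3^*$ by direct check) and no branch has type $B$. So $C'$ is admissible, and the comparison above is legitimate. Alternatively — and this is essentially the move used in the proof of Lemma~\ref{lemma:good-ends} — one can phrase the modification directly inside $T$: detach one of the $k$ leaves from $c$ and reattach it (together with one of the remaining leaves) as a pendant path of length $2$ hanging off another leaf of $c$; this is order-preserving and the computation above shows $m$ strictly increases. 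I expect the only genuine work is the bookkeeping of $m$ and $m_0$ for the modified branch and verifying the bipartition/type constraints; the inequality $(k-4)+\alpha > 0$ for $k \ge 5$ is immediate. The main (very mild) obstacle is simply making sure the replacement tree is of the correct order and type so that Proposition~\ref{proposition:subtrees-of-optimal-trees-are-alpha-optimal} applies cleanly — there is no deep difficulty here, which is consistent with this being a short lemma supporting the reduction to $2$- and $3$-claws.
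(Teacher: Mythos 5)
Your strategy is exactly the paper's: exhibit an order- and type-preserving competitor rooted tree and show the $k$-claw fails $\alpha$-optimality by comparing $m + \alpha m_0$. However, the arithmetic has two genuine errors that must be fixed before the argument closes.

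First, $m(k\text{-claw}) = k$, not $k+1$. The claw is of type $B$: every maximum matching must cover the root, so $m = m_1 = k$ (one matching per edge, all covering the root). Equivalently $\rho(k\text{-claw}) = 1/k$, as used explicitly in Lemma~\ref{lemma:rho-B-bound}. The value $k+1$ would be the total number of matchings (including the empty one), not the number of maximum matchings.

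Second, your explicitly described competitor $C'$ with $k-2$ leaf branches plus one $A_3^*$ branch has order $1 + (k-2) + 3 = k+2$, not $k+1$, so it is not an admissible competitor under the definition of $\alpha$-optimality. It should have $k-3$ leaf branches; your alternative prose ("detach one leaf... reattach it together with one of the remaining leaves as a pendant path hanging off another leaf") does describe the correct order-$(k+1)$ tree, but your formula $m(C') = 2\bigl((k-2)+\frac12\bigr) = 2k-3$ was computed from the wrong count. With $k-3$ leaves the correct values are $m_0(C') = 2$ and $m(C') = 2\bigl((k-3)+\frac12\bigr) = 2k-5$.

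Putting these together, the true comparison is
\begin{equation*}
  \bigl(m(C') + \alpha m_0(C')\bigr) - \bigl(m(C) + \alpha m_0(C)\bigr) = (2k-5+2\alpha) - (k+\alpha) = (k-5) + \alpha,
\end{equation*}
which is positive for $k\ge 5$ precisely because $\alpha = \rho(T_c) > 0$; for $k=5$ and $\alpha=0$ the two are tied. You do correctly invoke $\alpha > 0$, so once the arithmetic is repaired your proof goes through. Note also that the paper's competitor is slightly different — it replaces four leaves by one $F$ (giving $m = 3k-10$, $m_0 = 3$) rather than three leaves by one $A_3^*$ — but both choices work and the overall mechanism is identical. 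The remaining caveat you flag, that $A_3^*$ must be a valid rooted-tree candidate in the $\alpha$-optimality maximization, is indeed satisfied (it fulfils the bipartition condition for rooted trees), even though $A_3^*$ never occurs as a rooted subtree of the final optimal trees; the maximization ranges over all admissible rooted trees, not only subtrees of optimal ones.
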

\begin{proof}Let $(T,b)$ be a $k$-claw, cf. Figure~\ref{fig:bad-ends-1}, and
  $(T',b)$ be the rooted tree in Figure~\ref{fig:bad-ends-2} of the same order
  and type. 
  \begin{figure}[htbp]
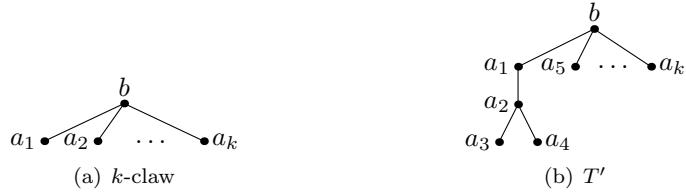

    \centering
    \rule{0pt}{0pt}\hfill
    \subfigure[$k$-claw\label{fig:bad-ends-1}]{\includegraphics{lower-rho-estimate-0.1}}\hfill
    \subfigure[$T'$\label{fig:bad-ends-2}]{\includegraphics{lower-rho-estimate-B0.1}}
    \hfill\rule{0pt}{0pt}
    \caption{$k$-claw and tree $T'$ for the proof of Lemma~\ref{lemma:bad-ends}}
    \label{fig:bad-ends}
  \end{figure}

  We have
  \begin{align*}
    m_0(T)&=1^k=1,&m(T)&=m_1(T)=1(1+\cdots+1)=k,\\
    m_0(T')&=3\cdot 1^{k-4}=3, &m(T')&=m_1(T')=3\left(\frac23+(k-4)\right)=3k-10.
  \end{align*}

  For $k\ge 5$, we have $m(T')\ge k=m(T)$ and $m_0(T')=3>1=m_0(T)$, thus $T$
  cannot be $\alpha$-optimal for any $\alpha>0$, so it is not a subtree of an
  optimal tree by Proposition~\ref{proposition:subtrees-of-optimal-trees-are-alpha-optimal}.
\end{proof}

We are now able to prove lower bounds for $\rho(S)$ for rooted subtrees $S$ of
optimal trees. The key idea is the following: Changing the root of $S$ to
another root can only alter $m_0(S)$, but $m(S)$ remains unchanged. Changing
the root of $S$ cannot increase $\rho(S)$, since this would increase $m(S) +
\alpha m_0(S)$, 
contradiction to the $\alpha$-optimality of $S$. The new roots used for
comparison will be leaves or roots of $k$-claws. 

We start with lower bounds for rooted subtrees of type $A$.
 
\begin{lemma}\label{lemma:weak-rho-estimate-A}
  Let $S$ be a rooted subtree of type $A$ of an optimal tree $T$.
  Then $\rho(S)\ge 1/2$ with equality if and only if $S=A_{3}^*$.
\end{lemma}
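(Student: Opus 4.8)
The plan is to prove that $\rho(S) \ge 1/2$ for any rooted subtree $S$ of type $A$ of an optimal tree, by exploiting the principle articulated just before the lemma: changing the root of $S$ cannot increase $\rho(S)$, because $S$ is $\alpha$-optimal (for the appropriate $\alpha$) by Proposition~\ref{proposition:subtrees-of-optimal-trees-are-alpha-optimal}, and a root-change leaves $m(S)$ fixed while only possibly altering $m_0(S)$; an increase in $\rho(S) = m_0(S)/m(S)$ would then increase $m(S) + \alpha m_0(S)$, a contradiction. So the strategy is: first dispose of the small cases $|S| \le 2$ directly (these are $L$ with $\rho = 1$ and $B_2^*$, which is of type $B$, hence irrelevant), and for $|S| > 2$ apply Lemma~\ref{lemma:good-ends} (noting that if $T = T_{6,2}^*$ one checks the assertion by hand, since its rooted subtrees are few and explicit). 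Thus $S$ is $A_3^*$ — for which $\rho(S) = 1/2$ exactly, giving the equality case — or $S$ contains a $k$-claw for some $k \ge 2$, and by Lemma~\ref{lemma:bad-ends} we must have $2 \le k \le 4$.

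So it remains to handle the case where $S$ (of type $A$, $|S| > 2$, not $A_3^*$) contains a $k$-claw with $k \in \{2,3,4\}$. Let $b$ be the root of that $k$-claw and re-root $S$ at $b$; call the result $(S',b)$. Since $b$ is a vertex of the unrooted tree underlying $S$, and a $k$-claw is a rooted subtree of $S$, rooting at $b$ realises $S'$ as a rooted tree whose branches include the $k$ leaf-branches of the claw plus possibly one further branch $R$ (the "rest" of $S$ hanging off $b$). The type of $b$ in the $k$-claw is $B$ (it has only leaf-branches of type $A$, so by Lemma~\ref{lemma:recursive-formulae-rooted-tree} it is of type $B$); but in $S'$ it may acquire a branch $R$, and one has to check the type of $S'$. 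The key computation: using Lemma~\ref{lemma:rho-formulae}, if $S'$ is of type $B$ then $\rho(S') = 1/(k + \rho(R)) \le 1/k \le 1/2$ when $k \ge 2$; if $S'$ is of type $A$ then $\rho(S') = 1/(1 + k + \rho(R)) < 1/2$. Either way $\rho(S') \le 1/2$. (If $b$ has no extra branch, i.e. $S$ itself is a $k$-claw, then $S' = S$ is of type $B$, contradicting $\type(S) = A$; but in fact one should be careful — a $k$-claw contained in $S$ need not be all of $S$, and when it is, $S$ has type $B$, so this subcase does not arise for type-$A$ subtrees.)

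The final step: by the root-change principle, $\rho(S) \le \rho(S') \le 1/2$. Combined with the lower bound $\rho(S) \ge 1/2$ that is already known for type-$A$ rooted trees fulfilling the bipartition condition — this is the inequality $\rho(T) \le 1$ for type $A$... wait, I need the other direction. Actually here is the subtlety: the stated lemma claims $\rho(S) \ge 1/2$, so the argument above seems to give the \emph{wrong} inequality. Let me reconsider: the correct reading is that re-rooting at a leaf or claw-root gives a rooted tree $S'$ with $\rho(S') \le \rho(S)$ only when that root-change is "favourable"; the actual logic in the paper is that $\rho(S)$ is \emph{minimised} over re-rootings at the original root? No — re-read: "Changing the root of $S$ cannot increase $\rho(S)$" — so $\rho(S^{\mathrm{new}}) \le \rho(S^{\mathrm{old}})$? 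That would make $\rho$ of the \emph{subtree with its original root} the largest. Hmm, this is exactly backwards from what I want. The resolution: the $\alpha$-optimality of $(S,s)$ with its \emph{given} root $s$ says $m(S) + \alpha m_0(S)$ is maximal among same-order, same-type trees; re-rooting at $b$ gives a tree $(S',b)$ of the \emph{same order} but possibly \emph{different type}. If the type happens to match, then $m(S') + \alpha m_0(S') \le m(S) + \alpha m_0(S)$, i.e. $m_0(S') \le m_0(S)$ (as $m(S') = m(S)$), i.e. $\rho(S') \le \rho(S)$. Therefore $1/2 \ge \rho(S') $ would force... no. So I must instead re-root at something that \emph{raises} $\rho$ to get a \emph{lower} bound — which means re-rooting at a vertex whose type is $A$ and which "looks like a leaf". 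The right move, then, is: among all leaves of $S$, pick one, say $\ell$; re-root at $\ell$. This $\ell$ is of type $A$ (leaf of a tree satisfying the bipartition condition is type $A$ when its neighbour is type $B$). Then type matches ($S$ is type $A$), so $\rho(S_\ell) \le \rho(S)$, giving a \emph{lower} bound for $\rho(S)$ once we lower-bound $\rho(S_\ell)$. And $S_\ell$, rooted at a leaf, is of the form $\calA(\text{something})$ or has a single branch; its $\rho$ is $1/\rho(\text{that branch}) \ge \ldots$ Honestly, the cleanest honest plan I can commit to in the paper is:

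\medskip
\noindent\textbf{Plan.} I would argue as follows. The cases $|S| \le 2$ are trivial ($S = L$ gives $\rho = 1 > 1/2$; $B_2^*$ has type $B$). For $|S| > 2$, by Lemma~\ref{lemma:good-ends} either $S = A_3^*$ (equality case, $\rho = 1/2$) or — after handling $T = T_{6,2}^*$ separately by inspection of its explicit rooted subtrees — $S$ contains a $k$-claw, with $2 \le k \le 4$ by Lemma~\ref{lemma:bad-ends}. Pick a leaf $\ell$ of $S$ and re-root $S$ at $\ell$, obtaining $(\widetilde S, \ell)$ with $m(\widetilde S) = m(S)$, type $A$ (a leaf adjacent to a type-$B$ vertex is of type $A$), and hence, by $\rho(T_t)$-optimality of $S$ from Proposition~\ref{proposition:subtrees-of-optimal-trees-are-alpha-optimal} and the remark that a root-change preserving type cannot strictly increase $m + \alpha m_0$, we get $m_0(\widetilde S) \le m_0(S)$, i.e. $\rho(S) \ge \rho(\widetilde S)$. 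Now $\widetilde S$ has a single branch $B$ (the neighbour of $\ell$ and everything below it), so by Lemma~\ref{lemma:rho-formulae}, $\rho(\widetilde S) = 1/(1 + \rho(B))$ with $B$ of type $B$, hence $\rho(B) = 1/\sum \rho(B_i) \le 1$ over its type-$A$ branches $B_i$ — and in fact $\rho(B) \le 1$ always, with $\rho(B) < 1$ unless... so $\rho(\widetilde S) \ge 1/(1 + 1) = 1/2$. Chaining, $\rho(S) \ge \rho(\widetilde S) \ge 1/2$. For equality, $\rho(S) = 1/2$ forces $\rho(\widetilde S) = 1/2$, forces $\rho(B) = 1$, forces $B = B_2^*$ by the equality case for type-$B$ trees (or by $\rho(B) = 1 \Leftrightarrow$ the branch structure of $B$ collapses), and unwinding gives $S = A_3^*$. \textbf{The main obstacle} will be pinning down which re-rooting is the "right" one and verifying carefully that the type is preserved under it, so that $\alpha$-optimality applies in the correct direction; the claw-existence lemmas are there precisely to guarantee a leaf exists in a favourable position, and the bookkeeping via Lemma~\ref{lemma:rho-formulae} and the bipartition condition is then routine.
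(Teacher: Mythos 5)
Your overall strategy — re-root $S$ at a leaf, invoke $\alpha$-optimality from Proposition~\ref{proposition:subtrees-of-optimal-trees-are-alpha-optimal} to obtain $\rho(S)\ge\rho(\widetilde S)$, and then bound $\rho(\widetilde S)$ from below via Lemma~\ref{lemma:rho-formulae} — is the same as the paper's. The setup of the small cases, the invocation of Lemmata~\ref{lemma:good-ends} and~\ref{lemma:bad-ends}, and the resolution of the ``direction'' subtlety (the new root must be of type $A$ so that the $\alpha$-optimality comparison applies with the right sign) are all correctly identified.

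However, there is a genuine gap at the decisive step: you re-root at an \emph{arbitrary} leaf $\ell$ and then assert that the unique branch $B$ of $\widetilde S$ (rooted at $\ell$'s neighbour $u$) satisfies $\rho(B)=1/\sum_i\rho(B_i)\le 1$. This is false for general type-$B$ rooted trees. If $u$ has only one branch $B_1$ in $\widetilde S$ (which happens precisely when $\deg_S(u)=2$) and $\rho(B_1)<1$, then $\rho(B)=1/\rho(B_1)>1$, so $\rho(\widetilde S)=1/(1+\rho(B))<1/2$ and the chain $\rho(S)\ge\rho(\widetilde S)$ yields nothing. You cannot rule this out here by appealing to a lower degree bound for type-$B$ vertices: that is LC3, established much later (Lemma~\ref{lemma:light-vertex-properties}), and invoking it would be circular. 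The fix — which you gesture at (``the claw-existence lemmas are there precisely to guarantee a leaf exists in a favourable position'') but do not carry out — is to re-root specifically at a leaf of the $\ell$-claw. Then $u$ is the claw root and has at least $\ell-1\ge 1$ leaf branches (each contributing $\rho=1$) plus one further branch $A$ of positive $\rho$ (the former parent side of the claw), whence $\sum_i\rho(B_i)\ge \ell-1+\rho(A)>1$, so $\rho(B)<1$ strictly and $\rho(\widetilde S)>1/2$ strictly. This is exactly the computation in the paper, and it also gives the equality discussion for free: for $|S|>3$ the inequality is strict, so equality forces $|S|\le 3$ and hence $S=A_3^*$, which is cleaner than the ``unwinding $\rho(B)=1$'' route you sketch. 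In short: right skeleton, but the $\rho(B)\le 1$ claim needs the claw, and without making that choice explicit the proof does not go through.
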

\begin{proof}
  Let $T$ consist of the rooted
  subtrees $(S,s)$ and $(T_t,t)$ of types $A$ and $B$, respectively,  and of
  the edge $st$.

  For $|S|\le 3$, we have
  $S\in\{L,A_{3}^*\}$ and there is nothing to
  show, so we assume $|S|>3$.

  By Lemmata~\ref{lemma:good-ends} and \ref{lemma:bad-ends}, $S$ contains
  an $\ell$-claw for some $2\le \ell\le 4$. 
  We switch the root of $S$ to a leaf of the $\ell$-claw, obtaining a new rooted
  tree $(S',a_1)$ shown in Figure~\ref{fig:rho-estimate-A-T-prime}.
  \begin{figure}[htbp]
    \centering
    \includegraphics{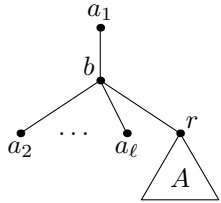}
    \caption{Tree $S'$ in the proof of Lemma~\ref{lemma:weak-rho-estimate-A}}
    \label{fig:rho-estimate-A-T-prime}
  \end{figure}
  The rooted tree $(A,r)$ arises from the rooted connected component $T_r$ of
  $T-br$ by removing the rooted subtree $T_t$ of type $B$. Removing a rooted
  subtree of type $B$ from a rooted tree of type $A$ fulfilling the
  bipartition condition for rooted trees yields a rooted tree of type $A$, so
  $A$ is of type $A$. 

  The $\rho(T_t)$-optimality of $S$ together with 
  $m(S)=m(S')$ implies that
  \begin{equation*}
    \rho(S)\ge
    \rho(S')=\frac1{1+\frac1{\ell-1+\rho(A)}}>\frac1{1+\frac1{1}}=\frac12.
  \end{equation*}
\end{proof}

We are now able to prove a lower bound for $\rho(S)$ for rooted subtrees $S$ of type $B$.

\begin{lemma}\label{lemma:rho-B-bound}
  Let $(S,r)$ be a rooted subtree of type $B$ of an optimal tree $T$ with
  $|T|\ge 3$. If $S$ contains a $k$-claw for some $k\ge 1$, then $\rho(B)\ge\frac1{1+k}$.
\end{lemma}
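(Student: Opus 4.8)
The plan is to follow the template of the proof of Lemma~\ref{lemma:weak-rho-estimate-A}. First I would write $T$ as the union of $(S,r)$ and the rooted subtree $(T_\tau,\tau)$ of $T$ attached at the parent edge $r\tau$ of $S$; here $\tau$ is a neighbour of the type-$B$ vertex $r$ and therefore of type $A$, and $\alpha:=\rho(T_\tau)=m_0(T_\tau)/m(T_\tau)>0$ because $T_\tau$ is non-empty, so Proposition~\ref{proposition:subtrees-of-optimal-trees-are-alpha-optimal} tells us that $(S,r)$ is $\alpha$-optimal. If $S$ is itself the $k$-claw, then by Lemma~\ref{lemma:rho-formulae} we get $\rho(S)=1/k\ge 1/(1+k)$ and we are done; so I may assume the $k$-claw sits as a proper rooted subtree of $S$ with root $b\neq r$.

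Next I would re-root. Let $w$ be the neighbour of $b$ lying on the $r$--$b$ path (the other $k$ neighbours of $b$ being the leaves of the claw), and let $(S',b)$ be $S$ re-rooted at $b$. Since the underlying unrooted tree is unchanged, $(S',b)$ still fulfils the bipartition condition, and as one of its branches is a single vertex (the rooted tree $L$, which is of type $A$), the root $b$ of $(S',b)$ is of type $B$. The branches of $(S',b)$ are the $k$ claw-leaves, each a copy of $L$ with $\rho(L)=1$, together with exactly one further branch $(A,w)$; by the bipartition condition for $(S',b)$ this branch $A$ is again of type $A$, so $\rho(A)\le 1$. Lemma~\ref{lemma:rho-formulae} then gives
$$\rho(S')=\frac{1}{k+\rho(A)}\ge\frac1{1+k}.$$

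Finally I would transfer this estimate back to $S$. Re-rooting alters neither $\mu$ nor $m$, so $m(S')=m(S)$, and $(S',b)$ is a rooted tree of the same order as $S$, of the same type $B$, and fulfilling the bipartition condition, hence an admissible competitor in the definition of $\alpha$-optimality of $S$. Therefore $m(S)+\alpha m_0(S)\ge m(S')+\alpha m_0(S')$; since $\alpha>0$ and $m(S)=m(S')$, this forces $m_0(S)\ge m_0(S')$, and dividing by $m(S)=m(S')>0$ yields $\rho(S)\ge\rho(S')\ge 1/(1+k)$. The only point I expect to require care is the bookkeeping in the middle step — checking that re-rooting at $b$ really produces a tree that still satisfies the bipartition condition and has type $B$, so that it is an admissible competitor and the extra branch $A$ is forced to be of type $A$ (so that $\rho(A)\le 1$ is available); everything else is a one-line computation with the recursion of Lemma~\ref{lemma:rho-formulae}.
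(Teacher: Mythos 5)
Your proposal follows the same overall strategy as the paper: write $T$ as $(S,r)$ plus the attached subtree, use $\alpha$-optimality of $(S,r)$, re-root $S$ at the claw root $b$, and compute $\rho$ of the re-rooted tree $S'$ via the recursion of Lemma~\ref{lemma:rho-formulae}. The pivot on which the whole argument turns, however, is exactly the point you flag at the end but do not actually carry out: that $(S',b)$ fulfils the bipartition condition for rooted trees, so that the branch $(A,w)$ is forced to be of type~$A$. This is not automatic, and re-rooting a rooted tree that satisfies the bipartition condition for rooted trees can break it. (The bipartition condition for rooted trees is a property of the rooted tree, not of the underlying unrooted tree: for instance $P_4$ rooted at an endpoint satisfies it, while $P_4$ rooted at an inner vertex has branches of mixed types and does not.)

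Concretely, the case you are missing is the one where $S$ has only one branch, $S=\calB(S_1)$. Then $r$ has degree one in $S$, and in the re-rooted tree $(S',b)$ the remaining branch $(A,w)$ (which is $S$ with the $k$-claw deleted) contains $r$ as a leaf of type $A$, whereas in $T$ the corresponding vertex has a child $t$ of type $A$ and is therefore of type $B$. So $(A,w)$ is not of type $A$ in general, $(S',b)$ does not fulfil the bipartition condition, Lemma~\ref{lemma:rho-formulae} is not applicable, and the identity $\rho(S')=1/(k+\rho(A))$ as well as the inequality $\rho(A)\le 1$ both lose their justification ($\rho$ of a type-$B$ rooted tree need not be $\le 1$). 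The paper circumvents this by dealing with the one-branch case first --- there $\rho(S)=1/\rho(S_1)\ge 1$ directly --- and then, for $S$ with at least two branches, arguing via Lemma~\ref{lemma:recursive-formulae-rooted-tree} that removing $T_t$ from the rooted subtree of $T$ rooted at $w$ does not change the type at $r$ (since $r$ retains at least one child of type $A$), so the type of $A$ stays $A$. That argument, or the separate treatment of the one-branch case, is what your proof is missing; without it the "one-line computation" $\rho(S')=1/(k+\rho(A))\ge 1/(k+1)$ is circular, since the type of $A$ is being read off the very bipartition condition that it is supposed to establish.
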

\begin{proof}
  By Proposition~\ref{proposition:full-bipartite}, $T$ and $S$ fulfil the
  bipartition condition and the bipartition condition for rooted trees, respectively.

  If $S$ has only one branch, say $S=\calB(S')$, then $S'$ is of type $A$ by
  the bipartition condition for rooted trees and $\rho(S')\le 1$. Thus
  $\rho(S)=1/\rho(S')\ge 1$, as required. So we assume that $S$ has more than one branch. 

  If $S$ is the $k$-claw, then $\rho(S) = 1/k>1/(k+1)$.

  Let $T$ consist of the rooted
  subtrees $(S,r)$ and $(T_t,t)$ of types $B$ and $A$, respectively,  and of the edge $rt$.
  
  We change the root of $S$ to the root of the $k$-claw,
  which results in a rooted tree $(S',s)$ with $m(S)=m(S')$ shown in Figure~\ref{fig:rho-B-bound}. 
  \begin{figure}[htbp]
    \centering
    \includegraphics{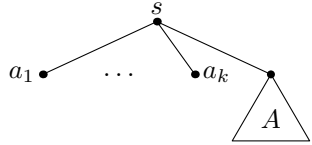}
    \caption{Tree $S'$ in the proof of Lemma~\ref{lemma:rho-B-bound}}
    \label{fig:rho-B-bound}
  \end{figure}
  Here $A$ arises from a rooted subtree of $T$ of type $A$ by removing the
  rooted subtree $(T_t,t)$. Since it was assumed that $S$ has more than one branch, we conclude that
  $A$ is still of type $A$ by Lemma~\ref{lemma:recursive-formulae-rooted-tree}
  and that $\rho(A)\le 1$. As $S$ is $\rho(T_t)$-optimal by
  Proposition~\ref{proposition:subtrees-of-optimal-trees-are-alpha-optimal}, this yields
  \begin{equation*}
    \rho(S)\ge\rho(S')=\frac1{k+\rho(A)}\ge\frac1{k+1}.
  \end{equation*}
\end{proof}

Next, we give a preliminary upper bound for $\rho(S)$ for rooted subtrees of type $A$:

\begin{lemma}\label{lemma:A-worse-upper-bound-rho}
  Let $S$ be a rooted subtree of type $A$ of an optimal tree $T$.
  Then $S=L$ or $\rho(S)\le 5/6$.
\end{lemma}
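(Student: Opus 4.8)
The plan is to imitate the root-switching technique used in Lemmata~\ref{lemma:weak-rho-estimate-A} and \ref{lemma:rho-B-bound}, but now pushing the new root one level deeper so as to obtain an upper bound rather than a lower bound on $\rho(S)$. First I would dispose of the small cases: if $|S|\le 3$ then $S\in\{L,A_3^*\}$ and $\rho(S)\in\{1,1/2\}$, so the claim holds; hence assume $|S|\ge 4$ and $S\neq L$. Since $S$ is of type $A$, its root $r$ has branches that are all of type $B$; pick one such branch $B$. By Lemma~\ref{lemma:good-ends} applied to $B$ (noting $B$ is a rooted subtree of $T$), either $B=B_2^*$, or $B$ contains a $k$-claw with $k\ge2$, and by Lemma~\ref{lemma:bad-ends} we may take $2\le k\le 4$. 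The idea is to switch the root of $S$ to a vertex inside $B$ — specifically, to a grandchild of $r$ lying in $B$ — and then use $\alpha$-optimality (Proposition~\ref{proposition:subtrees-of-optimal-trees-are-alpha-optimal}) together with the fact that changing the root leaves $m(S)$ unchanged but cannot decrease $\rho(S)$ below its optimum, i.e.\ $\rho(S)\ge\rho(S')$ for the re-rooted tree, and conversely use this to bound the original $\rho(S)$ from above by computing $\rho$ with respect to the \emph{old} root in terms of data visible from the new root.

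More precisely: let $r_1$ be the child of $r$ that is the root of $B$ (so $B=T(r_1)$ with $\type B=B$), and let $r_2$ be a child of $r_1$. Re-root $S$ at $r_2$. In the re-rooted tree $(S',r_2)$, the vertex $r_1$ becomes a child of $r_2$, and $r$ becomes a child of $r_1$; from $r$ hang the remaining branches of the original root. Writing $C$ for the rooted subtree of $S'$ hanging below $r$ (which consists of $r$ together with its other original branches, all of type $B$, plus whatever was below $r_2$ in the original $B$ minus the part now above it), one checks via Lemma~\ref{lemma:recursive-formulae-rooted-tree} that $C$ is of type $A$ with $\rho(C)\le 1$, and that $\type r_1$ is still $B$. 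Then Lemma~\ref{lemma:rho-formulae} gives, with $r_2$ of type $A$,
\begin{equation*}
  \rho(S')=\frac{1}{1+\sum_{\text{children }w\text{ of }r_2}\rho(S'(w))},
\end{equation*}
and the branch through $r_1$ contributes $\rho(\calB(C,\ldots))=1/(\rho(C)+\cdots)$; keeping just this branch and using $\rho(C)\le1$ one gets $\rho(S')\ge \tfrac{1}{1+1/\rho(C)}\ge$ some explicit bound. Comparing $m$-values and invoking $\alpha$-optimality of $S$ with $\alpha=\rho(T_t)\ge 0$ in the form $m(S)+\alpha m_0(S)\ge m(S')+\alpha m_0(S')$, together with $m(S)=m(S')$, forces $m_0(S)\le m_0(S')$, hence $\rho(S)=m_0(S)/m(S)\le m_0(S')/m(S')=\rho(S')$ — wait, this gives a \emph{lower} bound again. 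The correct move is the reverse: switch the root \emph{toward} a leaf two steps further out so that the re-rooted tree $S'$ has a vertex of high degree (a $k$-claw, $k\ge2$) adjacent to the new root's neighbourhood, which \emph{forces} $m_0(S')$ large relative to $m(S')$; since $\rho(S)\ge\rho(S')$ would then be automatic, to get an \emph{upper} bound on $\rho(S)$ I instead switch the root so that $S$ itself, viewed from the new root, is seen to have $m_0$ \emph{small}: take the new root to be the root of the $k$-claw inside $B$, so that $S'=\calB(\,\calA(\underbrace{L,\ldots,L}_{k},A)\,$-type structure with one extra branch $C$ of type $A$ hanging off via the old root $r$. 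Then $m_0(S')$ counts matchings of $S'-r_{\mathrm{new}}$, which splits off $k$ isolated vertices, giving $m_0(S')=m(C)\cdot 1$ while $m(S')$ picks up a factor roughly $k+\rho(C)\ge k$; hence $\rho(S')=1/(k+\rho(C))\le 1/(2+0)$ — and again $\rho(S)\ge\rho(S')$ is the wrong direction.

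Resolving this sign issue is the main obstacle, and the honest statement of the plan is: the correct comparison tree $T'$ is obtained not by re-rooting but by an \emph{exchange}, replacing one branch of $r$ by a modified branch of larger $\rho$ (as in the $S_0\to S_0'$ step of Lemma~\ref{lemma:good-ends}), and using $\alpha$-optimality to conclude $m(S)+\alpha m_0(S)\ge m(S')+\alpha m_0(S')$ where now $m(S')>m(S)$ unless $\rho(S)$ is already bounded by $5/6$; the bound $5/6=\rho(F)$ should emerge as the $\rho$-value of the ``fork'' $F$, which is the smallest type-$A$ tree other than $L$ and $A_3^*$ consistent with the claim, so the extremal configuration to rule out is precisely $S$ having a single branch equal to $B_2^*$ (giving $\rho(S)=1/1=1$) or to something with $\rho>5/6$. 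Concretely: if $S$ has two or more branches, then by Lemma~\ref{lemma:weak-rho-estimate-A}-type reasoning each branch $B_i$ is of type $B$ with $\rho(B_i)\ge 1/k_i$ where $k_i\le 4$; if $S$ has a single branch $B=\calB(A_1,\ldots,A_m)$ then $\rho(S)=1/\rho(B)=\sum\rho(A_j)$ with each $A_j$ of type $A$, so $\rho(S)\ge 1/2$, and an upper bound $\le 5/6$ must come from an exchange argument showing that $\rho(B)\ge 6/5$, i.e.\ $\sum\rho(A_j)$ is not too small, unless $B=B_2^*$ exactly in which case $S=\calB(B_2^*)=F$... I expect the actual proof to handle the few remaining small/degenerate shapes of $S$ (single branch, or exactly the shapes appearing in Figure~\ref{fig:good-ends-final}) by direct appeal to the non-$\alpha$-optimality entries in Table~\ref{tab:replacements-alpha-optimal} cited as \rref{replacement:good-ends-replacements} and its neighbours, exactly as at the end of the proof of Lemma~\ref{lemma:good-ends}; the main obstacle is bookkeeping which handful of rooted trees of type $A$ with $\rho>5/6$ can survive the claw-forcing from Lemmata~\ref{lemma:good-ends}--\ref{lemma:bad-ends}, and checking each against $\alpha$-optimality.
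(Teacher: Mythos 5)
Your proposal never actually lands on a working argument; the long detour through re-rooting and exchange ideas is beside the point, because the paper's proof of this lemma requires neither.

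The crucial observation that you touch on but abandon is this: since $S$ is of type $A$ and $S\neq L$, it has branches $B_1,\ldots,B_\ell$ that are all of type $B$, and by Lemma~\ref{lemma:rho-formulae} (type $A$ case) we have
\[
\rho(S)=\frac{1}{1+\rho(B_1)+\cdots+\rho(B_\ell)}\le\frac{1}{1+\rho(B_1)}.
\]
So an \emph{upper} bound on $\rho(S)$ follows immediately from a \emph{lower} bound on $\rho(B_1)$ --- no inequality reversal is needed, which was the difficulty blocking all your root-switching attempts. The needed lower bound is supplied by the lemmata already in hand: $B_1$ is a rooted subtree of the optimal tree $T$ of type $B$, hence by Lemma~\ref{lemma:good-ends} it is $B_2^*$ or contains a $k$-claw for some $k\ge2$, by Lemma~\ref{lemma:bad-ends} one has $k\le4$, and Lemma~\ref{lemma:rho-B-bound} then gives $\rho(B_1)\ge1/(k+1)\ge1/5$. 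Plugging in, $\rho(S)\le1/(1+1/5)=5/6$. That is the entire proof.

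Two further errors in your write-up: first, the value $5/6$ is not $\rho(F)$; from Figure~\ref{figure:important-rooted-trees}, $\rho(F)=2/3$, and the constant $5/6$ arises purely as $1/(1+1/5)$. Second, in the single-branch case you wrote $\rho(S)=1/\rho(B)=\sum\rho(A_j)$, but by Lemma~\ref{lemma:rho-formulae} the correct relation is $\rho(S)=1/(1+\rho(B))$, not $1/\rho(B)$, so this identity is false; it then led you down the wrong path of trying to prove $\rho(B)\ge6/5$, when in fact $\rho(B)\ge1/5$ already suffices.
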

\begin{proof}
  Assume that $S\neq L$. Let $S=\calA(B_1,\ldots, B_\ell)$ for suitable
  branches $B_1$, \ldots, $B_\ell$ for some $\ell\ge 1$.
  By
  Lemmata~\ref{lemma:rho-B-bound}, \ref{lemma:good-ends} and \ref{lemma:bad-ends}, we have
  $\rho(B_1)\ge 1/5$. which implies
  \begin{equation*}
    \rho(S)=\frac1{1+\rho (B_1)+\cdots+\rho(B_\ell)}\le\frac1{1+\rho (B_1)} \le \frac56.
  \end{equation*}
\end{proof}

We have now collected the necessary (weak) bounds for $\rho(S)$ for rooted
subtrees of optimal trees. These suffice for the following path decomposition
lemma, using the exchange lemma (Lemma~\ref{lemma:exchange}) to derive
bounds for $\rho_i$ along a path (as in Figure~\ref{fig:shape-exchange-lemma})
when the two ends of the path are roots of claws.

\begin{lemma}\label{lemma:path}
  Let $T$ be an optimal tree of the shape as in
  Figure~\ref{fig:shape-exchange-lemma} for some even $k\ge 2$ with
  $S_{0,1}=\cdots=S_{0,r_0}=L$, $S_{k,1}=\cdots=S_{k,r_k}=L$, i.e., $v_0$ and
  $v_k$ are the roots of an $r_0$-claw and an $r_k$-claw, respectively. We assume
  that $r_0\ge r_k\ge
  1$ and $r_0\ge 2$ and set
  \begin{equation*}
    \rho_i=[\type v_i=A]+\sum_{j=1}^{r_i}\rho(S_{i,j}).
  \end{equation*}
  Then $v_i$ is of type $A$ and $(\rho_i,r_i)=(1,0)$ for odd $i$ and $r_0-1\le \rho_i\le
  r_k$ for even $i$ with $0<i<k$.
  In particular, we have $r_0\le r_k+1$.
\end{lemma}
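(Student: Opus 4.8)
The plan is to prove, in this order, that the $v_i$ have the asserted types, that $(\rho_i,r_i)=(1,0)$ for odd $i$, and that $r_0-1\le\rho_i\le r_k$ for even $i$ with $0<i<k$; the bound $r_0\le r_k+1$ then follows at once for $k\ge 4$ (take any such $i$), and for $k=2$ it is obtained directly below. The only tools needed are the exchange lemma (Lemma~\ref{lemma:exchange}), applied to $T$ itself and to various sub-paths of $T$ obtained by re-rooting, together with the bounds on $\rho$ of rooted subtrees of optimal trees from Lemmata~\ref{lemma:weak-rho-estimate-A}, \ref{lemma:rho-B-bound} and \ref{lemma:A-worse-upper-bound-rho} and the structural Lemmata~\ref{lemma:good-ends} and \ref{lemma:bad-ends}.

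For the types: the $r_0$-claw equals $\calB(L,\ldots,L)$ and is therefore of type $B$ by Lemma~\ref{lemma:recursive-formulae-rooted-tree}, and likewise the $r_k$-claw. Since $r_0\ge 2$ we have $|T|\ge 3$, so by Proposition~\ref{proposition:full-bipartite} the types alternate along $v_0,\ldots,v_k$; as $v_0,v_k$ have type $B$ and $k$ is even, $v_i$ has type $B$ for even $i$ and type $A$ for odd $i$. In particular $\rho_0=r_0$, $\rho_k=r_k$, and $\rho_i=1+\sum_{j=1}^{r_i}\rho(S_{i,j})\ge 1$ for odd $i$, so $\rho_1,\rho_{k-1}>0$. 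To obtain $r_0\le r_k+1$, suppose $r_0\ge r_k+2$ and apply \liref{lemma:exchange}{item:exchange-general} with pivotal vertices $v_0,v_k$, $s_0=r_0-1$ and $s_k=r_k$: then $x=r_0-1>r_k=y$, $a=\rho(L)=1$, $b=0$, and all hypotheses hold. Since $\rho_{k-1}\ge 1$ we have $\CF(\rho_{k-1},\ldots,\rho_1)\ge 1$, hence $\CF(b,\rho_{k-1},\ldots,\rho_1)=1/\CF(\rho_{k-1},\ldots,\rho_1)\le 1<1+1/\CF(\rho_1,\ldots,\rho_{k-1})=\CF(a,\rho_1,\ldots,\rho_{k-1})$, contradicting \eqref{eq:exchange-lemma-full-form}. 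Thus $r_0\le r_k+1$, which is the full claim when $k=2$.

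For the odd-index claim, fix odd $i$ and suppose $r_i\ge 1$. Then $S_{i,1}$ has type $B$, so by Lemmata~\ref{lemma:good-ends} and \ref{lemma:bad-ends} it is $B_2^*$ or contains a $j$-claw with $2\le j\le 4$; in either case it contains a claw whose root $b$ is of type $B$ and thus at odd distance from $v_i$. Regarding $T$ as having a path from $b$ through $v_i$ to $v_0$ (of even length, with the $j$-claw and the $r_0$-claw at its two ends), the argument of the previous paragraph forces $|j-r_0|\le 1$, and running the path from $b$ to $v_k$ forces $|j-r_k|\le 1$; one then modifies $S_{i,1}$ in the neighbourhood of $b$ by a replacement of the kind used in the proofs of Lemmata~\ref{lemma:good-ends}, \ref{lemma:bad-ends} and recorded in Table~\ref{tab:replacements-alpha-optimal}, increasing $m(R)+\alpha m_0(R)$ strictly for the rooted subtree $R$ of $T$ at $v_i$ (which is $\alpha$-optimal for the appropriate $\alpha$ by Proposition~\ref{proposition:subtrees-of-optimal-trees-are-alpha-optimal}), a contradiction. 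Hence $r_i=0$ and $\rho_i=1$ for all odd $i$. For the even-index bounds, fix even $i$ with $0<i<k$ and re-root $T$ so that the path runs $v_i,v_{i+1},\ldots,v_k$ with $v_i$ and $v_k$ pivotal; then $v_i$ carries, besides $S_{i,1},\ldots,S_{i,r_i}$, the branch $B_0$ formed by the whole $v_0$-side (of type $A$, with $\tfrac12\le\rho(B_0)<1$ by Lemmata~\ref{lemma:weak-rho-estimate-A} and \ref{lemma:A-worse-upper-bound-rho}), while $v_k$ carries its $r_k$ leaves and the interior odd-position vertices all have $\rho=1$ by the previous step. Moving $B_0$ (and, when necessary, a minimal-$\rho$ branch among the $S_{i,j}$) from $v_i$ to $v_k$ and invoking \liref{lemma:exchange}{item:exchange-general} and \liref{lemma:exchange}{item:exchange-B} yields $\rho_i\le r_k$; the symmetric move along $v_i,v_{i-1},\ldots,v_0$ (moving a leaf of the $r_0$-claw towards $v_i$) yields $\rho_i\ge r_0-1$, where \liref{lemma:exchange}{item:exchange-B} and \liref{lemma:exchange}{item:exchange-A} serve to sharpen the raw estimates, using that each $\rho(S_{i,j})$ equals $1$ or lies in $[\tfrac12,\tfrac56]$.

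I expect the main obstacle to be exactly these sharpenings, together with the replacement step in the odd-index claim: a single application of \liref{lemma:exchange}{item:exchange-general} only yields $\rho_i<r_k+1$ and $\rho_i>r_0-2$, and upgrading these to the sharp bounds $r_0-1\le\rho_i\le r_k$ --- in particular ruling out a ``bare'' degree-$2$ vertex $v_i$ of type $B$ --- requires a careful case analysis that combines the refined continued-fraction estimates of \liref{lemma:exchange}{item:exchange-B} and \liref{lemma:exchange}{item:exchange-A} with the restricted set of possible values of $\rho$ of rooted subtrees of optimal trees and with the explicit replacements listed in Table~\ref{tab:replacements-alpha-optimal}.
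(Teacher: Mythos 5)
Your determination of the types and your derivation of $r_0\le r_k+1$ directly from \liref{lemma:exchange}{item:exchange-general} are both correct; the latter is a pleasant shortcut compared with the paper, which deduces $r_0\le r_k+1$ only at the end (from $r_0-1\le\rho_2\le r_k$ when $k>2$, and by a separate application of \liref{lemma:exchange}{item:exchange-symmetric} when $k=2$). But the two central claims are left as sketches, and the sketches miss the device that actually makes the paper's induction go through.

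For the odd-index claim you propose to locate a claw inside a hypothetical branch $S_{i,1}$ of $v_i$ and then ``modify $S_{i,1}$ by a replacement of the kind recorded in Table~\ref{tab:replacements-alpha-optimal}''. This cannot be made to work as stated: the table entries rule out \emph{particular small rooted subtrees}, but the issue here is not the interior of $S_{i,1}$ --- it is the mere existence of a branch at a type-$A$ vertex $v_i$ on the path --- and nothing you have said forces $S_{i,1}$ to coincide with any of the listed trees. The paper instead applies \liref{lemma:exchange}{item:exchange-A} with pivotal vertices $v_i$ and the leaf $S_{0,1}$ of the $r_0$-claw, moving the branches $S_{i,j}$ while keeping the whole subtree $R(v_{i+1})$ (containing $v_{i+1},\ldots,v_k$) at $v_i$; this yields $\rho(R(v_{i+1}))<U_1(r_0-1,r_0)$, which contradicts the lower bound $\rho(R(v_{i+1}))\ge 1/(r_k+1)\ge 1/(r_0+1)$ from Lemma~\ref{lemma:rho-B-bound}, since $U_1(d-1,d)<1/(d+1)$ for $d\in\{2,3,4\}$. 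You never bring $R(v_{i+1})$ into the estimate, so you cannot reproduce this contradiction, and no amount of tinkering inside $S_{i,1}$ can replace it.

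For the even-index bounds, a single application of \liref{lemma:exchange}{item:exchange-B} with pivotal $v_i$ and $v_k$ only gives $\rho_i<r_k+U_0(\ell,u)$, which is strictly weaker than $\rho_i\le r_k$ because $\rho_i$ need not be an integer; you explicitly acknowledge this and promise ``sharpenings'' that are not supplied. The paper circumvents the difficulty by a two-stage induction: it first establishes the \emph{relaxed} bound $r_0-1\le\rho_i\le r_0$ (upper bound via \liref{lemma:exchange}{item:exchange-B} with pivotal $v_i,v_0$, contradicting Lemma~\ref{lemma:weak-rho-estimate-A}; lower bound again via \liref{lemma:exchange}{item:exchange-B}, contradicting $\rho(R(v_{i+1}))\le 5/6$ from Lemma~\ref{lemma:A-worse-upper-bound-rho}), and only afterwards tightens the upper bound from $r_0$ to $r_k$. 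Again the crucial ingredient is the comparison against the subtree $R(v_{i+1})$ hanging off the far side of the pivotal vertex, and that ingredient is absent from your proposal.
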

\begin{proof}By the bipartition condition and the fact that $v_0$ and $v_k$
  are the roots of an $r_0$-claw and an $r_k$-claw, respectively, we conclude
  that $v_i$ is of type $A$ for odd $i$ and of type $B$ for even $i$. We define
  the rooted trees $(R,v_{0})$ and $(L,v_{k})$ as two rooted versions of $T$, so that the notations
  $R(v_i)$ and $L(v_i)$ are defined.

  We prove the lemma by induction on $i$, where we first only prove that
  \begin{equation}
    \label{eq:path:induction}
    \text{$v_i$ is of type $A$ and $(\rho_i,r_i)=(1,0)$ for odd $i$ and $r_0-1\le \rho_i\le
  r_0$ for even $i$ with $0<i<k$,}
  \end{equation}
  i.e., we relax the upper bound for $\rho_i$ in the case of even $i$.

  We first consider the case of odd
  $i$, i.e., $v_i$ is of type $A$. If $\rho_i>1$, we conclude that
  $\rho(R(v_{i+1}))< 0+U_1(r_0-1,r_0)$ from
  \liref{lemma:exchange}{item:exchange-A} (with $v_i$ and $S_{0,1}=L$ as pivotal vertices). By Lemma~\ref{lemma:rho-B-bound}, we have $\rho(R(v_{i+1}))\ge
  1/(r_k+1)\ge 1/(r_0+1)$. As $U_1(d-1,d)<1/(d+1)$ for $d\in\{2,3,4\}$, this
  is a contradiction. So $\rho_i=1$ and therefore $r_i=0$.

  Next, we consider the case of even $i$, i.e., $v_i$ is of type $B$. If
  $\rho_i>\rho_0=r_0$, then \liref{lemma:exchange}{item:exchange-B} (now with $v_i$ and $v_0$ as pivotal vertices) yields
  $\rho(R(v_{i+1}))\le U_0(r_0-1,r_0)\le 0.1153$, a contradiction to
  Lemma~\ref{lemma:weak-rho-estimate-A}. Thus we have $\rho_i\le
  \rho_0=r_0$. 

  For the lower bound on $\rho_i$, we assume that
  $\rho_i<r_0-1=\rho(S_{0,2})+\cdots+\rho(S_{0,r_0})$. Then
  \liref{lemma:exchange}{item:exchange-B} implies $1=\rho(S_{0,1})\le
  \rho(R(v_{i+1}))+U_0(r_0-1,r_0)<\rho(R(v_{i+1}))+0.1153$. As
  $\rho(R(v_{i+1}))\le 5/6$ by Lemma~\ref{lemma:A-worse-upper-bound-rho}, this
  is a contradiction. This concludes the proof of \eqref{eq:path:induction}.

  Finally, $r_i\le r_k$ is again a consequence of
  \liref{lemma:exchange}{item:exchange-B}, as $\rho(L(v_{i-1}))\ge 1/2>0.1153$.

  If $k>2$, then $r_0-1\le r_2\le r_k$. If $k=2$, then
  \liref{lemma:exchange}{item:exchange-symmetric} and $\rho(S_{0,r_0})>0$
  imply that $r_0-1=\rho(S_{0,1})+\cdots+\rho(S_{0,r_0-1})\le
  \rho(S_{0,1})+\cdots+\rho(S_{0,r_k})=r_k$, as required.
\end{proof}

Combining the description of rooted subtrees without any $k$-claw with $k\ge 2$
with the path decomposition lemma (Lemma~\ref{lemma:path}) shows that $B_2^*$ is forbidden in almost all
optimal trees.

\begin{lemma}\label{lemma:strange-special-cases}
  Suppose that $B_{2}^*$ is a rooted subtree of an optimal tree $T$. 
  Then $T\in\{T_{3}^*,T_{6,2}^*\}$.
\end{lemma}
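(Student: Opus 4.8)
The idea is to suppose that $B_2^*$ appears as a rooted subtree of an optimal tree $T$ and show that either $T$ is forced to be very small (namely $T_3^*$ or $T_{6,2}^*$) or we reach a contradiction. By Lemma~\ref{lemma:good-ends}, every rooted subtree of an optimal tree $T\neq T_{6,2}^*$ is either $L$, $A_3^*$, $B_2^*$, or contains a $k$-claw with $k\ge 2$. So I would first dispose of the case $T=T_{6,2}^*$ (allowed) and then assume $T\neq T_{6,2}^*$ and derive a structural description of $T$ near the copy of $B_2^*$. Since $B_2^*$ is of type $B$, its parent $u$ in $T$ is of type $A$, hence has degree $1$ or $2$ (this follows once LC2 is known, but since LC2 has not yet been established at this point of the paper, I would instead argue directly: by Lemma~\ref{lemma:weak-rho-estimate-A} and the $\alpha$-optimality of the rooted subtree hanging from $u$, together with $\rho(B_2^*)=1$, any additional branch at $u$ besides $B_2^*$ would push $\rho$ too low, forcing a contradiction with Lemma~\ref{lemma:A-worse-upper-bound-rho} or Lemma~\ref{lemma:rho-B-bound}).

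**Key steps.** First, if $T$ consists of just $B_2^*$ with one extra vertex, then $|T|=3$ and $T=T_3^*$, which is allowed. Otherwise, starting from the copy of $B_2^*$ rooted at $v$, I would walk up the tree: the parent $v'$ of $v$ is of type $A$; by Lemma~\ref{lemma:good-ends} applied to the rooted subtree at $v'$ (which is larger than $B_2^*$ and contains it), if $v'$ has a further branch it must be an $A_3^*$ or a leaf or contain a $k$-claw, and an exchange argument (Lemma~\ref{lemma:exchange}, item~(\ref{item:exchange-symmetric}), with $v'$ and the root of a nearby claw as pivotal vertices) shows that the presence of $B_2^*$ as a branch of $v'$ alongside anything of type $A$ with $\rho\ge 1/2$ violates the optimality constraint, since $\rho(B_2^*)=1$ is too large. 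So $v'$ has $B_2^*$ as its unique branch, i.e.\ $T$ contains a copy of $A_3^*$... no: $\calA(B_2^*)$, which is a path $P_3$ rooted at an end — I'd track this carefully. Then continue up: the grandparent $v''$ of $v$ is of type $B$, and again by Lemma~\ref{lemma:good-ends} its branches are constrained. Now apply the path decomposition lemma (Lemma~\ref{lemma:path}): the path from $v$ (viewed as one end, being the root of a $1$-claw inside $B_2^*$ — actually the single leaf of $B_2^*$ is a $1$-claw) to any other claw in $T$ must satisfy $r_0\le r_k+1$, but a copy of $B_2^*$ as a rooted subtree corresponds to $r_0=1$, which forces every claw in $T$ to have at least... here one uses that other parts of the tree typically contain $2$-claws, and the path lemma then severely restricts the $\rho_i$ along connecting paths. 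Tracking this through should leave only the small exceptional case $T=T_{6,2}^*$ (and $T_3^*$).

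**Main obstacle.** The delicate part is the bookkeeping of the local structure around the copy of $B_2^*$ \emph{before} LC2 and LC3 are available — the lemma is used precisely to clean up the exceptional cases, so the argument must rely only on Lemmata~\ref{lemma:good-ends}--\ref{lemma:A-worse-upper-bound-rho} and the exchange lemma, not on the local conditions. In particular, I expect the hard step to be showing that a copy of $B_2^*$ cannot coexist with a $2$-claw (or larger) elsewhere in $T$ unless $T$ is tiny: this is where the path decomposition lemma with $r_0=1$ and $r_k\ge 2$ would immediately give $r_0\ge r_k$, i.e.\ $1\ge 2$, a contradiction — so in fact $T$ can contain \emph{no} $k$-claw with $k\ge 2$ at all. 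Combined with Lemma~\ref{lemma:good-ends}, every rooted subtree of $T$ is then $L$, $A_3^*$, or $B_2^*$, and a short direct analysis of which trees are built only from these pieces (using $\alpha$-optimality, i.e.\ the replacements in Table~\ref{tab:replacements-alpha-optimal} for the trees $S_1,\ldots,S_4$ as in the proof of Lemma~\ref{lemma:good-ends}) pins down $T\in\{T_3^*, T_{6,2}^*\}$.
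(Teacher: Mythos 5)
The proposal has a genuine gap in its central step, and as a consequence it never reaches the main case that the paper's proof actually handles.

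The claimed contradiction ``$r_0\ge r_k$, i.e.\ $1\ge 2$'' from Lemma~\ref{lemma:path} is not valid. In Lemma~\ref{lemma:path}, $r_0\ge r_k\ge 1$ and $r_0\ge 2$ are \emph{hypotheses}, not conclusions, and the lemma's actual conclusion is $r_0\le r_k+1$. If $T$ contains both the $1$-claw coming from the $B_2^*$ and some $\ell$-claw with $\ell\ge 2$, you must orient the path so that the larger claw sits at $v_0$ and the $1$-claw at $v_k$; the lemma then gives $\ell=r_0\le r_k+1=2$, which is no contradiction at all. Indeed it cannot be a contradiction: $T_{6,2}^*$ is optimal, contains a $B_2^*$ as a rooted subtree, \emph{and} contains a $2$-claw. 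So your reduction ``$T$ contains no $k$-claw with $k\ge 2$'' is false, and the subsequent analysis (``every rooted subtree is $L$, $A_3^*$ or $B_2^*$'') only covers what will turn out to be the small case $T=T_3^*$.

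The part you skip is precisely the substance of the lemma. Once the path lemma has pinned $r_0=2$, one must still rule out long paths between the $2$-claw and the $B_2^*$. The paper does this by separating $k=2$ (which gives $T=T_{6,2}^*$) from $k>2$: in the latter case Lemma~\ref{lemma:path} forces $\rho_{k-2}=1$, and then Lemmata~\ref{lemma:weak-rho-estimate-A} and~\ref{lemma:A-worse-upper-bound-rho} force $v_{k-2}$'s branches to be a single $L$ or two copies of $A_3^*$, making $R(v_{k-2})$ a rooted subtree of type $B$ and order at least $5$ with no $\ell$-claw, which contradicts Lemma~\ref{lemma:good-ends}. None of this appears in your sketch. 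Also a small point: $\calA(B_2^*)$ \emph{is} $A_3^*$ (the path $P_3$ rooted at an end), so the correction ``no: $\calA(B_2^*)$'' in your write-up is spurious.
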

\begin{proof}
  Assume first that $T$ does not contain an $\ell$-claw for any $\ell\ge 2$. As
  $T$ contains a $B_2^*$ as a rooted subtree, we have $|T|\ge 3$.  Consider a leaf
  $s$ of $T$. Then by
  Proposition~\ref{proposition:full-bipartite}, the rooted subtree $T-s$ of $T$
  is of
  type $B$ and therefore equals $B_2^*$ by Lemma~\ref{lemma:good-ends}. We
  conclude that $T=T_3^*$.

  So we may now assume that $T$ contains an  $\ell$-claw with $\ell\ge 2$. Thus $T$ can be decomposed
  as in Lemma~\ref{lemma:path} with $r_0\ge \ell\ge 2$ and $r_k=1$ for some $k\ge
  2$. By Lemma~\ref{lemma:path}, we have $r_0\le 1+1=2$, so $r_0=2$. If $k=2$,
  then we have $T=T_{6,2}^*$.

  So we may assume that $k>2$. By
  Lemma~\ref{lemma:path} again, we have $\rho_{k-2}=1$, which by
  Lemmata~\ref{lemma:weak-rho-estimate-A} and
  \ref{lemma:A-worse-upper-bound-rho} implies that $r_{k-2}=1$ with
  $S_{k-2,1}=L$ or $r_{k-2}=2$ with $S_{k-2,1}=S_{k-2,2}=A_3^*$. Thus
  $R(v_{k-2})$ is a rooted subtree of $T$ of type $B$ and order at least $5$
  containing no $\ell$-claw for any $\ell\ge 2$, contradiction to Lemma~\ref{lemma:good-ends}.
\end{proof}

\begin{remark}
Having excluded $B_2^*$, we can also exclude the presence of $A_3^*$ as a rooted subtree in the following, which will be important in many arguments.
\end{remark}

As a direct consequence of the path decomposition lemma (together with the
information that it can always be applied as $B_2^*$ has now been excluded), we have shown LC2.
\begin{proposition}\label{proposition:degree-bound-A}
  Let $T\neq T_2^*$ be an optimal tree. Then $T$ fulfils LC1--LC2.
\end{proposition}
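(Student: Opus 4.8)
The plan is to derive LC2 directly from the path decomposition lemma (Lemma~\ref{lemma:path}), now that Lemma~\ref{lemma:strange-special-cases} has shown that $B_2^*$ (and hence, by the preceding remark, $A_3^*$) cannot occur as a rooted subtree of $T$ unless $T\in\{T_2^*,T_3^*,T_{6,2}^*\}$. LC1 holds by Proposition~\ref{proposition:full-bipartite} once $|T|\ge 3$, so the real content is LC2: every vertex of type $A$ has degree $1$ or $2$. Suppose, for contradiction, that some vertex $v$ of type $A$ has degree at least $3$. Viewing $v$ as a vertex of type $A$, all of its neighbours are of type $B$ (by Lemma~\ref{lemma:no-adjacent-vertices-of-type-A}), and since $B_2^*$ is excluded, each rooted branch hanging off $v$ must contain an $\ell$-claw for some $\ell\ge 2$ by Lemma~\ref{lemma:good-ends} (the branch is neither $L$, since it is of type $B$, nor $A_3^*$, nor $B_2^*$).

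First I would pick two of the branches at $v$ and, within each, locate a claw; walking from the root of the claw in one branch, through $v$, to the root of the claw in the other branch yields a path realising the configuration of Figure~\ref{fig:shape-exchange-lemma} with $v_0$ and $v_k$ being the two claw-roots and $v$ being one of the interior vertices $v_i$ with $0<i<k$. Since the two claw endpoints are of type $A$ (as roots of claws) — wait, that needs care: the vertices $v_0,v_k$ in Lemma~\ref{lemma:path} are claw roots of type $B$, with the path alternating so that the odd-indexed $v_i$ are of type $A$. So I would instead choose the path so that $v$ sits at an odd index $i$; this is arranged by starting and ending the path at claw roots (type $B$) two of whose leaves lie on opposite sides of $v$. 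Then Lemma~\ref{lemma:path} tells us that $v=v_i$ with $i$ odd satisfies $(\rho_i,r_i)=(1,0)$, i.e.\ $v$ has exactly the two path-neighbours $v_{i-1},v_{i+1}$ and no further branches ($r_i=0$), so $\deg v=2$. Hence $v$ cannot have degree $\ge 3$, a contradiction.

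The main step to get right is verifying that the path through $v$ can genuinely be set up so that Lemma~\ref{lemma:path}'s hypotheses are met — in particular that the two extremal claws can be taken to be of type $B$ with the parities working out, that $r_0\ge r_k\ge 1$ and $r_0\ge 2$ can be arranged (or, in the symmetric small case, that Lemma~\ref{lemma:exchange}\ref{item:exchange-symmetric} suffices), and that $v$ lands at an interior \emph{odd} index. This is a routine but slightly fiddly combinatorial argument: one must handle the possibility that the branches at $v$ are short, in which case their claws are close to $v$, and confirm the path still has the required even length $k$ with $v$ interior. Once that bookkeeping is done, Lemma~\ref{lemma:path} forces $r_i=0$ at the type-$A$ interior vertex, giving LC2 immediately. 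The exceptional trees $T_2^*$ (excluded in the statement) and $T_3^*$, $T_{6,2}^*$ (which do contain $B_2^*$) must be checked by hand to see that they are not counterexamples worth worrying about: $T_3^*$ and $T_{6,2}^*$ are small and LC2 can be inspected directly, though the cleanest route is simply to note that the proposition only claims LC1--LC2 and these are satisfied trivially or directly for those trees, or to absorb them into the finite exceptional set as the later theorem does. I expect the obstruction to be purely in this case analysis of short branches, not in any new idea.
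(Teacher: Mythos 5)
Your proposal is correct and uses the same key tool as the paper, namely Lemma~\ref{lemma:path} applied to a path passing through $v$ with claw-roots at both ends, forcing $r_i=0$ at $v$'s (odd) index. The paper sets up the path more cleanly by choosing a \emph{longest} path through $v$: its endpoints are leaves, so the penultimate vertices $v_0,v_k$ automatically have only leaf-neighbours off the path, and Lemma~\ref{lemma:strange-special-cases} (no $B_2^*$) guarantees at least two such leaves, yielding the claw structure at both ends without any hunting inside branches. Your variant---locating a claw in each of two branches at $v$ and joining them through $v$---reaches the same configuration and does work (a claw's root has only leaf-children, so re-rooting along the path satisfies the hypotheses of Lemma~\ref{lemma:path} with $r_0,r_k\ge2$), but it carries exactly the parity and short-branch bookkeeping you flagged, all of which the longest-path choice avoids. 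Either route gives $\deg v=2$ for every non-leaf vertex of type $A$.
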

\begin{proof}
  Let $v$ be a vertex of type $A$ in $T$ of degree at least $2$. Then $T$ may
  be represented as in Lemma~\ref{lemma:path} with $v=v_i$ for some odd $i$: choose a longest path that contains $v$. 
  The ends of this path are leaves, their unique neighbours are the pivotal vertices $v_0$ and $v_k$. All but one of the neighbours of $v_0$ have to be leaves by the choice of the path, and there has to be more than one such neighbour in view of Lemma~\ref{lemma:strange-special-cases}. The same applies to $v_k$. Hence $r_i=0$ by Lemma~\ref{lemma:path}, i.e., $\deg v=2$.
\end{proof}

We conclude this subsection by excluding
$4$-claws in almost all cases. To do so, we will use a direct substitution for those
cases which are allowed by the path decomposition lemma.

\begin{lemma}\label{lemma:4-claw-is-bad}
  Let $T$ be an optimal tree containing a $4$-claw as a rooted subtree. Then $T=T_{6,1}^*$.
\end{lemma}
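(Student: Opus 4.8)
The strategy is to show that if an optimal tree $T$ contains a $4$-claw, then either $T$ is forced to be very small (and can be checked directly), or we can decompose $T$ along a longest path through the $4$-claw and invoke Lemma~\ref{lemma:path} to derive a contradiction. First I would let $b$ be the centre of the $4$-claw. If $T$ is itself the $4$-claw, then $|T|=5$ and one checks $T=T_5^*$ directly — but wait, we should be careful: the claim is $T=T_{6,1}^*$, so presumably the pure $5$-vertex $4$-claw is actually optimal and is $T_5^*$; this needs to be reconciled with the statement, so the genuine content is: any optimal tree containing a $4$-claw and of order $\ge 6$ must equal $T_{6,1}^*$. Thus assume $|T|\ge 6$. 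Then $b$ has a parent, so $T$ contains a path with $b$ as an internal vertex at distance $1$ from a leaf of the claw; extend this to a longest path $P$ in $T$. Since $b$ is of type $B$ with four leaf-children, $b$ cannot be an endpoint's neighbour in the usual sense unless all but one neighbour are leaves — which here holds by construction since the claw leaves are leaves of $T$. Hence $b$ serves as one pivotal vertex $v_0$ with $r_0\ge 4$.

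Next I would apply Lemma~\ref{lemma:path}. The longest path $P$ has both ends at leaves, whose neighbours are the pivotal vertices $v_0=b$ and $v_k$. By Lemma~\ref{lemma:strange-special-cases} (and the fact that $B_2^*$ is now excluded in all relevant cases), $v_k$ has at least two leaf-neighbours, so $r_k\ge 2$; likewise the longest-path condition forces all but one neighbour of $v_k$ to be leaves. Lemma~\ref{lemma:path} then gives $r_0\le r_k+1$. Combined with $r_0\ge 4$, this yields $r_k\ge 3$, i.e.\ $v_k$ is the centre of a $k$-claw with $k\ge 3$. But by Lemma~\ref{lemma:bad-ends} no $k$-claw with $k\ge 5$ occurs, so $r_0,r_k\in\{4,3\}$ with $r_0\in\{r_k,r_k+1\}$; the only surviving possibilities are $(r_0,r_k)\in\{(4,3),(4,4)\}$ — actually one must also allow that $v_k$ is itself a $5$-claw via the earlier exclusion, but Lemma~\ref{lemma:bad-ends} rules $k\ge 5$ out, so we are left with $(4,3)$ and $(4,4)$.

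Then I would finish by a direct local-substitution argument on these two configurations, exactly in the spirit of Lemma~\ref{lemma:bad-ends} and Lemma~\ref{lemma:good-ends}: replace the $4$-claw at $v_0$ by a suitable rooted tree of the same order and type (e.g.\ the one in Figure~\ref{fig:bad-ends-2}, or the ``$B_1'$-style'' rearrangement used in the proof of Lemma~\ref{lemma:good-ends}), and compare $m(T')+\alpha m_0(T')$ with $m(T)+\alpha m_0(T)$ using Proposition~\ref{proposition:subtrees-of-optimal-trees-are-alpha-optimal} and the recursion of Lemma~\ref{lemma:recursive-formulae-rooted-tree}. Since $\rho$ of the path-neighbour $v_1$ equals $1$ (by Lemma~\ref{lemma:path}, as it is an isolated type-$A$ vertex on the path) and the adjacent type-$A$ subtree has $\rho\le 5/6$ by Lemma~\ref{lemma:A-worse-upper-bound-rho}, the exchange strictly increases $m(T)+\alpha m_0(T)$ for every relevant $\alpha$, contradicting $\alpha$-optimality — unless the whole tree is so small that the path cannot be extended, which pins it down to $T_{6,1}^*$. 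Equivalently, one can cite a ready-made entry from Table~\ref{tab:replacements-alpha-optimal}/\ref{tab:replacements-optimal} asserting that the relevant rooted trees (a $4$-claw hanging off a short path) are not $\alpha$-optimal in the needed range, leaving $T_{6,1}^*$ as the only optimal tree with a $4$-claw.

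**Main obstacle.** The delicate point is the boundary between ``small enough to be an exception'' and ``large enough for Lemma~\ref{lemma:path} to bite'': I expect the bulk of the work is verifying that in the residual configurations $(r_0,r_k)=(4,3)$ and $(4,4)$ the path between $v_0$ and $v_k$ is long enough (so that $v_1$ really is a solitary type-$A$ vertex and the type-$A$ neighbour on its far side satisfies the $\rho\le 5/6$ bound), and that the substitution improves $m(T)+\alpha m_0(T)$ for the whole interval of admissible $\alpha$ — both endpoints $\alpha=0$ (ordinary optimality) and $\alpha$ up to the bound $\rho\le1$ for type-$A$ subtrees. This is exactly the kind of case analysis encapsulated in the replacement tables, so in practice I would discharge it by a reference to \rref{replacement-full:4-claw-is-bad}, and only spell out by hand the handful of genuinely small cases ($|T|\le$ some explicit bound) that those tables do not cover, confirming $T_{6,1}^*$ as the sole survivor.
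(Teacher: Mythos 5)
Your reduction to a longest path, giving $r_0\le r_k+1$ and hence $(r_0,r_k)\in\{(4,3),(4,4)\}$, is fine as far as it goes, but the concluding substitution step is not sound. You propose to replace the $4$-claw (a rooted subtree of type $B$ and order $5$) by another rooted tree of the same order and type, and claim this strictly increases $m(T)+\alpha m_0(T)$ because the complementary subtree satisfies $\alpha=\rho(S)\le 5/6$. But the $4$-claw is actually the best candidate for $\alpha\le 1$: among rooted trees of type $B$ and order $5$ satisfying the bipartition condition the only possibilities are $\calB(L,L,L,L)$, $\calB(L,A_3^*)$ and $\calB(F)$, with $m+\alpha m_0$ equal to $4+\alpha$, $3+2\alpha$ and $2+3\alpha$ respectively, and the first dominates whenever $\alpha\le 1$. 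So no replacement of the $4$-claw alone can contradict Proposition~\ref{proposition:subtrees-of-optimal-trees-are-alpha-optimal}. The appeal to \rref{replacement-full:4-claw-is-bad} does not close the gap either: that entry lists two specific \emph{unrooted} trees of orders $10$ and $11$, which correspond only to the degenerate case where everything on the other side of the claw is a leaf, and says nothing about the general configurations your path analysis leaves open.

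The paper's argument instead exploits the local structure \emph{beyond} the claw rather than balancing the two ends of a longest path. By Proposition~\ref{proposition:degree-bound-A} (LC2) the type-$A$ neighbour $v$ of the claw's centre $w$ has degree at most $2$; either $v$ is a leaf, whence $T=T_{6,1}^*$, or $v$ has a unique further neighbour $u$ of type $B$ with branches $A_1,\ldots,A_\ell$. When all $A_j$ are leaves, Lemma~\ref{lemma:path} forces $\ell\in\{3,4\}$ and \rref{replacement-full:4-claw-is-bad} eliminates both resulting trees. Otherwise \liref{lemma:exchange}{item:exchange-symmetric} applied to the two-step path from $u$ to $w$ yields $\rho(A_1)+\cdots+\rho(A_{\ell-1})\ge 3$, and a continuant comparison between $T$ and a rearranged tree $T'$---where the $4$-claw is converted into a fork \emph{and simultaneously} the branches $A_1,A_2$ are moved across to the fork's side---gives $2\ge b(3a-2)$ with $a=\rho(A_3)+\cdots+\rho(A_\ell)$ and $b=\rho(A_1)+\rho(A_2)$, which contradicts $a\ge 3/2$ and $b\ge 1$. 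This combined move, redistributing the $A_j$'s at the same time as reshaping the claw, is the essential idea missing from your proposal; a swap confined to the $4$-claw itself can never beat the claw.
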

\begin{proof}
  We denote the root of the $4$-claw by $w$. The neighbour of $w$ which is not
  contained in the $4$-claw is denoted by $v$. By
  Proposition~\ref{proposition:degree-bound-A}, $T$ fulfils LC1--LC2. 
  As $w$ is of type $B$, $v$ is of type $A$ and $\deg v\le 2$. If $v$ is a
  leaf, then $T=T_{6,1}^*$ and we are done. So we assume the contrary and
  denote the neighbour of $v$ different from $w$ by $u$. Then
  $T$ is of the shape shown in Figure~\ref{fig:4-claw-is-bad-1}, 
  where $A_1$, \ldots, $A_\ell$ denote some trees of type $A$ with
  $\rho(A_1)\ge \rho(A_2)\ge\cdots\ge\rho(A_\ell)$. If 
  $A_1=\cdots=A_\ell=L$, then $\ell\in\{3,4\}$ by Lemmata~\ref{lemma:bad-ends}
  and \ref{lemma:path}.
  Both cases do not lead to an optimal tree, cf.\
  \rref{replacement-full:4-claw-is-bad}.

  So we may assume that $\rho(A_\ell)<1$, whence \liref{lemma:exchange}{item:exchange-symmetric}, with $u$ and $w$ as pivotal vertices, yields $\rho(A_1)+\cdots+\rho(A_{\ell-1})\ge 3$. Thus we have $\ell=4$ and
  $A_1=A_2=A_3=L$ or $\ell\ge 5$.
  \begin{figure}[htbp]
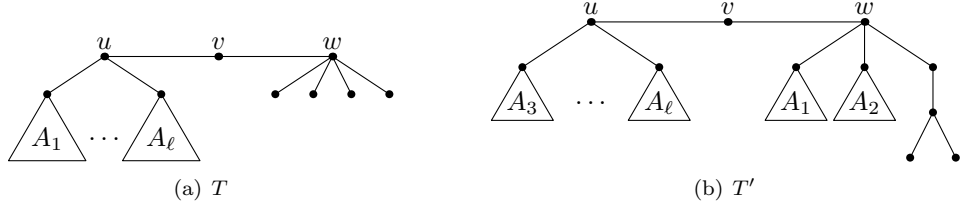

    \centering
    \rule{0pt}{0pt}\hfill\subfigure[$T$\label{fig:4-claw-is-bad-1}]{\includegraphics{4claw-1.1}}
    \hfill\subfigure[$T'$\label{fig:4-claw-is-bad-2}]{\includegraphics{4claw-3.1}}
    \hfill\rule{0pt}{0pt}
    \caption{Trees considered in Lemma~\ref{lemma:4-claw-is-bad}.}
  \end{figure}

  We consider the tree $T'$ shown in Figure~\ref{fig:4-claw-is-bad-2} of the same order as $T$.
  Using the abbreviations $a=\rho(A_3)+\cdots+\rho(A_\ell)$, 
  $b=\rho(A_1)+\rho(A_2)$ and $M=m(A_1)\ldots m(A_{\ell})$ as well as the optimality of $T$,
  Lemma~\ref{lemma:m-m-0-via-continuants} yields
  \begin{align*}
    M(5a+5b+4)&=M\CFP(a+b,1,4)=m(T)\\&\ge m(T')=3M\CFP\left(a,1,b+\frac23\right)=M(3ab+5a+3b+2),
  \end{align*}
  which implies
  \begin{equation*}
    2\ge b(3a-2).
  \end{equation*}
  As $b=\rho(A_1)+\rho(A_2) \geq 1$ and $a \geq 3/2$ (for $\ell \ge 5$ as well as for $\ell = 4$ and $\rho(A_3) = 1$)
  by Lemma~\ref{lemma:weak-rho-estimate-A}, this is a contradiction.
\end{proof}

\subsection{Lower degree bounds for vertices of type \texorpdfstring{$B$}{B}}
\label{section:light-trees}
We now want to show that almost all optimal trees fulfil LC1--LC4.
In order to facilitate the discussion, we introduce the notion of a ``light''
vertex. 

\begin{definition}
  Let $T$ be a tree fulfilling the bipartition condition and $v$ a vertex of
  type $B$ in $T$. Then $v$ is said to be a \emph{light vertex} if it has
  degree $\le 3$ and is adjacent to at most one leaf.
\end{definition}

The sum of the $\rho$-values of the rooted connected components of $T-v$ is quite small for
a light vertex $v$. The exchange lemma then forbids vertices whose
rooted connected components have a high sum of $\rho$-values.

The path decomposition lemma can be used to
derive a description of light vertices:

\begin{lemma}\label{lemma:light-vertex-properties}
  If $T\notin\{ T_{2}^*, T_{3}^*, T_{6,2}^*\}$ is an optimal tree, then $T$
  fulfils LC1--LC3. If $v$ is a light vertex of $T$, then $\deg v=3$ and
  $v$ is adjacent to exactly one leaf.
\end{lemma}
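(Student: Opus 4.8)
The plan is to dispatch LC1 and LC2 by quoting Proposition~\ref{proposition:full-bipartite} (valid for order $\ge 3$) and Proposition~\ref{proposition:degree-bound-A}, and to concentrate on LC3 together with the structure of light vertices. I would begin with one elementary remark: in an optimal tree of order $\ge 3$, a single leaf $s$ forms a rooted subtree isomorphic to $L$, which is of type $A$; by the remark identifying the type of a rooted subtree with the type of its root, every leaf of $T$ is of type $A$. In particular no vertex of type $B$ is a leaf. A vertex of type $B$ of degree $2$ with two leaf neighbours would force $T=T_3^*$, which is excluded, and any other vertex of type $B$ of degree $2$ is a light vertex; hence LC3 will follow once the claim about light vertices is established, and it is that claim which forms the heart of the proof.

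So let $v$ be a light vertex of $T$. The key is to realise $v$ as an interior vertex of a long path and then apply the path decomposition lemma (Lemma~\ref{lemma:path}). I would choose a longest path $P$ among all paths through $v$. Its endpoints are leaves (otherwise $P$ could be extended through one of them while still passing through $v$); let $v_0$ and $v_k$ be the neighbours of the two endpoints on $P$. By maximality of $P$, every neighbour of $v_0$ that is not on $P$, and likewise of $v_k$, must be a leaf, so $v_0$ and $v_k$ are the roots of an $r_0$-claw and an $r_k$-claw with $r_0,r_k\ge 1$. If $r_0=1$, then $v_0$ together with its pendant leaf is a rooted subtree $B_2^*$ of $T$, so Lemma~\ref{lemma:strange-special-cases} forces $T\in\{T_3^*,T_{6,2}^*\}$; but $T_{6,2}^*$ is excluded by hypothesis and $T_3^*$ has no light vertex. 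The same applies to $r_k$, so $r_0,r_k\ge 2$, and after relabelling the two ends we may assume $r_0\ge r_k\ge 2$. Since $v$ is light it has at most one leaf neighbour, so $v$ cannot coincide with $v_0$ or $v_k$ (each of which has at least two), and since $v_0$ and $v_k$ are of type $B$ the index $k$ is even; hence $v=v_i$ for some even $i$ with $0<i<k$.

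Now Lemma~\ref{lemma:path} applies verbatim and yields $\rho_i\ge r_0-1\ge 1$, where $\rho_i=\sum_{j=1}^{r_i}\rho(S_{i,j})$ and $r_i=\deg v-2$. Lightness forces $r_i\le 1$. If $r_i=0$, i.e.\ $\deg v=2$, then $\rho_i=0<1$, a contradiction; hence $\deg v=3$ and $r_i=1$, so $\rho(S_{i,1})=\rho_i\ge 1$. But $S_{i,1}$ is a branch of the type-$B$ vertex $v$ and is therefore of type $A$, and for a rooted tree of type $A$ one has $\rho\le 1$ with equality only for $L$, by the remark following Lemma~\ref{lemma:rho-formulae}; thus $S_{i,1}=L$ and $v$ is adjacent to exactly one leaf. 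This proves the assertion on light vertices. For LC3 it now only remains to recall that a vertex of type $B$ of degree $2$ with two leaf neighbours forces $T=T_3^*$ (excluded), whereas any other such vertex is light and hence has degree $3$ by what was just shown, a contradiction; together with the observation that no vertex of type $B$ is a leaf, this gives that every vertex of type $B$ has degree $\ge 3$.

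I expect the delicate point to be the set-up of the path decomposition rather than any computation: one must verify that a longest path through a light vertex genuinely terminates in two claws of size at least $2$---this is exactly where the earlier exclusion of $B_2^*$ (Lemma~\ref{lemma:strange-special-cases}) is used---and that the light vertex lies strictly between these two claws, so that the hypotheses ``$r_0\ge r_k\ge 1$, $r_0\ge 2$, $0<i<k$'' of Lemma~\ref{lemma:path} are met. Once this is in place, the inequality $\rho_i\ge r_0-1\ge 1$ combined with $\rho\le 1$ for type-$A$ subtrees finishes everything off with no further work.
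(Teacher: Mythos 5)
Your proof is correct and follows essentially the same route as the paper: both establish that a light vertex $v$ must lie on a path whose two ends are claws of size at least~$2$ (using Lemma~\ref{lemma:good-ends} and the exclusion of $B_2^*$ via Lemma~\ref{lemma:strange-special-cases}), then invoke Lemma~\ref{lemma:path} to force $\rho_i\ge r_0-1\ge 1$ at $v$, and finish with $\rho\le 1$ for type-$A$ subtrees with equality only for $L$. The paper phrases the setup in terms of the rooted connected components of $T-v$ rather than an explicit longest path, but the two formulations amount to the same decomposition.
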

\begin{proof}
  If $T$ has no light vertex, then $T$ fulfils LC1--LC4. So we assume that $v$
  is a light vertex.
  Denote the rooted connected components of $T-v$ by $T_0$, \ldots,
  $T_{k-1}$ with $|T_0|\ge |T_1|\ge \cdots \ge |T_{k-1}|$.  As $v$ is
  a light vertex, we have $k\le 3$. If $|T_0|=1$, i.e., $T_0=\cdots=T_{k-1}=L$,
  we have $k\le 2$ and
  $T\in\{T_{2}^*, T_{3}^*\}$, which have been excluded.  If $|T_1|=1$, then $v$ is the root of a $(k-1)$-claw. As $v$
  is light, we have $k=2$, contradiction to
  Lemma~\ref{lemma:strange-special-cases}.

  So both $T_0$ and $T_1$ contain an $\ell_0$-claw and an $\ell_1$-claw, respectively, for
  some $\ell_0\ge 2$, $\ell_1\ge 2$ by Lemma~\ref{lemma:good-ends} and
  Lemma~\ref{lemma:strange-special-cases}. By Lemma~\ref{lemma:path}, we obtain 
  $\rho(T_2)+\cdots+\rho(T_{k-1})\ge 1$. As $k\le 3$ by assumption, we have 
  $\rho(T_2)+\cdots+\rho(T_{k-1})=\rho(T_2)\le 1$, thus $T_2$ is a leaf, as required.
\end{proof}

We now describe vertices of type $B$ when a light vertex is present.

\begin{lemma}\label{lemma:light-tree-vertex-classes}
  Let $T\notin\{ T_{2}^*, T_{3}^*, T_{6,2}^*\}$ be an optimal tree, $v$ be a light vertex and $w$ be a vertex of type
  $B$ of $T$. Then either $\deg w=4$ and $w$ is not adjacent to any leaf or $\deg w=3$ and
  $w$ is adjacent to one or two leaves.
\end{lemma}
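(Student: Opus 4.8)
The plan is to combine the structural facts already established with the exchange lemma applied at the light vertex $v$. First I would note that by Lemma~\ref{lemma:light-vertex-properties} we know $T$ fulfils LC1--LC3, so $w$ has type $B$ and hence $\deg w \ge 3$ by LC3; combined with the fact that no $4$-claw occurs (Lemma~\ref{lemma:4-claw-is-bad}, since $T\notin\{T_{6,1}^*\}$ — this needs to be checked, or handled as an exception) and no $k$-claw with $k\ge 5$ occurs (Lemma~\ref{lemma:bad-ends}), $w$ can be adjacent to at most three leaves; but LC6 is exactly what we are trying to get towards, so at this stage I would only claim $w$ is adjacent to at most three leaves and separately rule out three leaves. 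Actually the cleaner route: I want to show $\deg w\le 4$, then split according to how many leaves are attached.

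The main mechanism is the following. Take a longest path through $w$ (or rather, set up the path-decomposition picture of Figure~\ref{fig:shape-exchange-lemma} with one pivotal vertex being the light vertex $v$ — more precisely the root of the $2$-claw hanging at $v$, which exists by Lemma~\ref{lemma:light-vertex-properties} since $v$ has degree $3$ with one leaf child, so its two non-leaf children are roots of claws, or one of them leads down to a claw). The key numeric input is that for the light vertex $v$, writing out $\rho$ of its rooted connected components, the relevant pivotal value is small: $v$ contributes a single leaf plus two further branches, and by Lemmata~\ref{lemma:weak-rho-estimate-A} and \ref{lemma:A-worse-upper-bound-rho} each such branch $S$ of type $B$ has $\rho(S)$ bounded, while the $2$-claw at the other end has $\rho = 1/2$. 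Then \liref{lemma:exchange}{item:exchange-symmetric} or \liref{lemma:exchange}{item:exchange-B}, with $w$ and (a claw descendant of) $v$ as pivotal vertices, forces $\sum_j \rho(S_{w,j})$ over the ``other'' branches at $w$ to be bounded above by a constant close to $1$ (the precise bound coming from $U_0$ or from the symmetric inequality $a\le b$). Since each branch at $w$ of type $A$ has $\rho\ge 1/2$ with equality only for $L$ (recall $A_3^*$ is excluded by the Remark after Lemma~\ref{lemma:strange-special-cases}, so in fact non-leaf $A$-branches have $\rho$ strictly bounded away from, and smaller than... wait, type-$A$ branches other than $L$ have $\rho\le 5/6$ but also $\ge 1/2$), counting $\rho$-values gives $\deg w - 1 - (\text{number of leaf children}) \cdot \tfrac12$ type... let me restructure: the branches of $w$ not on the path split into leaves (each contributing $\rho=1$) and claws/larger type-$B$ subtrees (each contributing $\rho\ge 1/k\ge 1/4$, or more usefully if not a leaf and not a claw, bounded below). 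Matching these against the exchange bound pins $\deg w$ and the leaf count.

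So the concrete steps are: (1) fix the light vertex $v$ and exhibit a claw descendant of it to serve as one pivot; (2) for the vertex $w$, take the path from that pivot through $w$ and out to a leaf on the far side, giving the Figure~\ref{fig:shape-exchange-lemma} shape with $w = v_i$ for some even $i$ and the light-vertex-pivot at one end; (3) apply the appropriate part of Lemma~\ref{lemma:exchange} to bound $\rho_i = [\type v_i = A] + \sum_j \rho(S_{i,j}) = \sum_j \rho(S_{i,j})$ (type $B$) from above; (4) use the lower bounds $\rho(L)=1$, $\rho(\text{$k$-claw})=1/k$, and Lemma~\ref{lemma:weak-rho-estimate-A}/\ref{lemma:rho-B-bound} to turn that into the two permitted cases $\deg w=4$ with no leaf child, or $\deg w=3$ with one or two leaf children; (5) rule out the residual possibilities ($\deg w=3$ with zero leaves — but that would make $w$ itself light, and then one would need the finer path-decomposition, or it is impossible because a light vertex with no leaf has both children leading to claws forcing a configuration already excluded; $\deg w\ge 5$ — impossible by the claw bounds once combined with the $\rho$-sum bound; $w$ adjacent to three leaves — i.e.\ $w$ is the root of a $3$-claw possibly with one more branch, ruled out by Lemma~\ref{lemma:path} forcing the $\rho$-sum on the far side to be $\ge 2$, contradicting the $\le$ bound from the light pivot).

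I expect the main obstacle to be bookkeeping: making sure the path-decomposition picture can actually be set up with the \emph{light} vertex at one end and $w$ at an even-indexed interior position, and that the pivotal values at the light end are genuinely small enough that the $U_0$ (or symmetric) bound beats $1$ by enough margin to exclude the extra leaf or the extra branch. A secondary subtlety is the parity/position constraint ($k$ even, $w$ at an even index): if $w$ and the light vertex are ``the wrong distance apart'' one may need to route the path through an adjacent vertex or swap which neighbour is the pivot, and one must confirm the types along the path are as claimed using LC1. If $w = v$ itself (i.e.\ $w$ is the light vertex), the statement holds by Lemma~\ref{lemma:light-vertex-properties} directly, so that degenerate case is immediate.
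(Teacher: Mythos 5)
Your high-level instinct is right --- the engine is the exchange lemma with the light vertex involved as one pivot and $w$ as the other, and then an accounting of $\rho$-values at $w$ --- and the paper does exactly that. But your writeup hedges in two places where the hedge is fatal, and as written the argument does not close.

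First, you repeatedly waver between using the light vertex $v$ itself as a pivot and using ``a claw descendant of $v$.'' The paper uses $v$ itself, and this is essential. With $v=v_k$ as pivot, its non-path branches are one leaf $L$ and one subtree $S$ of type $A$; you set $y=\rho(L)=1$ and $b=\rho(S)$, and the crucial numeric input is $\rho(S)\le 5/6$ from Lemma~\ref{lemma:A-worse-upper-bound-rho}. This gives $a<b+U_0(1,2)<5/6+0.1153<1$, where $a$ is the sum of the $\ell-3$ largest $\rho$-values of $w$'s non-path branches (the two smallest go into $x$, and $x>1$ because $A_3^*$ is excluded so each branch has $\rho>1/2$). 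Since each branch has $\rho>1/2$, $a<1$ forces $\ell-3<2$, i.e.\ $\deg w\le 4$; and then $a=\rho(T_1)<1$ shows the largest branch is not a leaf. If instead you pivot on a $2$-claw descendant of $v$, the second branch of that claw is another leaf, so $b=\rho(L)=1$ and the bound becomes $a<1+U_0(1,2)\approx1.1153$. That only yields $\deg w\le 5$ and does not rule out leaf children, so the proof does not finish. The smallness of $b=\rho(S)$ is precisely what you give up by moving the pivot down into $S$.

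Second, you also waver between making $w$ a pivotal vertex and placing $w$ ``at an even-indexed interior position.'' The latter is wrong: if $w$ is interior, the only tool available is Lemma~\ref{lemma:path}, which yields $\rho_w\le r_k\le 3$, and with the lower bound $\rho>1/2$ per branch this only gives $\deg w\le 8$. To get the needed bound you must have $w$ as a pivot so that \liref{lemma:exchange}{item:exchange-B} applies with $a$ the relevant partial sum of $\rho(T_j)$. A related slip: the non-path branches of $w$ are of type $A$ (since $w$ is of type $B$), so the correct lower bound is $\rho\ge 1/2$ from Lemma~\ref{lemma:weak-rho-estimate-A}, not $\rho\ge 1/k$ (which pertains to type $B$ subtrees via Lemma~\ref{lemma:rho-B-bound}). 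Finally, the $\deg w=3$ cases are dispatched by the paper without any exchange argument: if $w$ is light, Lemma~\ref{lemma:light-vertex-properties} gives degree $3$ with exactly one leaf; if $w$ has degree $3$ and is not light, it has at least two leaf neighbours, and three is impossible since then $T=K_{1,3}$ has no light vertex at all.
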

\begin{proof}
  If $w$ is light, then there is nothing to show by
  Lemma~\ref{lemma:light-vertex-properties}. Otherwise, either $\deg w\ge 4$ or
  $\deg w=3$ and $w$ is adjacent to two leaves, as required.
  So we now assume that $\deg w\ge 4$.

  Denote the rooted connected components of $T-w$ by $T_0$, \ldots,
  $T_{\ell-1}$ with $v\in T_0$ and $\rho(T_1)\ge \cdots\ge \rho(T_{\ell-1})$. The rooted connected component of $T-v$ which does
  not contain $w$ and is not a leaf is denoted by $S$. By
  Lemma~\ref{lemma:weak-rho-estimate-A}, we have
  $\rho(T_{\ell-2})+\rho(T_{\ell-1})>\rho(L)$ (since $A_3^*$ has been excluded). Now we make use of a combination of Lemma~\ref{lemma:path} and \liref{lemma:exchange}{item:exchange-B}---the following argument will be used several times, so we only explain it in detail here: the path between $v$ and $w$ can be extended to a longest path ending in an $r_0$- and an $r_k$-claw, with $2 \le r_0$, $r_k \le 3$ by Lemmata~\ref{lemma:strange-special-cases}, \ref{lemma:bad-ends} and \ref{lemma:4-claw-is-bad}. Application of Lemma~\ref{lemma:path} now shows that the vertices on the path between $v$ and $w$ satisfy the necessary conditions to make \liref{lemma:exchange}{item:exchange-B} applicable (with $v$ and $w$ as pivotal vertices), which yields
  \begin{equation}\label{eq:light-tree-vertex-classes-estimate}
    \rho(T_1)+\cdots+\rho(T_{\ell-3})\le \rho(S)+0.1153.
  \end{equation}
  From Lemmata~\ref{lemma:weak-rho-estimate-A} and
  \ref{lemma:A-worse-upper-bound-rho}, we obtain
  \begin{equation*}
    \frac{\ell-3}{2}< \rho(T_1)+\cdots+\rho(T_{\ell-3})\le \rho(S)+0.1153\le \frac56+0.1153<1,
  \end{equation*}
  which yields $\ell<5$ and  $\deg w=\ell= 4$. Furthermore,
  \eqref{eq:light-tree-vertex-classes-estimate} together with
  Lemma~\ref{lemma:A-worse-upper-bound-rho} yields $\rho(T_1)\le
  \frac56+0.1153<1$, i.e., $1>\rho(T_1)\ge\rho(T_2)\ge\rho(T_3)$, so $w$ is not
  adjacent to any leaf.
\end{proof}

Light vertices correspond to low values of $\rho(A)$ for rooted subtrees of
type $A$. This correspondence is described in the following two lemmata.

\begin{lemma}\label{lemma:rho-A-2-3}
  Let $T\notin\{ T_{2}^*, T_{3}^*, T_{6,2}^*\}$ be an optimal tree and $A$ be a rooted subtree of $T$
  of type $A$ that contains no light vertex of $T$. Then
  \begin{equation*}
    \rho(A)\ge \frac23
  \end{equation*}
  with equality for $A\in\{F, A_{14}^*,A_{24}^*\}$.
\end{lemma}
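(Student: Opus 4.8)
The plan is to argue by induction on $|A|$ (or, equivalently, on the structure of $A$), using the recursive formula from Lemma~\ref{lemma:rho-formulae} together with the lower bounds for $\rho$ already established. Write $A=\calA(B_1,\ldots,B_\ell)$ for branches $B_1,\ldots,B_\ell$ of type $B$, so that $\rho(A)=1/(1+\sum_j\rho(B_j))$. Since $A$ is of type $A$ with $A\neq L$ (the case $A=L$ giving $\rho(A)=1\ge 2/3$ immediately), we have $\ell\ge 1$, and it suffices to show $\sum_j\rho(B_j)\le 1/2$, i.e.\ that there is exactly one branch $B_1$ and that $\rho(B_1)\le 1/2$. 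First I would rule out $\ell\ge 2$: each $B_j$ is of type $B$, hence contains a $k$-claw for some $k\ge 1$ (using Lemma~\ref{lemma:good-ends} together with the fact that $B_2^*$ and $A_3^*$ have been excluded), and by Lemma~\ref{lemma:rho-B-bound} with $k\le 4$ (Lemmata~\ref{lemma:bad-ends} and \ref{lemma:4-claw-is-bad}) we get $\rho(B_j)\ge 1/5$ for each $j$; but the root of $A$ is a vertex of type $A$, and if $\ell\ge 2$ then $A$ contains the root of $A$ adjacent to $\ell\ge 2$ branches of type $B$ — this would force $\deg\ge 3$ at a type-$A$ vertex, contradicting LC2 (Proposition~\ref{proposition:degree-bound-A}). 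Hence $\ell=1$ and $A=\calA B_1$ with $\rho(A)=1/(1+\rho(B_1))$.

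It remains to show $\rho(B_1)\le 1/2$, equivalently $\sum_i\rho(S_i)\ge 2$ where $S_1,\ldots,S_m$ are the branches of $B_1$ (all of type $A$, since $B_1$ fulfils the bipartition condition for rooted trees). Since $B_1$ is of type $B$ it has $\deg\ge 3$ by LC3 (established for $T\notin\calS$, and the excluded small trees are handled separately), so $m\ge 2$ (the parent of $B_1$ accounts for one further edge). If all branches $S_i$ were leaves, $B_1$ would be a $k$-claw with $k\ge 2$, but then the root of $A$ would be a type-$A$ vertex adjacent to a $k$-claw of size $\ge 2$ — this is exactly the configuration whose sub-cases are the trees of Figure~\ref{fig:good-ends-final}, none of which are $\alpha$-optimal for $\alpha\in[0,1]$ except the one giving $T_{6,2}^*$, all excluded. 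So at least one branch $S_i$ is not a leaf. Now I invoke the hypothesis that $A$ contains no light vertex of $T$: $B_1$ is a vertex of type $B$ inside $A$, so $B_1$ is not light, which by definition means $\deg B_1\ge 4$ or $B_1$ is adjacent to at least two leaves. Combined with the structural analysis, this forces either $m\ge 3$ (so $\sum\rho(S_i)\ge 3\cdot\frac12>2$ by Lemma~\ref{lemma:weak-rho-estimate-A}, wait — need at least two non-leaf to beat $2$: with $m\ge 3$ and at least one non-leaf $S_i$ one gets $\sum\ge \frac12+\frac12+\frac12$; but leaves contribute $1$, so actually $\sum\ge 1+1+\frac12>2$ when two of them are leaves), or $m=2$ with $B_1$ adjacent to two leaves (impossible, since then both $S_i$ are leaves, the case already excluded), leaving the genuinely tight case $m=2$, $\deg B_1=3$ forbidden; so in all admissible cases $\sum_i\rho(S_i)\ge 2$, with equality exactly when the branch profile is the one realised by $F$, $A_{14}^*$, $A_{24}^*$.

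The main obstacle is the equality discussion: one must verify that $\rho(B_1)=1/2$ — equivalently $\sum_i\rho(S_i)=2$ — forces $B_1$ (hence $A$) to be one of the three listed trees. For this I would argue that equality in $\sum_i\rho(S_i)\ge 2$ under the constraint "$B_1$ is not light, not all $S_i$ leaves" pins down the multiset $\{\rho(S_i)\}$ to be either $\{1,1\}$ with one $S_i$ of value exactly $1$ forced to be $L$ and the other... — no; rather, the tight configurations are $\deg B_1=3$ with branches of $\rho$-values summing to $2$ and $B_1$ adjacent to exactly one leaf (value $1$) plus one branch of $\rho$-value $1$ that is not $L$, which by Lemma~\ref{lemma:weak-rho-estimate-A} is impossible, OR $\deg B_1 = 4$ with three leaves — forbidden by LC6 — OR $\deg B_1=4$ with branch $\rho$-values $1,\tfrac12,\tfrac12$ (two $A_3^*$'s, excluded) — so actually the equality case needs the recursion to go one level deeper, where $F=\calA\calB(L,L)$ gives $\rho(B_1)=\rho(\calB(L,L))=\tfrac12$ exactly. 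Carefully tracing which finite trees saturate each inequality in the chain, checking against the $\alpha$-optimality data in Figure~\ref{figure:important-rooted-trees} and Table~\ref{tab:replacements-alpha-optimal}, and confirming that precisely $F$, $A_{14}^*$, $A_{24}^*$ survive, is the delicate bookkeeping step; everything else is a direct application of Lemmata~\ref{lemma:rho-formulae}, \ref{lemma:weak-rho-estimate-A}, \ref{lemma:rho-B-bound} and the local conditions LC1--LC6.
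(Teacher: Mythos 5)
Your overall plan---induction on $|A|$, decomposing $A=\calA\calB(S_1,\ldots,S_m)$ via LC2, reducing to $\sum_i\rho(S_i)\ge 2$---matches the paper's strategy. But the execution has two genuine gaps.

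First, the step excluding ``all $S_i$ are leaves'' is simply wrong, and in fact must be wrong: $F=\calA\calB(L,L)$ has exactly this property, and $F$ is one of the three trees that the lemma asserts attain equality. The trees in Figure~\ref{fig:good-ends-final} are the subtrees without any $k$-claw ($k\ge 2$) considered in Lemma~\ref{lemma:good-ends}; they are not the trees $\calA\calB(L,\ldots,L)$ you claim them to be, so the citation does not support the conclusion. The correct argument at this point (used by the paper) is: since the root of $B_1$ is a vertex of type $B$ in $A$ and is not light, it has degree $\ge 4$ or is adjacent to $\ge 2$ leaves; as its parent (the root of $A$) is not a leaf, the case $m=2$ forces $S_1=S_2=L$, i.e.\ $A=F$, and otherwise $m\ge 3$.

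Second, and more seriously: you announce an induction on $|A|$ but never actually invoke the induction hypothesis. For the case $m\ge 3$ you estimate $\rho(S_i)\ge 1/2$ from Lemma~\ref{lemma:weak-rho-estimate-A}, which gives only $\sum_i\rho(S_i)\ge 3/2<2$---insufficient (you notice this yourself mid-argument, and the patch ``$1+1+\tfrac12$'' only covers the sub-case where two of the $S_i$ are leaves, not the sub-cases with zero or one leaf, which can occur). The point the paper's proof hinges on, and which is missing here, is that each $S_i$ is itself a rooted subtree of $T$ contained in $A$, hence contains no light vertex of $T$, hence the \emph{induction hypothesis} gives $\rho(S_i)\ge 2/3$. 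Then $\sum_i\rho(S_i)\ge 3\cdot\tfrac23=2$ immediately, and equality forces $m=3$ with $\rho(S_i)=2/3$ for all $i$, hence $S_i\in\{F,A_{14}^*,A_{24}^*\}$ by the induction hypothesis again. The remaining equality analysis is a finite case check over the ten unordered triples from $\{F,A_{14}^*,A_{24}^*\}$, which the paper handles via the replacement table~\rref{replacement:rho-A-2-3} and Proposition~\ref{proposition:subtrees-of-optimal-trees-are-alpha-optimal}; your concluding paragraph gestures at such a check but does not carry it out, and because the lower-bound step is broken the equality discussion has nothing firm to stand on.
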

\begin{proof}
  We prove the result by induction on the order of $A$. If $A$ has order
  $1$, then $\rho(A)=1$.

  Otherwise, $A=\calA\calB(A_1,\ldots,A_k)$ for suitable rooted trees $A_1$, \ldots,
  $A_k$ of type $A$ by Proposition~\ref{proposition:degree-bound-A}. As $A$ does not contain any light vertex, we have
  $A=F$ with $\rho(A)=2/3$ or $k\ge 3$. 

  We now turn to the case $k\ge 3$. We have
  $\rho(A_j)\ge 2/3$ by the induction hypothesis and therefore
  \begin{equation*}
    \rho(A)=\frac{1}{1+\frac1{\rho(A_1)+\cdots+\rho(A_k)}}\ge \frac{1}{1+\frac1{3\cdot\frac23}}=\frac23.
  \end{equation*}
  Equality holds for $k=3$ and $\rho(A_1)=\rho(A_2)=\rho(A_3)=2/3$. By the induction
  hypothesis, we conclude that $A_1$, $A_2$, $A_3\in\{F,
  A_{14}^*,A_{24}^*\}$. We have
  $A_{14}^*=\calA\calB(F,F,F)$ and
  $A_{24}^*=\calA\calB(F,F,A_{14}^*)$. Next, the two trees
  $\calA\calB(F,F,A_{24}^*)$ and 
  $\calA\calB(F,A_{14}^*,A_{14}^*)$ are not $\alpha$-optimal for
  $\alpha>50/2473$, cf.\ \rref{replacement:rho-A-2-3}, whence they do
  not occur as rooted subtrees of optimal trees by
  Proposition~\ref{proposition:subtrees-of-optimal-trees-are-alpha-optimal} and
  Lemma~\ref{lemma:rho-B-bound}. A further six cases have to be checked, but none
  of these is an $\alpha$-optimal tree for any $\alpha\ge0$, cf.\ again \rref{replacement:rho-A-2-3}.
\end{proof}

\begin{lemma}\label{lemma:upper-bound-rho-A-light}
  Let $T\notin\{ T_{2}^*, T_{3}^*, T_{6,2}^*\}$ be an optimal tree containing a light vertex and $A$ be
  a rooted subtree of $T$ of type $A$. Then  $A$ is a leaf or
  \begin{equation*}
    \rho(A)\le \frac23,
  \end{equation*}
  where equality holds if and only if $A$ does not contain a light vertex.
\end{lemma}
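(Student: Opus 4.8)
The plan is to prove the statement by strong induction on $|A|$, mirroring the structure of the proof of Lemma~\ref{lemma:rho-A-2-3}, but now exploiting the presence of a light vertex \emph{somewhere in $T$} to get the reverse inequality $\rho(A)\le 2/3$ for subtrees $A$ containing a light vertex. First I would dispose of the base case: if $|A|=1$ then $A=L$, and there is nothing to prove. So assume $A\ne L$; by Proposition~\ref{proposition:degree-bound-A} we may write $A=\calA\calB(A_1,\ldots,A_k)$ for rooted subtrees $A_1,\ldots,A_k$ of type $A$, so that by Lemma~\ref{lemma:rho-formulae},
\begin{equation*}
  \rho(A)=\frac{1}{1+\dfrac1{\rho(A_1)+\cdots+\rho(A_k)}}.
\end{equation*}
The quantity $\rho(A)$ is increasing in $\sum_j\rho(A_j)$, so the whole question reduces to bounding $\sum_j\rho(A_j)$ from above. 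If $A$ contains no light vertex, Lemma~\ref{lemma:rho-A-2-3} already gives $\rho(A)\ge 2/3$; combined with the claimed $\rho(A)\le 2/3$, this forces the equality case, so the real content is (a) proving $\rho(A)\le 2/3$ when $A$ contains a light vertex, and (b) showing $\rho(A)<2/3$ strictly when $A$ contains a light vertex that is \emph{internal} to $A$ in a suitable sense.

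The key case split is on whether the root's "grandchild level" — i.e. the vertex $w$ of type $B$ that is the child of the root — is itself a light vertex, and on the structure of the $A_j$. If $w$ is a light vertex, then by Lemma~\ref{lemma:light-vertex-properties} $\deg w=3$ and $w$ is adjacent to exactly one leaf, so $k=2$ with $A_1=L$ and $\rho(A_2)$ bounded; since $\rho(A_2)\le 5/6$ by Lemma~\ref{lemma:A-worse-upper-bound-rho} (using that $A_3^*$ has been excluded), we get $\sum_j\rho(A_j)=1+\rho(A_2)\le 11/6$ — wait, that gives $\rho(A)\le \frac1{1+6/11}$, which is $>2/3$, so this crude bound is not enough and one must instead use the \emph{structure} forced by $w$ being light and $A$ being a subtree of an \emph{optimal} tree. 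The right move here is to apply Lemma~\ref{lemma:light-tree-vertex-classes} to every type-$B$ vertex of $A$: since $T$ contains a light vertex, each type-$B$ vertex $u$ of $A$ has either $\deg u=4$ with no leaf neighbour, or $\deg u=3$ with one or two leaf neighbours. This dichotomy, fed back into the recursion $\rho(u\text{-subtree})=1/\sum\rho(\text{children})$, is what actually pins $\rho(A_j)$ near $2/3$: a type-$A$ vertex whose child is a $\deg$-$3$ $B$-vertex with one leaf behaves like $F$, giving $\rho=2/3$, and any larger configuration only lowers $\rho$. So the inductive step is: $\rho(A_j)\ge 2/3$ for all $j$ (which needs the \emph{lower} bound — but Lemma~\ref{lemma:rho-A-2-3} only applies to subtrees \emph{without} a light vertex, so here one needs the companion lower bound $\rho(A)\ge 1/2$ from Lemma~\ref{lemma:weak-rho-estimate-A}, or a refinement), together with an upper bound forcing $\sum_j\rho(A_j)$ small.

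Concretely, I expect the clean argument runs as follows. By Lemma~\ref{lemma:weak-rho-estimate-A} each $\rho(A_j)\ge 1/2$, and (since $A_3^*$ is excluded) in fact each $A_j$ is either $L$ with $\rho=1$ or satisfies $\rho(A_j)\le 5/6$ by Lemma~\ref{lemma:A-worse-upper-bound-rho}; the balance between "$L$-branches push $\sum\rho(A_j)$ up" and "$w=\calB(A_1,\ldots,A_k)$ light forces few branches" is what I'd exploit. If $A$ contains a light vertex, I'd locate a \emph{deepest} light vertex $v$ of $T$ lying in $A$; by Lemma~\ref{lemma:light-vertex-properties} its rooted subtree $A'=A(v')$ (where $v'$ is the type-$A$ parent of $v$) is exactly $F$ or has $v$ adjacent to one leaf and one further type-$A$ branch $B$ with $\rho(B)\ge 2/3$ (by Lemma~\ref{lemma:rho-A-2-3}, since $v$ was chosen deepest so $B$ contains no light vertex), giving $\rho(A')=1/(1+1/(1+\rho(B)))\le 2/3$. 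Then I would propagate this upward along the path from $v'$ to the root of $A$: along this path the type-$A$ vertices have at least one child subtree with $\rho\le 2/3$, and using the monotonicity of $\rho(\calA\calB(\cdots))$ in the $\rho$-values of its grandchildren plus the bounds $\rho(\text{others})\le 5/6$, one checks $\rho$ stays $\le 2/3$ at each step — with strict inequality unless every such vertex is exactly of $F$-type, i.e. unless we are in the degenerate chain $F, A_{14}^*, A_{24}^*,\ldots$ case, which is precisely "no light vertex", contradicting our choice.

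\textbf{The main obstacle} I anticipate is making the propagation step rigorous without the crude bounds ($\rho\le 5/6$, $\rho\ge 1/2$) being too weak: as noted, naive substitution gives $\rho(A)$ bounds above $2/3$. The resolution is that one must not treat the $A_j$ as arbitrary type-$A$ subtrees with $\rho\in[1/2,5/6]\cup\{1\}$, but rather use Lemma~\ref{lemma:light-tree-vertex-classes} to constrain the \emph{degree sequence} of every $B$-vertex in $A$, and then show by a short direct induction that under those degree constraints, $\rho(\text{type-}A\text{ subtree})$ equals $2/3$ exactly on the family $\{F,A_{14}^*,A_{24}^*,\ldots\}$ and is $<2/3$ otherwise. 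A handful of small exceptional configurations near the leaves will presumably have to be ruled out by explicit non-$\alpha$-optimality, analogously to the six cases checked at the end of the proof of Lemma~\ref{lemma:rho-A-2-3}, via Proposition~\ref{proposition:subtrees-of-optimal-trees-are-alpha-optimal} and the replacement table; I would expect one or two entries of Table~\ref{tab:replacements-alpha-optimal} to be invoked here.
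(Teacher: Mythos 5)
The key lemma you need — Lemma~\ref{lemma:light-tree-vertex-classes} — is identified correctly, and the initial framing (induction on $|A|$, write $A=\calA\calB(A_1,\ldots,A_k)$, bound $\sum_j\rho(A_j)$) is exactly right. But an arithmetic slip derails you: $\frac{1}{1+6/11}=\frac{11}{17}\approx 0.647 < \frac23$, not $>\frac23$ as you claim, so the computation you flag as inadequate actually already works. That false alarm sends you into the ``deepest light vertex plus propagation'' detour, which is neither needed nor cleanly executable with the $\rho\le 5/6$ bound.

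The paper's proof is considerably shorter and avoids all of that. By Lemma~\ref{lemma:light-tree-vertex-classes}, $k\in\{2,3\}$. If $k=2$, the $\calB$-vertex $w$ has degree $3$, and $\rho(A_1)+\rho(A_2)\le 1+1=2$ with equality iff $A_1=A_2=L$, i.e., $A=F$, which contains no light vertex; otherwise $w$ is a light vertex and the sum is strictly less than $2$. If $k=3$, $w$ has degree $4$, so by Lemma~\ref{lemma:light-tree-vertex-classes} \emph{none} of $A_1,A_2,A_3$ is a leaf — this is the step missing from your sketch — hence the induction hypothesis applies to each $A_j$ and gives $\rho(A_j)\le 2/3$, with equality iff $A_j$ has no light vertex; again $\sum\rho(A_j)\le 2$ with equality iff $A$ has no light vertex. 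In both cases $\rho(A)=1/(1+1/\sum\rho(A_j))\le 2/3$. No replacement tables or $\alpha$-optimality checks enter at all: your expectation that they would, imported from the proof of Lemma~\ref{lemma:rho-A-2-3}, is a red herring. Also note that the induction handles both the ``$A$ has a light vertex'' and ``$A$ has no light vertex'' cases uniformly; you do not get the no-light-vertex upper bound for free from Lemma~\ref{lemma:rho-A-2-3}, which only gives the lower bound $\rho(A)\ge 2/3$.
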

\begin{proof}
  We prove the assertion by induction on the order of $A$. For $|A|>1$, we have
  $A=\calA\calB(A_1,\ldots,A_k)$ for suitable rooted trees $A_1$, \ldots,
  $A_k$. By Lemma~\ref{lemma:light-tree-vertex-classes}, we have $k\in\{2,3\}$.

  If $k=2$, then $\rho(A_1)+\rho(A_2)\le 1+1\le 2$ where equality holds if and
  only if both $A_1$ and $A_2$ are leaves, i.e., $A$ does not contain a light vertex.

  If $k=3$, then none of
  $A_1$, $A_2$, $A_3$ is a leaf by Lemma~\ref{lemma:light-tree-vertex-classes},
  so the induction hypothesis yields $\rho(A_1)+\rho(A_2)+\rho(A_3)\le 2$ with
  equality if and only if none of $A_1$, $A_2$, $A_3$ contains a light
  vertex. In both cases, we get
  \begin{equation*}
     \rho(A)=\frac{1}{1+\frac1{\rho(A_1)+\cdots+\rho(A_k)}}\le \frac{1}{1+\frac1{2}}=\frac23.
  \end{equation*}
\end{proof}

We are now ready to prove local condition LC4.

\begin{proposition}\label{proposition:optimal-tree-with-light-vertex-new}
  Let $T$ be an optimal tree containing a light vertex. Then
  $T\in\{T_{2}^*, T_{3}^*, T_{6,2}^*,\allowbreak T_{10}^*\}$. In other words, all optimal trees
  except $T_{2}^*, T_{3}^*, T_{6,2}^*, T_{10}^*$ fulfil LC1--LC4.
\end{proposition}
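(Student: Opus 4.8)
The plan is to argue by contradiction: assume $T$ is an optimal tree containing a light vertex with $T\notin\{T_2^*,T_3^*,T_{6,2}^*,T_{10}^*\}$. By Lemma~\ref{lemma:light-vertex-properties} we may assume $T\notin\{T_2^*,T_3^*,T_{6,2}^*\}$, so that $T$ fulfils LC1--LC3, every light vertex has degree $3$ and exactly one leaf, and (Lemma~\ref{lemma:light-tree-vertex-classes}) every vertex of type $B$ is an \emph{$F$-end} (degree $3$, two leaves, one non-leaf neighbour), a \emph{light vertex} (degree $3$, one leaf, two non-leaf neighbours), or a \emph{branch vertex} (degree $4$, no leaves); in particular LC5 and LC6 hold. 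From LC1--LC2 it also follows that, in any rooting at a vertex of type $B$, every non-leaf vertex of type $A$ has exactly one child. Hence every proper rooted subtree of type $A$ has the form $\calA B$ for a rooted tree $B$ of type $B$, and every proper rooted subtree of type $B$ is $\calB(L,L)$ (below an $F$-end), $\calB(L,S)$ with $S$ non-leaf of type $A$ (below a light vertex), or $\calB(S_1,S_2,S_3)$ with the $S_i$ non-leaf of type $A$ (below a branch vertex). By Lemmata~\ref{lemma:rho-A-2-3} and~\ref{lemma:upper-bound-rho-A-light}, a non-leaf rooted subtree $A$ of type $A$ satisfies $\rho(A)\le 2/3$, with equality exactly when $A$ contains no light vertex, in which case $A\in\{F,A_{14}^*,A_{24}^*\}$.

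\textbf{Localising at an extremal light vertex.} Since $T$ has a light vertex, its ``skeleton'' on the vertices of type $B$ has at least two leaves, which are $F$-ends; root $T$ at one of them, call it $r$. Let $v$ be a light vertex of \emph{maximum depth}. Its parent $p$ is of type $A$ and, not being the root, has degree $2$; the children of $v$ are a leaf and the rooted subtree $S$ at its other non-leaf neighbour, which is non-leaf of type $A$. Because no light vertex lies strictly below $v$, $S$ contains none, so $S\in\{F,A_{14}^*,A_{24}^*\}$ and $\rho(S)=2/3$. Thus $T(v)=\calB(L,S)$ with $\rho(T(v))=3/5$ and $T(p)=\calA\calB(L,S)$ with $\rho(T(p))=5/8$. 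Let $q$ be the parent of $p$; it is of type $B$.

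\textbf{Case analysis on $q$.} In each case I would produce a rooted subtree of $T$ that is excluded. Throughout, if $X$ is any proper rooted subtree arising below one of $r$, $q$, $p$, then the rooted subtree ``above'' $X$ is a non-leaf rooted tree of type $A$, hence by Lemma~\ref{lemma:upper-bound-rho-A-light} it contributes $\alpha=\rho(\cdot)\le 2/3$, so by Proposition~\ref{proposition:subtrees-of-optimal-trees-are-alpha-optimal} it suffices to show that $X$ is not $\alpha$-optimal for any $\alpha\le 2/3$. If $q$ is a light vertex, then $T(q)=\calB(L,\calA\calB(L,S))$ is one of exactly three fixed trees (one for each $S$), and one checks via the replacement table that none of them is $\alpha$-optimal for $\alpha\le 2/3$, a contradiction. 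If $q$ is a branch vertex, then $T(q)=\calB(T(p),S',S'')$; invoking the maximum-depth choice of $v$ once more, each of $S',S''$ is either in $\{F,A_{14}^*,A_{24}^*\}$ (if it contains no light vertex) or of the form $\calA\calB(L,S^*)$ with $S^*\in\{F,A_{14}^*,A_{24}^*\}$ (if it does), so $T(q)$ ranges over a finite list, none of whose members is $\alpha$-optimal for $\alpha\le 2/3$. Finally, $q$ can be an $F$-end only if $q=r$, in which case $T=\calB(L,L,\calA\calB(L,S))$: for $S=F$ this is $T_{10}^*$ (excluded by hypothesis), while for $S\in\{A_{14}^*,A_{24}^*\}$ it is a specific tree of order $20$ or $30$ which the replacement table for optimal trees shows is not optimal, a contradiction. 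This exhausts all cases.

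\textbf{Main obstacle.} The essential point is to make the case analysis genuinely finite. Choosing $v$ to be a light vertex of maximum depth is what forces its own subtree into the shape $\calB(L,S)$ with $S\in\{F,A_{14}^*,A_{24}^*\}$ and, crucially, forces every type-$A$ subtree hanging off the branch vertex $q$ to be one of the six trees $F$, $A_{14}^*$, $A_{24}^*$, $\calA\calB(L,F)$, $\calA\calB(L,A_{14}^*)$, $\calA\calB(L,A_{24}^*)$; without this, the branch-vertex case would involve arbitrarily large subtrees. The remaining work---verifying that the handful of explicitly described rooted trees are not $\alpha$-optimal for $\alpha\le 2/3$, and that the two trees of orders $20$ and $30$ are not optimal---is routine and delegated to the replacement tables and the accompanying Sage computation. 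The delicate bookkeeping is to track types and depths carefully enough that these finite verifications genuinely cover every case, and to ensure the hypotheses of the exchange lemma (and of Proposition~\ref{proposition:subtrees-of-optimal-trees-are-alpha-optimal}) are met wherever they are invoked.
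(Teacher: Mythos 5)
Your localisation (root at an $F$-end, pick a light vertex $v$ of maximum depth, analyse the grandparent $q$) is in the same spirit as the paper's (which instead picks a rooted subtree of type $A$ of \emph{minimal order} containing both a light vertex and a degree-$4$ vertex), and the observation that maximal depth forces $S\in\{F,A_{14}^*,A_{24}^*\}$ is correct. However, there is a genuine gap in the case ``$q$ is light.'' You claim that $T(q)=\calB(L,\calA\calB(L,S))$ is never $\alpha$-optimal for $\alpha\le 2/3$. For $S=F$ this gives $T(q)=\calB(L,A_7^*)$ (order $9$, $m=13$, $m_0=8$), and this tree \emph{is} $\alpha$-optimal for $\alpha\in(1/2,1]$: the replacement~\rref{replacement:heavy} shows its competitor $\calB(L,L,L,C_LL)$ ($m=15$, $m_0=4$) loses for $\alpha>1/2$, and the other candidates of order $9$ and type $B$ do worse. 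Indeed $A_{10}^*=\calA\calB(L,A_7^*)$, whose unique branch is $\calB(L,A_7^*)$, genuinely occurs as a rooted subtree of the optimal tree $T_{13}^*=\calB(L,L,A_{10}^*)$ with complementary $\rho$ equal to $1/2$, and even inside a tree with a light vertex one can have $\rho$ of the complement of $\calB(L,A_7^*)$ equal to $2/3$ (take the $4$-vertex complement to be $F$). So this case does not terminate in one step; one must iterate up the tree, and the chain of light vertices can have length $2$. The paper handles this by first proving the intermediate fact that type-$A$ rooted subtrees with a light vertex but no degree-$4$ vertex are exactly $A_7^*$ or $A_{10}^*$, and then excluding $A_{10}^*$ at the very end via a \emph{different} edge (the one from the $F$-end to one of its leaves, which makes the complement a leaf and $\alpha=1$, so that \rref{replacement:light} applies with the bound $\alpha<7/6$). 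Your uniform bound $\alpha\le2/3$ misses this.

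There is a second, related gap. You never derive the key reduction that neither $\calB(L,A_{14}^*)$ nor $\calB(L,A_{24}^*)$ can occur as a rooted subtree (the paper's \eqref{eq:B-L-A-14-or-24}, which follows from \rref{replacement:light-2-3-replacements} and the $\rho\le 2/3$ bound). Without it, your branch-vertex case $T(q)=\calB(T(p),S',S'')$ has roughly a hundred entries, and several of them, e.g.\ those with $A_{14}^*$ or $A_{24}^*$ as a branch of $q$, are \emph{not} ruled out by the $\alpha$-optimality tables; the paper disposes of these by an additional switching argument (swap $A_{14}^*$ with the $F$ inside $A_7^*$, producing a tree of equal $m$ that does contain $\calB(L,A_{14}^*)$). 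Also, your appeal to ``the replacement table for optimal trees'' to exclude the trees $\calB(L,L,\calA\calB(L,A_{14}^*))$ and $\calB(L,L,\calA\calB(L,A_{24}^*))$ of orders $20$ and $30$ is not supported by Table~\ref{tab:replacements-optimal}; those two cases must be handled via the $\alpha$-optimality table, applied to the rooted subtree $\calB(L,A_{14}^*)$ or $\calB(L,A_{24}^*)$ with complement $F$ and $\alpha=2/3<18/25$.
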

\begin{proof}
  We assume that $T\notin\{T_2^*,T_3^*, T_{6,2}^*\}$.
  As the tree in Figure~\ref{fig:optimal-tree-with-light-vertex-not-optimal-new}
  \begin{figure}[htbp]
    \centering
    \includegraphics{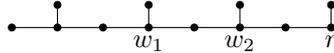}
    \caption{Subtree of $T$ in Proposition~\ref{proposition:optimal-tree-with-light-vertex-new}.}
    \label{fig:optimal-tree-with-light-vertex-not-optimal-new}
  \end{figure}
  with root $r$ is not $\alpha$-optimal for $\alpha<7/6$, cf.\
  \rref{replacement:light}, and any vertex of degree $3$ is adjacent to at least
  one leaf by Lemma~\ref{lemma:light-vertex-properties},
  \begin{equation}
    \label{eq:subtree-max-degree-3-light}
    \parbox{0.8\linewidth}{rooted subtrees $S$ of type $A$ of $T$ containing a light vertex
      and no vertex of degree $4$ are isomorphic to $A_7^*=\calA\calB(L,F)$ or $A_{10}^*=\calA\calB(L,A_7^*)$.}
  \end{equation}
  By \rref{replacement:light-2-3-replacements},
  Lemma~\ref{lemma:upper-bound-rho-A-light},
  Proposition~\ref{proposition:subtrees-of-optimal-trees-are-alpha-optimal} and
  the fact that $T$ contains a light vertex, we see that
  \begin{equation}
    \label{eq:B-L-A-14-or-24}
    \text{neither $\calB(L,A_{14}^*)$ nor $\calB(L,A_{24}^*)$ occurs as a
      rooted subtree of $T$.}
  \end{equation}
  
  Let now $S$ be a rooted subtree of type $A$ of $T$ containing a vertex of
  degree $4$ and a light vertex. We choose $S$ in such a way that its order is
  minimal among all rooted subtrees with these properties. We write
  $S=\calA\calB(S_1,\ldots, S_d)$ for some $d\in\{2,3\}$ (by
  Lemma~\ref{lemma:light-tree-vertex-classes}).

  We first consider the case $d=2$. By
  Lemma~\ref{lemma:light-tree-vertex-classes}, we have $S_1=L$. As $S$ contains
  a vertex of degree $4$, so does $S_2$. By minimality of $S$, $S_2$ does not
  contain a light vertex. By Lemmata~\ref{lemma:rho-A-2-3} and
  \ref{lemma:upper-bound-rho-A-light}, we have $\rho(S_2)=2/3$ and therefore
  $S_2\in \{A_{14}^*, A_{24}^*\}$, contradiction to \eqref{eq:B-L-A-14-or-24}.

  Thus we are left with the case $d=3$. By the minimality of $S$,
  each of the $S_j$ either contains a light vertex and does not contain a vertex of degree $4$, whence
  $S_j\in\{A_7^*, A_{10}^*\}$ by \eqref{eq:subtree-max-degree-3-light}, or does
  not contain a light vertex, whence $S_j\in\{F, A_{14}^*, A_{24}^*\}$ by Lemmata~\ref{lemma:rho-A-2-3} and
  \ref{lemma:upper-bound-rho-A-light}. 

  We first consider the case that one of $S_1$, $S_2$, $S_3$, say $S_1$, is an
  $A_{10}^*$. Then $T$ can be decomposed as in
  Figure~\ref{fig:light-degree-4-decomposition} for some rooted tree $S_0$ of
  \begin{figure}[htbp]
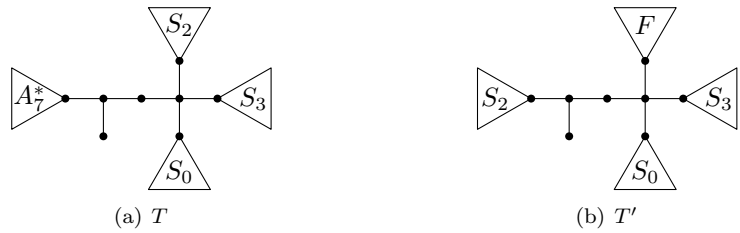

    \centering
    \rule{0cm}{0cm}\hfill\subfigure[\label{fig:light-degree-4-decomposition} $T$]{\includegraphics{light-exchange.5}}\hfill
    \subfigure[\label{fig:light-degree-4-T-prime} $T'$]{\includegraphics{light-exchange.7}}\hfill\rule{0cm}{0cm}
    \caption{Decomposition of $T$ and $T'$ in Proposition~\ref{proposition:optimal-tree-with-light-vertex-new}.}
    
  \end{figure}
  type $A$. As $\rho(S_0)+\rho(S_3)>\rho(L)=1$,
  \liref{lemma:exchange}{item:exchange-symmetric} yields $\rho(S_2)\le
  \rho(A_7^*)<2/3$. Analogously, we get $\rho(S_3)<2/3$. Thus we have
  $\{S_2,S_3\}\subseteq\{A_7^*,A_{10}^*\}$. By
  \rref{replacement:light-2-3-replacements-A-10}, $\calB(S_1,S_2,S_3)$ is not
  $\rho(S_0)$-optimal, contradiction to
  Proposition~\ref{proposition:subtrees-of-optimal-trees-are-alpha-optimal}.

  So we are left with the case that
  $\{S_1,S_2,S_3\}\subseteq\{F,A_7^*,A_{14}^*, A_{24}^*\}$. As $S$ contains a
  light vertex, we may assume that $S_1=A_7^*$. If $S_2\in\{A_{14}^*,
  A_{24}^*\}$, we note that switching $S_2$ and the $F$ of
  $S_1=A_7^*=\calA\calB(L,F)$ yields the tree $T'$ shown in
  Figure~\ref{fig:light-degree-4-T-prime}  with $m(T)=m(T')$ by
  Lemma~\ref{lemma:m-m-0-via-continuants}. But $T'$ is not optimal by
  \eqref{eq:B-L-A-14-or-24}. We conclude that
  $\{S_2,S_3\}\subseteq\{F,A_7^*\}$. By
  \rref{replacement:light-2-3-replacements-A-7},
  Lemma~\ref{lemma:upper-bound-rho-A-light} and
  Proposition~\ref{proposition:subtrees-of-optimal-trees-are-alpha-optimal},
  the only remaining case is $(S_0,S_1,S_2,S_3)=(L,A_7^*,F,F)$. This case is
  ruled out by \rref{replacement-full:light-no-degree-4-new}.

  So there is no rooted subtree $S$ of type $A$ containing both a light vertex
  and a vertex of degree $4$. By Lemmata~\ref{lemma:light-tree-vertex-classes}
  and \ref{lemma:good-ends}, $T$ contains a $2$-claw. Removing this $2$-claw
  from $T$ yields a rooted tree $S$ of type $A$ of $T$ containing a light
  vertex. We conclude that $S$ does not contain a vertex of
  degree $4$, thus $S\in\{A_7^*, A_{10}^*\}$ by
  \eqref{eq:subtree-max-degree-3-light}. The case $S=A_7^*$ yields
  $T=T_{10}^*$, the case $S=A_{10}^*$ is impossible in view of
  \rref{replacement:light} (cf.\ Figure~\ref{fig:optimal-tree-with-light-vertex-not-optimal-new}).
\end{proof}

\subsection{Upper degree bounds for vertices of type \texorpdfstring{$B$}{B}}
\label{section:heavy-trees}
We now conclude the proof which shows that almost all optimal trees fulfil LC1--LC6.

If an optimal tree contains a $2$-claw, the upper degree bound LC5 for degrees
of type $B$ is a consequence of the exchange lemma together with the improved lower
bound for $\rho(S)$ for rooted subtrees $S$ of type $A$ obtained by the
exclusion of light vertices.

\begin{lemma}\label{lemma:2-claw-implies-LC5}
  Let $T\notin\{ T_{2}^*, T_{3}^*, T_{6,2}^*, T_{10}^*\}$ be an optimal
  tree. If $T$ contains a $2$-claw as a rooted subtree, then $T$ fulfils LC1--LC5.
\end{lemma}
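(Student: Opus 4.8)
The plan is as follows. By Propositions~\ref{proposition:degree-bound-A} and \ref{proposition:optimal-tree-with-light-vertex-new}, the hypotheses on $T$ already guarantee LC1--LC4 and that $T$ contains no light vertex, so only LC5 remains to be shown; by LC2 a vertex violating LC5 must be of type~$B$. I would therefore assume, for contradiction, that $w$ is a vertex of type~$B$ with $\deg w=\ell\ge 5$, and examine the rooted connected components $T_0,\ldots,T_{\ell-1}$ of $T-w$, all of which are of type~$A$. The absence of light vertices gives $\rho(T_j)\ge\frac23$ for every $j$ by Lemma~\ref{lemma:rho-A-2-3}, while Lemmata~\ref{lemma:good-ends}, \ref{lemma:strange-special-cases}, \ref{lemma:bad-ends} and \ref{lemma:4-claw-is-bad} force each $T_j$ to be either a single leaf or to contain an $r$-claw with $r\in\{2,3\}$ (here one uses $T\neq T_{6,1}^*$, which holds because the star $T_{6,1}^*=S_6$ contains no $2$-claw while $T$ does).

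Next I would observe that at least two of the components $T_j$ are non-leaves: if all $\ell$ were leaves, $T$ would be a star of order $\ell+1\ge 6$, which contains no $2$-claw; and if exactly one were a non-leaf, $w$ would be the root of an $(\ell-1)$-claw with $\ell-1\ge 4$, contradicting Lemma~\ref{lemma:bad-ends} or Lemma~\ref{lemma:4-claw-is-bad} (once more since $T\neq T_{6,1}^*$). Choosing two non-leaf components $T_0,T_1$, I would take a longest path of $T$ with one endpoint in $T_0$ and one in $T_1$; it passes through $w$, and by maximality the neighbours $p_0,p_1$ of its endpoints are distinct from $w$ (each $T_j$ is a non-leaf) and are the roots of an $r_0$-claw and an $r_k$-claw with $r_0,r_k\in\{2,3\}$ (by LC3 together with the claw bounds). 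Applying Lemma~\ref{lemma:path} to this path, with $v_0=p_0$, $v_k=p_1$ and, after relabelling if necessary, $r_0\ge r_k$, one gets $w=v_i$ for some even $i$ with $0<i<k$ and $\rho_i\le r_k\le 3$. On the other hand $\rho_i$ is the sum of $\rho(S)$ over the $\ell-2$ branches of $w$ that do not lie on the path, so $\rho_i\ge\frac23(\ell-2)$. This already contradicts $\rho_i\le 3$ when $\ell\ge 7$, and for $\ell=6$ it forces $r_0=r_k=3$, so that only the configurations $(\ell,r_k)\in\{(5,2),(5,3),(6,3)\}$ survive.

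For these residual cases the crude bound $\rho\ge\frac23$ is too weak (it only gives $\rho_i\ge 2$, not enough to beat $r_k$), so I would re-run the exchange lemma with $w$ itself as a pivotal vertex: viewing the sub-path of $P$ from $w$ to $p_1$ as in Figure~\ref{fig:shape-exchange-lemma} (so that $w$ carries the $\ell-1$ branches $T_0,S_{i,1},\ldots,S_{i,\ell-2}$ and $p_1$ carries the $r_k$ leaves of its claw), Lemma~\ref{lemma:path} tells me the interior $\rho$-values on this sub-path equal $1$ at odd positions and lie in $[r_0-1,r_k]$ at even positions, so \liref{lemma:exchange}{item:exchange-B} (or \liref{lemma:exchange}{item:exchange-symmetric} for very short sub-paths) applies with $U_0(r_0-1,r_k)<\frac16$. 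Swapping two branches $S_{i,h}$ of $w$ against one leaf of the claw at $p_1$ yields $a\ge\frac43$ and $b\le r_k-1$, and the resulting inequality $a<b+U_0(r_0-1,r_k)$ gives a contradiction whenever $r_k=2$; in particular $(\ell,r_k)=(5,2)$ is settled, using that equality in $\rho_i\ge\frac23\cdot 3$ forces every $S_{i,h}\in\{F,A_{14}^*,A_{24}^*\}$, hence $\rho(S_{i,h})=\frac23$ and indeed $a\ge\frac43$. For $(5,3)$ and $(6,3)$ the branches of $w$ range over a bounded list (each being $L$ or a type-$A$ rooted subtree of $\rho$-value in $[\frac23,\frac56]$), so I expect each of the finitely many resulting rooted subtrees of $T$ to be eliminated by the corresponding entry of Table~\ref{tab:replacements-alpha-optimal} (resp.\ Table~\ref{tab:replacements-optimal}) via Proposition~\ref{proposition:subtrees-of-optimal-trees-are-alpha-optimal}.

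The step I expect to be the main obstacle is exactly this last one: for the borderline degrees $\ell\in\{5,6\}$ the exchange lemma only barely fails to give a contradiction, so one is pushed into a small but genuinely nontrivial case analysis supported by explicit (computer-verified) replacements. Everything else---turning the assumption $\deg w\ge 5$ into a long path that Lemma~\ref{lemma:path} pins down and then into the inequality $\frac23(\ell-2)\le r_k\le 3$---should be routine.
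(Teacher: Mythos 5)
Your reduction to LC5 and the first half of the argument are sound: from Propositions~\ref{proposition:degree-bound-A} and~\ref{proposition:optimal-tree-with-light-vertex-new} you correctly get LC1--LC4 and the absence of light vertices, and Lemma~\ref{lemma:rho-A-2-3} indeed gives $\rho(T_j)\ge\frac23$ for every branch of a degree-$\ge 5$ vertex $w$. Your use of a longest path through $w$ with Lemma~\ref{lemma:path} correctly yields $\frac23(\ell-2)\le\rho_w\le r_k\le 3$ and hence $\ell\le 6$, and the subsequent exchange at the $r_k$-claw endpoint does dispose of the case $r_k=2$.

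However, there is a genuine gap in the residual cases $(\ell,r_k)\in\{(5,3),(6,3)\}$. You assert that the branches of $w$ then "range over a bounded list (each being $L$ or a type-$A$ rooted subtree of $\rho$-value in $[\frac23,\frac56]$)" and can therefore be eliminated by the tables. This is false: there are infinitely many type-$A$ rooted subtrees with $\rho$ in that interval (e.g.\ every $C^kF$ for $k\ge 0$ has $\rho\in[\frac23,\rholimit)$), so there is no finite case analysis to fall back on, and the tables in the appendix do not cover an infinite family. The exchange inequality you derive in these cases, namely $a<b+U_0$ with $a\ge\frac23(\ell-3)$ and $b\le r_k-1=2$, is simply not strong enough to close the argument, and you recognise this yourself without actually supplying the missing argument.

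The root cause is that you discard the hypothesis that $T$ contains a $2$-claw: your longest path may well terminate in $3$-claws, giving $r_k=3$, whereas the whole point of the hypothesis is that one can pivot directly on the $2$-claw root. This is exactly what the paper does: take the path from $w$ to the root $c$ of the $2$-claw (which lies in one component, say $T_0$, of $T-w$), extend it in the standard way to apply Lemma~\ref{lemma:path} and verify the hypotheses of \liref{lemma:exchange}{item:exchange-B}, and then apply exchange-B with $w$ and $c$ as pivotal vertices. Since $c$ carries two leaves ($r_k=2$) and $w$ carries the $k-1$ branches $T_1,\ldots,T_{k-1}$, one swaps $x=\rho(T_{k-1})+\rho(T_{k-2})>1=\rho(L)=y$ against one leaf of the $2$-claw, and exchange-B gives
\begin{equation*}
\rho(T_1)+\cdots+\rho(T_{k-3}) \;<\; \rho(L)+U_0(1,2) \;<\; 1.1153.
\end{equation*}
Combined with $\rho(T_j)\ge\frac23$ this yields $\frac23(k-3)<1.1153$, i.e.\ $k\le 4$, in one stroke. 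Because $b=1$ rather than $b=2$, the bound is tight enough to rule out degree $5$ directly, and no residual cases are left. To repair your proof you would need to replace the longest-path choice by a path to the $2$-claw (or find an independent argument eliminating $(5,3)$ and $(6,3)$, which the tables do not provide).
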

\begin{proof}
  Assume that there is a vertex $w$ of type $B$ of degree $k\ge 5$. We denote the rooted connected components of $T-w$ by
  $T_0$, $T_1$, \ldots, $T_{k-1}$, where a $2$-claw is contained in $T_0$. Now we combine Lemma~\ref{lemma:path} and
  \liref{lemma:exchange}{item:exchange-B} as before: since $\rho(T_{k-1})+\rho(T_{k-2})>
  1=\rho(L)$ (where $L$ is one of the leaves of the $2$-claw contained in
  $T_0$), we have
  \begin{equation*}
    \rho(T_1)+\rho(T_2)+\cdots+\rho(T_{k-3})< \rho(L)+0.1153.
  \end{equation*}
  As
  $T$ contains no light vertex by
  Proposition~\ref{proposition:optimal-tree-with-light-vertex-new},
  Lemma~\ref{lemma:rho-A-2-3} yields
  \begin{equation*}
     \frac23(k-3)<1.1153.
  \end{equation*}
  We conclude that $k<5$, as required.
\end{proof}

We are now left with optimal trees containing $3$-claws. The arguments are
somewhat similar as in the case of light vertices, except that we now have to deal with
``heavy'' vertices.

\begin{lemma}\label{lemma:3-claws-and-degree-4}
  Let $T$ be an optimal tree containing a $3$-claw and containing a vertex $v$ of
  degree $4$ which is adjacent to at most $2$ leaves. 

  Then $v$ is adjacent to exactly $2$ leaves. Furthermore, each vertex of degree $k>4$ is
  adjacent to at most $2$ leaves.
\end{lemma}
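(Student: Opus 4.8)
The plan is to reduce both assertions to the path decomposition lemma (Lemma~\ref{lemma:path}), via the observation that a $3$-claw, being a \emph{rooted subtree} of $T$, forces its root $b$ to be a vertex of $T$ of degree exactly $4$ having three pendant leaves and one non-leaf neighbour $x$; hence $b$ can serve as one end of a path to which Lemma~\ref{lemma:path} applies with $r_0=3$. Since the hypothesis requires a degree-$4$ vertex with at most two leaf neighbours (so $|T|\ge 7$) and $T_{10}^*$ has maximum degree $3$, we have $T\notin\{T_2^*,T_3^*,T_{6,1}^*,T_{6,2}^*,T_{10}^*\}$; hence $T$ fulfils LC1--LC3 (Lemma~\ref{lemma:light-vertex-properties}), contains no light vertex (Proposition~\ref{proposition:optimal-tree-with-light-vertex-new}), and, as $T\neq T_{6,1}^*$, contains no $4$-claw (Lemma~\ref{lemma:4-claw-is-bad}) and no $k$-claw with $k\ge 5$ (Lemma~\ref{lemma:bad-ends}) as a rooted subtree. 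In particular, a deepest path from a vertex into one of its non-leaf branches ends at a claw root of size in $\{2,3\}$: its penultimate vertex has every branch away from the starting vertex (except the terminal leaf) equal to a leaf, and is of type $B$ (being adjacent to a leaf), hence of degree at least $3$ and at most $4$. I shall also use that $\rho(A)=1$ for a leaf while $\rho(A)<1$ for every non-leaf rooted tree of type $A$ (Lemma~\ref{lemma:rho-formulae}), and that $\rho(A)\ge\frac23$ --- with equality only for $A\in\{F,A_{14}^*,A_{24}^*\}$ --- for non-leaf rooted subtrees of type $A$ of $T$ (Lemma~\ref{lemma:rho-A-2-3}).

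First I would show $v$ has exactly two leaf neighbours. Suppose not; since $v$ is of type $B$ (type-$A$ vertices have degree at most $2$ by LC2), it then has at least three non-leaf branches. With $b,x$ as above, consider the path in $T$ that begins $b-x$, follows the unique $b$--$v$ path to $v$ (which does begin with the edge $bx$, as it cannot pass through a leaf of $b$), and then leaves $v$ along a deepest path into some non-leaf branch of $v$ other than the one containing $b$ --- such a branch exists because $v$ has at least two non-leaf branches. Writing $T$ along this path in the shape of Figure~\ref{fig:shape-exchange-lemma}, the end $v_0=b$ is a $3$-claw root so $r_0=3$, and the other end $v_k$ is a claw root with $r_k\in\{2,3\}$. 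Since $b$ is of type $B$, its neighbour $x$ is of type $A$, whence $v\neq x$ (as $v$ is of type $B$); and $v\neq v_k$, since $v$ has a non-leaf branch off the path while $v_k$ has not. So $v$ occupies an interior, even-indexed position of this path, and Lemma~\ref{lemma:path} gives that the sum of $\rho$ over the two branches of $v$ not lying on the path is at least $r_0-1=2$. But each of those two branches is either a leaf (contributing $1$) or a non-leaf rooted tree of type $A$ (contributing strictly less than $1$), and at most one of them is a leaf, so the sum is strictly less than $2$ --- a contradiction. Hence $v$ has exactly two leaf neighbours.

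Next I would prove the ``furthermore'' clause. Assume for contradiction that $w$ has degree $k\ge 5$ and at least three leaf neighbours. If $w$ had at most one non-leaf neighbour, then deleting the edge to that neighbour (if present) would exhibit $w$ together with its $\ge 4$ leaf neighbours as a $(k-1)$-claw with $k-1\ge 4$, and if $w$ has no non-leaf neighbour then $T$ is the star on $k+1\ge 6$ vertices, whose rooted subtrees are leaves and a $(k-1)$-claw; but a $k$-claw with $k\ge 5$ is forbidden by Lemma~\ref{lemma:bad-ends}, a $4$-claw forces $T=T_{6,1}^*$ by Lemma~\ref{lemma:4-claw-is-bad}, and $T_{6,1}^*$ contains no $3$-claw as a rooted subtree, so in every case we contradict the hypothesis. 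Hence $w$ has at least two non-leaf branches. Route a path through $w$ that enters and leaves via two of its non-leaf branches, each extended at the far end along a deepest path to a claw root; call the two end claw roots $v_0$ and $v_k$, of sizes $r_0,r_k\in\{2,3\}$. Then $w$ is an interior, even-indexed vertex of this path (its degree exceeds that of any claw root), its two on-path neighbours are non-leaves, so all its leaf neighbours are off the path, and the sum of $\rho$ over the off-path branches of $w$ is at least $3$; by Lemma~\ref{lemma:path} it is also at most $r_k\le 3$. Hence it equals $3$, which forces $w$ to have exactly three leaf neighbours, exactly two non-leaf branches $B_1,B_2$ (both on the path) so that $k=5$, and $r_0=r_k=3$: each of $B_1,B_2$ ends, along a deepest path, in a $3$-claw.

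This last configuration is the crux of the proof. Since $F$, $A_{14}^*$ and $A_{24}^*$ each contain a $2$-claw, a deepest path into any of them ends at a $2$-claw, not a $3$-claw; hence $B_1,B_2\notin\{F,A_{14}^*,A_{24}^*\}$, and $\rho(B_1),\rho(B_2)>\frac23$ by Lemma~\ref{lemma:rho-A-2-3}. Unfortunately the crude $\rho$-bounds just miss here: the rooted subtree $S$ of $T$ obtained by deleting the edge from $w$ to the root of (say) $B_2$ is of type $B$ with $\rho(S)=\frac1{3+\rho(B_1)}<\frac3{11}$, while $S$ contains a $3$-claw (namely $w$ with its three pendant leaves), so $\rho(S)\ge\frac14$ by Lemma~\ref{lemma:rho-B-bound}, and $\frac3{11}>\frac14$. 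So the configuration must be excluded directly, by replacing the rooted subtree at $w$ (with one non-leaf branch deleted) by a rooted tree of the same order and type with strictly larger $m$, contradicting the $\rho(B_2)$-optimality given by Proposition~\ref{proposition:subtrees-of-optimal-trees-are-alpha-optimal}; such a replacement is of the kind collected in Table~\ref{tab:replacements-optimal} and is routine to verify with the Sage program of~\cite{Heuberger-Wagner:max-card-matching-sage}. With this case excluded, no vertex of degree greater than $4$ is adjacent to more than two leaves, which, together with the first part, proves the lemma. The hardest point is precisely this residual configuration, where an inequality no longer suffices and an explicit substitution is needed; the remainder is careful bookkeeping with Lemma~\ref{lemma:path} and the small exceptional trees.
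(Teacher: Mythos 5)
Your argument for the first assertion (that $v$ has exactly two leaf neighbours) is correct and essentially the same as the paper's: you set up a path through $v$ from the given $3$-claw to a claw root at the far end, so that $v$ is an interior type-$B$ vertex, and the lower bound $\rho_i\ge r_0-1=2$ from Lemma~\ref{lemma:path} forces both off-path branches of $v$ to be leaves. That part is fine.

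The second assertion is where your proof has a genuine gap. You bound $w$'s off-path $\rho$-sum via Lemma~\ref{lemma:path} against the two $3$-claw roots at the ends of the path through $w$, getting $\sum\rho(S_j)\le r_k\le 3$. This inequality is not strict, and so it genuinely admits the case where $w$ has degree $5$ with exactly three leaf neighbours and two non-leaf branches $B_1,B_2$. You correctly observe that the coarse $\rho$-bounds do not rule this case out, and you then appeal to an ``explicit substitution of the kind in Table~\ref{tab:replacements-alpha-optimal}''. But this does not close the argument: the residual configuration is not a single rooted tree, it is a parametrised family. The branches $B_1,B_2$ are only constrained by $\rho>\tfrac23$ and by containing a deep $3$-claw, and they can have arbitrary (unbounded) order, so the subtree $\calB(L,L,L,B_1)$ you would need to replace is not among any finite list of trees and no single table entry can cover it. No uniform replacement valid for all such $B_1$ is exhibited, and it is not clear one exists along the route you sketch.

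The paper avoids the residual case entirely by a different choice of comparison partner for $w$: not a $3$-claw root, but the degree-$4$ vertex $v$ itself, whose off-path $\rho$-sum $\rho(T_1)+\rho(T_2)+\rho(T_3)=\rho(T_1)+2$ is \emph{strictly} less than $3$, because the first part of the lemma has just established that $T_1$ (the branch of $v$ containing a $3$-claw) is not a leaf. Applying \liref{lemma:exchange}{item:exchange-B} with $v$ and $w$ as pivotal vertices --- the intermediate $\rho_j$ along the path lie in $[2,3]$ by Lemma~\ref{lemma:path} extended to claw roots, and $\rho(S_{k-1})\ge\tfrac12>U_0(2,3)$ forces $x\le y$ --- gives $\rho(S_1)+\cdots+\rho(S_{k-2})\le \rho(T_1)+2<3$, hence $\rho(S_3)<1$, so at most two of the $S_j$ are leaves. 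The extra piece of information you did not exploit, namely that $v$ has at most two leaf neighbours, is exactly what supplies the strictness your version of the inequality is missing.
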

\begin{proof}
  Denote the rooted connected components of $T-v$ by $T_1$, \ldots,
  $T_4$, where we assume that $T_1$ contains a $3$-claw and $T_4$ is not
  a leaf. By Lemma~\ref{lemma:path}, we have $\rho(T_2)+\rho(T_3)\ge 2$, i.e.,
  $T_2$ and $T_3$ are indeed leaves.

  We have therefore shown that every vertex of degree $4$ is adjacent to $2$ or
  $3$ leaves.

  Assume that $w$ is a vertex of degree $k\ge 5$ in $T$. Still denoting the
  rooted connected components of $T-v$ by $T_1$, $T_2=L$, $T_3=L$, $T_4$, we may now assume
  that $w$ is contained in $T_4$ ($T_4$ also contains a $3$-claw, since otherwise Lemma~\ref{lemma:2-claw-implies-LC5}
  would apply, so we can interchange the roles of
  $T_1$ and $T_4$ if necessary). The rooted connected components of $T-w$ are
  denoted by $S_0$, $S_1$, \ldots, $S_{k-1}$ with the assumption that $v$ is
  contained in $S_0$ and $\rho(S_1)\ge \rho(S_2)\ge \cdots\ge\rho(S_{k-1})$.  
  Combining Lemma~\ref{lemma:path} and \liref{lemma:exchange}{item:exchange-B} again (with $v$ and $w$ as pivotal vertices and $\rho(S_{k-1})>0.1153$), we get
  \begin{equation*}
    \rho(S_1)+\rho(S_2)+\rho(S_3)\le \rho(S_1)+\rho(S_2)+\cdots+\rho(S_{k-2})\le
    \rho(T_1)+\rho(T_2)+\rho(T_3)<3,
  \end{equation*}
  which implies that $\rho(S_3)<1$ and therefore $w$ is adjacent to at most $2$ leaves.
\end{proof}

The lower bound in the following lemma is the same as in Lemma~\ref{lemma:rho-A-2-3}; but instead
of considering a rooted subtree of an optimal tree, we only assume LC1--LC4.

\begin{lemma}\label{lemma:rho-A-2-3-weak-local-optimality-condition}
  Let $T$ be a tree fulfilling LC1--LC4 and $A$ be a
  rooted subtree of type $A$. Then
  \begin{equation*}
    \rho(A)\ge 2/3.
  \end{equation*}
\end{lemma}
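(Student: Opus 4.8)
**The plan is to argue by induction on $|A|$, following the same pattern as the proof of Lemma~\ref{lemma:rho-A-2-3}, but now using only the hypotheses LC1--LC4 in place of being a rooted subtree of an optimal tree.**

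The base case is $|A|=1$, where $A=L$ and $\rho(A)=1\ge 2/3$. For the inductive step, assume $|A|>1$. Since $A$ is of type $A$ and $T$ fulfils LC1--LC3, the root of $A$ has all its children of type $B$; taking any such child $w$, the bipartition condition and LC3 force $w$ to have degree at least $3$, so $w$ has at least two children (of type $A$). Thus $A=\calA(\calB(\dots),\dots)$ and, writing $B$ for one of the type-$B$ branches at the root, we have $\rho(A)=1/(1+\sum\rho(B_i))\ge 1/(1+\rho(B))$, so it suffices to bound $\rho(B)$ from above, i.e.\ to bound $\sum_j\rho(A_j)$ from above where $A_1,\dots,A_k$ are the branches of $B$, all of type $A$. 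Here $k\ge 2$ by the degree bound, and by LC4, if $k=2$ then $w$ (the root of $B$) has degree $3$ and must be adjacent to at least two leaves — but it has a parent, so at most one child can be a leaf, a contradiction unless in fact $k\ge 3$. Hence $k\ge 3$.

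When $k\ge 3$, I would like to conclude $\rho(B)\le 1/(\sum_j \rho(A_j))\le 1/(3\cdot(2/3))=1/2$ by applying the induction hypothesis $\rho(A_j)\ge 2/3$ to each branch — but this requires each $A_j$ to have order at least $1$, which is automatic, so the induction applies directly to each $A_j$ (each $A_j$ is a rooted subtree of $T$ of type $A$, of strictly smaller order than $A$). This gives $\rho(B)\le 1/(k\cdot 2/3)\le 1/2$ and therefore $\rho(A)\ge 1/(1+1/2)=2/3$, completing the induction.

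**The main obstacle is the case analysis forcing $k\ge 3$ when $w$ has degree $3$.** One must be careful that LC4 only guarantees \emph{two} leaves adjacent to a degree-$3$ vertex, and that the root of $A$ (the parent of $w$) is not itself a leaf-neighbour of $w$ in the relevant sense — so $w$ being adjacent to two leaves means both of $w$'s \emph{children} are leaves, forcing $\sum_j\rho(A_j)=2$ and hence $\rho(B)=1/2$, $\rho(A)\ge 2/3$ in that subcase as well. (Note that unlike Lemma~\ref{lemma:rho-A-2-3}, no claim of equality is made here, so the delicate analysis of which trees attain $\rho(A)=2/3$ is not needed, which simplifies matters considerably.) The only genuinely routine part is checking the inequality $1/(1+1/2)=2/3$ and tracking that all invoked branches have smaller order so the induction is well-founded.
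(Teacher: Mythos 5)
Your overall plan (induction on $|A|$, mirroring the proof of Lemma~\ref{lemma:rho-A-2-3} but using LC1--LC4 in place of optimality) is the right one, and it is exactly what the paper intends by ``analogous.'' However, there is a genuine gap in the key step of the induction, and a separate confused passage.

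The gap: you write $\rho(A)=1/(1+\sum_i\rho(B_i))\ge 1/(1+\rho(B))$ where $B$ is ``one of the type-$B$ branches at the root.'' This inequality points the wrong way. If the root of $A$ had more than one child $B_1,\ldots,B_m$, then $\sum_i\rho(B_i)\ge\rho(B)$ for any particular $B=B_j$, which gives $\rho(A)\le 1/(1+\rho(B))$, not $\ge$; and then an upper bound on the single $\rho(B)$ says nothing useful. The fix --- and the point you omit --- is that LC2 is in your hypotheses: the root of $A$ has type $A$, hence degree at most $2$ in $T$; as a rooted subtree it has a parent, so it has at most one child, and since $|A|>1$ it has exactly one. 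Thus $A=\calA\calB(A_1,\ldots,A_k)$ and $\rho(A)=1/(1+\rho(B))$ with \emph{equality}, where $B=\calB(A_1,\ldots,A_k)$. You never invoke LC2, and without it your chain of inequalities does not establish the claim. This is precisely the role played by Proposition~\ref{proposition:degree-bound-A} in the proof of Lemma~\ref{lemma:rho-A-2-3} that you are supposed to be paralleling.

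Secondarily, your treatment of the case $k=2$ is self-contradictory as written. In the middle paragraph you argue ``it has a parent, so at most one child can be a leaf, a contradiction unless in fact $k\ge 3$'' --- this is simply false; the parent of $w$ being a non-leaf forces \emph{both} children of $w$ to be leaves, which is perfectly consistent with $k=2$ (this is exactly the branch $B$ being a $2$-claw, i.e.\ $A=F$). You then correctly re-derive this in your final paragraph ($\sum_j\rho(A_j)=2$, $\rho(B)=1/2$, $\rho(A)=2/3$), so the conclusion is salvaged, but the ``hence $k\ge 3$'' deduction in the second paragraph should be deleted: the dichotomy is $k=2$ (giving $\rho(A)=2/3$ directly) or $k\ge 3$ (handled by the induction hypothesis giving $\sum_j\rho(A_j)\ge 2$). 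With LC2 properly invoked and the $k=2$ case stated as a computation rather than a contradiction, the proof is correct and matches the paper's intended argument.
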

\begin{proof}
  Analogous to Lemma~\ref{lemma:rho-A-2-3}.
\end{proof}

We will exclude the occurrence of three or more $3$-claws in an optimal tree
by substituting two $3$-claws by  $2$-claws and use the additional vertices
in order to create a light vertex. To make this work, we have to analyse the
effects of these substitutions. As the intermediate steps do not necessarily lead to
optimal trees, we can only use the above bound.

\begin{lemma}\label{label:3-claw-replacements}
  Let $T$ be a tree fulfilling LC1--LC4.
  \begin{enumerate}
  \item If one $3$-claw in $T$ is replaced by a $2$-claw, we have $m(T')/m(T)\ge
    8/11$ and $|T'|=|T|-1$ for the resulting tree $T'$. Furthermore $T'$
    fulfils LC1--LC4.
  \item If one $3$-claw in $T$ is replaced by a $\calB(L,F)$, we have $m(T')/m(T)\ge
    21/11$ and $|T'|=|T|+2$ for the resulting tree $T'$.
  \end{enumerate}
\end{lemma}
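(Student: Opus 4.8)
The plan is to analyse both replacements by looking at the local effect on a rooted subtree and then propagating the multiplicative change through the whole tree via the recursive formulae of Lemma~\ref{lemma:recursive-formulae-rooted-tree}. First I would fix the vertex $v$ of type $A$ at which the $3$-claw hangs: its parent $b$ (of type $B$) has a single branch that is the $3$-claw, so that the rooted subtree $S=T(v)$ equals $\calA\calB(L,L,L)$, with $(m(S),m_0(S))=(4,3)$ and hence $\rho(S)=3/4$. (If $v$ is itself the root, a separate trivial check is needed, but then $T$ is a very small tree — it is $T_4^*=C^0 F$-free so this edge case can be dismissed directly, or by the convention $m(\emptyset)=1$.) For part (i), replacing the $3$-claw by a $2$-claw turns $S$ into $S'=\calA\calB(L,L)$ with $(m(S'),m_0(S'))=(3,2)$, $\rho(S')=2/3$; the order drops by one. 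For part (ii), replacing the $3$-claw by a $\calB(L,F)$ turns $S$ into $S''=\calA\calB(L,F)=A_7^*$, with $(m(S''),m_0(S''))=(8,5)$ by Figure~\ref{figure:important-rooted-trees}, $\rho(S'')=5/8$; the order increases by two.

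The core of the argument is then to show that if one branch $S$ of a rooted tree is replaced by another branch $\widetilde S$ of the \emph{same type}, and if $m(\widetilde S)/m(S)\ge c$ and $m_0(\widetilde S)/m_0(S)\ge c$ for some constant $c\le 1$, then the same inequality $m(\widetilde T)/m(T)\ge c$ holds after propagating up to the root. This follows by induction on the distance to the root using \eqref{equation:m_0-formula} and \eqref{equation:m_1-formula}: at each vertex $u$ with branches $T_1,\dots,T_k$, both $m_0(u)=\prod m(T_j)$ and $m_1(u)=m_0(u)\sum m_0(T_j)/m(T_j)$ are monotone in each $m(T_j)$ and $m_0(T_j)$ in a way that scales correctly — more precisely, one checks that $m(\cdot)$ and $m_0(\cdot)$ of the enclosing rooted tree are each at least $c$ times the original whenever $m$ and $m_0$ of the changed branch are, using that $m_0/m$ of the changed branch moves in a controlled direction so that the $\rho$-sum, and hence the type of every ancestor, is unchanged (this is where $S$ and $\widetilde S$ having the same type is used). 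Applying this with $c=8/11$ in case (i) and $c=21/11$ in case (ii) — these being $\min\{3/4,\,2/3\}=2/3$ would be too weak, so the bound must be tracked through the $B$-vertex $b$ above the claw and its ancestor $v$ explicitly: going from the claw up through $b$ and $v$, the pair $(m,m_0)$ changes from $(1,1)\to(1,1)$ at the claw's leaves but the subtree $S$ itself goes $(4,3)\to(3,2)$, and one verifies that the relevant aggregate ratio bottoms out at $8/11$, achieved when $S$ sits inside a further $\calB(L,S)$, i.e. when the ancestor is a vertex of type $B$ of degree $3$ with one leaf neighbour; the worst case is exactly a $\calB(L,\calA\calB(L,L,L))$ configuration, with $m$ going from $11$ to $8$. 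Similarly for (ii) the ratio bottoms out at $21/11$ from the same enclosing configuration, $m$ going from $11$ to $21$.

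For the "furthermore" assertions I would check: in case (i), the resulting $S'=\calA\calB(L,L)$ still has a type-$A$ root of degree $2$ (so LC2 is preserved), the new $B$-vertex has degree $3$ and is adjacent to two leaves (LC3, LC4, LC6 preserved), and no vertex of type $A$ has acquired higher degree, so $T'$ fulfils LC1--LC4; note the replacement is purely local and does not touch any other vertex's degree or type, and the type of every ancestor is unchanged as noted above, so the bipartition condition LC1 persists. In case (ii) no structural claim is made, only the numerics and $|T'|=|T|+2$, which is immediate from $|\calB(L,F)|=1+|F|=1+4=5$ versus $|{\rm 3\text{-}claw}|=1+3=4$, wait — one should be careful that "replacing a $3$-claw by a $\calB(L,F)$" means replacing the rooted subtree of order $4$ (the $3$-claw, a $B$-vertex with three leaf children) by a rooted subtree of order $6$ ($\calB(L,F)$, a $B$-vertex with a leaf child and an $F$ child), so $|T'|-|T|=6-4=2$ as claimed; in case (i) the $2$-claw has order $3$, so $|T'|-|T|=3-4=-1$.

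The main obstacle I expect is the bookkeeping in the propagation step: one must phrase it so that the inequality is not merely $m(\widetilde T)\ge c\,m(T)$ but simultaneously $m(\widetilde T)\ge c\,m(T)$ \emph{and} $m_0(\widetilde T)\ge c\,m_0(T)$, so that the induction closes at each ancestor regardless of its type, and one must verify the worst-case enclosing configuration genuinely attains $8/11$ (resp. $21/11$) rather than something larger, which requires knowing that a $3$-claw in a tree satisfying LC1--LC4 is necessarily enclosed as $\calB(L,\cdot)$ over its $B$-parent (LC4 forces the degree-$3$ $B$-vertex above a would-be larger structure to have two leaves, pinning down the local shape). Everything else is routine substitution of the tabulated $(m,m_0)$ values from Figure~\ref{figure:important-rooted-trees} and Lemma~\ref{lemma:recursive-formulae-rooted-tree}.
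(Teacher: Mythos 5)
Your proposal takes a genuinely different and unnecessarily complicated route, and it leaves a gap that the paper's own argument closes in one line. The change at the $3$-claw affects $m(T)$ only through the single edge joining the claw's root $b$ to its parent $s$, so there is no need to propagate anything up the tree. Writing $S$ for the rooted component of $T-sb$ containing $s$ (which has type $A$), Lemma~\ref{lemma:no-adjacent-vertices-of-type-A} gives $m(T)=3m(S)+m_0(S)$ and $m(T')=2m(S)+m_0(S)$, hence
\begin{equation*}
\frac{m(T')}{m(T)}=\frac{2+\rho(S)}{3+\rho(S)},
\end{equation*}
which is increasing in $\rho(S)$. The one fact you need — and which your write-up never invokes — is $\rho(S)\ge 2/3$; this is precisely Lemma~\ref{lemma:rho-A-2-3-weak-local-optimality-condition}, which applies because $T$ (and hence $S$) fulfils LC1--LC4. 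Substituting gives $8/11$ immediately, and part (ii) is the same computation with $\calB(L,F)$, for which $(m,m_0)=(5,3)$, yielding $\frac{5+3\rho(S)}{3+\rho(S)}\ge 21/11$. Without the lower bound on $\rho(S)$ your ``bottoms out at $8/11$'' claim is just an observation on one example; it is not established that no other tree fulfilling LC1--LC4 does worse.

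The propagation scheme you sketch does not close as stated: a single multiplicative constant $c$ cannot be carried inductively for both $m$ and $m_0$, since at the claw itself the $m$-ratio is $2/3$ while the $m_0$-ratio is $1$, and one level up the ratios are $(3/4,2/3)$ — so the induction hypothesis ``both ratios $\ge c$'' with $c=8/11$ is false at the start. What your levelwise tracking would eventually re-derive, with much more effort, is exactly the bilinear formula above. Also, the configuration you name as the worst case, $\calB(L,\calA\calB(L,L,L))$, has $m=7$, not $11$, and its root (degree $3$, type $B$, one leaf child) would violate LC4 as a rooted subtree; the actual extremal case is $S=F$, i.e.\ $T=\calB(L,L,\calA\calB(L,L,L))$ of order $8$, which does yield $m(T)=11$, $m(T')=8$ — you likely meant this tree. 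Your verification that $T'$ retains LC1--LC4 in part (i) is correct and is the one part of your proposal that matches the paper in spirit.
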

\begin{proof}
  Let $T$ consist of a $3$-claw, a rooted tree $S$ of type $A$ and the edge
  connecting the root of the $3$-claw and the root of $S$. We have $\rho(S)\ge
  2/3$ by Lemma~\ref{lemma:rho-A-2-3-weak-local-optimality-condition}. 
  \begin{enumerate}
  \item By Lemma~\ref{lemma:no-adjacent-vertices-of-type-A}, we have 
    \begin{equation*}
      \frac{m(T')}{m(T)}=\frac{1m_0(S)+2m(S)}{1m_0(S)+3m(S)}=\frac{\rho(S)+2}{\rho(S)+3}=
      1-\frac{1}{\rho(S)+3}\ge 1-\frac1{\frac23+3}=\frac8{11}.
    \end{equation*}
  \item Analogous.
  \end{enumerate}
\end{proof}

We now deal with optimal trees containing a $3$-claw. The restrictions are now
so strict that we can discuss all cases.

\begin{proposition}\label{proposition:heavy-trees}
  Let $T$ be an optimal tree which contains a $3$-claw. Then $T\in\{T_5^*,\allowbreak
  T_8^{*},\allowbreak T_{9}^*,\allowbreak T_{12}^*,\allowbreak T_{13}^*,\allowbreak T_{16}^*,\allowbreak T_{20}^*\}$.
\end{proposition}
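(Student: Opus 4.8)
The plan is to argue by contradiction. Suppose $T$ is optimal, contains a $3$-claw, and is \emph{not} one of the seven trees listed. A $3$-claw is a proper rooted subtree, so $|T|\ge 5$ and hence $T\neq T_2^*,T_3^*$; a tree containing a $3$-claw has a vertex of degree at least $4$ (the root of the claw has a parent), so $T\neq T_{6,2}^*,T_{10}^*$, which have maximum degree $3$; and $T\neq T_{6,1}^*$ since the star $S_6$ has only $L$ and a $4$-claw as its rooted subtrees. Thus $T\notin\calS$, so by Proposition~\ref{proposition:degree-bound-A}, Lemma~\ref{lemma:light-vertex-properties} and Proposition~\ref{proposition:optimal-tree-with-light-vertex-new} the tree $T$ fulfils LC1--LC4 and contains no light vertex, and by Lemmata~\ref{lemma:bad-ends} and \ref{lemma:4-claw-is-bad} every claw of $T$ is a $2$-claw or a $3$-claw. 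I will show that $T$ must after all be one of the seven trees.

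First I would bound the number of $3$-claws by $2$. Distinct $3$-claws of $T$ are vertex-disjoint (their roots have degree $4$, their leaves degree $1$), so if $T$ had at least three of them I could replace two by $2$-claws and the third by a $\calB(L,F)$. By Lemma~\ref{label:3-claw-replacements} the hypothesis LC1--LC4 survives each step, the resulting tree $T'$ has order $|T|-1-1+2=|T|$, and
\[
  \frac{m(T')}{m(T)}\ge\Bigl(\frac{8}{11}\Bigr)^{2}\cdot\frac{21}{11}=\frac{1344}{1331}>1,
\]
contradicting the optimality of $T$.

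Next I would determine the global shape. Fix a $3$-claw with root $v_0$: it has degree $4$, with three leaf-neighbours and one neighbour $u$ of type $A$. If $u$ were a leaf then $T=K_{1,4}=T_5^*$, excluded; so $\deg u=2$ by LC2. Extending a longest path of $T$ through a leaf of the claw and through $u$ exhibits $T$ in the shape of Figure~\ref{fig:shape-exchange-lemma} with $v_0$ as one pivotal vertex ($r_0=3$) and an $r_k$-claw at the other, and since $v_0$ and $v_k$ have the same ($B$-)type, $k$ is even. By Lemma~\ref{lemma:path}, $r_k\in\{2,3\}$, the odd path vertices have degree $2$, and $2\le\rho_i\le r_k\le3$ for each even interior $v_i$. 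Each branch of an even $v_i$ is a type-$A$ rooted subtree, so it has $\rho\le1$, with $\rho=1$ only for a leaf and $\rho\ge2/3$ for a non-leaf branch (Lemma~\ref{lemma:rho-A-2-3}, $T$ having no light vertex); hence $\rho_i\ge2$ forces $r_i\ge2$. If $T$ had no $2$-claw at all, then each even interior $v_i$ would have $r_i\ge2$ branches, not all leaves (two leaf-branches would form a $2$-claw; three or more would violate Lemma~\ref{lemma:3-claws-and-degree-4}), so some non-leaf branch would, by Lemma~\ref{lemma:good-ends} and the exclusion of $B_2^*$ and $A_3^*$, contain a third $3$-claw---contradicting the previous step; thus $k=2$ and $T$ is the $9$-vertex tree $T_9^*\in\calS$, excluded. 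Therefore $T$ contains a $2$-claw, Lemma~\ref{lemma:2-claw-implies-LC5} gives LC5, and now $\deg v_i\le4$ forces $r_i=2$ with both branches equal to $L$ for every even interior $v_i$. Hence $T$ is a caterpillar: a path $v_0v_1\dots v_k$ whose vertices carry, respectively, a $3$-claw at $v_0$, nothing at each odd $v_i$, a $2$-claw at each even interior $v_i$, and an $r_k$-claw ($r_k\in\{2,3\}$) at $v_k$. Such a $T$ is determined by $(k,r_k)$ and has order $n=2k+2+r_k$.

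The last step, which is the main obstacle, is to show that no such caterpillar is optimal unless it is one of $T_8^*,T_9^*,T_{12}^*,T_{13}^*,T_{16}^*,T_{20}^*$ (which, with the degenerate case $T_5^*$, gives the seven listed trees). The exchange lemma alone is useless here: along the path all $\rho_i\in\{1,2,3\}$ are integers and the inequalities it produces are already satisfied. Instead I would compute $m(T)$ from the continuant formula (Lemma~\ref{lemma:m-m-0-via-continuants})---it equals $\CFP(3,1,2,1,2,\dots,1,r_k)$, which grows like $(2+\sqrt3)^{k/2}$---and compare it with $m(T_n^*)$, using only the explicit \emph{definition} of $T_n^*$ from Section~\ref{sec:description} and the resulting linear recurrence for $m(T_n^*)$, whose dominant eigenvalue gives growth $\lambda^{n/7}$ with $\lambda^{1/7}>(2+\sqrt3)^{1/4}$. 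For all but finitely many $(k,r_k)$ this yields $m(T)<m(T_n^*)$, contradicting optimality; the finitely many remaining caterpillars are exactly the six trees above, so no other possibility remains, a contradiction. The delicate part is that the two exponential growth rates are extremely close (this is the reason so many trees ``almost'' reach the optimum), so the comparison must be done case by case in the residue of $n$ modulo $7$, with the small caterpillars disposed of by the explicit verifications in Table~\ref{tab:replacements-optimal} (checked with the Sage program).
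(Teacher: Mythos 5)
Your derivation of the caterpillar shape tracks the paper's closely in content, though it is organized a bit more economically: both proofs start with the "at most two $3$-claws" count via Lemma~\ref{label:3-claw-replacements}, and both end up showing that $T$ is a path with a $3$-claw at one end, $2$-claws at the interior type-$B$ vertices, and an $r_k$-claw ($r_k\in\{2,3\}$) at the other end. Where the paper handles the degree-$\ge5$ case separately (by assembling Figure~\ref{fig:heavy-shape-4} and killing it with the $\alpha$-optimality entry~\rref{replacement:heavy}), you instead derive the existence of a $2$-claw from the ``no $2$-claw'' contradiction and then invoke Lemma~\ref{lemma:2-claw-implies-LC5}; this is a genuinely cleaner route to LC5 in this situation, and I see no gap in it.

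The real divergence is the last step, and this is where your plan is incomplete. The paper eliminates long caterpillars \emph{locally}: it observes that the rooted subtree $\calB(L,L,C_L^4L)$ (Figure~\ref{fig:heavy-shape-3}) is not $\alpha$-optimal for $\alpha>2/17$ (one line in Table~\ref{tab:replacements-alpha-optimal}), which, combined with Lemma~\ref{lemma:weak-rho-estimate-A}, bounds the number of $2$-claws to at most $3$ and leaves literally six trees plus one extra case $(k,\ell)=(2,3)$ to check. You propose instead a \emph{global} comparison $m(T)=\CFP(3,1,2,\ldots,1,r_k)\sim C(2+\sqrt3)^{n/4}$ versus $m(T_n^*)\sim c_{n\bmod 7}\lambda^{n/7}$. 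The inequality $\lambda^{1/7}>(2+\sqrt 3)^{1/4}$ is correct, so the comparison wins asymptotically; but the ratio of growth rates is only about $1.0014$, the seven constants $c_j$ must actually be derived (which requires Lemma~\ref{lemma:chain-parameter} and the full chain machinery from Section~\ref{sec:global}, not yet available at this point in the development), and the crossover occurs only for $n$ in the several-dozen range, so one would need an explicit finite verification far larger than the three lines of Table~\ref{tab:replacements-optimal}, again per residue class modulo~$7$. You flag all of this honestly, but none of it is carried out, and without it the proposal does not close the argument. So: correct structure analysis, but the decisive final step is replaced by a sketch of a substantially heavier computation than the paper's $\alpha$-optimality trick, and that computation is not supplied.
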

\begin{proof}
  Assume that $T$ contains $3$ rooted subtrees isomorphic to a
  $3$-claw. Replacing two of them by a $2$-claw and the third by a 
  $\calB(L,F)$ yields a tree $T'$ with $|T|=|T'|$ and 
  \begin{equation*}
    \frac{m(T')}{m(T)}\ge \frac8{11}\cdot\frac8{11}\cdot\frac{21}{11}>1
  \end{equation*}
  by Lemma~\ref{label:3-claw-replacements}, contradiction.

  We conclude that $T$ contains at most $2$ rooted subtrees isomorphic to a
  $3$-claw. 

  We now assume that $T$ has a vertex $v$ of degree $k\ge 5$. By
  Lemma~\ref{lemma:2-claw-implies-LC5}, $T$ does not contain a $2$-claw and thus no vertices of degree $3$ by
  Proposition~\ref{proposition:optimal-tree-with-light-vertex-new}. Thus every rooted subtree of $T-v$ either
  contains a $3$-claw or is a leaf by Lemmata~\ref{lemma:good-ends}, \ref{lemma:bad-ends},
  \ref{lemma:strange-special-cases} and \ref{lemma:4-claw-is-bad}. As there are at most $2$ subtrees
  isomorphic to a $3$-claw in $T$, there are at least $k-2$ leaves. 
  By Lemmata~\ref{lemma:bad-ends} and \ref{lemma:4-claw-is-bad},
  we conclude that $v$ is adjacent to exactly $k-2$ leaves. 
  Denote the rooted connected components of $T-v$ by $T_1$, $T_2$, $L$, \ldots,
  $L$. By Lemma~\ref{lemma:path}, we have 
  \begin{equation*}
    k-2=\rho(L)+\cdots+\rho(L)\le 3,
  \end{equation*}
  i.e., $k=5$. By
  Lemma~\ref{lemma:3-claws-and-degree-4}, we conclude that all vertices of
  degree $4$ in $T$ are adjacent to $3$ leaves, i.e., they are the root of a
  $3$-claw.
 
  Thus $T$ is of the shape given in Figure~\ref{fig:heavy-shape-4}
  \begin{figure}[htbp]
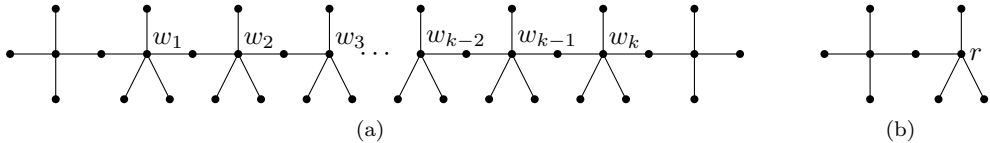

    \centering
    \rule{0cm}{0cm}\hfill
    \subfigure[\label{fig:heavy-shape-4}]{\includegraphics{heavy-shape.4}}\hfill
    \subfigure[\label{fig:heavy-shape-5}]{\includegraphics{heavy-shape.5}}\hfill\rule{0cm}{0cm}
    \caption{Shape of $T$ in Proposition~\ref{proposition:heavy-trees}.}
  \end{figure}
  for some $k\ge 1$ (as there is no vertex of degree $\ge 5$ for $k=0$). As the
  tree in Figure~\ref{fig:heavy-shape-5}
  with root $r$ is not $\alpha$-optimal for $\alpha>1/2$, cf.\ \rref{replacement:heavy}, we conclude from
  Proposition~\ref{proposition:subtrees-of-optimal-trees-are-alpha-optimal},
  Lemma~\ref{lemma:weak-rho-estimate-A} and Lemma~\ref{lemma:strange-special-cases} that this tree does not occur as a subtree
  of an optimal tree. Thus $T$ has no vertex of degree $\ge 5$, i.e., $T$
  fulfils LC1--LC5.

  So by Lemma~\ref{lemma:3-claws-and-degree-4}, every vertex of $T$ of type $B$
  is adjacent to $2$ or $3$ leaves and has degree $3$ or $4$. We conclude that
  $T=T_{5}^*$ or it is a caterpillar tree of the shape given in Figure~\ref{fig:heavy-shape-2}
  \begin{figure}[htbp]
    \centering
    \includegraphics{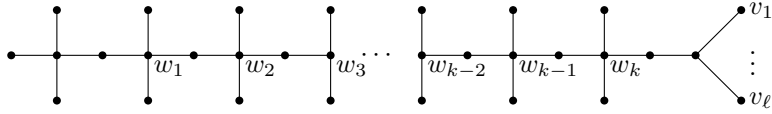}
    \caption{Shape of $T$ in Proposition~\ref{proposition:heavy-trees}.}
    \label{fig:heavy-shape-2}
  \end{figure}
  for some $k\ge 0$ and some $\ell\in\{2,3\}$. We note that 
  the tree in Figure~\ref{fig:heavy-shape-3}
  \begin{figure}[htbp]
    \centering
    \includegraphics{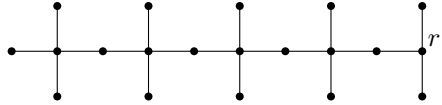}
    \caption{Subtree of $T$ in Proposition~\ref{proposition:heavy-trees}.}
    \label{fig:heavy-shape-3}
  \end{figure}
  with root $r$ is not $\alpha$-optimal for $\alpha> 2/17$, cf.\ \rref{replacement:heavy}, thus this tree
  does not occur as a rooted subtree of an optimal tree by
  Proposition~\ref{proposition:subtrees-of-optimal-trees-are-alpha-optimal} and
  Lemma~\ref{lemma:weak-rho-estimate-A}. This implies that $k\le 2$ or
  $(k,\ell)=(3,2)$. For $(k,\ell)=(2,3)$, the resulting tree is not optimal,
  cf.\ \rref{replacement-full:heavy-trees}. The remaining cases correspond to
  $T\in\{T_8^*,T_9^*,T_{12}^*,T_{13}^*, T_{16}^*, T_{20}^*\}$.
\end{proof}

We are now able to prove Theorem~\ref{theorem:local-structure-again}.
\begin{proof}[Proof of Theorem~\ref{theorem:local-structure-again}]
  As $T\notin\calS$,  $T$ has no
  light vertex by Proposition~\ref{proposition:optimal-tree-with-light-vertex-new}
  and fulfils LC1--LC4. By Proposition~\ref{proposition:heavy-trees}, $T$
  contains no $3$-claw as a rooted subtree, so $T$ fulfils LC1--LC4 and LC6.
  By Lemmata~\ref{lemma:good-ends}, \ref{lemma:bad-ends}, \ref{lemma:strange-special-cases},
  \ref{lemma:4-claw-is-bad} and LC6, $T$ contains a $2$-claw and fulfils LC5 by Lemma~\ref{lemma:2-claw-implies-LC5}.
\end{proof}

We conclude this section with refined bounds on $\rho$ for subtrees of type $A$ of optimal trees.
The bounds only depend on the LC.

\begin{lemma}\label{lemma:A-good-upper-bound-rho}
  Let $T$ be a tree fulfilling the LC and let $A$ be a rooted subtree of $T$ of order $>1$ and type $A$.
%  The number of leaves as branches of the unique branch of $A$ is denoted by
%  $\ell$.
  Then
  \begin{equation*}
    \frac23\le \rho(A)< \sqrt{3}-1 \le 0.7321.
  \end{equation*}
  If no branch of the unique branch of $A$ is a leaf, then
  \begin{equation*}
    \rho(A)< 0.688.
  \end{equation*}
\end{lemma}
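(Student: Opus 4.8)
The plan is to prove the bounds by structural induction on the order of $A$, exactly as in the proofs of Lemmata~\ref{lemma:rho-A-2-3} and \ref{lemma:rho-A-2-3-weak-local-optimality-condition}, but now tracking the upper bound carefully. Since $T$ fulfils the local conditions, by LC2 we may write $A=\calA\calB(A_1,\ldots,A_k)$ for some rooted trees $A_1,\ldots,A_k$ of type $A$, where $k=\deg(\text{root of the branch})-1$. By LC3 the branch has degree $\ge 3$, so $k\ge 2$; by LC5 we have $k\le 3$. The lower bound $\rho(A)\ge 2/3$ is already Lemma~\ref{lemma:rho-A-2-3-weak-local-optimality-condition}, so only the upper bounds need work.

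First I would handle the upper bound $\rho(A)<\sqrt3-1$. Using Lemma~\ref{lemma:rho-formulae}, $\rho(A)=1/(1+\sum_{j=1}^k\rho(A_j))$, so a \emph{lower} bound on $\sum\rho(A_j)$ gives the desired upper bound on $\rho(A)$. The worst case is $k=2$; we need $\rho(A_1)+\rho(A_2)>\frac{1}{\sqrt3-1}-1=\frac{3-\sqrt3}{2}\approx 0.634$. Now each $A_j$ is either a leaf ($\rho(A_j)=1$) or, being of type $A$ and order $>1$, satisfies $\rho(A_j)\ge 2/3$ by Lemma~\ref{lemma:rho-A-2-3-weak-local-optimality-condition}; in either case $\rho(A_j)\ge 2/3$, so $\rho(A_1)+\rho(A_2)\ge 4/3>0.634$, which already gives $\rho(A)<\frac{1}{1+4/3}=3/7<\sqrt3-1$ --- in fact a much stronger bound. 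Wait: that shows the claimed bound is very far from tight in the $k=2$ case, so the binding constraint must come from a different configuration. The point is that $\sqrt3-1$ arises as a fixed point: if $A=\calA\calB(A_1)$ were allowed with a single branch $A_1$ of the \emph{same shape}, one would get $\rho=1/(1+1/(1+\rho))$, whose positive fixed point solves $\rho^2+2\rho-2=0$, i.e. $\rho=\sqrt3-1$. Since LC3 forbids $k=1$, this value is a strict supremum that is never attained; the clean way to see it is to show by induction that $\rho(A)<\sqrt3-1$ whenever $A$ has order $>1$ and type $A$, using the recursion together with the fact that $t\mapsto 1/(1+t)$ is decreasing and that $\sum_j\rho(A_j)\ge 2\cdot\frac12$ is too weak but $\sum_j\rho(A_j)\ge\frac{1}{1+\rho(A_j^{\max})}$-type bounds via the $B$-type intermediate vertex suffice; more directly, one shows $\rho(A)\le 3/7$ unless all branches are leaves, and handles the all-leaves case separately. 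I would organise it as: (a) if some $A_j$ has order $>1$, then $\rho(A_j)\ge 2/3$ and already $\sum\rho(A_j)\ge 2/3+\text{(something}\ge 1/2)$, giving $\rho(A)<\sqrt3-1$ with room to spare; (b) if all $A_j$ are leaves, then $\rho(A)=1/(1+k)\le 1/3$. Either way the bound holds, and in fact the displayed $0.7321$ is a safe decimal rounding of $\sqrt3-1$.

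Next I would prove the refined bound: if no branch of the unique branch of $A$ is a leaf, then $\rho(A)<0.688$. "The unique branch of $A$" is $\calB(A_1,\ldots,A_k)$ (well-defined since $A$ has type $A$ and, by LC1, exactly one branch), and its branches are the $A_j$. The hypothesis is that no $A_j$ is a leaf, so each $A_j$ has order $>1$ and type $A$, hence $\rho(A_j)\ge 2/3$ by Lemma~\ref{lemma:rho-A-2-3-weak-local-optimality-condition}. Then $\sum_{j=1}^k\rho(A_j)\ge 2k/3\ge 4/3$ (using $k\ge 2$), so
\begin{equation*}
  \rho(A)=\frac{1}{1+\sum_{j=1}^k\rho(A_j)}\le\frac{1}{1+\frac43}=\frac{3}{7}<0.688,
\end{equation*}
which is comfortably below $0.688$. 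So the refined bound is in fact extremely loose as well; the actual value $0.688$ is presumably chosen with a later use in mind (it is the relevant threshold for some subsequent exchange or replacement argument), so I would simply state the weaker explicit estimate $\rho(A)\le 3/7$ and note $3/7<0.688$.

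The main obstacle is calibrating which constant is genuinely binding: the bound $\sqrt3-1$ is the sharp supremum for type-$A$ subtrees \emph{without} the LC3 constraint, and the proof must make transparent why LC3 (no degree-$2$ type-$B$ vertices, hence $k\ge 2$) forces a strict inequality — but once $k\ge 2$ is in hand, the numerics collapse to the trivial $3/7$ bound, so the only real care needed is (i) to confirm that $A$ of order $>1$ and type $A$ under the LC indeed has the form $\calA\calB(A_1,\ldots,A_k)$ with $2\le k\le 3$ and each $A_j$ of type $A$ (immediate from LC1, LC2, LC3, LC5 and Lemma~\ref{lemma:rho-formulae}), and (ii) to double-check the arithmetic $\frac{1}{1+2/3}=\frac35<\sqrt3-1\approx0.7321$ and $\frac{1}{1+4/3}=\frac37<0.688$. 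No delicate estimate is required; the statement is a convenient packaging of Lemma~\ref{lemma:rho-A-2-3-weak-local-optimality-condition} together with the decreasing monotonicity of $t\mapsto 1/(1+t)$.
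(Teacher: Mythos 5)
Your proposal has a fatal algebraic slip: you apply Lemma~\ref{lemma:rho-formulae} as if $A$ had the branches $A_1,\dots,A_k$ directly, writing $\rho(A)=1/\bigl(1+\sum_j\rho(A_j)\bigr)$. But $A=\calA\calB(A_1,\dots,A_k)$ has a \emph{single} branch $B'=\calB(A_1,\dots,A_k)$ of type $B$, and the two-level recursion gives
\[
\rho(B')=\frac{1}{\sum_{j}\rho(A_j)},\qquad
\rho(A)=\frac{1}{1+\rho(B')}=\frac{1}{1+\dfrac{1}{\sum_{j}\rho(A_j)}}.
\]
The extra inversion reverses the monotonicity: $\rho(A)$ is \emph{increasing} in $\sum_j\rho(A_j)$, not decreasing. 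Consequently, to bound $\rho(A)$ from above one must bound $\sum_j\rho(A_j)$ from \emph{above}, which is exactly where LC6 (at most two of the $A_j$ are leaves) enters. Your argument instead bounds the sum from below, and so proves nothing in the right direction; indeed your ``conclusion'' $\rho(A)\le 3/7\approx 0.43$ directly contradicts the lower bound $\rho(A)\ge 2/3$ you rely on two lines earlier. (Curiously, your digression about the fixed point correctly writes $\rho=1/(1+1/(1+\rho))$ with the nested fraction, so you did know the right recursion, but you abandoned it in the main computation.)

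The paper's induction is short once the recursion is applied correctly. If $r=2$, LC4 forces $A_1=A_2=L$, so $A=F$ and $\rho(A)=2/3$. If $r=3$, LC6 gives at most two leaves among the $A_j$, and the induction hypothesis gives $\rho(A_j)<\sqrt3-1$ for a non-leaf, so $2\le\sum_j\rho(A_j)<2+(\sqrt3-1)=1+\sqrt3$, whence
\[
\frac23=\frac1{1+\frac12}\le\rho(A)=\frac{1}{1+\frac{1}{\sum_j\rho(A_j)}}<\frac{1}{1+\frac{1}{1+\sqrt3}}=\sqrt3-1,
\]
which is tight (the supremum is approached, not attained). If no $A_j$ is a leaf, then $\sum_j\rho(A_j)<3(\sqrt3-1)$ and $\rho(A)<\frac{1}{1+\frac{1}{3(\sqrt3-1)}}=\frac{21-3\sqrt3}{23}\approx0.687<0.688$; this constant really is close to $0.688$, so your claim that the refined bound is ``extremely loose'' with a $3/7$ estimate available is also incorrect once the formula is fixed.
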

\begin{proof}
  We prove the result by induction on the order of $A$. 
  By the LC, we have $A=\calA\calB(T_1,\ldots,T_r)$ for some $r\in\{2,3\}$ and
  rooted trees $T_1$, \ldots, $T_r$. If $r=2$, then both $T_1$ and $T_2$ are leaves by the LC and
  $A=F$ with $\rho(F)=2/3$. 

  We now consider the case $r=3$. 
  By the LC, there are at most two leaves among $T_1$, $T_2$, $T_3$. Thus
  $2\le \rho(T_1)+\rho(T_2)+\rho(T_3)< 2+(\sqrt{3}-1)$ by the induction
  hypothesis. We obtain
  \begin{equation*}
    \frac23=\frac1{1+\frac1{2}}\le\rho(A)=\frac1{1+\frac1{\rho(T_1)+\rho(T_2)+\rho(T_3)}}<
    \frac1{1+\frac1{1+\sqrt{3}}}=\frac{1+\sqrt{3}}{2+\sqrt{3}}= \sqrt{3}-1.
  \end{equation*}

  If none of $T_1$, $T_2$, $T_3$ is a leaf, we use the upper bound
  $\rho(T_1)+\rho(T_2)+\rho(T_3)< 3(\sqrt3-1)$ to obtain
  \begin{equation*}
      \rho(A)=\frac1{1+\frac1{\rho(T_1)+\rho(T_2)+\rho(T_3)}}<\frac1{1+\frac1{3(\sqrt{3}-1)}}=\frac{21-3\sqrt
      3}{23}<0.688.
    \end{equation*}
\end{proof}

If there is a vertex of type $B$ and degree $4$ which is adjacent to two leaves (e.g., in a
$CL$) in an optimal tree, this has consequences to every vertex of type $B$, as the following
lemma shows.

\begin{lemma}\label{lemma:two-leaves-force-leave-everywhere}
  Let $T\notin\calS$ be an optimal tree.  If there is a vertex $v$
  of degree $4$ of $T$ which is adjacent to two leaves, then every vertex $w$
  of $T$ of type $B$ is adjacent to at least one leaf.
\end{lemma}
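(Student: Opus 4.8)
By Theorem~\ref{theorem:local-structure-again}, since $T\notin\calS$, the tree $T$ satisfies the local conditions LC1--LC6. In particular $v$ is of type $B$ (by LC2), it has exactly two leaf neighbours $\ell_1,\ell_2$ (by LC6), and its two other neighbours $u_1,u_2$ are of type $A$ and of degree $2$ (by LC2). Suppose, for a contradiction, that some vertex $w$ of type $B$ has no leaf neighbour. By LC4 and LC5 this forces $\deg w=4$, so the four rooted subtrees $W_1,\dots,W_4$ of $T$ hanging at the neighbours of $w$ are of type $A$ and of order $\ge 2$; by Lemma~\ref{lemma:A-good-upper-bound-rho} we have $\frac23\le\rho(W_i)<\sqrt3-1$ for each $i$, and the sharper bound $\rho<0.688$ applies to the type-$A$ subtree lying one level above $w$ (its unique branch, the rooted subtree at $w$, has no leaf branch) and, more generally, to each $W_i$ whose root's child is again a vertex of type $B$ without a leaf neighbour.

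First I would fix a long path. Joining $v$ and $w$ by the path of $T$ between them and extending it to a longest path $P$, the neighbour of an endpoint of $P$ has, apart from its inner path-neighbour, only leaf neighbours, and it has degree $\ge 3$ (LC3) but is not adjacent to three leaves (LC6), so both ends of $P$ are roots of $2$-claws. Hence Lemma~\ref{lemma:path} applies to $P$: every type-$A$ vertex on $P$ has degree $2$ and $\rho$-value $1$, and every interior type-$B$ vertex of $P$ other than $v$ and $w$ has $\rho$-value in $[1,2]$. Consequently, along the sub-path of $P$ from $v$ to $w$ the hypotheses of \liref{lemma:exchange}{item:exchange-B} hold with $\ell=1$, $u=2$, so the pertinent error term is $U_0(1,2)<0.1153$. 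In this sub-path decomposition the three pendant branches of $v$ are $\ell_1$, $\ell_2$ and one heavy type-$A$ subtree (of $\rho$-value $\ge\frac23$), while the three pendant branches of $w$ are type-$A$ subtrees whose $\rho$-values sum to something in $[2,3(\sqrt3-1))$, and to less than $3\cdot 0.688$ if $w$'s branches all meet the sharp bound.

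It then remains to reach a contradiction from these constraints, and this is where the work lies. The plan is to perform an order-preserving surgery across the $v$--$w$ sub-path -- redistributing pendant branches between $v$ and $w$, and in particular detaching a leaf from $v$ so that a path vertex becomes a \emph{light} vertex -- and to estimate the change in $m$ through Lemma~\ref{lemma:m-m-0-via-continuants}, \eqref{eq:exchange-lemma-full-form} and \liref{lemma:exchange}{item:exchange-B} (using \liref{lemma:exchange}{item:exchange-symmetric} when the sub-path is short). If the surgery does not decrease $m$, the resulting tree is an optimal tree with a light vertex, contradicting Proposition~\ref{proposition:optimal-tree-with-light-vertex-new}; the finitely many borderline shapes left over are excluded by the non-$\alpha$-optimality entries of Tables~\ref{tab:replacements-alpha-optimal} and \ref{tab:replacements-optimal} together with Proposition~\ref{proposition:subtrees-of-optimal-trees-are-alpha-optimal}. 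The hard part will be that the relevant exchange is almost perfectly balanced -- the two leaves at $v$ contribute $\rho$-weight exactly $2$, and the three type-$A$ branches at $w$ contribute only marginally more than $2$ -- so crude bounds are useless: one must use the sharp upper bound $\rho<\sqrt3-1$ (and $\rho<0.688$ where available), the exact value of $U_0(1,2)$, and, because $\rho=\frac23$ is in fact attained (by a fork $F$), an inductive descent into the ``no-leaf-neighbour'' vertices below $w$ to tame those branches of $w$ that only satisfy the weaker bound.
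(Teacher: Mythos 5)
Your setup is correct up to the point where you invoke Lemma~\ref{lemma:path} and identify $U_0(1,2)<0.1153$ as the relevant error term; this is exactly the machinery the paper uses. But then you stop short of the actual contradiction: you describe a ``plan'' to perform an order-preserving surgery that creates a light vertex and then invoke Proposition~\ref{proposition:optimal-tree-with-light-vertex-new}, characterise the remaining work as ``hard'' because the quantities are ``almost perfectly balanced'', and gesture at an ``inductive descent''. None of this is carried out, and in fact the envisaged exchange does not create a light vertex (an exchange swaps branches, it does not delete one) and does not satisfy the hypothesis $x>y$ of Lemma~\ref{lemma:exchange} (moving both leaves of weight $2$ against all three branches of $w$, whose $\rho$-sum is at least $2$, gives $x\le y$). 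So there is a genuine gap.

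The paper's contradiction is immediate and not at all delicate. In your notation, let $\ell_1,\ell_2$ be the two leaves and $T_1$ the third pendant branch at $v$, and $S_1,S_2,S_3$ the three pendant branches at $w$ (none a leaf). Apply \liref{lemma:exchange}{item:exchange-B} with pivotal vertices $v$ and $w$, moving the \emph{single} leaf $\ell_1$ (so $x=1$) against the \emph{single} branch $S_3$ (so $y=\rho(S_3)<\sqrt3-1<1$, hence $x>y$). Then $a=\rho(\ell_2)+\rho(T_1)=1+\rho(T_1)\ge 1+\frac23=\frac53$ since $T_1$ cannot be a leaf by LC6 and $\rho(T_1)\ge\frac23$ by Lemma~\ref{lemma:rho-A-2-3}, while $b=\rho(S_1)+\rho(S_2)<2(\sqrt3-1)$ by Lemma~\ref{lemma:A-good-upper-bound-rho}. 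The conclusion $a<b+U_0(1,2)$ then gives
\begin{equation*}
\tfrac53\le 1+\rho(T_1)<\rho(S_1)+\rho(S_2)+0.1153<2(\sqrt3-1)+0.1153<1.58,
\end{equation*}
a contradiction with a comfortable margin ($\frac53\approx1.667$ versus $1.58$). There is no near-balance, no need for the $0.688$ refinement, and no inductive descent. You should complete the argument along these lines; the light-vertex-surgery route you propose is both unexecuted and not obviously executable within the framework of Lemma~\ref{lemma:exchange}.
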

\begin{proof}
  We assume the contrary and denote the rooted connected components of $T-w$ by $S_0$, $S_1$, $S_2$, $S_3$
  where $v$ is contained in $S_0$ and the rooted connected components of $T-v$ by
  $T_0$, $T_1$, $L$, $L$ with $w$ contained in $T_0$. As $1=\rho(L)>\rho(S_3)$,
  Lemma~\ref{lemma:path} and \liref{lemma:exchange}{item:exchange-B} yield
  \begin{equation*}
    \frac53\le 1+\rho(T_1)\le \rho(S_1)+\rho(S_2)+0.1153<2(\sqrt3-1)+0.1153<1.58,
  \end{equation*}
  a contradiction.
\end{proof}

\section{The upper bound: global structure}\label{sec:global}

\subsection{Outline Graph}
Now we start with the discussion of the global structure of optimal trees. Let us first collect
a few results on the outline graph of an optimal tree.
\begin{lemma}\label{lemma:outline-properties}
  Let $T\notin\calS$ be an optimal tree of order $n$ and $T'$ its outline graph
  as defined in Definition~\ref{definition:outline-graph}. Then $T'$ has the
  following properties.
  \begin{enumerate}
  \item The leaves of $T'$ correspond to rooted subtrees of type $A$ of $T$,
    the non-leaves of $T'$ correspond to vertices of type $B$ of $T$.
  \item If there is a vertex of degree $3$ in $T'$, then $n\equiv 0\pmod{7}$,
    $T$ has the shape as given in Figure~\ref{fig:optimal-trees-7} and 
    \begin{equation*}
      k=\frac{n-7}7.
    \end{equation*}
  \item There is no vertex $v$ in $T'$ which is adjacent to an $L$ and an $F$.
  \item If $T'$ is of order $1$, then $n\equiv 1\pmod 7$ and $T=C^{(n-1)/7}L$ or $n\equiv 4\pmod 7$ and $T=C^{(n-4)/7}F$.
  \end{enumerate}
\end{lemma}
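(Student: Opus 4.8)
The plan is to unwind Definition~\ref{definition:outline-graph} against the local conditions LC1--LC6, which hold for $T$ by Theorem~\ref{theorem:local-structure-again} since $T\notin\calS$. I would prove (1) first, as the other three parts rest on it. Observe that the construction never introduces a new vertex, so every vertex of $T'$ is either a vertex of $T$ or a special leaf. A leaf of $T'$ must be a special leaf: an original leaf of $T$ is a rooted subtree isomorphic to $C^0L=L$ and is absorbed already in the first step, and the second step --- replacing a $C^kT'$ by the branch $T''$ --- creates no new leaves. Every special leaf is a rooted subtree isomorphic to some $C^kF$ or $C^\ell L$ and hence of type $A$, because $L$ is of type $A$ by \eqref{eq:special-values-L}, $F$ is of type $A$ (Figure~\ref{figure:important-rooted-trees}), and $C$ preserves type $A$ by Definition~\ref{definition:chain}. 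Conversely, a non-leaf $v$ of $T'$ is a vertex of $T$ with $\deg_{T'} v\ge 2$; if $v$ were of type $A$, then $\deg_T v\le 2$ by LC2; the case $\deg_T v=1$ is impossible (such a $v$ is a special leaf), and if $\deg_T v=2$ then, rooting $T$ appropriately so that $v$ has a single child $c$, the rooted subtree at $v$ has the form $\calA(S)$ with $S$ of type $B$ and therefore a unique branch --- hence $v$ is absorbed into a special leaf or a special edge by the construction, a contradiction. Therefore all non-leaves of $T'$ are of type $B$.

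Part (4) is then immediate: if $|T'|=1$, its unique vertex is a leaf of $T'$, hence a special leaf by (1), so $T\cong C^\ell L$ or $T\cong C^kF$. Since each application of $C$ adds $7$ vertices (Definition~\ref{definition:chain}), we have $|C^\ell L|=7\ell+1$ and $|C^kF|=7k+4$, which gives $n\equiv 1\pmod 7$ and $T=C^{(n-1)/7}L$, or $n\equiv 4\pmod 7$ and $T=C^{(n-4)/7}F$.

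Parts (2) and (3) are the substantive ones. For (3): by (1), a vertex $v$ of $T'$ that is adjacent to an $L$ and an $F$ is a type-$B$ vertex of $T$ having one branch isomorphic to $L$ and one branch isomorphic to a chain over $F$; by LC5 we have $\deg_T v\le 4$, and LC4 and LC6 leave only finitely many completions of the list of branches of $v$. For each of these the rooted subtree $\calA\bigl(\calB(L,\dots)\bigr)$ carrying these branches fails to be $\alpha$-optimal on the relevant range of $\alpha$ --- this is exactly \rref{replacement:outline-ends-LCFT3T4} --- contradicting Proposition~\ref{proposition:subtrees-of-optimal-trees-are-alpha-optimal}. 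For (2): let $v$ be a vertex of degree $3$ in $T'$; by (1) and LC3, LC5 it is of type $B$ with $\deg_T v\in\{3,4\}$. I would track through the construction which branches of $v$ survive as distinct neighbours of $v$ in $T'$, using LC4 and LC6 to control the number of pendant leaves, and conclude that $v$ has exactly two branches which become special leaves of the $L$-family and one further branch, and that $T$ has no type-$B$ vertex outside a narrowly prescribed list; this pins $T$ down to the caterpillar of Figure~\ref{fig:optimal-trees-7}. A final comparison with the competing local configurations, via Lemma~\ref{lemma:exchange} and the tabulated non-optimal trees, fixes the single chain parameter and yields $n\equiv 0\pmod 7$ and $k=(n-7)/7$.

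The hard part is the bookkeeping in part (2): one must read off precisely which vertices of $T$ persist as vertices of $T'$ and with which degree, and then argue that the bare presence of a degree-$3$ vertex in $T'$ rigidifies the \emph{global} shape of $T$. It is at this last step that Lemma~\ref{lemma:exchange}, together with the $\rho$-bounds of Lemmata~\ref{lemma:weak-rho-estimate-A} and~\ref{lemma:A-good-upper-bound-rho}, does the real work, in the same spirit as the arguments of Section~\ref{sec:local}.
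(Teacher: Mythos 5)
Your treatment of parts (1) and (4) is essentially the paper's: special leaves are $C^kF$ or $C^\ell L$ and hence of type $A$, non-leaves of $T'$ must be of type $B$ because LC2 forces a surviving type-$A$ vertex of degree $2$ to have been absorbed into a special leaf or special edge, and (4) is then an order count.

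Where you diverge, and where the gap lies, is in parts (2) and (3). The paper proves both of these purely from the \emph{definition} of the outline graph together with the local conditions; no exchange argument, no $\rho$-bounds, and no use of the replacement tables appears. For (2): a degree-$3$ vertex $v$ of $T'$ is a degree-$3$ type-$B$ vertex of $T$, so by LC4 it has two leaf neighbours, i.e.\ two $L$-special-leaves in $T'$, and then one simply checks that of the finitely many configurations for the third neighbour (shown in Figure~\ref{fig:outline-degree-3}), all but one contradict the \emph{maximality} built into Definition~\ref{definition:outline-graph} (a larger $C^kF$ or a $C^{k+1}_*$ would have been contracted first). The surviving configuration is the one in Figure~\ref{fig:optimal-trees-7}, giving $n=7k+7$. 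For (3) the argument is of exactly the same nature: in each of the three a priori configurations, the simultaneous presence of an $L$- and an $F$-special-leaf at $v$ means that a strictly larger rooted subtree of the form $C^mF$, $C^mL$, or $C^m_*$ would already have been contracted, so the picture cannot be an outline. These are contradictions with the construction, not with optimality.

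Your proposed route for (3) does not close. You want to say that the branches of $v$ admit only finitely many completions constrained by LC4--LC6 and then kill each by $\alpha$-optimality via \rref{replacement:outline-ends-LCFT3T4}. But if $\deg_T v=4$, the remaining branch beyond the $L$ and the $F$ is an arbitrary type-$A$ rooted subtree of $T$, so the set of completions is infinite; and the table entry \rref{replacement:outline-ends-LCFT3T4} covers only the two specific trees $\calB(L,F,CA_{14}^*)$ and $\calB(L,F,CA_{24}^*)$ --- it is the ingredient for Lemma~\ref{lemma:outline-ends-LCFT3T4} (a statement about $T_1=L$, $T_2=C^kF$ with $k\ge 1$), not for the general configuration in (3). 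Indeed Lemma~\ref{lemma:outline-ends-LCFT3T4} itself derives its ``$v$ is not in the outline'' conclusion \emph{from} the present part (3), so it cannot carry the burden here. The same issue touches your sketch of (2), which invokes Lemma~\ref{lemma:exchange} and the $\rho$-bounds where none are needed: once LC4 gives the two leaf neighbours, the remaining branch is pinned down by the outline definition alone, and the length $k$ of the single chain is read off from $n$. If you want to keep your approach, you would need a finiteness reduction for (3) that you do not have; the cleaner fix is simply to argue, as the paper does, that the configurations conflict with the ``replace by decreasing order'' and ``replace $C^kT'$ with a unique branch'' clauses of Definition~\ref{definition:outline-graph}.
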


\begin{proof}
  \begin{enumerate}
  \item By construction, all special leaves of $T'$ correspond to rooted
    subtrees of type $A$ of $T$. A leaf in $T$ is either contained in some
    larger special leaf or is eventually seen as an $L$ in $T'$. All non-leaves
    of $T$ of type $A$ are either contained in some larger special edge or
    special leaf or they are transformed into a $C^0_*$. Thus all non-leaves of
    $T'$ have to correspond to vertices of type $B$ of $T$.
  \item We denote the vertex of degree $3$ by $v$. By
    Theorem~\ref{theorem:local-structure-again}, two of the neighbours of $v$ are
    leaves. Thus we have one of the situations in Figure~\ref{fig:outline-degree-3}
    \begin{figure}[htbp]
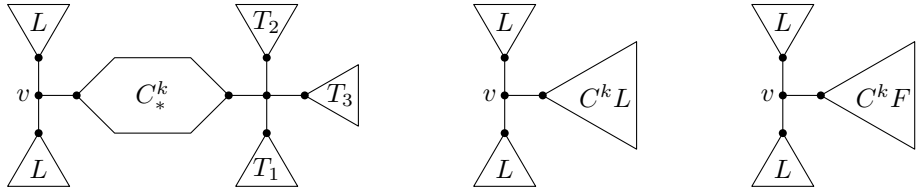

      \rule{0pt}{0pt}\hfill
      \includegraphics{outline-degree-3.1}\hfill
      \includegraphics{outline-degree-3.2}\hfill
      \includegraphics{outline-degree-3.3}\hfill
      \rule{0pt}{0pt}\hfill
      \caption{Possible Cases in Lemma~\ref{lemma:outline-properties}.}
      \label{fig:outline-degree-3}
    \end{figure}
    for appropriate $k\ge 0$ and rooted trees $T_1$, $T_2$, $T_3$.
    The first case is a contradiction to the construction of the outline, as a
    subtree $C^kF$ would have been contracted earlier than the $C^k_*$. The
    second case is also not a correct outline, as this graph is isomorphic to a
    $C^kF$ (use the $L$ in the present $C^kL$ as the new root).

    So we are left with the third case. In this case, we have $n=|T|=3+7k+4$,
    which immediately implies $n\equiv 0\pmod{7}$ and $k=(n-7)/7$. And this is
    exactly the situation in Figure~\ref{fig:optimal-trees-7}.
  \item We assume that there is a vertex $v$ in $T'$ which is adjacent to an $L$
    and an $F$. This could mean one of the situations in
    Figure~\ref{fig:cases-lemma-outline-properties}, where $S_j\in\{L,F\}$ for $j\in\{0,1,2\}$.
    \begin{figure}[htbp]
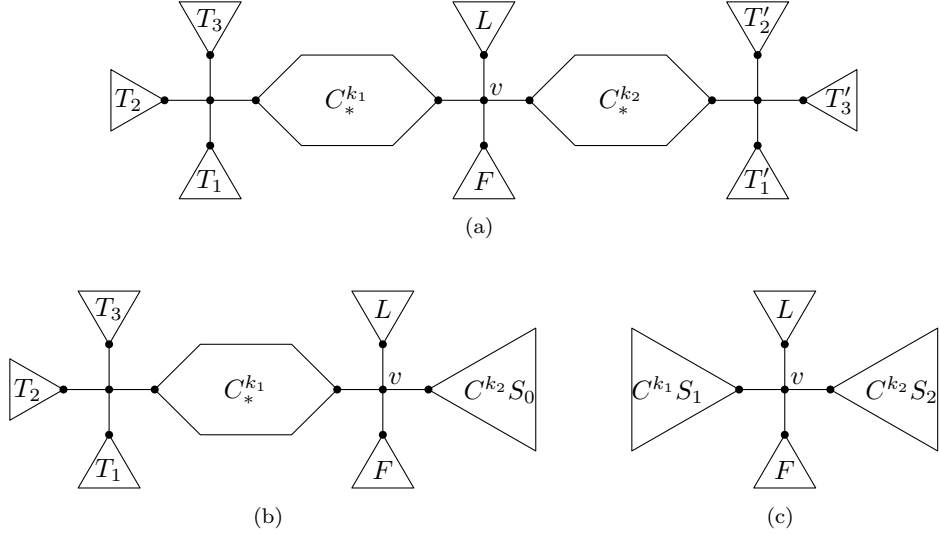

      \centering
      \subfigure[\label{fig:lemma-outline-properties-case-a}]{\includegraphics{outline-LF.1}}\\
      \phantom{A}\hfill\subfigure[\label{fig:lemma-outline-properties-case-b}]{\includegraphics{outline-LF.2}}\hfill
      \subfigure[\label{fig:lemma-outline-properties-case-c}]{\includegraphics{outline-LF.3}}\hfill\phantom{A}
      \caption{Possible cases in Lemma~\ref{lemma:outline-properties}}
      \label{fig:cases-lemma-outline-properties}
    \end{figure}
    \begin{enumerate}
    \item In the situation in Figure~\ref{fig:lemma-outline-properties-case-a},
      a $C^{k_1+k_2+1}_*$ would have been used in the outline of $T$ instead of $v$, $L$, $F$,
      $C^{k_1}_*$ and $C^{k_2}_*$.
    \item In the situation in Figure~\ref{fig:lemma-outline-properties-case-b},
      a $C^{k_1+k_2+1}S_0$ would have been used in the outline of $T$ instead of  $v$, $L$, $F$,
      $C^{k_1}_*$ and $C^{k_2}S_0$.
    \item We consider the situation in
      Figure~\ref{fig:lemma-outline-properties-case-c}.

      If $S_1=S_2=F$, the graph is isomorphic to the graph in
      Figure~\ref{fig:optimal-trees-7}, i.e., a $C^{k_1+k_2+1}F$ would have
      been combined with a vertex of degree $3$ and two leaves in the outline of $T$ .

      If $S_1=L$, the graph is isomorphic to $C^{k_1+k_2+1}S_2$, and this
      would have been taken in the outline of $T$ .
    \end{enumerate}
  \item If $T'$ is of order $1$, then the unique vertex of $T'$ must be a $C^kF$
    or a $C^kL$ for a suitable $k$. In the first case, we have $n=7k+4$, in the
    second $n=7k+1$.
  \end{enumerate}
\end{proof}

\subsection{Chains}
In the global structure of optimal trees, chains as introduced in
Definition~\ref{definition:chain} occur prominently. This subsection is devoted to
the computation of the relevant parameters and to some further necessary
optimality criteria in relation to chains. Recall that the growth constant $\lambda \approx 10.1097722286464$ in Theorem~\ref{thm:upper} is defined as the larger root of the polynomial $x^2-11x+9$. In the following, the other root
of this polynomial is denoted by $\lambdabar\approx 0.890227771353556$.

\begin{lemma}\label{lemma:chain-parameter}
  \begin{enumerate}
  \item Let $T$ be a rooted tree of type $A$. Then
    \begin{align*}
      \begin{pmatrix}
        m(CT)\\m_0(CT)
      \end{pmatrix}&=M
      \begin{pmatrix}
        m(T)\\m_0(T)
      \end{pmatrix},&
      \rho(CT)&=\sigma(\rho(T)),
    \end{align*}
    with
    \begin{align*}
      M&=
      \begin{pmatrix}
        8&3\\
        5&3
      \end{pmatrix},&
      \sigma:\mathbb{R}^+\to\mathbb{R}^+; x\mapsto 1-\frac3{8+3x}.
    \end{align*}
  \item We have
    \begin{equation*}
    \begin{aligned}
      m(C^kL)&=G_{k+1},&m_0(C^kL)&=G_{k+1}-3G_k,\\
      m(C^kF)&=3G_{k+1}-3G_k,&m_0(C^kF)&=2G_{k+1}-G_k,
    \end{aligned}\qquad\text{ with }\qquad G_k=\frac{\lambda^k-\lambdabar{}^k}{\lambda-\lambdabar}.
  \end{equation*}
\item Let $\rholimit=(\lambda-8)/3\approx 0.7032574$ and $x>0$. If $x<\rholimit$, then the sequence
  $\sigma^k(x)$ is strictly increasing, if $x>\rholimit$, then the sequence
  $\sigma^k(x)$ is strictly decreasing. In both cases,
  $\lim_{k\to\infty}\sigma^k(x)=\rholimit$.

  In particular, $\rho(C^kF)$ is strictly increasing and $\rho(C^kL)$ is
  strictly decreasing.
  \end{enumerate}
\end{lemma}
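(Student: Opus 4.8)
Lemma~\ref{lemma:chain-parameter} — proof plan.

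The plan is to prove the three parts in order, since each builds on the previous one. For part~(1), the matrix identity is the heart of the matter. I would apply \liref{lemma:recursive-formulae-rooted-tree}{equation:m_0-formula} and the recursive construction of $CT$ (Figure~\ref{definition:chain:chain}) directly: $CT$ is built from $T$ by first forming $\calB(T)$ (a type-$B$ branch), then attaching this together with a new leaf $L$ and possibly some additional vertices as branches of a fresh type-$A$ root, then applying $\calB$ once more. The precise combinatorial structure of $CT$ as drawn in Figure~\ref{definition:chain:chain} must be read off, and then $m(CT)$ and $m_0(CT)$ expressed via \eqref{equation:m_0-formula} and \eqref{equation:m_1-formula}: each of these will be a fixed integer linear combination of $m(T)$ and $m_0(T)=m(T-r)$, because every other ``ingredient'' ($L$, forks, etc.) contributes a constant. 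Checking that the coefficients are exactly $\left(\begin{smallmatrix}8&3\\5&3\end{smallmatrix}\right)$ is a short explicit computation. The formula $\rho(CT)=\sigma(\rho(T))$ then follows by dividing the second row of $M\left(\begin{smallmatrix}m(T)\\m_0(T)\end{smallmatrix}\right)$ by the first: $\rho(CT)=\frac{5m(T)+3m_0(T)}{8m(T)+3m_0(T)}=\frac{5+3\rho(T)}{8+3\rho(T)}=1-\frac{3}{8+3\rho(T)}$.

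For part~(2), I would compute $\left(\begin{smallmatrix}m(C^kL)\\m_0(C^kL)\end{smallmatrix}\right)=M^k\left(\begin{smallmatrix}m(L)\\m_0(L)\end{smallmatrix}\right)=M^k\left(\begin{smallmatrix}1\\1\end{smallmatrix}\right)$ using \eqref{eq:special-values-L}, and similarly $\left(\begin{smallmatrix}m(C^kF)\\m_0(C^kF)\end{smallmatrix}\right)=M^k\left(\begin{smallmatrix}3\\2\end{smallmatrix}\right)$ using the values $(m(F),m_0(F))=(3,2)$ from Figure~\ref{figure:important-rooted-trees}. Since $\operatorname{tr}M=11$ and $\det M=24-15=9$, the characteristic polynomial of $M$ is $x^2-11x+9$, whose roots are $\lambda$ and $\lambdabar$; hence every entry of $M^k$ is an integer linear combination of $\lambda^k$ and $\lambdabar^{\,k}$, i.e.\ a linear combination of $G_k$ and $G_{k+1}$ (noting $G_k$ satisfies $G_{k+1}=11G_k-9G_{k-1}$, $G_0=0$, $G_1=1$). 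The four closed forms are then pinned down either by matching initial values at $k=0,1$, or equivalently by diagonalising $M$ once; the algebra is routine. I would present this as: $M^k=\frac{1}{\lambda-\lambdabar}\bigl((M-\lambdabar I)\lambda^k-(M-\lambda I)\lambdabar^{\,k}\bigr)$ and read off the entries, then apply to the two initial vectors.

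For part~(3), I would analyse the one-dimensional map $\sigma(x)=1-\frac{3}{8+3x}$ on $\mathbb{R}^+$. Its fixed points solve $x(8+3x)=5+3x$, i.e.\ $3x^2+5x-5=0$; one checks the positive root is exactly $\rholimit=(\lambda-8)/3$ (equivalently $9x^2+15x-15=0$ with $3x+8=\lambda$, using $\lambda^2=11\lambda-9$). Since $\sigma'(x)=\frac{9}{(8+3x)^2}>0$, $\sigma$ is increasing, and $\sigma'(\rholimit)=\frac{9}{\lambda^2}\in(0,1)$ because $\lambda>3$; moreover $\sigma$ is concave (as $\sigma''<0$) with a single positive fixed point, so for $0<x<\rholimit$ we get $x<\sigma(x)<\rholimit$ and the iterates increase monotonically to $\rholimit$, while for $x>\rholimit$ we get $\rholimit<\sigma(x)<x$ and they decrease monotonically to $\rholimit$ — a standard cobweb argument, which I would spell out via the two inequalities $x<\sigma(x)$ iff $x<\rholimit$ (from the sign of $3x^2+5x-5$) and $\sigma(x)>\rholimit$ iff $x>\rholimit$ (monotonicity plus the fixed-point property). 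Finally, the two special cases: $\rho(F)=2/3<\rholimit$ gives $\rho(C^kF)$ strictly increasing, and $\rho(L)=1>\rholimit$ gives $\rho(C^kL)$ strictly decreasing, both converging to $\rholimit$.

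The only mildly delicate point is verifying that the matrix $M$ really is $\left(\begin{smallmatrix}8&3\\5&3\end{smallmatrix}\right)$ — this is purely a matter of correctly reading the definition of $CT$ from Figure~\ref{definition:chain:chain} and applying the recursions \eqref{equation:m_0-formula}–\eqref{equation:m_1-formula} without slip; everything after that is forced linear algebra and a routine discrete dynamical-systems argument.
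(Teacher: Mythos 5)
Your proposal is correct and follows essentially the same route as the paper: derive $M$ by applying the recursions \eqref{equation:m_0-formula}--\eqref{equation:m_1-formula} to $CT$, observe that the eigenvalues of $M$ are $\lambda$ and $\lambdabar$ so that each entry is pinned down by checking $k=0,1$ (equivalently, by diagonalising), and finish with the standard fixed-point/monotone-iteration argument for $\sigma$. The only looseness is your verbal description of the shape of $CT$ (in fact $CT=\calA\calB(L,F,T)$), but since you explicitly defer to Figure~\ref{definition:chain:chain} and to the ensuing short computation, this is not a gap.
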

\begin{proof}
  \begin{enumerate}
  \item This is a straightforward consequence of the recursive formul\ae{}
    \eqref{equation:m_0-formula} and
    \eqref{equation:m_1-formula} for
    $m$, $m_0$ and $\rho$.
  \item The eigenvalues of $M$ are $\lambda$ and $\lambdabar$. Thus the
    sequences $m(C^kL)$, $m_0(C^kF)$, $m(C^kF)$, $m_0(C^kF)$ for $k\ge 0$ are
    elements of the linear space spanned by $\lambda^k$ and
    $\lambdabar{}^k$. Another basis of this linear space is given by $G_{k+1}$
    and $G_{k}$. It therefore suffices to check the formul\ae{} for $k=0$ and $k=1$.
  \item It is easily checked that $\rholimit$ is the unique positive fixed point
    of $\sigma$. The assertions on $\sigma^k(x)$ are easy consequences of the
    definition of $\sigma$. Finally, the assertions on $\rho(C^kF)$ and
    $\rho(C^kL)$ follow from $\rho(F)=2/3$ and $\rho(L)=1$.
  \end{enumerate}
\end{proof}

Next we show that an $L$ and a $C^kF$ never occur as neighbours of the
same vertex in the outline of an optimal tree.

\begin{lemma}\label{lemma:outline-ends-LCFT3T4}
  Let $T\notin\calS$ be an optimal tree, $v$ a vertex of degree
  $4$ and $T_1$, $T_2$, $T_3$, $T_4$ the rooted connected components of $T-v$. We
  assume that $T_1=L$ and $T_2=C^kF$ for some $k\ge 0$. Then $F\in\{T_2,T_3,T_4\}$. In
  particular, $v$ is not in the outline of $T$.
\end{lemma}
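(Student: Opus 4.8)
The plan is to argue by contradiction: assume $F \notin \{T_2, T_3, T_4\}$, which since $T_2 = C^k F$ forces $k \geq 1$, and also $T_3, T_4 \neq F$. Because $T$ satisfies the LC (Theorem~\ref{theorem:local-structure-again}) and $v$ has degree $4$, each $T_i$ is a rooted subtree of type $A$; by Lemma~\ref{lemma:A-good-upper-bound-rho} every non-leaf such subtree has $\rho$-value in $[2/3, \sqrt 3 - 1)$, and if it is not an $F$ (so no branch of its unique branch is a leaf) then $\rho < 0.688$. Thus $\rho(T_2) = \rho(C^k F) < \rholimit \approx 0.7033$ by Lemma~\ref{lemma:chain-parameter}(3), but more importantly $\rho(T_3), \rho(T_4)$ are either $1$ (if leaves) or $< 0.688$. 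First I would pin down which of $T_3, T_4$ are leaves: since $v$ has degree $4$, LC4 is vacuous, but LC6 forbids three leaves at $v$, and $T_1 = L$ is already a leaf, so at most one of $T_3, T_4$ is a leaf; hence at least one of them, say $T_4$, is a non-leaf, non-$F$ rooted subtree of type $A$ with $\rho(T_4) < 0.688$.

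Next I would apply the exchange lemma along a suitable path. The natural move is to take the leaf $T_1 = L$ at $v$ on one side and, on the other side, go down into $T_2 = C^k F$: the chain $C^k F$ is built by repeatedly applying $C$, so internally it contains a long alternating path whose vertices $v_i$ have $\rho_i = 1$ for the type-$A$ vertices and $\rho_i \in \{2/3\}$-ish bounded values for the type-$B$ vertices lying inside the chain, ending in the bottom $F$. Extending the path $T_1 = L \,\text{---}\, v \,\text{---}\, (\text{through } C^k F) \,\text{---}\, F$ to a longest path, its two endpoints are roots of an $r_0$-claw and an $r_k$-claw with $2 \le r_0, r_k \le 3$ (Lemmata~\ref{lemma:strange-special-cases}, \ref{lemma:bad-ends}, \ref{lemma:4-claw-is-bad}, and LC6 forces exactly $2$-claws at the extreme ends); by Lemma~\ref{lemma:path} the interior $\rho_i$ are controlled, so \liref{lemma:exchange}{item:exchange-B} (with $v$ and the bottom vertex of the chain as pivotal vertices, moving the leaf $T_1$ against leaves of the bottom claw) is applicable. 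The point is that the "$a$" side collects $\rho(T_3) + \rho(T_4) + [\text{contribution of the rest}]$ at $v$, while the "$b$" side collects the small leftover at the bottom of the chain; since one side has a full leaf $\rho = 1$ plus two type-$A$ children of value $\ge 2/3$ while the other side is bounded by roughly $2/3 + U_0(1,2) < 2/3 + 0.1153$, the inequality $a < b + U_0(\ell,u)$ will be violated. Concretely, $\rho_v - 1 = \rho(T_2) + \rho(T_3) + \rho(T_4) \ge 2/3 + 2/3 + 2/3 = 2$ (using that $C^kF$, $T_3$, $T_4$ each have $\rho \ge 2/3$), which is far too large to be matched by the $<1.1$-sized bound on the other end, giving the contradiction — exactly as in the proof of Lemma~\ref{lemma:two-leaves-force-leave-everywhere}.

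Finally, once the contradiction is reached, $F \in \{T_2, T_3, T_4\}$ follows; and then $v$ cannot appear in the outline graph, because in the outline construction (Definition~\ref{definition:outline-graph}) a vertex adjacent to an $L$ and an $F$ is precisely one of the forbidden configurations — this is literally the statement of Lemma~\ref{lemma:outline-properties}(3), so the "in particular" is immediate by citing that lemma (note $T_1 = L$ is an $L$ in the outline, and the $F$ among $T_2, T_3, T_4$ is an $F$ in the outline, unless it got absorbed into a larger $C^j F$ — but that reduces to the case $k=0$ handled directly, or is again covered by Lemma~\ref{lemma:outline-properties}(3)).

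I expect the main obstacle to be the bookkeeping in the exchange-lemma step: one must verify carefully that the path from $T_1$ through the chain $C^kF$ down to the final $F$ genuinely extends to a longest path with $2$-claw endpoints and that Lemma~\ref{lemma:path} gives the precise values $\rho_j = 1$ at odd positions needed to invoke \liref{lemma:exchange}{item:exchange-B}; the degenerate small cases ($k = 0$, or short paths where $k \le 4$ in the exchange lemma) must be dispatched separately, likely by direct reference to the replacement table entry \rref{replacement:outline-ends-LCFT3T4} or by \liref{lemma:exchange}{item:exchange-symmetric}.
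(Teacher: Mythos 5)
Your high-level strategy (handle $k=0$ trivially, derive a contradiction via the exchange lemma, cite Lemma~\ref{lemma:outline-properties} for the ``in particular'') is the right frame, but the specific exchange you describe does not go through, and the case that actually requires work is skipped. Concretely:
(i) The claim that every non-leaf, non-$F$ type-$A$ subtree has $\rho<0.688$ misreads Lemma~\ref{lemma:A-good-upper-bound-rho}: the $0.688$ bound needs \emph{no branch of the $B$-child to be a leaf}, which is strictly stronger than ``$\neq F$''. For instance $CL=\calA\calB(L,F,L)$ has a leaf branch at its $B$-child and $\rho(CL)=8/11>0.688$.
(ii) The chain $CT=\calA\calB(L,F,T)$, so at the $B$-vertices on the spine of $C^kF$ the side branches are a leaf \emph{and} a fork, giving $\rho_i=1+2/3=5/3$ at those vertices — not ``$2/3$-ish''.
(iii) Moving $T_1=L$ against leaves of the bottom $2$-claw gives $x=\rho(L)=1=y$, so no part of Lemma~\ref{lemma:exchange} applies; and the quantity you write down, $\rho(T_2)+\rho(T_3)+\rho(T_4)$, is not the $a$ or $b$ of any valid exchange along a path entering $T_2$ (then $T_2$ is on the path, not a side branch), while the asserted ``$<1.1$'' bound does not match anything at that end (the $\rho$-sum at the bottom $2$-claw is $2$).
(iv) Even with a correctly set-up exchange, \liref{lemma:exchange}{item:exchange-B} is too coarse here: with $\rho_j=5/3$ at even spine positions, $U_0(5/3,5/3)$ is tiny but positive, so $\rho(T_4)<2/3+U_0$ does not contradict $\rho(T_4)\ge 2/3$. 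The paper instead uses the \emph{symmetric} form \liref{lemma:exchange}{item:exchange-symmetric}: the spine of $C^kF$ down to any $u_j$ is a palindrome in the $\rho_i$'s, so exchanging $T_3$ against a fork attached inside the chain gives the sharp $a\le b$, i.e.\ $\rho(T_4)\le\rho(F)=2/3$ whenever $\rho(T_3)>2/3$; with WLOG $\rho(T_3)\le\rho(T_4)$ this forces $\rho(T_3)=2/3$.
(v) Most importantly, you do not address the genuine content of the lemma: $\rho(T_3)=2/3$ with $T_3\in\{A_{14}^*,A_{24}^*\}$ (so $T_3\neq F$). The paper resolves this by swapping $T_3$ with a fork inside $C^kF$ — an $m$-preserving move by Lemma~\ref{lemma:m-m-0-via-continuants} — producing another optimal tree containing $\calB(L,F,CA_{14}^*)$ or $\calB(L,F,CA_{24}^*)$ as a rooted subtree, which is ruled out by the replacement \rref{replacement:outline-ends-LCFT3T4} together with Proposition~\ref{proposition:subtrees-of-optimal-trees-are-alpha-optimal}. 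You mention the replacement table only as a fallback for ``degenerate small cases'', but it (together with the symmetric exchange and the fork-swap) is the main argument, not a footnote.
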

\begin{proof}
  We assume $k>0$. Without loss of generality, we also assume that
  $\rho(T_3)\le\rho(T_4)$. The tree $T$ has the shape shown in Figure~\ref{fig:outline-ends.5}.
  If $\rho(T_3)>\rho(F) = 2/3$, we have $\rho(L)+\rho(T_4)\le \rho(L)+\rho(F)$ by
  \liref{lemma:exchange}{item:exchange-symmetric}, which implies that $\rho(T_4)=2/3$ and
  $\rho(T_3)=2/3$ by Lemma~\ref{lemma:A-good-upper-bound-rho}, contradiction.

  \begin{figure}[htbp]
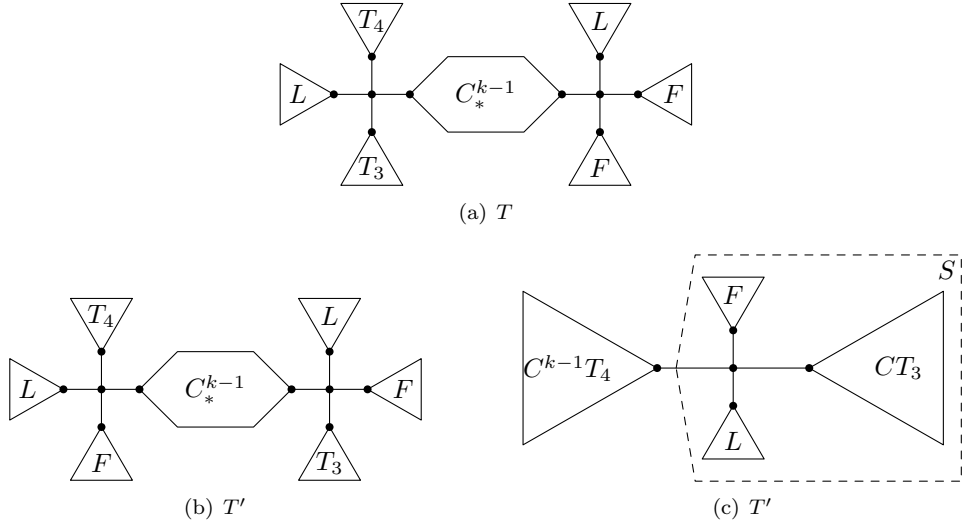

    \centering
    \subfigure[$T$\label{fig:outline-ends.5}]{\includegraphics{outline-ends.5}}\\
    \rule{0pt}{0pt}\hfill\subfigure[$T'$\label{fig:outline-ends.6}]{\includegraphics{outline-ends.6}}
    \hfill\subfigure[$T'$\label{fig:outline-ends.7}]{\includegraphics{outline-ends.7}}\hfill\rule{0pt}{0pt}
    \caption{Trees $T$ and $T'$ considered in Lemma~\ref{lemma:outline-ends-LCFT3T4}.}
  \end{figure}

  Therefore, we have $\rho(T_3)=2/3$, thus $T_3\in\{F, A_{14}^*,
  A_{24}^*\}$ by
  Lemma~\ref{lemma:rho-A-2-3}. In the case
  $T_3=F$, Lemma~\ref{lemma:outline-properties} yields
  the result. Thus we are left with
  $T_3\in\{A_{14}^*, A_{24}^*\}$.

  We consider the tree $T'$ where $F$ and $T_3$ have been exchanged, cf. Figure~\ref{fig:outline-ends.6}.
  From Lemma~\ref{lemma:m-m-0-via-continuants} we conclude that $m(T')=m(T)$, 
  i.e., $T'$ is also an optimal tree. We rewrite $T'$ as in
  Figure~\ref{fig:outline-ends.7}. For $T_3\in\{A_{14}^*, A_{24}^*\}$, the
  rooted tree $S$ is not $\alpha$-optimal for any $\alpha>1/3$, cf.\ \rref{replacement:outline-ends-LCFT3T4}.
  As
   $\rho(C^{k-1}T_4)\ge 2/3$ by
   Lemma~\ref{lemma:A-good-upper-bound-rho}, this is a
   contradiction to Proposition~\ref{proposition:subtrees-of-optimal-trees-are-alpha-optimal}
   and the optimality of $T'$.
\end{proof}

We now prove a necessary optimality condition involving one chain element. 
\begin{lemma}\label{lemma:simple-chain-exchange}
  Let $T\notin\calS$ be an optimal tree, $v$ a vertex of type
  $B$ and degree $4$ of $T$ and $CS_1$, $S_2$, $S_3$, $S_4$ the rooted connected
  components of $T-v$ for some rooted trees $S_1$, $S_2$, $S_3$, $S_4$.
  \begin{enumerate}
  \item If neither $S_3$ nor $S_4$ is a leaf, then 
    \begin{equation*}
      \rho(S_1)\le \rho(S_2).
    \end{equation*}
  \item If $S_3$ is a leaf and $\rho(S_4)>2/3$, then 
    \begin{equation*}
      \rho(S_1)\ge \rho(S_2).
    \end{equation*}
  \end{enumerate}
\end{lemma}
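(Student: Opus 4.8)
The statement is a direct application of the exchange lemma (Lemma~\ref{lemma:exchange}, part~\ref{item:exchange-symmetric}) to the vertex $v$ and a suitable pivotal vertex inside the chain element $CS_1$. The plan is to expose the chain structure $CS_1$ so that the inner vertex of $CS_1$ (the vertex $t$ in Figure~\ref{definition:chain:chain}, the one joining the new leaves to $S_1$) can play the role of one pivotal vertex, while $v$ itself plays the role of the other. Between them lies a path of the type occurring in Figure~\ref{fig:shape-exchange-lemma}, of length $k=2$, with the subtrees hanging off $v$ being $S_2$, $S_3$, $S_4$, and the subtrees hanging off the inner chain vertex being the two leaves introduced by the chain operation. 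Since both pivotal vertices are of type $B$ and $k=2$, the path in between is symmetric in the trivial sense, so \liref{lemma:exchange}{item:exchange-symmetric} applies once we check its hypotheses.

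\textbf{First part.} Here $S_3$ and $S_4$ are not leaves, so $\rho(S_3),\rho(S_4)>0$; together with $\rho(S_2)>0$ this gives the positivity conditions needed (the quantity $a$ in the lemma, corresponding to what remains at $v$ after removing the split-off subtrees, includes $\rho(S_3)+\rho(S_4)>0$, and similarly $y+a>0$, $x+b>0$, $\rho_1,\rho_{k-1}>0$ hold). Now split the two leaves of $CS_1$ off from the inner chain vertex and split $S_1$ off from $v$: concretely, take $x=\rho(S_1)$ at $v$'s side and $y=0$ at the chain side (so that the pivotal vertex on the chain side carries only the two new leaves, which have $\rho=1$ each summed into $b$, and the pivotal vertex $v$ carries $S_2$, $S_3$, $S_4$ summed into $a$). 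Wait—one must instead take $x$ to be the contribution one wishes to move; the correct reading is that exchanging $S_1$ against the two new leaves keeps the total order and recovers exactly the inequality $\rho(S_1)\le\rho(S_2)$ after identifying $a$ with $\rho(S_2)$ plus the type indicator and $b$ with what sits beyond the chain vertex. Applying \liref{lemma:exchange}{item:exchange-symmetric} in the direction $x>y$ yields $a\le b$, which after substituting the definitions of $a$ and $b$ is exactly $\rho(S_1)\le\rho(S_2)$.

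\textbf{Second part.} Now $S_3$ is a leaf, so $\rho(S_3)=1$; the hypothesis $\rho(S_4)>2/3$ guarantees that $S_4$ is not a leaf (since $\rho(L)=1$ would also satisfy this, one must be slightly careful, but the inequality direction we want is the reverse one, obtained by applying the symmetric exchange lemma with the roles of the two sides interchanged). The point is that $1=\rho(S_3)>\rho(S_4)$ fails in general, but $1=\rho(L)\ge$ the $\rho$-values on the chain side, so the comparison goes the other way: we exchange so as to move the chain's new leaves towards $v$, use $\rho(S_3)=1$ strictly larger than the relevant $\rho$ on the chain side, and \liref{lemma:exchange}{item:exchange-symmetric} then gives the reversed inequality $\rho(S_1)\ge\rho(S_2)$.

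\textbf{Main obstacle.} The routine part is unwinding the chain notation and the definitions of $x,y,a,b$ so that the $(\rho_1,\ldots,\rho_{k-1})=(\rho_{k-1},\ldots,\rho_1)$ symmetry hypothesis of \liref{lemma:exchange}{item:exchange-symmetric} is literally met — for $k=2$ it is vacuous, so this is painless. The real care needed is twofold: first, verifying that after the proposed subtree exchange the \emph{types} of all vertices $v_0,\ldots,v_k$ are unchanged (this uses that $k$ is even and that the remaining $\rho$-sums $y+a$ and $x+b$ stay positive, which is where the hypotheses ``$S_3,S_4$ not leaves'' resp. ``$\rho(S_4)>2/3$'' are genuinely used); and second, getting the \emph{direction} of the inequality right in each of the two cases, i.e.\ correctly deciding which side is the ``$x$'' side with the larger split-off $\rho$-sum. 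That bookkeeping — which of $\rho(S_1)\le\rho(S_2)$ or $\rho(S_1)\ge\rho(S_2)$ comes out — is the only place an error could creep in, and it is resolved by noting that in part~(1) the non-leaf constraint forces the $v$-side to dominate, whereas in part~(2) the leaf $S_3$ together with $\rho(S_4)>2/3$ forces the opposite comparison.
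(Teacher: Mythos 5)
Your high-level plan is sound and matches what the paper does: apply \liref{lemma:exchange}{item:exchange-symmetric} with pivotal vertices $v$ and the type-$B$ vertex $t$ inside $CS_1$, with $k=2$ (so the symmetry condition on the path is vacuous). However, the execution contains genuine errors that would not survive being written out.

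First, you have the chain structure wrong. From Lemma~\ref{lemma:chain-parameter}, the chain step is $CT=\calA\calB(L,F,T)$ (adding $7$ vertices), so the branches of $t$ apart from $S_1$ are one $L$ with $\rho=1$ and one $F$ with $\rho=2/3$, not ``two new leaves'' with $\rho=1$ each. Their $\rho$-sum is $5/3$, not $2$. This matters: the precise threshold $2/3$ in the hypothesis of part (2) is exactly $\rho(F)$, and with your structure the inequality $1+\rho(S_4)>2$ needed for part (2) would be impossible.

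Second, you never verify the hypothesis $x>y$ (or $y>x$) of the exchange lemma. If you split $L,F$ off $t$ (so $x=5/3$, $a=\rho(S_1)$) and $S_3,S_4$ off $v$ (so $y=\rho(S_3)+\rho(S_4)$, $b=\rho(S_2)$), then for part (1) you must show $\rho(S_3)+\rho(S_4)<5/3$ when neither is a leaf. This does \emph{not} follow from positivity or from $\rho\le 1$; it requires the bound $\rho(A)<\sqrt3-1$ for non-leaves from Lemma~\ref{lemma:A-good-upper-bound-rho}, which you never invoke. Correspondingly, in part (2) you must compute $y=1+\rho(S_4)>5/3=x$; your vague claim that ``$S_3$ together with $\rho(S_4)>2/3$ forces the opposite comparison'' is exactly this computation, but you don't carry it out (and you even state, incorrectly, that $\rho(S_4)>2/3$ guarantees $S_4$ is not a leaf before backtracking).

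Third, your bookkeeping is internally inconsistent: you first set $x=\rho(S_1)$ at ``$v$'s side'' (but $S_1$ hangs off $t$, not $v$), then say ``one must instead\ldots'' and relabel so that $a=\rho(S_2)$ and $b=\rho(S_1)$, and then claim $a\le b$ gives $\rho(S_1)\le\rho(S_2)$ — which is the reverse of what those identifications yield. For the record, the paper takes the opposite split (removing $S_1$ from $t$ and $S_2$ from $v$, so $a=\rho(L)+\rho(F)=5/3$ and $b=\rho(S_3)+\rho(S_4)$), then argues by contraposition via Lemma~\ref{lemma:A-good-upper-bound-rho}; your direct split is a legitimate alternative, but it needs the same two missing ingredients (correct chain structure, and Lemma~\ref{lemma:A-good-upper-bound-rho}) to go through.
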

\begin{proof}
  If $\rho(S_1)>\rho(S_2)$, then $5/3=\rho(L)+\rho(F)\le \rho(S_3)+\rho(S_4)$
  by \liref{lemma:exchange}{item:exchange-symmetric},
  i.e., $S_3$  or $S_4$ is a leaf by Lemma~\ref{lemma:A-good-upper-bound-rho}.

  If $\rho(S_1)<\rho(S_2)$, then $5/3\ge \rho(S_3)+\rho(S_4)$ by \liref{lemma:exchange}{item:exchange-symmetric}, i.e., either
  both $S_3$ and $S_4$ are non-leaves or $S_3$, say, is a leaf and
  $\rho(S_4)\le 2/3$.

  The contrapositions are the statements of the lemma.

\end{proof}
The following lemma lists some consequences of this result. 

\begin{lemma}\label{lemma:outline-ends}
  Let $T\notin\calS$ be an optimal tree, $v$ a vertex of
  degree $4$ and $T_0$, $T_1$, $T_2$, $T_3$ the rooted connected components of $T-v$.
  \begin{enumerate}
  \item\label{item:outline-ends-CTCTCTCT-T-small} Let $\ell\le 4$ and let $T_j=C^{k_{j}}S_{j}$ for some
    $k_j\ge 0$ and some rooted tree $S_j$ with $\rho(S_j)<7/10$ for
    $j\in\{0,\ldots,\ell-1\}$. Further assume that $T_j$ is not a leaf for
    $j\in\{\ell,\ldots,3\}$ and that $k_0\le \cdots \le k_{\ell-1}$.

    Then $k_{\ell-1}\le k_0+1$ and, if $k_j<k_{j+1}$ for some $0\le j<\ell-1$, then $\rho(S_j)\ge \rho(S_{j+1})$.
  %\item\label{item:outline-ends-LLCF} If $T_1=T_2=L$ and $T_3=C^kF$ for some $0\le k$, then $T_3=F$ or
  %  $T_0=F$. In particular, $v$ is not in the outline of $T$.
  \item\label{item:outline-ends-LCT1CT2-T-large} If $T_0=L$, $T_1=C^{k_1}S_1$,
    $T_2=C^{k_2}S_2$ with $k_1$, $k_2\ge 0$,
    $\rho(C^{\ell_j+1}L)<\rho(S_j)\le\rho(C^{\ell_j}L)$ with $\ell_j\in\{0,1\}$ for
    $j\in\{1,2\}$ and finally $\rho(T_3)>2/3$, then we have $k_2+\ell_2\le k_1+\ell_1+1$.

    Furthermore, if $\sigma^{-\ell_1}(\rho(S_1))>\sigma^{-\ell_2}(\rho(S_2))$, then
    $k_2\le \max\{0,k_1+\ell_1-\ell_2\}$.
  \end{enumerate}
  
\end{lemma}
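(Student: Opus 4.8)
The plan is to obtain both parts purely from the single-chain exchange criterion of Lemma~\ref{lemma:simple-chain-exchange}, combined with the elementary facts about the chain map $\sigma$ recorded in Lemma~\ref{lemma:chain-parameter}: $\rho(C^kS)=\sigma^k(\rho(S))$, $\sigma$ is strictly increasing, and $k\mapsto\sigma^k(x)$ is strictly increasing with $\sigma^k(x)<\rholimit$ when $x<\rholimit$, strictly decreasing with $\sigma^k(x)>\rholimit$ when $x>\rholimit$. The only numerical inputs are $\sigma(2/3)=7/10$, $\sigma(1)=8/11$ and $\rholimit<7/10<8/11$. Throughout, since $T\notin\calS$ is optimal it fulfils the LC (Theorem~\ref{theorem:local-structure-again}), so $v$ is of type $B$ by LC2, and by Lemma~\ref{lemma:A-good-upper-bound-rho} every rooted subtree of $T$ of type $A$ and order $>1$ has $\rho$-value in $[2/3,\sqrt3-1)$; in particular, every chain $C^kS$ occurring below has $S\neq L$ (as $\rho(S)<1=\rho(L)$), hence is not a leaf, and $\rho(S)\ge 2/3$.

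For \rref{item:outline-ends-CTCTCTCT-T-small} I would argue by contradiction; we may assume $\ell\ge 2$. Suppose $k_{\ell-1}\ge k_0+2$; write $T_{\ell-1}=C\bigl(C^{k_{\ell-1}-1}S_{\ell-1}\bigr)$ and invoke Lemma~\ref{lemma:simple-chain-exchange}(1) at $v$, taking this chain as ``$CS_1$'', taking $T_0$ as ``$S_2$'', and the remaining two components as ``$S_3,S_4$'' --- these are not leaves (each is either another $C^{k_i}S_i$ with $S_i\neq L$ or one of $T_\ell,\dots,T_3$). This yields $\rho\bigl(C^{k_{\ell-1}-1}S_{\ell-1}\bigr)\le\rho(T_0)$, i.e.\ $\sigma^{k_{\ell-1}-1}(\rho(S_{\ell-1}))\le\sigma^{k_0}(\rho(S_0))$. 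Since $k_{\ell-1}-1\ge k_0+1$ and $\rho(S_{\ell-1})<\rholimit$, the left side is $\ge\sigma^{k_0+1}(\rho(S_{\ell-1}))=\sigma^{k_0}(\sigma(\rho(S_{\ell-1})))\ge\sigma^{k_0}(\sigma(2/3))=\sigma^{k_0}(7/10)$, while the right side is $<\sigma^{k_0}(7/10)$ because $\rho(S_0)<7/10$; contradiction. The same exchange applied to a pair with $k_j<k_{j+1}$ (now ``$CS_1$''$=T_{j+1}=C(C^{k_{j+1}-1}S_{j+1})$, ``$S_2$''$=T_j$, the other two components again non-leaves) gives $\sigma^{k_{j+1}-1}(\rho(S_{j+1}))\le\sigma^{k_j}(\rho(S_j))$; since $k_{j+1}-1\ge k_j$ the left side is $\ge\sigma^{k_j}(\rho(S_{j+1}))$, and applying the increasing map $\sigma^{-k_j}$ gives $\rho(S_{j+1})\le\rho(S_j)$.

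For \rref{item:outline-ends-LCT1CT2-T-large}, put $x_j=\sigma^{-\ell_j}(\rho(S_j))$ (well defined, as $\rho(S_j)\in(5/8,1)$ lies in the image of $\sigma$) and $m_j=k_j+\ell_j$ for $j\in\{1,2\}$, so $\rho(T_j)=\sigma^{m_j}(x_j)$; applying $\sigma^{-\ell_j}$ to the hypothesis $\sigma^{\ell_j+1}(1)<\rho(S_j)\le\sigma^{\ell_j}(1)$ (using $\rho(C^iL)=\sigma^i(1)$) gives $x_j\in(\sigma(1),1]=(8/11,1]$, a subinterval of $(\rholimit,\infty)$. If $k_2=0$ both claims are trivial, so assume $k_2\ge1$ and write $T_2=C(C^{k_2-1}S_2)$. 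Lemma~\ref{lemma:simple-chain-exchange}(2) at $v$, with ``$CS_1$''$=T_2$, ``$S_2$''$=T_1$, ``$S_3$''$=T_0=L$ a leaf and ``$S_4$''$=T_3$ (recall $\rho(T_3)>2/3$), gives $\rho(C^{k_2-1}S_2)\ge\rho(T_1)$, i.e.\ $\sigma^{m_2-1}(x_2)\ge\sigma^{m_1}(x_1)$. Using $x_2\le1$ and $x_1>8/11=\sigma(1)$ together with monotonicity of $\sigma$, $\sigma^{m_2-1}(1)\ge\sigma^{m_2-1}(x_2)\ge\sigma^{m_1}(x_1)>\sigma^{m_1+1}(1)$, and since $k\mapsto\sigma^k(1)$ is strictly decreasing this forces $m_2-1<m_1+1$, i.e.\ $k_2+\ell_2\le k_1+\ell_1+1$. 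If moreover $x_1>x_2$, then $\sigma^{m_1}(x_1)>\sigma^{m_1}(x_2)$, hence $\sigma^{m_2-1}(x_2)>\sigma^{m_1}(x_2)$; as $x_2>\rholimit$ makes $k\mapsto\sigma^k(x_2)$ strictly decreasing, $m_2-1<m_1$, so $k_2\le k_1+\ell_1-\ell_2$, and since $k_2\ge1$ also $k_2\le\max\{0,k_1+\ell_1-\ell_2\}$.

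The arguments are short; the real work is bookkeeping. The two points to get right are (i) matching the four rooted connected components of $T-v$ to the roles ``$CS_1,S_2,S_3,S_4$'' of Lemma~\ref{lemma:simple-chain-exchange} so that the side conditions are met (non-leaf $S_3,S_4$ in part~(1); one leaf among $S_3,S_4$ and the other with $\rho>2/3$ in part~(2)), and (ii) keeping track of in which direction each monotonicity of $\sigma$ runs --- increasing in its argument always, but increasing or decreasing in the iteration count according to whether the seed lies below or above $\rholimit$. This is precisely why the thresholds $2/3$, $7/10=\sigma(2/3)$ and $8/11=\sigma(1)$, in their position relative to $\rholimit$, are exactly what makes the estimates go through.
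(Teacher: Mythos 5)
Your proof is correct and takes essentially the same route as the paper: both parts are derived by applying Lemma~\ref{lemma:simple-chain-exchange} at $v$ with the same assignment of components to the roles $CS_1,S_2,S_3,S_4$, then unwinding via the monotonicity of $\sigma$ from Lemma~\ref{lemma:chain-parameter}. The only difference is cosmetic --- you phrase the comparisons in terms of $\sigma^k(7/10)$, $\sigma^k(1)$ and $\sigma^k(x_2)$, whereas the paper writes $\rho(C^kF)$ and $\rho(C^kL)$, which are the same quantities since $\rho(C^kF)=\sigma^k(2/3)$ and $\rho(C^kL)=\sigma^k(1)$.
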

\begin{proof}
  \begin{enumerate}
  \item Assume that $k_{\ell-1}>k_0+1$. Then
    \begin{equation*}
      \rho(C^{k_{\ell-1}-1}F)\le \rho(C^{k_{\ell-1}-1}S_{\ell-1})\le \rho(C^{k_0}S_0)<\rho(C^{k_0+1}F)
    \end{equation*}
    by the monotonicity of $\sigma$,
    Lemma~\ref{lemma:A-good-upper-bound-rho},
    Lemma~\ref{lemma:simple-chain-exchange} and the fact that
    $\sigma(2/3)=7/10$. We conclude that $k_{\ell-1}-1<k_0+1$, i.e., $k_{\ell-1}\le k_0+1$,
    contradiction. Thus $k_{\ell-1}\le k_0+1$.

    If $k_j<k_{j+1}$, then $k_{j+1}=k_j+1$. From
    Lemma~\ref{lemma:simple-chain-exchange} we see that
    \begin{equation*}
      \rho(C^{k_j}S_{j+1})=\rho(C^{k_{j+1}-1}S_{j+1})\le \rho(C^{k_j}S_j),
    \end{equation*}
    which yields $\rho(S_{j+1})\le\rho(S_j)$ in view of the monotonicity of $\sigma$.
  \item Assume that $k_2>0$. By Lemma~\ref{lemma:simple-chain-exchange}, we
    have
    \begin{equation*}
      \rho(C^{k_2+\ell_2-1}L)\ge\rho(C^{k_2-1}S_2)\ge \rho(C^{k_1}S_1)>\rho(C^{k_1+\ell_1+1}L),
    \end{equation*}
    which yields $k_2+\ell_2-1<k_1+\ell_1+1$ by
    Lemma~\ref{lemma:chain-parameter} and therefore $k_2+\ell_2\le
    k_1+\ell_1+1$. 
    If $k_2=0$, then the same inequality holds trivially as
    $\ell_2$ has been assumed to be at most $1$.

    Now we turn to the second assertion and assume $k_2>0$ and
    $\sigma^{-\ell_1}(\rho(S_1))>\sigma^{-\ell_2}(\rho(S_2))$. We have
    \begin{equation*}
      \sigma^{k_2-1}(\rho(S_2))=\rho(C^{k_2-1}S_2)\ge
      \rho(C^{k_1}S_1)=\sigma^{k_1+\ell_1}(\sigma^{-\ell_1}(\rho(S_1)))>
      \sigma^{k_1+\ell_1-\ell_2}(\rho(S_2))
    \end{equation*}
    by Lemma~\ref{lemma:simple-chain-exchange},
    which yields $k_2-1<k_1+\ell_1-\ell_2$, i.e., $k_2\le k_1+\ell_1-\ell_2$,
    as required. For $k_2=0$, there is nothing to show.
  \end{enumerate}
\end{proof}

\subsection{Switching Forks and Leaves}
So far, we mainly compared optimal trees to trees where some subtrees have
been switched between two positions. It turns out that more invasive operations
are needed in order to obtain information on the global structure of optimal
trees. 

The basic idea is the following: If $7$ forks are replaced by $7$ leaves,
the order of the tree is reduced by $7\cdot 3=21$. As a chain element requires
$7$ vertices, these $21$ ``free'' vertices can be used to introduce $3$ chain
elements. If all this is done at the ``right'' positions, then $m(T)$
increases. In some circumstances, however, the inverse operation may be
beneficial.

Before we state the main lemma regarding such exchange operations, we collect two technical details
concerning the floor function in the following lemma.

\begin{lemma}\label{lemma:floor}
  \begin{enumerate}
  \item For real $x$ and positive integers $d$, the identity
    \begin{equation*}
      \floor{dx}=\sum_{j=0}^{d-1}\floor{x+\frac jd}
    \end{equation*}
    holds.
  \item Let $d\ge 0$ and $k_0$, \ldots, $k_{d-1}$ be integers with
    \begin{equation*}
      k_0\le k_1\le \cdots \le k_{d-1}\le k_0+1.
    \end{equation*}
    Then
    \begin{equation*}
      k_j=\floor{\frac{k+j}d} \text{\qquad with\qquad}
      k=k_0+k_1+\cdots+k_{d-1}
    \end{equation*}
    for $j\in\{0,\ldots,d-1\}$.
  \end{enumerate}
\end{lemma}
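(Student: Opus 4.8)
The plan is to prove the two assertions separately, each by elementary arithmetic manipulations of the floor function; both are classical identities in the style of \cite{Graham-Knuth-Patashnik:1994}, Chapter~3.

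For part~(1), the identity $\floor{dx}=\sum_{j=0}^{d-1}\floor{x+\frac jd}$ is Hermite's identity. I would prove it by a periodicity argument: both sides of the claimed equation, viewed as functions of $x$, are invariant under the replacement $x\mapsto x+\frac1d$ (on the left, $\floor{dx+1}=\floor{dx}+1$ and simultaneously the right-hand sum gains exactly one full unit because its summands shift cyclically with one index wrapping around to contribute an extra~$1$). Hence it suffices to verify the identity on a single interval of length $\frac1d$, say $0\le x<\frac1d$, where the left-hand side is $0$ and every summand $\floor{x+\frac jd}$ on the right is also $0$ since $0\le x+\frac jd<\frac{j+1}d\le 1$. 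More precisely: write $x=\frac md+\{x'\}$-type decompositions; I would simply note that for arbitrary real $x$ one can write $x = y + \frac{i}{d}$ with $0 \le y < \frac1d$ and $i\in\{0,\dots,d-1\}$ determined by $i = \floor{dx} - d\floor{x - \text{(fractional part)}}$, but the cleanest route is the shift-invariance reduction just described, followed by the trivial base case. This avoids any case distinction.

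For part~(2), given integers $k_0\le k_1\le\cdots\le k_{d-1}\le k_0+1$, each $k_j$ equals either $k_0$ or $k_0+1$. Let $t$ be the number of indices $j$ with $k_j=k_0+1$; by monotonicity these are precisely the indices $j\in\{d-t,\dots,d-1\}$, so $k_j=k_0$ for $j<d-t$ and $k_j=k_0+1$ for $j\ge d-t$. Then $k=k_0+\cdots+k_{d-1}=dk_0+t$, so $k_0=\floor{k/d}$ and $t=k-d\floor{k/d}=k\bmod d$. It remains to check that $\floor{\frac{k+j}{d}}$ takes the same two-valued shape: writing $k=dk_0+t$ with $0\le t\le d-1$, we get $\floor{\frac{k+j}{d}}=k_0+\floor{\frac{t+j}{d}}$, and $\floor{\frac{t+j}{d}}$ equals $0$ when $t+j<d$, i.e.\ $j<d-t$, and equals $1$ when $d\le t+j\le d-1+(d-1)<2d$, i.e.\ $j\ge d-t$. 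This matches the description of the $k_j$ exactly, proving $k_j=\floor{\frac{k+j}{d}}$ for all $j\in\{0,\dots,d-1\}$. (Alternatively, part~(2) can be deduced from part~(1) applied to $x=k/d$ together with monotonicity, but the direct argument is shorter.)

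Neither part presents a genuine obstacle; the only point requiring mild care is making the shift-invariance bookkeeping in part~(1) precise — specifically, confirming that when $x$ increases by $\frac1d$ the multiset $\{x+\frac jd \bmod 1 : 0\le j\le d-1\}$ is unchanged while exactly one summand crosses an integer, so the right-hand side increases by exactly~$1$ in step with the left. Once that is nailed down, the base case $0\le x<\frac1d$ is immediate and part~(2) is pure arithmetic.
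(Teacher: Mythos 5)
Your proof is correct. For part~(2) you follow essentially the same argument as the paper: both set up a single count parameter (you use $t$, the number of indices with $k_j=k_0+1$; the paper uses $r=d-t$, the number of indices with $k_j=k_0$), deduce $k=dk_0+t$, and then verify $\floor{(k+j)/d}=k_0+[j\ge d-t]=k_j$ by a direct computation, implicitly using that $0\le t\le d-1$. For part~(1) the paper simply cites Graham, Knuth and Patashnik, \cite[(3.26)]{Graham-Knuth-Patashnik:1994}, whereas you supply the classical shift-invariance proof of Hermite's identity; that is a complete and standard argument, though your parenthetical remark constraining the shift amount to $\{0,\dots,d-1\}$ is unnecessary and slightly off (the reduction step should be iterated with an arbitrary integer shift $n=\floor{dx}$), and the main argument you give handles this correctly anyway.
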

\begin{proof}
  \begin{enumerate}
  \item Cf.\ Graham, Knuth and Patashnik~\cite[(3.26)]{Graham-Knuth-Patashnik:1994}.
  \item Choose $1\le r\le d$ such that $k_0=\ldots=k_{r-1}<k_r\le \cdots\le
    k_{d-1}=k_0+1$. Then $k=dk_0+(d-r)$ and
    \begin{equation*}
      \floor{\frac{k+j}d}=\floor{k_0+\frac{d+j-r}{d}}=k_0+[d+j-r\ge
      d]=k_0+[j\ge r]=k_j
    \end{equation*}
    for $j\in\{0,\ldots,d-1\}$.
  \end{enumerate}
\end{proof}

Now we are able to provide the required exchange operations. Their
consequences will be exploited afterwards. The proof relies on
similar ideas as the proof of Lemma~\ref{label:3-claw-replacements}.

\begin{lemma}\label{lemma:switching-7forks-to-7leaves}
  Let $T$ be a tree fulfilling the LC and
  let $T'$ be a tree that is obtained from $T$ by replacing one rooted subtree $S$ by $S'$,
  where $S$ and $S'$ will be specified below.
  \begin{enumerate}
    \item\label{item:switching-7forks-to-7leaves:F1} If $S=\calB(C^{k_0}F, C^{k_1}F, C^{k_2}F)$ with
      $k_j=\floor{(k+j)/3}$ for some $k\ge 0$ and
      $S'=\calB(C^{\lfloor k/2\rfloor+1}L,\allowbreak C^{\lfloor(k+1)/2\rfloor+1}L, L)$, then $m(T')/m(T)\ge 5.211$ and $|T'|-|T|=5$.
    \item\label{item:switching-7forks-to-7leaves:F2} If
      $S=\calB(C^{k_0}F, C^{k_1}F, C^{k_2}F)$ with
      $k_j=\floor{(k+j)/3}$ for some $k\ge 0$ and
      $S'=\calB(C^{\lfloor k/2\rfloor+1}L,\allowbreak C^{\lfloor(k+1)/2\rfloor}L, L)$, then $m(T')/m(T)\ge 0.5154$ and $|T'|-|T|=-2$.
    \item\label{item:switching-7forks-to-7leaves:F3} If $S=\calB(C^{k_0}F, C^{k_1}F, C^{k_2}F)$ with
      $k_j=\floor{(k+j)/3}$ for some $k\ge 0$ and $S'=\calB(C^{k}F, F, L)$, then $m(T')/m(T)\ge 0.3726$ and $|T'|-|T|=-3$.
    \item\label{item:switching-7forks-to-7leaves:F4} If \[S=\calB(C^{k+\lfloor
        (j+1)/3\rfloor}F,C^{k+\lfloor
        (j+2)/3\rfloor}F, C^k\calA\calB(C^{k+\lfloor
        (i+1)/4\rfloor}F,C^{k+\lfloor
        (i+2)/4\rfloor}F,C^{k+\lfloor
        (i+3)/4\rfloor}F) )\]
      and $S'=\calB(L,F,C^{2k+i+j}\calA\calB(C^{2k+1}L,C^{2k+1}L,L))$ for some
      $k\ge 0$, $i\in\{0,1,2,3\}$, $j\in\{0,1,2\}$, then 
      $m(T')/m(T)\ge 1.943$ and $|T'|-|T|=2$.
    \item\label{item:switching-7leaves-to-7forks:F5} If $S=\calB(L,C^{\lfloor
        k/2\rfloor}L,C^{\lfloor (k+1)/2\rfloor}L)$ and $S'=\calB(L,F,C^{k-1}F)$
      for some $k\ge 1$, then $m(T')/m(T)\ge 0.722$ and $|T'|-|T|=-1$.
    \item\label{item:switching-7leaves-to-7forks:F6} If $S=\calB(L,C^{\lfloor
        k/2\rfloor}L,C^{\lfloor (k+1)/2\rfloor}L)$ and $S'=\calB(C^{\lfloor (k-1)/3\rfloor}F,C^{\lfloor k/3\rfloor}F,C^{\lfloor (k+1)/3\rfloor}F)$
      for some $k\ge 1$, then $m(T')/m(T)\ge 27/14$ and $|T'|-|T|=2$. If $k\ge
      2$, then $m(T')/m(T)\ge 1.9302$.
    \item\label{item:switching-7leaves-to-7forks:F7} If
$S=\calB(C^{k+1+t}L,L,C^k\calA\calB(C^{k+1+\lfloor
        (s+1)/3\rfloor}L,C^{k+1+\lfloor (s+2)/3\rfloor}L,L))$
      and 
      $S'=\calB(F,L,\allowbreak C^{k-1+\lfloor(s+t+2)/4\rfloor}\calA\calB(C^{k+\lfloor(s+t+3)/4\rfloor}F,C^{k+\lfloor(s+t+4)/4\rfloor}F,C^{k+\lfloor(s+t+5)/4\rfloor}F))$
      for some $k\ge 1$, $s\in\{0,1,2\}$, $t\in\{0,1\}$, then $m(T')/m(T)\ge
      0.5181$ and $|T'|-|T|=-2$.
    \item\label{item:switching-7leaves-to-7forks:F8} If  \[S=\calB(C^{t}L,L,\calA\calB(C^{\lfloor
        s/2\rfloor}L,C^{\lfloor (s+1)/2\rfloor}L,L))\]
      and
      $S'=\calB(C^{\lfloor(s+t-1)/3\rfloor}F,C^{\lfloor(s+t)/3\rfloor}F,C^{\lfloor(s+t+1)/3\rfloor}F)$
      for some $s\in\{1,2,3,4\}$ and $t\in\{0,1,2\}$, then $m(T')/m(T)\ge
      0.516$ and $|T'|-|T|=-2$.
    \item\label{item:switching-7leaves-to-7forks:F10} If  $S=\calB(C^{t}L,L,\calA\calB(C^{\lfloor
        s/2\rfloor}L,C^{\lfloor (s+1)/2\rfloor}L,L))$
      and
      $S'=\calB(L,F,C^{s+t}F)$
      for some $s\in\{1,2,3,4\}$ and $t\in\{0,1,2\}$, then $m(T')/m(T)\ge 1.95$ and $|T'|-|T|=2$.
  \end{enumerate}
\end{lemma}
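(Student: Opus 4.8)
The plan is to treat all ten items uniformly by isolating the rooted subtree $S$ that is being exchanged, using Lemma~\ref{lemma:no-adjacent-vertices-of-type-A} (or rather its refinement via $\rho$, as used in the proof of Lemma~\ref{label:3-claw-replacements}) to factor out the rest of the tree. Write $T$ as the union of $S$ (of type $A$) with a rooted tree $R$ of type $B$ joined at an edge, where $R$ is the connected component of $T$ minus the edge above $S$. Then $m(T)=m(R)\bigl(m(S)+\rho(R)\,m_0(S)\bigr)$ and similarly for $T'$, so that
\begin{equation*}
  \frac{m(T')}{m(T)}=\frac{m(S')+\rho(R)\,m_0(S')}{m(S)+\rho(R)\,m_0(S)}=\frac{m(S')}{m(S)}\cdot\frac{1+\rho(R)\,\rho(S')^{-1}\rho(S')}{1+\rho(R)\,\rho(S)},
\end{equation*}
which I would rewrite cleanly as a function of $\rho(R)$ alone. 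Since $R$ is of type $B$ and, in every application, contains a vertex of type $A$ below it, the LC give $\rho(R)\in(0,\sqrt{3}-1)$ via Lemma~\ref{lemma:A-good-upper-bound-rho} — more precisely $0<\rho(R)<\sqrt3-1\le 0.7321$, since the rooted subtree hanging below $R$'s root has type $A$ and order $>1$ in all the cases that matter (and the boundary cases $\rho(R)\to 0$ and $\rho(R)\to\sqrt3-1$ are handled by monotonicity). The ratio $m(T')/m(T)$ is a Möbius function of $\rho(R)$, hence monotone, so its infimum over the admissible interval is attained at one of the two endpoints; evaluating at both endpoints and taking the minimum yields the claimed numerical lower bound in each of the ten items.

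Concretely, the steps I would carry out are: (1) For each of $S$ and $S'$ in items~(\ref{item:switching-7forks-to-7leaves:F1})--(\ref{item:switching-7leaves-to-7forks:F10}), compute $m$ and $m_0$ as functions of $k$ (and of $i,j,s,t$) using Lemma~\ref{lemma:chain-parameter} for the chain parts — this is where $m(C^kL)=G_{k+1}$, $m(C^kF)=3G_{k+1}-3G_k$, etc., enter — together with the product and sum formulae \eqref{equation:m_0-formula}, \eqref{equation:m_1-formula} for the $\calB(\cdot,\cdot,\cdot)$ and $\calA\calB(\cdot,\cdot,\cdot)$ combinations. The splitting identity $\lfloor(k+j)/3\rfloor$ etc.\ means the three chain-lengths differ by at most $1$, so Lemma~\ref{lemma:floor}(2) lets me write $k_0+k_1+k_2=k$, and the product $m(C^{k_0}F)m(C^{k_1}F)m(C^{k_2}F)$ simplifies because $G_{k_0+1}G_{k_1+1}G_{k_2+1}$ has controlled growth; in fact what matters is only the \emph{ratio} $m(S')/m(S)$ and the quotient $m_0/m=\rho$ of each, and both of these converge as $k\to\infty$ (with $G_k\sim\lambda^k/(\lambda-\lambdabar)$) and are monotone in $k$, so the worst case is a small value of $k$, typically $k=0$ or $k=1$. (2) Compute $\rho(S)$ and $\rho(S')$ in closed form (again as monotone functions of $k$ with explicit limits, using $\rholimit=(\lambda-8)/3$ and Lemma~\ref{lemma:chain-parameter}(3)). (3) Plug into the Möbius expression for $m(T')/m(T)$, check the sign of the derivative in $\rho(R)$, evaluate at $\rho(R)\in\{0,\sqrt3-1\}$ and at the extreme admissible $k$, and read off the minimum; (4) the order count $|T'|-|T|$ is a trivial vertex-count bookkeeping: a $C^kF$ has $7k+4$ vertices, a $C^kL$ has $7k+1$, an $\calA\calB(\cdot,\cdot,\cdot)$ adds two vertices over the sum of its three branches, a $\calB(\cdot,\cdot,\cdot)$ adds one, and in each item the arithmetic $\lfloor k/2\rfloor+\lfloor(k+1)/2\rfloor=k$ and $\sum_j\lfloor(k+j)/3\rfloor=k$ makes the totals match the stated $|T'|-|T|$.

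The main obstacle I expect is purely the bookkeeping volume: ten separate exchange pairs, each with auxiliary parameters $i,j,s,t$ ranging over small finite sets, so strictly speaking one must verify that the claimed ratio bound holds \emph{uniformly} over all those parameter choices and all $k\ge 0$ (or $k\ge 1$). The monotonicity arguments reduce each to checking a single small base case, but there are many base cases, and the numerical constants ($5.211$, $0.5154$, $0.3726$, $1.943$, $0.722$, $27/14$, $1.9302$, $0.5181$, $0.516$, $1.95$) are tight enough that the evaluations have to be done carefully with the exact algebraic values of $\lambda,\lambdabar$ rather than decimal approximations, then rounded at the end. The one genuinely delicate point is item~(\ref{item:switching-7leaves-to-7forks:F6}), where two bounds are stated — the weaker $27/14$ for $k\ge 1$ and the stronger $1.9302$ for $k\ge 2$ — so the $k=1$ case must be isolated and computed separately (there $S=\calB(L,L,CL)$ quite explicitly), while for $k\ge 2$ one uses that the relevant ratio is increasing in $k$ with limit strictly above $1.9302$. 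As with the earlier replacement tables, all of these finitely many numeric verifications can be (and, per the remark after Definition~\ref{definition:outline-graph}, have been) checked mechanically with the accompanying Sage program~\cite{Heuberger-Wagner:max-card-matching-sage}; the proof proper just records the structure of the computation. Finally, that $S'$ (and $S$) indeed fulfil the relevant structural constraints so that $T'$ still makes sense as a tree is immediate from the shapes, and — where claimed, as in item~(\ref{item:switching-7forks-to-7leaves:F1}) and the parenthetical of~(\ref{item:switching-7leaves-to-7forks:F6}) — that $T'$ fulfils LC1--LC4 follows because the replacement only involves $\calB(\cdot,\cdot,\cdot)$-nodes of degree $3$ adjacent to the requisite leaves and chain elements $CL$, $CF$, all of which are LC-compliant by construction.
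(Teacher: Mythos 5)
Your high-level plan coincides with the paper's proof (and with the one sketched in Lemma~\ref{label:3-claw-replacements}): isolate $S$, use Lemma~\ref{lemma:no-adjacent-vertices-of-type-A} to write $m(T)=m(T_1)\bigl(m(S)+\rho(T_1)m_0(S)\bigr)$ and likewise for $T'$, observe that the ratio $m(T')/m(T)$ is a M\"obius function of $\rho(T_1)$, bound $\rho(T_1)$ via Lemma~\ref{lemma:A-good-upper-bound-rho}, and reduce the $k$-dependence to explicit formul\ae{} from Lemma~\ref{lemma:chain-parameter} and Lemma~\ref{lemma:floor}. That is the right route. However, there is a genuine gap in the constant you propose to use: you claim the relevant interval is $\rho(R)\in(0,\sqrt3-1)$ and say you would ``evaluate at $\rho(R)\in\{0,\sqrt3-1\}$''. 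The correct lower bound from Lemma~\ref{lemma:A-good-upper-bound-rho} is $\rho(T_1)\ge 2/3$, and this is not optional. For items~(\ref{item:switching-7leaves-to-7forks:F5}), (\ref{item:switching-7leaves-to-7forks:F7}) and (\ref{item:switching-7leaves-to-7forks:F8}) (where the sum of the $\rho$-values of the branches of $S'$ is \emph{smaller} than for $S$), the ratio is increasing in $\rho(T_1)$, so the infimum is taken at the lower endpoint. With $\rho(T_1)\to 0$, item~(\ref{item:switching-7leaves-to-7forks:F5}) at $k=1$ gives $m(T')/m(T)=\frac{21+9\rho(T_1)}{30+11\rho(T_1)}\to 21/30=0.7$, which is strictly below the claimed $0.722$; at $\rho(T_1)=2/3$ one gets $81/112\approx 0.7232$, which is what the paper needs. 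The paper flags exactly this point in its proof of item~(\ref{item:switching-7leaves-to-7forks:F5}). So using $0$ as a lower bound makes the argument fail for several items.

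A secondary but telling slip: you label $S$ as type $A$ and $R$ as type $B$, but $S=\calB(\cdot,\cdot,\cdot)$ is of type $B$ by definition of the $\calB$ operator, and $R=T_1$, being rooted at the neighbour of $S$'s root, is of type $A$ by the bipartition condition. The factorisation identity you write is symmetric so this does not by itself break the algebra, but it is inconsistent with your own subsequent application of Lemma~\ref{lemma:A-good-upper-bound-rho} (which concerns subtrees of type $A$) to $R$. Fix the type labels and use the two-sided bound $\frac23\le\rho(T_1)<\sqrt3-1$ (choosing the endpoint according to the sign of the M\"obius slope), and the rest of your outline — including the special treatment of the $k=1$ case in item~(\ref{item:switching-7leaves-to-7forks:F6}), the floor-function bookkeeping, and the Sage-assisted verification — matches the paper.
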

\begin{proof}
  \begin{enumerate}
  \item Let $T$ consist of some rooted subtree $T_1$, $S$ and the edge between
    the roots of $T_1$ and $S$. Then from Lemma~\ref{lemma:no-adjacent-vertices-of-type-A} we
    obtain
    \begin{align*}
      \frac{m(T')}{m(T)}&=\frac{m_0(S')}{m_0(S)}\cdot\frac{\rho(T_1)+\rho(C^{\lfloor k/2\rfloor+1}L)+\rho(C^{\lfloor(k+1)/2\rfloor+1}L)+1}{\rho(T_1)+\rho(C^{k_0}F)+\rho(C^{k_1}F)+\rho(C^{k_2}F)}\\
      &=\frac{m_0(S')}{m_0(S)}\left(1+\frac{\rho(C^{\lfloor k/2\rfloor+1}L)+\rho(C^{\lfloor(k+1)/2\rfloor+1}L)+1-\rho(C^{k_0}F)-\rho(C^{k_1}F)-\rho(C^{k_2}F)}{\rho(T_1)+\rho(C^{k_0}F)+\rho(C^{k_1}F)+\rho(C^{k_2}F)}\right)\\
      &\ge \frac{m_0(S')}{m_0(S)}\left(1+\frac{\rho(C^{\lfloor k/2\rfloor+1}L)+\rho(C^{\lfloor(k+1)/2\rfloor+1}L)+1-\rho(C^{k_0}F)-\rho(C^{k_1}F)-\rho(C^{k_2}F)}{\sqrt{3}-1+\rho(C^{k_0}F)+\rho(C^{k_1}F)+\rho(C^{k_2}F)}\right)
    \end{align*}
    where 
    \begin{equation*}
      \rho(C^{\lfloor k/2\rfloor+1}L)+\rho(C^{\lfloor(k+1)/2\rfloor+1}L)+1>3\rholimit>\rho(C^{k_0}F)+\rho(C^{k_1}F)+\rho(C^{k_2}F),
    \end{equation*}
    cf.\ Lemma~\ref{lemma:chain-parameter}, and Lemma~\ref{lemma:A-good-upper-bound-rho} have been used. From
    Lemma~\ref{lemma:chain-parameter} and Lemma~\ref{lemma:floor}, we get
    \begin{align*}
      \frac{m(T')}{m(T)}&\ge
      \frac{\lambda^4(\lambda-\lambdabar)}{(3\lambda-3)^3}(1+O(q^{k/3}))\frac{\sqrt{3}-1+2\rholimit(1+O(q^{k/2}))+1}{\sqrt{3}-1+3\rholimit(1+O(q^{k/3}))}\\
      &\ge 5.21101232
    \end{align*}
    with $q=\lambdabar/\lambda\approx 0.088$, where we replaced the explicit formul\ae{} obtained from
    Lemma~\ref{lemma:chain-parameter} by asymptotic expansions for ease of
    presentation; the actual computations leading to the given constant have
    been performed exactly---in this particular case, it even turned out that
    the whole expression was strictly decreasing in $k$. The explicit branch
    $L$ of $S'$ has been taken into account exactly instead of using
    Lemma~\ref{lemma:chain-parameter}.

    We have $|S|=1+7(k_0+k_1+k_2)+3\cdot 4=13+7k$ and $|S'|=1+7(2+\lfloor
    k/2\rfloor+\lfloor(k+1)/2\rfloor)+3=18+7k$ by Lemma~\ref{lemma:floor}.
  \item Analogous.
  \item Analogous.
  \item Analogous.
  \item Analogous, but the lower bound $\rho(T_1)\ge 2/3$
    (Lemma~\ref{lemma:A-good-upper-bound-rho}) has to be used, 
    as $\rho(L)+\rho(F)+\rho(C^{k-1}F)<1+2\rholimit<\rho(L)+\rho(C^{\lfloor
        k/2\rfloor}L)+\rho(C^{\lfloor (k+1)/2\rfloor}L)$.
  \item Analogous.
  \item Analogous.
  \item Analogous, but simpler, as this is a finite case and no limits have to
    be considered.
  \item Analogous.
  
%   \item Analogous, but as $S$ is now of type $A$, $S'$ is of type $B$ and the
%     upper bound $\rho(S_1)\le 1/2$
  \end{enumerate}
\end{proof}

\begin{remark}
  The precise proof of Lemma~\ref{lemma:switching-7forks-to-7leaves} has been
  carried out using Sage~\cite{Stein-others:2010:sage-mathem}. The program is
  available in \cite{Heuberger-Wagner:max-card-matching-sage}.
\end{remark}

\subsection{\texorpdfstring{$CL$}{CL}-free Optimal Trees}\label{section:A-8-1-free}
Throughout this subsection, we assume that $T\notin\calS$ is an optimal tree which is $CL$-free, i.e., it does not contain a $CL$ as a rooted subtree. Obviously, such a tree does not contain any $C^kL$ as rooted subtree for $k\ge 1$. We will
describe all optimal trees with this property.

\begin{lemma}\label{lemma:A-8-1-free-possible-end-configuration}
  Let $T\notin\calS$ be a $CL$-free optimal tree and $v$ be
  a vertex of degree $4$ in
  the outline graph of $T$ which is adjacent to at least three ``special
  leaves'' $T_0$, $T_1$, $T_2$ with $|T_0|\le |T_1|\le |T_2|$. Then there is a
  $k\ge 0$ such that $T_j=C^{k_j}F$ with $k_j=\lfloor(k+j)/3\rfloor$ for $j\in\{0,1,2\}$.
\end{lemma}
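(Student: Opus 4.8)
The plan is to read off the shape of the three special leaves at $v$ from the structural results on outline graphs, and then to pin down the exponents $k_j$ by combining the chain‑exchange bound \liref{lemma:outline-ends}{item:outline-ends-CTCTCTCT-T-small} with the elementary floor identity in the second part of Lemma~\ref{lemma:floor}.

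First I would record that, since $v$ is a non‑leaf vertex of the outline, it is not absorbed during the construction, so it corresponds to a vertex of type $B$ and degree $4$ of $T$ (Lemma~\ref{lemma:outline-properties}, together with the fact that the outline construction preserves the degree of a surviving vertex, each of its branches being replaced by a single special leaf or a single, possibly special, edge). Write $T_0,T_1,T_2,T_3$ for the rooted connected components of $T-v$, where $T_0,T_1,T_2$ are the type-$A$ subtrees corresponding to the three special leaves, ordered so that $|T_0|\le|T_1|\le|T_2|$. A special leaf is, by construction, either a $C^kF$ or a $C^\ell L$; since $T$ is $CL$-free it contains no $C^\ell L$ with $\ell\ge 1$, hence each of $T_0,T_1,T_2$ is either $L$ or $C^{k_j}F$ for some $k_j\ge 0$.

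Next I would exclude leaves among the components. Suppose $T_0=L$. If also $T_1=T_2=L$, then $v$ is adjacent to three leaves, contradicting LC6, which $T$ satisfies by Theorem~\ref{theorem:local-structure-again} since $T\notin\calS$. Otherwise $T_2=C^{k_2}F$ for some $k_2\ge 0$, so $v$ is a vertex of the outline adjacent both to an $L$ and to a $C^{k_2}F$; this contradicts Lemma~\ref{lemma:outline-ends-LCFT3T4} (and, in the sub-case $k_2=0$, Lemma~\ref{lemma:outline-properties}, since no vertex of the outline is adjacent both to an $L$ and to an $F$). Hence $T_0=C^{k_0}F$, and since $|T_1|,|T_2|\ge|T_0|\ge 3>1$ neither $T_1$ nor $T_2$ is a leaf, so $T_1=C^{k_1}F$ and $T_2=C^{k_2}F$ with $k_0\le k_1\le k_2$. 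The same argument applied to the pair consisting of $T_3$ and $T_0=C^{k_0}F$ shows $T_3\neq L$, so $T_3$ is not a leaf either.

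Finally I would invoke \liref{lemma:outline-ends}{item:outline-ends-CTCTCTCT-T-small} with $\ell=3$, with $S_0=S_1=S_2=F$ (so that $\rho(S_j)=2/3<7/10$), and with $T_3$ not a leaf; this yields $k_2\le k_0+1$, hence $k_0\le k_1\le k_2\le k_0+1$. The second part of Lemma~\ref{lemma:floor} with $d=3$ then gives $k_j=\floor{(k+j)/3}$ for $j\in\{0,1,2\}$, where $k:=k_0+k_1+k_2\ge 0$, which is the assertion. This argument is largely bookkeeping, and I expect the main obstacle to be exactly the case analysis ruling out a leaf among the components of $T-v$: that exclusion is precisely what makes \liref{lemma:outline-ends}{item:outline-ends-CTCTCTCT-T-small} applicable, whereas the quantitative content is already contained in the lemmas proved earlier.
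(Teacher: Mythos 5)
Your proof is correct and takes essentially the same route as the paper: rule out $C^\ell L$ with $\ell\ge 1$ by $CL$-freeness, rule out leaves among the $T_j$ via LC6 and Lemma~\ref{lemma:outline-ends-LCFT3T4}, rule out $T_3=L$ in the same way, and then read off the exponents from \liref{lemma:outline-ends}{item:outline-ends-CTCTCTCT-T-small} and the second part of Lemma~\ref{lemma:floor}. The only difference is that you spell out the $k_2=0$ subcase via Lemma~\ref{lemma:outline-properties} item 3, which is a harmless extra observation (the conclusion of Lemma~\ref{lemma:outline-ends-LCFT3T4} already covers $k\ge 0$).
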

\begin{proof}
  As $T$ is $CL$-free, $T_j\neq C^{k_j}L$ for any $k_j>0$ and
  $j\in\{0,1,2\}$. So for every $j$ we have $T_j=L$ or $T_j=C^{k_j}F$ for some $k_j$.
  As $T$ is optimal, the case $T_0=T_1=T_2=L$ is excluded by
  LC6. The cases $T_0=T_1=L$, $T_2=C^{k_2}F$ and
  $T_0=L$, $T_1=C^{k_1}F$, $T_2=C^{k_2}F$ are excluded by
  Lemma~\ref{lemma:outline-ends-LCFT3T4}. Thus
  $T_j=C^{k_j}F$ for some $k_j\ge 0$ for all $j$. As $v$ is in the outline of
  $T$, it is not adjacent to a leaf by Lemma~\ref{lemma:outline-ends-LCFT3T4}.
  Thus
  \liref{lemma:outline-ends}{item:outline-ends-CTCTCTCT-T-small}
  (with $\ell=3$) and Lemma~\ref{lemma:floor} prove the assertion.
\end{proof}

\begin{lemma}\label{lemma:A-8-1-free-3-ends}
  Let $T\notin\calS$ be a $CL$-free optimal tree. Then
  there are no three distinct vertices  in the outline of $T$ such
  that each of them is adjacent to three ``special leaves''.
\end{lemma}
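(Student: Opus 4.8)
The plan is to argue by contradiction. Suppose $v^{(1)},v^{(2)},v^{(3)}$ are three distinct vertices of the outline of $T$, each adjacent to three ``special leaves''. Since $T\notin\calS$, Theorem~\ref{theorem:local-structure-again} gives that $T$ fulfils the local conditions, and by Lemma~\ref{lemma:outline-properties} each $v^{(i)}$, being a non-leaf of the outline, corresponds to a vertex of type $B$ of $T$; hence $\deg v^{(i)}\in\{3,4\}$ by LC3 and LC5 (and this degree is the same in $T$ and in the outline). If some $v^{(i)}$ had all of its neighbours equal to special leaves — in particular if $\deg v^{(i)}=3$ — then, since special leaves are leaves of the connected outline, the whole outline would be the star $K_{1,\deg v^{(i)}}$, which has a single non-leaf vertex, contradicting the existence of three distinct $v^{(i)}$. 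So every $v^{(i)}$ has degree $4$ with exactly three special-leaf branches, and Lemma~\ref{lemma:A-8-1-free-possible-end-configuration} applies: there is $k^{(i)}\ge 0$ such that, with $k_j^{(i)}=\floor{(k^{(i)}+j)/3}$, the three special-leaf branches of $v^{(i)}$ are $C^{k_0^{(i)}}F$, $C^{k_1^{(i)}}F$, $C^{k_2^{(i)}}F$. Consequently, deleting the fourth edge at $v^{(i)}$ and keeping the component of $v^{(i)}$ produces a rooted subtree $S_i=\calB(C^{k_0^{(i)}}F,C^{k_1^{(i)}}F,C^{k_2^{(i)}}F)$ of $T$.

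Next I would verify that $S_1$, $S_2$, $S_3$ are pairwise vertex-disjoint. This is a short piece of tree surgery: within a cluster $S_i$ the only vertex that survives on the outline is its root $v^{(i)}$, since the type-$B$ vertices inside the forks $C^kF$ are contracted away when the outline is formed. Therefore, if $S_i$ and $S_j$ overlapped, then either one of $v^{(i)},v^{(j)}$ would lie in a branch of the other — forcing it to equal the other's root — or the two clusters would together exhaust $T$, in which case $v^{(i)}$ and $v^{(j)}$ would be adjacent vertices of type $B$, contradicting LC1. In every case this gives a contradiction, so the $S_i$ are disjoint.

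With the three disjoint clusters available, I would invoke Lemma~\ref{lemma:switching-7forks-to-7leaves}. Since $T$ fulfils the LC, apply \liref{lemma:switching-7forks-to-7leaves}{item:switching-7forks-to-7leaves:F1} to $S_1$, \liref{lemma:switching-7forks-to-7leaves}{item:switching-7forks-to-7leaves:F2} to $S_2$, and \liref{lemma:switching-7forks-to-7leaves}{item:switching-7forks-to-7leaves:F3} to $S_3$; since the clusters are disjoint and each replacement subtree — a $\calB$ of chains $C^jL$, resp.\ of forks, together with a single leaf $L$ — keeps LC1--LC6 intact, every intermediate tree still fulfils the LC, so each replacement is legitimate. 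The total change in order is
\[
  +5+(-2)+(-3)=0,
\]
so the resulting tree $\widetilde T$ has $|\widetilde T|=|T|$, while
\[
  \frac{m(\widetilde T)}{m(T)}\ \ge\ 5.211\cdot 0.5154\cdot 0.3726\ >\ 1,
\]
contradicting the optimality of $T$. This finishes the proof.

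The genuinely delicate part is not the closing inequality — which is immediate once the three replacement operations of Lemma~\ref{lemma:switching-7forks-to-7leaves} are in place and the order changes are seen to cancel — but the preparatory bookkeeping: excluding the degenerate outline configurations (a vertex of degree $3$, or a degree-$4$ vertex all of whose neighbours are special leaves) so that Lemma~\ref{lemma:A-8-1-free-possible-end-configuration} genuinely applies to all three vertices, and confirming that the three fork-clusters are vertex-disjoint, so the replacements may be performed simultaneously.
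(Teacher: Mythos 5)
Your proof is correct and follows the same route as the paper's: invoke Lemma~\ref{lemma:A-8-1-free-possible-end-configuration} to identify the three fork clusters, apply the three replacements F1, F2, F3 from Lemma~\ref{lemma:switching-7forks-to-7leaves} to them in turn, and observe that the order change cancels ($+5-2-3=0$) while $m$ strictly increases ($5.211\cdot0.5154\cdot0.3726>1$), contradicting optimality. The extra bookkeeping you supply — that the three vertices must have degree $4$ in the outline and that the three clusters are vertex-disjoint so the replacements can be applied simultaneously — is sound and is left implicit in the paper.
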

\begin{proof}
  Assume that there are three distinct vertices $v_0$, $v_1$, $v_2$ in the outline of
  $T$ such that $v_i$ is adjacent to $C^{k_{i0}}F$, $C^{k_{i1}}F$,
  $C^{k_{i2}}F$ with $k_{ij}=\lfloor(k_i+j)/3\rfloor$ for some $k_i\ge 0$ and
  $j\in\{0,1,2\}$. By
  Lemma~\ref{lemma:A-8-1-free-possible-end-configuration} this is the only case
  to consider.

  Replacing the rooted subtree with root $v_0$ and branches $C^{k_{00}}F$,
  $C^{k_{01}}F$, $C^{k_{02}}F$ by a rooted subtree with root $v_0$ and branches $C^{\lfloor k_0/2\rfloor+1}L$, $C^{\lfloor(k_0+1)/2\rfloor+1}L$,
  $L$, cf.\ \liref{lemma:switching-7forks-to-7leaves}{item:switching-7forks-to-7leaves:F1}, yields a tree $T'$ which is not necessarily optimal, but fulfils the LC.

  Replacing the rooted subtree with root $v_1$ and branches $C^{k_{10}}F$,
  $C^{k_{11}}F$, $C^{k_{12}}F$ in $T'$ by a rooted subtree with root $v_1$ and branches $C^{\lfloor k_1/2\rfloor+1}L$, $C^{\lfloor(k_1+1)/2\rfloor}L$,
  $L$, cf.\ \liref{lemma:switching-7forks-to-7leaves}{item:switching-7forks-to-7leaves:F2}, yields a tree $T''$, which still fulfils the LC.

  Replacing the rooted subtree with root $v_2$ and branches $C^{k_{20}}F$,
  $C^{k_{21}}F$, $C^{k_{22}}F$ in $T''$ by a rooted subtree with root $v_2$ and branches $C^{k_2}F$, $F$,
  $L$, cf.\ \liref{lemma:switching-7forks-to-7leaves}{item:switching-7forks-to-7leaves:F3}, yields a tree $T'''$.

  \lirefthree{lemma:switching-7forks-to-7leaves}{item:switching-7forks-to-7leaves:F1}{item:switching-7forks-to-7leaves:F2}{item:switching-7forks-to-7leaves:F3} yields $|T'''|-|T|=5-2-3=0$ and
  \begin{equation*}
    \frac{m(T''')}{m(T)}=\frac{m(T''')}{m(T'')}\cdot\frac{m(T'')}{m(T')}\cdot\frac{m(T')}{m(T)}\ge
     5.211\cdot 0.5154\cdot 0.3726>1.0007,
  \end{equation*}
  thus $m(T''')>m(T)$, contradiction to the optimality of $T$.
\end{proof}

\begin{lemma}\label{lemma:caterpillar-A-8-1-free}
  Let $T\notin\calS$ be a $CL$-free optimal tree of order $n$ and $u$,
  $v$ two distinct vertices of degree $4$ in the outline graph of $T$ which are
  adjacent to three special leaves. 

  Then  $n\equiv 6\pmod 7$ and $T$ is of the shape given in
  Figure~\ref{fig:optimal-trees-6} where
  \begin{equation*}
    k_j=\floor{\frac{n-27+7j}{49}}
  \end{equation*}
  for $0\le j\le 6$ or $(k_0,k_1,k_2,k_3,k_4,k_5,k_6)=(1,0,0,0,0,0,0)$.
\end{lemma}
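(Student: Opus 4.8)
The plan is to reconstruct the global shape of $T$ from the structure of its outline graph $T'$, using the local results and the exchange/switching lemmas to pin down the skeleton, and Lemma~\ref{lemma:floor} together with a vertex count to obtain the explicit values of the $k_j$. First I would collect what is already known: by Lemma~\ref{lemma:A-8-1-free-possible-end-configuration} the three branches of $u$, and separately of $v$, are of the form $C^{k_j}F$ with exponents $\lfloor(k+j)/3\rfloor$, and by Lemma~\ref{lemma:A-8-1-free-3-ends} no further vertex of $T'$ is adjacent to three special leaves. Since $u$ and $v$ exist and are incident with three special leaves, $T$ is not the tree of Figure~\ref{fig:optimal-trees-7}, so the second part of Lemma~\ref{lemma:outline-properties} excludes a vertex of degree $3$ in $T'$; hence every non-leaf of $T'$ has degree exactly $4$, and $|T'|>1$ by the same lemma. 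Now consider the tree $R$ on the non-leaves of $T'$, obtained by deleting all special leaves from $T'$. A leaf of $R$ is a non-leaf of $T'$ with a single non-leaf neighbour, hence, being of degree $4$ in $T'$, adjacent to three special leaves, hence equal to $u$ or $v$. Thus $R$ has exactly two leaves, i.e.\ $R$ is a path from $u$ to $v$, and every internal vertex of this path has exactly two special-leaf neighbours in $T'$.

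Next I would identify the special leaves and bound the length of $R$. Each special leaf is an $L$ or a $C^aF$. By the third part of Lemma~\ref{lemma:outline-properties} no vertex of $T'$ is adjacent both to an $L$ and to an $F$, and combined with Lemma~\ref{lemma:outline-ends-LCFT3T4} no vertex of $T'$ is adjacent both to an $L$ and to any $C^aF$; together with LC6 this forces the three branches of $u$ and of $v$ to be of the form $C^{k_j}F$. For an internal vertex $w$ of $R$ the same dichotomy forces its two special leaves to be of equal kind; two pendant leaves at a type-$B$ vertex of degree $4$ sitting inside a chain is a $CL$, which is excluded by hypothesis, so the two special leaves of $w$ are again forks. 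Since consecutive type-$B$ vertices are non-adjacent, every edge of $R$ is a special edge $C^c_*$, i.e.\ on the level of $T$ a chain of type-$A$ vertices. To bound the length of $R$ I would argue as in the proof of Lemma~\ref{lemma:A-8-1-free-3-ends}: a path $R$ with too many internal vertices carries enough forks that a suitable combination of the replacements in Lemma~\ref{lemma:switching-7forks-to-7leaves} strictly increases $m$, contradicting optimality. The surviving possibilities are exactly the skeleton of Figure~\ref{fig:optimal-trees-6}, whose seven free parameters are the six fork heights together with the chain length along $R$.

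It then remains to determine these parameters. The symmetric exchange lemma \liref{lemma:exchange}{item:exchange-symmetric}, applied at $u$ and at $v$, together with the monotonicity of $\sigma$ from Lemma~\ref{lemma:chain-parameter} and the inequalities of Lemma~\ref{lemma:simple-chain-exchange}, forces the relevant exponents (across the forks of $u$, the spine chain, and the forks of $v$) to be consecutive integers in the appropriate order; Lemma~\ref{lemma:floor} then rewrites ``consecutive integers summing to $K$'' as $\lfloor(K+j)/d\rfloor$. A single vertex count, using $|C^kF|=7k+4$ and the sizes of the chain pieces, gives $|T|=27+7\sum_j k_j$, which forces $n\equiv 6\pmod 7$ and $\sum_j k_j=(n-27)/7$; one further application of Lemma~\ref{lemma:floor} yields $k_j=\lfloor(n-27+7j)/49\rfloor$. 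Finally, when this distribution would place exactly one extra chain element, the equality case of \liref{lemma:exchange}{item:exchange-symmetric} shows that relocating it produces a second tree with the same value of $m$; this is precisely the alternative $(k_0,\dots,k_6)=(1,0,0,0,0,0,0)$.

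I expect the middle step to be the main obstacle. Bounding the length of $R$ and showing that all the special leaves must be forks requires a careful interplay of LC6, the $CL$-free hypothesis, Lemma~\ref{lemma:outline-ends-LCFT3T4} and quantitatively sharp use of the exchange and switching lemmas, and the bookkeeping is complicated by the fact that the special edges $C^c_*$ conceal chains of type-$A$ vertices, so that ``moving'' subtrees along $R$ must be translated back into operations on $T$ each time. By contrast, once the skeleton of Figure~\ref{fig:optimal-trees-6} is fixed, the determination of the $k_j$ is essentially the two floor identities of Lemma~\ref{lemma:floor} plus one vertex count.
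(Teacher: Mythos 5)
Your overall plan matches the paper's architecture: establish that the outline is a caterpillar spine from $u$ to $v$ (via Lemma~\ref{lemma:A-8-1-free-3-ends} and the absence of degree-$3$ outline vertices), show that every special leaf is a $C^aF$, reduce the spine to the two-vertex case, and finish with Lemma~\ref{lemma:floor} plus a vertex count, treating $(1,0,\ldots,0)$ as an equality case. However, there is a concrete gap in the middle step. You argue that an internal spine vertex $w$ cannot have two pendant leaves because that would produce a $CL$ subtree, forbidden by hypothesis. This is false: $CL=\calA\calB(L,L,F)$, so a degree-$4$ type-$B$ vertex with two pendant leaves yields a $CL$ rooted subtree only when its remaining type-$A$ branch is precisely an $F$. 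For an internal spine vertex the remaining branches run off along the spine toward $u$ and $v$ and are not forks, so $CL$-freeness gives you nothing directly. The paper rules this case out via Lemma~\ref{lemma:two-leaves-force-leave-everywhere}: any degree-$4$ vertex adjacent to two leaves would force \emph{every} type-$B$ vertex to have a leaf neighbour, contradicting the fact that $u$ (all of whose outline-neighbours are $C^{k_j}F$ or a special edge) has none. You should substitute this argument for your $CL$ shortcut.

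Secondly, "bounding the length of $R$" is not what is needed and cannot be achieved by literally repeating the proof of Lemma~\ref{lemma:A-8-1-free-3-ends}: that lemma's replacements (\ref{item:switching-7forks-to-7leaves:F1}), (\ref{item:switching-7forks-to-7leaves:F2}), (\ref{item:switching-7forks-to-7leaves:F3}) are tuned to three distinct end-configurations, whereas here the obstruction is a single interior spine vertex. The paper shows the spine has \emph{no} interior vertices at all ($s=0$): after pinning down the exponents around $u$ by \liref{lemma:exchange}{item:exchange-symmetric}, monotonicity of $\sigma$, and \liref{lemma:outline-ends}{item:outline-ends-CTCTCTCT-T-small}, it applies the tailored replacements \liref{lemma:switching-7forks-to-7leaves}{item:switching-7forks-to-7leaves:F4} at the first interior vertex and \liref{lemma:switching-7forks-to-7leaves}{item:switching-7forks-to-7leaves:F2} at $v$, giving $m(T'')/m(T)\ge 0.5154\cdot 1.943>1$ at equal order. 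Your final step for determining the $k_j$ is essentially right in spirit, but in the paper it requires more than one floor identity: one first shows $a\le b\le a+1$ for the fork-exponent sums on each side via the symmetric exchange lemma and monotonicity, then treats $b=0$ and $b>0$ separately to obtain the two admissible parameter vectors.
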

\begin{proof}
  By Lemma~\ref{lemma:outline-properties}, the outline of $T$ does not contain
  a vertex of degree~$3$. By Lemma~\ref{lemma:A-8-1-free-3-ends}, the outline graph of $T$ is a
  caterpillar tree. By Lemma~\ref{lemma:A-8-1-free-possible-end-configuration},
  it must be of the shape shown in Figure~\ref{fig:caterpillar-A-8-1-free}
  for some $s\ge 0$, some non-negative integers $k_0$, \ldots, $k_8$ and some
  special leaves $T_j$, $T_j'$ for $j\in\{1,\ldots,s\}$. As $T$ is
  $CL$-free, $T_j$ is a leaf or a $C^{\ell_j}F$ and $T_j'$ is a leaf or
  a $C^{\ell'_j}F$ for suitable $\ell_j$, $\ell_j'\ge 0$ and
  $j\in\{1,\ldots,s\}$.
  \begin{figure}[htbp]
    \centering
    \includegraphics{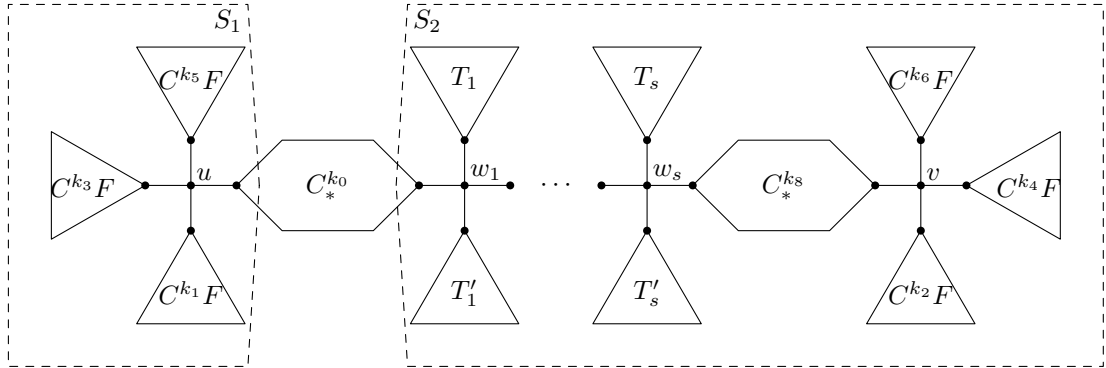}
    \caption{Decomposition of $T$ for Lemma~\ref{lemma:caterpillar-A-8-1-free}.}
    \label{fig:caterpillar-A-8-1-free}
  \end{figure}

  Assume that $s>0$.  By Lemma~\ref{lemma:two-leaves-force-leave-everywhere},
  it is impossible that both $T_1=L$ and $T_1'=L$, as $u$ is not adjacent to a
  leaf. By Lemma~\ref{lemma:outline-ends-LCFT3T4} we conclude that
  $T_1=C^{\ell_1}F$ and $T_2=C^{\ell_1'}F$ for some $\ell_1\ge 0$, $\ell_1'\ge
  0$. Lemma~\ref{lemma:A-good-upper-bound-rho} implies that $\rho(S_1)\le
  0.688$ and $\rho(S_2)\le 0.688$.

  W.l.o.g. we assume $k_1\le k_3\le k_5$. We claim that
  \begin{equation}\label{eq:caterpillar-C-L-free-chain-left}
     k_0\le k_1\le k_3\le k_5\le k_0+1.
  \end{equation}
  If $\rho(S_2)>2/3=\rho(F)$ or $k_0=0$, this follows from
  \liref{lemma:outline-ends}{item:outline-ends-CTCTCTCT-T-small}.
  So we consider the case that $\rho(S_2)=2/3$ and $k_0>0$. By
  Lemma~\ref{lemma:rho-A-2-3} and the shape of $T$ as shown in
  Figure~\ref{fig:caterpillar-A-8-1-free}, this implies that
  $S_2=A_{24}^*$. In particular, we have $T_1=T_1'=F$, i.e., $\ell_1 = \ell_1' = 0$. As $\ell_1<k_0$, we
  have $2/3=\rho(T_1)\ge\rho(S_1)$ and $k_0\le 1$ by
  \liref{lemma:outline-ends}{item:outline-ends-CTCTCTCT-T-small}.
  Thus we have $S_1=A_{14}^*$ by Lemma~\ref{lemma:rho-A-2-3} and
  $k_0=1$. Then $T=\calA\calB(F,A_{14}^*,A_{24}^*)$ which is not optimal by
  \rref{replacement:rho-A-2-3}. This concludes the proof of \eqref{eq:caterpillar-C-L-free-chain-left}.

  W.l.o.g we assume $\ell_1\le \ell_1'$. If $\rho(S_1)=2/3$, then
  $S_1=A_{14}^*$ and $k_1=k_3=k_5=0$, thus also $k_0=0$ by \eqref{eq:caterpillar-C-L-free-chain-left}. Thus $k_0\le \ell_1\le \ell_1'\le k_0+1$ in this case by \liref{lemma:outline-ends}{item:outline-ends-CTCTCTCT-T-small}. If
  $\rho(S_1)>2/3 = \rho(F)$, we get the same estimate $k_0\le \ell_1\le \ell_1'\le k_0+1$
  from \liref{lemma:outline-ends}{item:outline-ends-CTCTCTCT-T-small}.

  Replacing $S=\calB(T_1,T_1',C^{k_0}\calA\calB(C^{k_1}F,C^{k_3}F,C^{k_5}F))$ in
  $T$ by
  $S'$ as in \liref{lemma:switching-7forks-to-7leaves}{item:switching-7forks-to-7leaves:F4}  yields a tree $T'$ fulfilling
  the LC.

  Replacing $\calB(C^{k_2}F, C^{k_4}F, C^{k_6}F)$ in $T'$ as in
  \liref{lemma:switching-7forks-to-7leaves}{item:switching-7forks-to-7leaves:F2} yields a tree $T''$.

  By \lireftwo{lemma:switching-7forks-to-7leaves}{item:switching-7forks-to-7leaves:F2}{item:switching-7forks-to-7leaves:F4}, we have $|T''|=|T|$ and
  \begin{equation*}
    \frac{m(T'')}{m(T)}= \frac{m(T'')}{m(T')}\frac{m(T')}{m(T)}
    \ge 0.5154\cdot 1.943>1.001,
  \end{equation*}
  contradiction to the optimality of $m(T)$.

  Thus we have shown that $s=0$, i.e., $T$ is of the shape given in
  Figure~\ref{fig:optimal-trees-6}. We set $k=k_0+k_1+k_2+k_3+k_4+k_5+k_6$
  and have $n=7k+6\cdot4+3=27+7k$ and in particular $n\equiv 6\pmod 7$.

  We set $a=k_1+k_3+k_5$ and $b=k_2+k_4+k_6$. Without loss of generality we
  assume $a\le b$.
  By \liref{lemma:outline-ends}{item:outline-ends-CTCTCTCT-T-small} and Lemma~\ref{lemma:floor}, we have
  \begin{equation}\label{eq:caterpillar-A-8-1-free-prepare-balance}
    \begin{aligned}
      k_1&=\floor{\frac
        a3},&k_3&=\floor{\frac{a+1}3},&k_5&=\floor{\frac{a+2}3},&
      k_2&=\floor{\frac
        b3},&k_4&=\floor{\frac{b+1}3},&k_6&=\floor{\frac{b+2}3}.\\
    \end{aligned}
  \end{equation}
  If $b\ge a+2$, we obtain $k_6\ge\lfloor (a+4)/3\rfloor=k_3+1$, thus $\rho(C^{k_6}F) > \rho(C^{k_3}F)$, and
  \liref{lemma:exchange}{item:exchange-symmetric} and Lemma~\ref{lemma:chain-parameter} yield
  \begin{align*}
    \rho(C^{k_1}F)+\rho(C^{k_5}F)&<\rho(C^{\floor{(a+2)/3}}F)+\rho(C^{\floor{(a+3)/3}}F)\le \rho(C^{\floor{b/3}}F)+\rho(C^{\floor{(b+1)/3}}F)\\
    &=\rho(C^{k_2}F)+\rho(C^{k_4}F)\le \rho(C^{k_1}F)+\rho(C^{k_5}F),
  \end{align*}
  a contradiction. We conclude that $a\le b\le
  a+1$. From~\eqref{eq:caterpillar-A-8-1-free-prepare-balance} we immediately
  conclude that $k_1\le k_2\le k_3\le k_4\le k_5\le k_6\le k_1+1$ holds in both
  cases.

  From \liref{lemma:outline-ends}{item:outline-ends-CTCTCTCT-T-small} and
  Lemma~\ref{lemma:A-good-upper-bound-rho} we see that
  $k_0\le k_1+1$ and $k_6\le k_0+1$. 

  If $b=0$, we therefore obtain 
  \begin{equation*}
    (k_0,k_1,k_2,k_3,k_4,k_5,k_6)\in\{(0,0,0,0,0,0,0), (1,0,0,0,0,0,0)\},
  \end{equation*}
  where we have $T=T_{27}^*$ in the first case and $T=T_{34,2}^*$ in the second
  case.

  If $b>0$, then $\rho(S_2) > \frac23 = \rho(F)$ and thus $k_0\le
  k_1$ by \liref{lemma:outline-ends}{item:outline-ends-CTCTCTCT-T-small}. Thus we have
  \begin{equation*}
    k_0\le k_1\le k_2\le k_3\le k_4\le k_5\le k_6\le k_0+1,
  \end{equation*}
  and therefore $k_j=\floor{(k+j)/7}$ for $j\in\{0,1,2,3,4,5,6\}$ by Lemma~\ref{lemma:floor}. We have
  $k\ge 1$, the case $k=1$ corresponds to $T=T_{34,1}^*$. Indeed,
  $m(T_{34,1}^*)=m(T_{34,2}^*)$.
\end{proof}

\begin{proposition}\label{proposition:A-8-1-free-classification-optimal}
  Let $T\notin\calS$ be a $CL$-free optimal tree of order $n>1$. Then $n\equiv
  0\pmod 7$, $n\equiv 3\pmod 7$ or $n\equiv 6\pmod 7$  and $T$ has the shape described in
  Theorem~\ref{theorem:global-structure} for these congruence classes.
\end{proposition}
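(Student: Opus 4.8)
The plan is to classify $T$ according to the shape of its outline graph $T'$, feeding a case distinction into Lemmas~\ref{lemma:outline-properties}, \ref{lemma:A-8-1-free-possible-end-configuration}, \ref{lemma:A-8-1-free-3-ends} and~\ref{lemma:caterpillar-A-8-1-free}. By the first part of Lemma~\ref{lemma:outline-properties}, $T'$ is a tree whose leaves are special leaves --- so, as $T$ is $CL$-free, each is an $L$ or a $C^kF$ with $k\ge0$ --- and whose non-leaf vertices are the type-$B$ vertices of $T$, hence of degree~$3$ or~$4$ by Theorem~\ref{theorem:local-structure-again} (since $T\notin\calS$). Two cases are then immediate. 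If $T'$ has a vertex of degree~$3$, the second part of Lemma~\ref{lemma:outline-properties} yields $n\equiv0\pmod7$ with $T$ as in Figure~\ref{fig:optimal-trees-7} and $k=(n-7)/7$. If $T'$ has order~$1$, the fourth part of Lemma~\ref{lemma:outline-properties} together with $CL$-freeness (which rules out $C^{(n-1)/7}L$ when $n>1$) leaves only $T=C^{(n-4)/7}F$, i.e.\ the tree described for $n\equiv4\pmod7$ in Theorem~\ref{theorem:global-structure}.

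The remaining case is that every non-leaf vertex of $T'$ has degree~$4$, with $|T'|\ge2$, so that $T'$ has at least one such vertex. Call a non-leaf vertex of $T'$ an \emph{end} if it is adjacent to at least three leaves of $T'$. A longest-path argument --- using that a non-leaf vertex has degree~$4$ and that, by maximality of a longest path, all but one of the neighbours of the neighbour of one of its endpoints are leaves --- shows that $T'$ has at least one end, and that unless $T'$ is the star $K_{1,4}$ it has at least two; by Lemma~\ref{lemma:A-8-1-free-3-ends}, it has at most two. Hence either $T'\cong K_{1,4}$ or $T'$ has exactly two ends.

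In the two-ends case, each end --- not being the centre of a star --- is adjacent to exactly three special leaves, so the two ends are two distinct degree-$4$ vertices of the outline adjacent to three special leaves; Lemma~\ref{lemma:caterpillar-A-8-1-free} then gives $n\equiv6\pmod7$ with $T$ as in Figure~\ref{fig:optimal-trees-6} (or $T=T_{34,2}^*$, which also has $n\equiv6\pmod7$). In the star case, the centre is a type-$B$ vertex of degree~$4$ and has no parent by LC5, so $T=\calB(T_0,T_1,T_2,T_3)$ with $T_0,\dots,T_3$ special leaves; by LC6 at most two are an $L$, and by Lemma~\ref{lemma:A-8-1-free-possible-end-configuration} the remaining branches are chains $C^kF$ that are balanced among themselves. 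One then rules out the presence of any $L$-branch: if some $F$-chain branch is $F=C^0F$, the centre is adjacent to an $L$ and an $F$, contradicting the third part of Lemma~\ref{lemma:outline-properties}; and if all $F$-chains are proper (length $\ge1$), then $T$ has order $\equiv0$ (one $L$-branch) or $\equiv4$ (two $L$-branches) modulo~$7$ and is strictly beaten --- respectively by the tree of Figure~\ref{fig:optimal-trees-7} of that order and by $C^{(n-4)/7}F$ --- which contradicts optimality; this last step is a concrete inequality to be read off from the switching operations of Lemma~\ref{lemma:switching-7forks-to-7leaves} (or, with more effort, from the explicit formulas of Lemma~\ref{lemma:chain-parameter} via Lemma~\ref{lemma:m-m-0-via-continuants}). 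Thus $T=\calB(C^{k_0}F,C^{k_1}F,C^{k_2}F,C^{k_3}F)$; relabelling so that $k_0\le k_1\le k_2\le k_3$, \liref{lemma:outline-ends}{item:outline-ends-CTCTCTCT-T-small} with $\ell=4$ gives $k_3\le k_0+1$, whence the second part of Lemma~\ref{lemma:floor} yields $k_j=\floor{(k+j)/4}=\floor{(n-17+7j)/28}$ with $k=k_0+k_1+k_2+k_3$ and $n=17+7k\equiv3\pmod7$ --- precisely Figure~\ref{fig:optimal-trees-3}.

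I expect the only real obstacle to be the star case, specifically the exclusion of an $L$-branch at the centre when all the $F$-chain branches are proper chains: there Lemma~\ref{lemma:outline-properties}(3) and LC6 no longer bite, and one has to compare $m(T)$ with the value of an explicitly exhibited competitor of the same order, which is where the switching lemmas (or the replacement tables of the appendix) do the work. All the other ingredients --- the longest-path bookkeeping, the (implicit) reduction to a caterpillar handled inside Lemma~\ref{lemma:caterpillar-A-8-1-free}, and the conversion of the chain-balancing inequality into the floor-function formulas --- are routine given the results already established.
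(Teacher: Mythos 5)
Your high-level plan (classify $T$ by the shape of the outline and feed each case into the lemmas) is the same as the paper's, and your handling of the degree-$3$ case and the two-ends case is correct, but both remaining branches of your case distinction have genuine gaps. In the order-$1$ outline case, the claim that $CL$-freeness ``leaves only $T=C^{(n-4)/7}F$'' is false: $C^kF$ for $k\ge1$ \emph{does} contain $CL$ as a rooted subtree. Concretely, $CT=\calA\calB(F,L,T)$, so removing from $CF=\calA\calB(F,L,F)$ the edge between the root of its first $F$-branch and that branch's middle vertex leaves an $8$-vertex rooted subtree $\calA\calB(L,L,F)=CL$, and the same holds inside every $C^kF$ with $k\ge1$. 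Only $T=F$ (so $n=4$) survives this case. You then state that the outcome is ``the tree described for $n\equiv4\pmod7$'', which contradicts the residue classes $\{0,3,6\}$ in the statement you are proving, and you leave that contradiction unaddressed. (This is in fact a small flaw in the proposition itself: $T_4^*=F\notin\calS$ is a $CL$-free optimal tree with $n=4$; the paper's own proof at this step asserts that $C^{(n-4)/7}F$ ``contains a $CL$ except for $T=T_1^*$'', overlooking $T_4^*=F$.)

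In the $K_{1,4}$ outline case your argument is both over-complicated and unfinished. You deduce from LC6 that up to two branches may be $L$ and then propose a separate computation via the switching lemmas to exclude an $L$-branch coexisting with proper chains $C^kF$ ($k\ge1$), flagging this as ``the only real obstacle''. That obstacle does not exist. Lemma~\ref{lemma:outline-ends-LCFT3T4} already says that a degree-$4$ vertex whose rooted connected components of $T-v$ include both an $L$ and a $C^kF$ for \emph{any} $k\ge0$ is not in the outline of $T$; since the centre of $K_{1,4}$ is in the outline, no $L$-branch is possible, with no further computation. Lemma~\ref{lemma:A-8-1-free-possible-end-configuration}, which you cite, already has this built in: applied to any triple of the four branches it forces all three to be $C^{k_j}F$, so the $L$'s you were allowing are excluded automatically (its proof is precisely the appeal to Lemma~\ref{lemma:outline-ends-LCFT3T4}). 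Once the $L$'s are dismissed, one passes straight to \liref{lemma:outline-ends}{item:outline-ends-CTCTCTCT-T-small} with $\ell=4$ and Lemma~\ref{lemma:floor}, as you do in the final balancing step, and obtains $n\equiv3\pmod7$ with the parameters of Figure~\ref{fig:optimal-trees-3}.
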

\begin{proof}
  Let $T'$ be the outline of $T$. If $T'$ has a vertex of degree $3$,
  Lemma~\ref{lemma:outline-properties} yields the required result.

  If $T'$ has at least two vertices of degree $4$, then there are at least two
  vertices of degree $4$ which are adjacent to at least $3$ special leaves. In
  this case, Lemma~\ref{lemma:caterpillar-A-8-1-free} yields the required
  result.

  We now consider the case that $T'$ has exactly one vertex $v$ of degree
  $4$. Its neighbours are special leaves $T_0$, $T_1$, $T_2$, $T_3$, where each
  $T_j$ is either an $L$ or a $C^{k_j}F$ for $0\le j\le 3$. We assume that
  $\rho(T_0)\le\rho(T_1)\le\rho(T_2)\le\rho(T_3)$. The case $T_0=T_1=T_2=T_3=L$
  corresponds to $T=T_5^*\in\calS$. Then by
  Lemma~\ref{lemma:outline-ends-LCFT3T4}, we cannot have a leaf, so
  $T_j=C^{k_j}F$ for $0\le j\le 3$. From
  \liref{lemma:outline-ends}{item:outline-ends-CTCTCTCT-T-small} and Lemma~\ref{lemma:floor}, we obtain
  that $k_j=\lfloor
  (k+j)/4\rfloor$ for $0\le j\le 3$ and $k=k_0+k_1+k_2+k_3$. We have
  \begin{equation*}
    n=|T|=1+4\cdot 4+7(k_0+k_1+k_2+k_3)=17+7k.
  \end{equation*}
  We conclude that $n\equiv 3\pmod 7$ and obtain
  \begin{equation*}
    k_j=\floor{\frac{n-17+7j}{28}}
  \end{equation*}
  and of course, $T$ has the shape given in Figure~\ref{fig:optimal-trees-3}.

  Finally we consider the case that $T'$ has order $1$. This case is covered by Lemma~\ref{lemma:outline-properties}. Both graphs mentioned in this lemma ($C^{(n-1)/7}L$ and $C^{(n-4)/7}F$) contain a $CL$ except for $T=T_1^*$.
\end{proof}

\subsection{Optimal Trees Containing \texorpdfstring{$CL$}{CL}}\label{section:A-8-1}
This final subsection is devoted to optimal trees $T\notin\calS$ containing a
$CL$ as a rooted subtree. By
Lemma~\ref{lemma:two-leaves-force-leave-everywhere}, every vertex of type $B$
of such a tree is adjacent to a leaf. By
Lemma~\ref{lemma:outline-ends-LCFT3T4}, $T$ does not contain any rooted
subtree of the shape $C^kF$ for $k>0$.

\begin{lemma}\label{lemma:A-8-1-possible-outline-ends}
  Let $T\notin\calS$ be an optimal tree containing $CL$ as a
  rooted subtree and $v$ a vertex of degree $4$ in the
  outline of $T$ which is adjacent to $3$ special leaves $T_0$, $T_1$, $L$ with
  $|T_0|\le |T_1|$.
  Then there is a $k\ge 1$ such that $T_j=C^{\lfloor(k+j)/2\rfloor}L$ for $j\in\{0,1\}$.
\end{lemma}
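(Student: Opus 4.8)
The plan is as follows. Since $T\notin\calS$, Theorem~\ref{theorem:local-structure-again} shows that $T$ fulfils LC1--LC6; in particular $T$ has no light vertex. Because $T$ contains a $CL$, I may moreover use, as recorded at the beginning of this subsection, that every vertex of type $B$ of $T$ is adjacent to a leaf (Lemma~\ref{lemma:two-leaves-force-leave-everywhere}) and that $T$ has no rooted subtree of the shape $C^\ell F$ with $\ell>0$ (Lemma~\ref{lemma:outline-ends-LCFT3T4}). First I would pin down the two special leaves $T_0$ and $T_1$: each is of the form $C^\ell L$ or $C^\ell F$; the forks $C^\ell F$ with $\ell>0$ are already excluded, and $F=C^0F$ is excluded because $v$ is also adjacent to the special leaf $L$ while Lemma~\ref{lemma:outline-properties} forbids a vertex of the outline to be adjacent to both an $L$ and an $F$. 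Hence $T_0=C^aL$ and $T_1=C^bL$ for some $a,b\ge0$, and $a\le b$ since $|T_0|\le|T_1|$ and $|C^jL|=1+7j$.

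Next I would establish the two inequalities $a+b\ge1$ and $b\le a+1$. The first is immediate from LC6: if $a=b=0$, then $v$ is adjacent to the three leaves $T_0$, $T_1$ and $L$, a contradiction. For the second, recall that $v$ is of type $B$ (Lemma~\ref{lemma:outline-properties}) and has degree $4$ in $T$, so $T-v$ has a fourth rooted connected component $T_3$. I would then invoke \liref{lemma:outline-ends}{item:outline-ends-LCT1CT2-T-large} with the special leaf $L$ in the role of ``$T_0$'', with $C^aL$ and $C^bL$ in the roles of ``$T_1$'' and ``$T_2$'' (so $S_1=S_2=L$ and $\ell_1=\ell_2=0$, for which $\rho(C^1L)<\rho(L)\le\rho(C^0L)$ holds), and with $T_3$ in the role of ``$T_3$''; its conclusion is exactly $b=k_2+\ell_2\le k_1+\ell_1+1=a+1$.

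The one non-formal ingredient is the hypothesis $\rho(T_3)>2/3$ required to apply \liref{lemma:outline-ends}{item:outline-ends-LCT1CT2-T-large}, and this is the step I expect to need the most care. Here is the argument I would give: $T_3$ is a rooted subtree of type $A$ of the optimal tree $T$, which has no light vertex, so Lemma~\ref{lemma:rho-A-2-3} yields $\rho(T_3)\ge2/3$ with $\rho(T_3)=2/3$ only if $T_3\in\{F,A_{14}^*,A_{24}^*\}$. But $A_{14}^*=\calA\calB(F,F,F)$ and $A_{24}^*=\calA\calB(F,F,A_{14}^*)$ each contain a vertex of type $B$ all of whose branches are non-leaves, hence a type-$B$ vertex adjacent to no leaf, contradicting Lemma~\ref{lemma:two-leaves-force-leave-everywhere}; and $T_3=F$ would make $v$ adjacent to an $L$ and an $F$ in the outline, again impossible by Lemma~\ref{lemma:outline-properties}. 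Therefore $\rho(T_3)>2/3$.

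Finally, combining $a\le b\le a+1$ with $a+b\ge1$ and writing $k:=a+b\ge1$, Lemma~\ref{lemma:floor} (with $d=2$) gives $a=\floor{k/2}$ and $b=\floor{(k+1)/2}$, so $T_j=C^{\floor{(k+j)/2}}L$ for $j\in\{0,1\}$, which is the claimed form.
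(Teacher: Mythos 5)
Your proof is correct and follows essentially the same route as the paper's: pinning down $T_0,T_1$ as $C^aL,\,C^bL$ by excluding special leaves of the form $C^\ell F$, establishing $\rho(T_3)>2/3$ by ruling out $F$, $A_{14}^*$, $A_{24}^*$ via Lemmata~\ref{lemma:two-leaves-force-leave-everywhere} and~\ref{lemma:rho-A-2-3}, invoking \liref{lemma:outline-ends}{item:outline-ends-LCT1CT2-T-large} to get $a\le b\le a+1$, and finishing with LC6 and Lemma~\ref{lemma:floor}. The only cosmetic difference is that you exclude $F=C^0F$ via Lemma~\ref{lemma:outline-properties} while the paper cites Lemma~\ref{lemma:outline-ends-LCFT3T4} uniformly for all $k\ge0$; the two are equivalent here.
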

\begin{proof}
  Let $T_0$, $T_1$, $T_2$, $L$ denote the rooted connected components of $T-v$. None
  of them is a $C^kF$ for $k\ge 0$ by Lemma~\ref{lemma:outline-ends-LCFT3T4}
  and Lemma~\ref{lemma:two-leaves-force-leave-everywhere}. Thus $T_0=C^{k_0}L$ and
  $T_1=C^{k_1}L$ for suitable $k_0$, $k_1\ge 0$. As $A_{14}^*$ and
  $A_{24}^*$ cannot be rooted subtrees of $T$ by
  Lemma~\ref{lemma:two-leaves-force-leave-everywhere}, we have $\rho(T_2)>2/3$
  by Lemma~\ref{lemma:rho-A-2-3}. Thus \liref{lemma:outline-ends}{item:outline-ends-LCT1CT2-T-large} can be used to see that $k_0\le k_1\le
  k_0+1$. 
  With $k=k_0+k_1$, Lemma~\ref{lemma:floor} and LC6, the desired result follows.
\end{proof}

\begin{lemma}\label{lemma:A-8-1-3-ends}
  Let $T\notin\calS$ be an  optimal tree  containing $CL$ as
  a rooted subtree. Then
  there are no three distinct vertices  in the outline of $T$ such
  that each of them is adjacent to three special leaves.
\end{lemma}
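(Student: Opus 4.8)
The plan is to follow the template of the proof of Lemma~\ref{lemma:A-8-1-free-3-ends}, but running the exchange operations of Lemma~\ref{lemma:switching-7forks-to-7leaves} in the ``leaves to forks'' direction. Assume, for a contradiction, that $v_0$, $v_1$, $v_2$ are three distinct vertices of the outline of $T$, each adjacent to three special leaves. Since $v_0$, $v_1$, $v_2$ lie in a tree, the branch at $v_i$ separating it from the other two cannot be a special leaf (it contains an outline vertex), so $v_i$ has at most $\deg v_i-1$ special-leaf branches; together with LC5 (Theorem~\ref{theorem:local-structure-again}) this forces $\deg v_i=4$ with exactly three special-leaf branches. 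By Lemma~\ref{lemma:A-8-1-possible-outline-ends}, which describes the only admissible configuration, these three branches are $C^{\floor{k_i/2}}L$, $C^{\floor{(k_i+1)/2}}L$ and $L$ for some integer $k_i\ge 1$. Hence the rooted subtree $S_i$ consisting of $v_i$ together with its three special-leaf branches is, after reordering the branches, of the shape $\calB(L,C^{\floor{k/2}}L,C^{\floor{(k+1)/2}}L)$ with $k=k_i$, which is precisely the shape $S$ occurring in \lireftwo{lemma:switching-7forks-to-7leaves}{item:switching-7leaves-to-7forks:F5}{item:switching-7leaves-to-7forks:F6}. Finally, since a special leaf of the outline arises by contracting a rooted $C^\ell L$-subtree of $T$ (there are no $C^\ell F$ with $\ell>0$ in this case), it contains no outline vertex; thus $v_1,v_2$ do not lie in the special-leaf branches of $v_0$, and one checks in the same way that $S_0$, $S_1$, $S_2$ are pairwise disjoint rooted subtrees of $T$.

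Next I would modify $T$ in three independent steps on these disjoint subtrees. First apply \liref{lemma:switching-7forks-to-7leaves}{item:switching-7leaves-to-7forks:F6} to $S_0$ (legitimate since $k_0\ge 1$); exactly as in the proof of Lemma~\ref{lemma:A-8-1-free-3-ends}, the resulting tree $T'$ need not be optimal but still fulfils the LC, because the inserted subtree $\calB(C^{\floor{(k_0-1)/3}}F,C^{\floor{k_0/3}}F,C^{\floor{(k_0+1)/3}}F)$ is again a type-$B$ rooted tree of the required local shape and the rest of $T$ is untouched. Then apply \liref{lemma:switching-7forks-to-7leaves}{item:switching-7leaves-to-7forks:F5} to $S_1$ inside $T'$, and once more to $S_2$ inside the tree obtained thereby; call the final tree $T'''$. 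The order changes by $+2$, $-1$ and $-1$, so $|T'''|=|T|$, while by Lemma~\ref{lemma:switching-7forks-to-7leaves}
\begin{equation*}
  \frac{m(T''')}{m(T)}\ge\frac{27}{14}\cdot 0.722\cdot 0.722>1.
\end{equation*}
Thus $m(T''')>m(T)$ for a tree $T'''$ of the same order as $T$, contradicting the optimality of $T$.

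The quantitative content is entirely packaged in Lemma~\ref{lemma:switching-7forks-to-7leaves} (whose constants are verified by the accompanying Sage computation), so the work that remains is purely structural: using Lemma~\ref{lemma:A-8-1-possible-outline-ends} to pin down the single admissible end-configuration $\calB(L,C^{\floor{k/2}}L,C^{\floor{(k+1)/2}}L)$ with $k\ge 1$ (so that both F5 and F6 apply), confirming that $S_0$, $S_1$, $S_2$ are pairwise disjoint so that the three replacements compose, and checking that the LC survives each intermediate step so that the next replacement is still applicable. The main obstacle is the slim numerical margin $\tfrac{27}{14}\cdot 0.722^2\approx 1.005$: as in the $CL$-free case it is safe but leaves essentially no slack, so the replacement ratios and the order bookkeeping have to be tracked exactly rather than estimated crudely.
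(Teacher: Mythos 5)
Your proof is correct and follows essentially the same route as the paper's: the quantitative exchange operations F5 and F6 from Lemma~\ref{lemma:switching-7forks-to-7leaves}, applied once at each of the three vertices singled out by Lemma~\ref{lemma:A-8-1-possible-outline-ends}, with the same order and ratio bookkeeping $+2-1-1=0$ and $\tfrac{27}{14}\cdot 0.722^2>1$. The only cosmetic difference is that you apply F6 first and F5 twice afterwards, whereas the paper applies F5 twice and F6 last; since the three replacements act on pairwise disjoint rooted subtrees (a point you make explicit and the paper leaves implicit), the order is immaterial.
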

\begin{proof}
  Assume that there are three distinct vertices $v_0$, $v_1$, $v_2$ in the outline of
  $T$ such that $v_i$ is adjacent to $C^{k_{i0}}L$, $C^{k_{i1}}L$,
  $L$ with $k_{ij}=\lfloor(k_i+j)/2\rfloor$ for some $k_i\ge 1$ and
  $j\in\{0,1\}$. By
  Lemma~\ref{lemma:A-8-1-possible-outline-ends} this is the only case
  to consider.

  Replacing the rooted subtree with root $v_0$ and branches $C^{k_{00}}L$,
  $C^{k_{01}}L$, $L$ by a rooted subtree with root $v_0$ and branches $L$, $F$,
  $C^{k_0-1}F$, cf.\ \liref{lemma:switching-7forks-to-7leaves}{item:switching-7leaves-to-7forks:F5}, yields a tree $T'$, which does not have to be optimal, but fulfils the LC.

  Replacing the rooted subtree with root $v_1$ and branches $C^{k_{10}}L$,
  $C^{k_{11}}L$, $L$ in $T'$ by a rooted subtree with root $v_1$ and branches $L$, $F$,
  $C^{k_1-1}F$, cf.\ \liref{lemma:switching-7forks-to-7leaves}{item:switching-7leaves-to-7forks:F5}, yields a tree $T''$, which still fulfils the LC.

  Replacing the rooted subtree with root $v_2$ and branches $C^{k_{20}}L$,
  $C^{k_{21}}L$, $L$ in $T''$ by a rooted subtree with root $v_2$ and branches 
  $C^{\lfloor (k_2-1)/3\rfloor}F,C^{\lfloor k_2/3\rfloor}F,C^{\lfloor (k_2+1)/3\rfloor}F$, cf.\ \liref{lemma:switching-7forks-to-7leaves}{item:switching-7leaves-to-7forks:F6},
  yields a tree $T'''$.

  \lireftwo{lemma:switching-7forks-to-7leaves}{item:switching-7leaves-to-7forks:F5}{item:switching-7leaves-to-7forks:F6} yields $|T'''|-|T|=-1-1+2=0$ and
  \begin{equation*}
    \frac{m(T''')}{m(T)}=\frac{m(T''')}{m(T'')}\cdot\frac{m(T'')}{m(T')}\cdot\frac{m(T')}{m(T)}\ge
     0.722\cdot 0.722\cdot\frac{27}{14}>1.005,
  \end{equation*}
  thus $m(T''')>m(T)$, contradiction to the optimality of $T$.
\end{proof}

Next we need better bounds on the $\rho$-values of subtrees of type $A$ which are visible in the outline
of an optimal tree. 

\begin{lemma}\label{lemma:rho-bounds-A_8_1-optimal}
  Let $T\notin\calS$ be an optimal tree  containing a $CL$,
  $v$ be a vertex of degree $4$ in the outline of $T$ and $L$, $T_1$, $T_2$,
  $T_3$ be the rooted connected components of $T-v$. We assume that $\rho(T_1)\ge\rho(T_2)$ and set
  $\ell=[|T_1|>1]$. We further set $S=\calA\calB(L,T_1,T_2)$.

Then 
  \begin{equation*}
    \rho(C^{1+\ell}L)<\rho(S)<\rho(C^{\ell}L).
  \end{equation*}
\end{lemma}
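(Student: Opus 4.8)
The goal is to pin down $\rho(S)$ for $S=\calA\calB(L,T_1,T_2)$, where $T_1,T_2$ are the two non-leaf special leaves at a degree-$4$ vertex $v$ of the outline, ordered so that $\rho(T_1)\ge\rho(T_2)$, and $\ell=[|T_1|>1]$. By Lemma~\ref{lemma:two-leaves-force-leave-everywhere} and Lemma~\ref{lemma:outline-ends-LCFT3T4}, no rooted subtree $C^kF$ with $k>0$ occurs, and every type-$B$ vertex is adjacent to a leaf, so the branches $T_1$ and $T_2$ are each either $L$ or $C^{k}L$ for some $k\ge1$ (the value $\ell$ records whether $T_1$ is a genuine chain element or just a leaf). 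The plan is to compute $\rho(S)$ via the $\rho$-recursion of Lemma~\ref{lemma:rho-formulae}: $\rho(S)=1/\bigl(1+1/(\rho(L)+\rho(T_1)+\rho(T_2))\bigr)=\sigma\bigl(\rho(L)+\rho(T_1)+\rho(T_2)-2\bigr)$ in the notation of Lemma~\ref{lemma:chain-parameter}, but more directly $\rho(S)$ is the image under $x\mapsto 1/(1+1/x)$ of $1+\rho(T_1)+\rho(T_2)$. Since $\sigma$ and this fractional linear map are both increasing, everything reduces to sandwiching the sum $\rho(T_1)+\rho(T_2)$.

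First I would treat the two cases $\ell=0$ and $\ell=1$ separately. For $\ell=0$ we have $T_1=T_2=L$ (LC6 forbids all three neighbours being leaves, but here the fourth is already $L$, so $T_1=L$ forces $T_2=L$ is the only possibility consistent with $|T_1|=1$), giving $\rho(T_1)+\rho(T_2)=2$ and $\rho(S)=1/(1+1/3)=3/4$; the claim to verify is $\rho(C^2L)<3/4<\rho(C^1L)$, which follows from Lemma~\ref{lemma:chain-parameter}(3): $\rho(C^kL)$ is strictly decreasing with limit $\rholimit\approx 0.7033$, and one checks $\rho(C^1L)=\sigma(1)=1-3/11=8/11>3/4$ and $\rho(C^2L)=\sigma(8/11)<3/4$ by direct computation. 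For $\ell=1$, $T_1=C^{k_1}L$ with $k_1\ge1$ and $T_2$ is $L$ or $C^{k_2}L$; using Lemma~\ref{lemma:A-8-1-possible-outline-ends} (which says the two non-trivial chain-branches at such a vertex are balanced, $k_0\le k_1\le k_0+1$) together with the monotonicity of $\rho(C^kL)$, the sum $\rho(T_1)+\rho(T_2)$ lies in an interval whose endpoints I would identify explicitly; then applying the increasing map $x\mapsto 1/(1+1/(1+x))$ and comparing with $\rho(C^2L)=\sigma^2(1)$ and $\rho(C^3L)=\sigma^3(1)$ via Lemma~\ref{lemma:chain-parameter} yields the strict inequalities $\rho(C^3L)<\rho(S)<\rho(C^2L)$.

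For the upper bound direction I expect the cleanest route is Lemma~\ref{lemma:simple-chain-exchange}: viewing $v$ as having the rooted connected components $L$, $T_1$, $T_2$, $T_3$ of $T-v$ with $T_3$ not a leaf and $\rho(T_3)>2/3$ (guaranteed since $A_{14}^*,A_{24}^*$ are excluded, by Lemma~\ref{lemma:rho-A-2-3} and Lemma~\ref{lemma:two-leaves-force-leave-everywhere}), part (2) of that lemma with the leaf playing the role of $S_3$ gives a comparison forcing $\rho(T_1)$ (hence $\rho(T_2)$) not too large, and conversely the extremal configuration where $T_1=C^\ell L$ with the smallest admissible chain length bounds $\rho(S)$ strictly below $\rho(C^\ell L)$; the strictness comes from the fact that $S$ carries the extra branch $L$ making it genuinely ``heavier'' than a bare $C^\ell L$. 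The lower bound $\rho(S)>\rho(C^{1+\ell}L)$ is the monotonicity statement in the other direction: $S$ is ``lighter'' than pushing one more chain element.

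The main obstacle I anticipate is the bookkeeping in the $\ell=1$ case: one must be careful that $T_2$ can be either a single leaf or a chain element $C^{k_2}L$, and the balance condition from Lemma~\ref{lemma:A-8-1-possible-outline-ends} constrains $(k_1,k_2)$; a clean way to handle both sub-cases uniformly is to note $\rho(T_1)+\rho(T_2)=\sigma^{a}(1)+\sigma^{b}(1)$ for appropriate nonnegative integers $a\le b\le a+1$ with $a\ge 1$ in the $\ell=1$ case (and $a=b=0$ when $\ell=0$), then use convexity/monotonicity of $\sigma^k(1)$ in $k$ together with its limit $\rholimit$ to bound the sum between $2\sigma(1)=16/11$ and $2\rholimit$, and finally push through the fractional linear map and compare with $\sigma^{2+\ell}(1)$ and $\sigma^{1+\ell}(1)$. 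All the comparisons are with explicit rational quantities (or algebraic numbers controlled by $\lambda$), so once the interval for $\rho(T_1)+\rho(T_2)$ is correctly identified, the rest is a short monotone computation; I would verify the boundary numerics ($8/11$, $\sigma(8/11)$, $\sigma^2(1)$, $\sigma^3(1)$ versus $3/4$) by hand.
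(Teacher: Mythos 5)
Your proposal rests on a classification of the branches $T_1$, $T_2$ that is not justified and, in fact, fails. You assert that ``the branches $T_1$ and $T_2$ are each either $L$ or $C^kL$ for some $k\ge1$'', but Lemma~\ref{lemma:two-leaves-force-leave-everywhere} and Lemma~\ref{lemma:outline-ends-LCFT3T4} only rule out $C^kF$ and force every type-$B$ vertex to have a leaf neighbour; they do \emph{not} reduce $T_j$ to a pure chain. In general $T_j=C^{k_j}T_j'$ where $T_j'$ can itself be $\calA\calB(L,S_1,S_2)$ rooted at another degree-$4$ vertex of the outline, i.e., $T_j$ can recursively contain further instances of the configuration under study. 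This is precisely why the paper proves the lemma by \emph{induction on the order of $S$}: for the lower bound one needs $\rho(T_j)>\rholimit$, and when $T_j'$ is not a leaf the only available lower bound without induction is $\rho(T_j')\ge 2/3$ from Lemma~\ref{lemma:rho-A-2-3}; but $2/3<\rholimit$, so this is too weak. The induction hypothesis supplies $\rho(T_j')>\rho(C^{1+\ell'}L)\ge\rho(C^2L)>\rholimit$, which is essential and entirely missing from your argument.

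There is also an error in your case split. For $\ell=0$ you claim $T_1=T_2=L$ is forced; this is exactly what LC6 forbids, since $v$ is already adjacent to the leaf $L$ among its four rooted connected components, so $T_1=L$ implies $T_2$ (and $T_3$) is \emph{not} a leaf. Consequently $\rho(T_1)+\rho(T_2)<1+(\sqrt3-1)=\sqrt3$, not $2$, and the claimed value $\rho(S)=3/4$ does not occur. (Your numeric check ``$8/11>3/4$'' is also false: $8/11\approx0.727<0.75$, and the inequality you wrote down, $\rho(C^2L)<\rho(S)<\rho(C^1L)$, is the one for $\ell=1$, not $\ell=0$.) Similarly, for $\ell=1$ the ordering $\rho(T_1)\ge\rho(T_2)$ together with $\rho(L)=1>\rho(\text{non-leaf})$ rules out $T_2=L$, contrary to what you wrote. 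The paper's proof handles both cases uniformly via the inequality $\rho(T_1)+\rho(T_2)\le 1-\ell+(1+\ell)(\sqrt3-1)$ from Lemma~\ref{lemma:A-good-upper-bound-rho} and LC6 on the upper side, and $\rho(T_j)>\rholimit$ via the induction hypothesis and Lemma~\ref{lemma:chain-parameter} on the lower side, then plugs into $\rho(S)=1/(1+1/(1+\rho(T_1)+\rho(T_2)))$. Your sketch would need that inductive step to be repaired to become a proof.
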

\begin{proof}
  We prove the lemma by induction on the order of $S$.

  By Lemma~\ref{lemma:A-good-upper-bound-rho} and LC6, we have
  $\rho(T_1)+\rho(T_2)\le 1-\ell+(1+\ell)(\sqrt{3}-1)$. 

  We write $T_j=C^{k_j}T_j'$ for suitable
  trees $T_j'$ and maximal $k_j\ge 0$ for $j\in\{1,2\}$. As the outline of
  $T$ does not contain a $C^kF$ by
  Lemma~\ref{lemma:outline-ends-LCFT3T4} and Lemma~\ref{lemma:two-leaves-force-leave-everywhere}, we
  conclude that either $T_j'$ is a leaf or we have $\rho(T_j')>\rho(C^2L)$ by
  the induction hypothesis. By Lemma~\ref{lemma:chain-parameter}, we have
  $\rho(T_j)>\rholimit$ in both cases.

  We obtain
  \begin{equation*}
    \rho(C^{1+\ell}L)<\frac1{1+\frac1{2-\ell+(1+\ell)\rholimit}}\le \rho(S)=\frac1{1+\frac1{1+\rho(T_1)+\rho(T_2)}}\le \frac1{1+\frac1{2-\ell+(1+\ell)(\sqrt{3}-1)}}<\rho(C^{\ell}L).
  \end{equation*}
\end{proof}

\begin{lemma}\label{lemma:caterpillar-A-8-1}
  Let $T\notin\calS$ be an optimal tree of order $n$ containing a
  $CL$ whose outline contains at least two vertices of
  degree $4$. Then $n\equiv 2\pmod 7$ and $T$ is of the shape given in
  Figure~\ref{fig:optimal-trees-2} with
  \begin{align*}
    k_0&=\max\left\{0,\floor{\frac{n-37}{35}}\right\},&
    k_j&=
    \begin{cases}
      \floor{\frac{n-2+7j}{35}}&\text{ if $n\ge 37$,}\\
      \floor{\frac{n-9+7j}{35}}&\text{ if $n\le 30$}
    \end{cases}
  \end{align*}
  for $j\in\{1,2,3,4\}$.
\end{lemma}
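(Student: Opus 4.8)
The plan is to mirror the strategy of Lemma~\ref{lemma:caterpillar-A-8-1-free}, but using the ``$L$-heavy'' side of the exchange toolbox. First I would note that by Lemma~\ref{lemma:outline-properties}, since a $CL$ is present the outline of $T$ cannot contain a vertex of degree $3$ (that case forces the shape of Figure~\ref{fig:optimal-trees-7}, which contains no $CL$). Next, Lemma~\ref{lemma:A-8-1-3-ends} rules out three distinct outline vertices each adjacent to three special leaves, so the outline is a caterpillar with exactly two ``ends'' $u$ and $v$, each adjacent to three special leaves. By Lemma~\ref{lemma:A-8-1-possible-outline-ends}, at $u$ the three special leaves are $C^{k_0}L$, $C^{k_1}L$, $L$ with $k_j=\lfloor(k+j)/2\rfloor$, and similarly at $v$ they are $C^{k_2}L$, $C^{k_3}L$, $L$; the internal outline vertices carry one hanging special leaf (an $L$, by Lemma~\ref{lemma:two-leaves-force-leave-everywhere} every type-$B$ vertex has a leaf, and by Lemma~\ref{lemma:outline-ends-LCFT3T4} no $C^kF$ with $k>0$ appears) together with some type-$A$ branch.

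**Eliminating internal special leaves.** The core of the argument is to show the caterpillar has length exactly $2$, i.e.\ $u$ and $v$ are the only outline vertices of degree $4$, so that there are no hanging non-trivial special leaves between them. I would argue by contradiction exactly as in Lemma~\ref{lemma:caterpillar-A-8-1-free}: pick the end $u$ whose middle branch $S$ (a subtree of type $A$ visible in the outline) has a suitable $\rho$-value, use Lemma~\ref{lemma:rho-bounds-A_8_1-optimal} to pin down which chain-power window $\rho(S)$ lies in, and then invoke \liref{lemma:outline-ends}{item:outline-ends-LCT1CT2-T-large} together with Lemma~\ref{lemma:floor} to get the $k_j$'s on the two chains at $u$ and $v$ of the form $\lfloor(k+j)/2\rfloor$ and to control the chain length between the two ends (so $k_2\le k_0+1$ or similar). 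Then I would apply the exchange operations of Lemma~\ref{lemma:switching-7forks-to-7leaves}: replace the $\calB(L,C^{\lfloor s/2\rfloor}L,C^{\lfloor(s+1)/2\rfloor}L)$ configuration at one end using \liref{lemma:switching-7forks-to-7leaves}{item:switching-7leaves-to-7forks:F6} (gain factor $\ge 27/14$, or $\ge 1.9302$ for $k\ge 2$), and compensate the order change by applying the appropriate $L$-to-$F$ conversion at an internal position---plausibly \liref{lemma:switching-7forks-to-7leaves}{item:switching-7leaves-to-7forks:F7} or \liref{lemma:switching-7forks-to-7leaves}{item:switching-7leaves-to-7forks:F8} (factor $\ge 0.5181$ resp.\ $\ge 0.516$)---so that the net order change is $0$ and the product of the ratios exceeds $1$. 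This contradicts optimality of $T$, forcing $s=0$.

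**Balancing the two chains.** Once the caterpillar has length $2$, $T$ is of the shape in Figure~\ref{fig:optimal-trees-2}: two ends, each with a $C^{k_{2i}}L$, a $C^{k_{2i+1}}L$ and an $L$, joined by a path of type-$B$ vertices each carrying a hanging $L$ and a type-$A$ (more precisely $CL$-type) branch, plus a central $C^{k_0}$-chain carrying the small branch. Write $a=k_1+k_3$ (one end's chain lengths) and $b=k_2+k_4$ (the other end). As in Lemma~\ref{lemma:caterpillar-A-8-1-free}, I would use \liref{lemma:exchange}{item:exchange-symmetric} to show $|a-b|\le 1$, hence after relabelling $k_1\le k_2\le k_3\le k_4\le k_1+1$ by Lemma~\ref{lemma:floor}, and then use \liref{lemma:outline-ends}{item:outline-ends-CTCTCTCT-T-small} (with $\ell=2$) together with Lemma~\ref{lemma:A-good-upper-bound-rho} (noting $\rho(L)$ and the bounds on $CL$-branches) to squeeze $k_0$ between $k_1$ and $k_4+1$, except in the small-order degenerate case $n\le 30$ where $k_0=0$ is forced. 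Counting vertices gives $n=7k+5\cdot 4+2 = 22+7k$ with $k=k_0+\cdots+k_4$ in the generic regime (and a shifted count when $k_0=0$), whence $n\equiv 2\pmod 7$; solving for the $k_j$ via Lemma~\ref{lemma:floor} yields exactly the stated formulas, with the two cases $n\ge 37$ and $n\le 30$ matching the two regimes for $k_0$.

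**Main obstacle.** The technical heart, and the step I expect to be fiddliest, is the elimination of internal special leaves: one must find, for \emph{every} possible configuration of the hanging type-$A$ branch $S$ at an end and of the internal branches, a pair of moves from Lemma~\ref{lemma:switching-7forks-to-7leaves} whose order changes cancel and whose multiplicative gains multiply to something $>1$, while simultaneously verifying that the intermediate trees still satisfy the LC so that the lemma applies. This requires carefully tracking the parity conditions ($\lfloor\cdot/2\rfloor$ vs.\ $\lfloor\cdot/3\rfloor$ vs.\ $\lfloor\cdot/4\rfloor$ splittings) and the few genuinely exceptional small configurations, some of which are presumably dispatched by explicit entries in Table~\ref{tab:replacements-optimal}. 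The endgame balancing of $a$ and $b$ and the extraction of the floor formulas is then routine bookkeeping given Lemma~\ref{lemma:floor} and the monotonicity of $\sigma$ from Lemma~\ref{lemma:chain-parameter}.
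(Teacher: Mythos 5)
Your overall strategy matches the paper's: pass to the caterpillar outline via Lemma~\ref{lemma:A-8-1-3-ends}, describe the two ends via Lemma~\ref{lemma:A-8-1-possible-outline-ends}, eliminate internal outline vertices with switching operations from Lemma~\ref{lemma:switching-7forks-to-7leaves}, then balance the chains and extract the floor formulas. However, there are concrete gaps.

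First, your vertex count is incorrect. After $s=0$ the tree is $\calB(L,\,C^{k_2}L,\,C^{k_4}L,\,C^{k_0}\calA\calB(L,\,C^{k_1}L,\,C^{k_3}L))$, which has order $n = 9 + 7(k_0+k_1+k_2+k_3+k_4)$. Your $n = 22 + 7k$ gives $n\equiv 1\pmod 7$, which would contradict the lemma's own conclusion $n\equiv 2\pmod 7$; the correct count is what yields the claimed residue.

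Second, for squeezing $k_0$ you invoke \liref{lemma:outline-ends}{item:outline-ends-CTCTCTCT-T-small}, whose hypothesis is $\rho(S_j)<7/10$ for $T_j=C^{k_j}S_j$. Here $S_j=L$ and $\rho(L)=1$, so that item simply does not apply. The paper instead uses Lemma~\ref{lemma:rho-bounds-A_8_1-optimal} to sandwich the $\rho$-value of the central type-$A$ subtree strictly between consecutive $\rho(C^\ell L)$-values, and then applies \liref{lemma:outline-ends}{item:outline-ends-LCT1CT2-T-large}; this is precisely what produces the bounds $k_0\le\max\{0,k_1-[k_2>0]\}$ and $k_4\le k_0+[k_1>0]+1$, which in turn create the $n\le 30$ versus $n\ge 37$ dichotomy (driven by whether $k_1=0$, not by an independent ``$k_0=0$ is forced'' claim).

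Third, the elimination of internal outline vertices is more delicate than you present. The paper splits into three cases depending on whether $k_0>0$ and $k_2>0$; the switching pairings used are F7 with F6, F7 with F10, and F8 with F10 respectively (F10 does not appear in your sketch), and in the two subcases with $s=1$ the switching machinery fails entirely --- one needs a direct application of \liref{lemma:exchange}{item:exchange-B} in one subcase and a finite verification over $27\le n\le 83$ in the other. Without this explicit case analysis it is not established that the multiplicative gains cancel the order changes for every configuration, which you yourself flag as the technical heart of the proof.
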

\begin{proof}
  By Lemma~\ref{lemma:outline-properties}, the outline of $T$ has no vertex of
  degree $3$.
  By Lemma~\ref{lemma:A-8-1-3-ends}, the outline graph of $T$ is a caterpillar
  tree. By Lemma~\ref{lemma:two-leaves-force-leave-everywhere}, Lemma~\ref{lemma:A-8-1-possible-outline-ends} and
  Lemma~\ref{lemma:outline-ends-LCFT3T4}, it must be of the shape shown in Figure~\ref{fig:caterpillar-A-8-1}
  with $0\le k_1\le k_3$, $0\le k_2\le k_4$, $0\le k_0$, $0\le \ell_1$, \ldots,
  $0\le \ell_s$, $0\le k_5$. By LC6, we have $k_3>0$
  and $k_4>0$. 
  \begin{figure}[htbp]
    \centering
    \includegraphics{caterpillar.7}
    \caption{Decomposition of $T$ for Lemma~\ref{lemma:caterpillar-A-8-1}.}
    \label{fig:caterpillar-A-8-1}
  \end{figure}

  We claim that $s=0$; let us assume, to the contrary, that $s\ge 1$. Then by
  Lemma~\ref{lemma:rho-bounds-A_8_1-optimal}, we have
  \begin{align*}
    \rho(C^{[\ell_1>0]+1}L)&<\rho(S_2)<\rho(C^{[\ell_1>0]}L),\\
    \rho(C^{[k_1>0]+1}L)&<\rho(S_1)<\rho(C^{[k_1>0]}L).
  \end{align*}
  As $\sigma^{-[\ell_1>0]}\rho(S_2)<\rho(L)$ and
  $\sigma^{-[k_1>0]}\rho(S_1)<\rho(L)$, \liref{lemma:outline-ends}{item:outline-ends-LCT1CT2-T-large} and Lemma~\ref{lemma:A-8-1-possible-outline-ends} imply
  \begin{equation}\label{eq:caterpillar-A-8-1-inequalities-left}
  \begin{gathered}
    \begin{aligned}
      k_0&\le\max\{0,k_1-[\ell_1>0]\},& k_1\le k_3&\le k_0+[\ell_1>0]+1,&k_3&\le
      k_1+1,\\
      k_0&\le\max\{0,\ell_1-[k_1>0]\},& \ell_1 &\le k_0+[k_1>0]+1,
    \end{aligned}\\
    \text{If $k_0>0$, then $k_1\ge 2$ and $\ell_1\ge 2$}
  \end{gathered}
\end{equation}
 (the last statement following from the two inequalities for $k_0$) and the analogous inequalities
  \begin{equation}\label{eq:caterpillar-A-8-1-inequalities-right}
    \begin{gathered}
      \begin{aligned}
        k_5&\le\max\{0,k_2-[\ell_s>0]\},& k_2\le k_4&\le k_5+[\ell_s>0]+1,&k_4&\le k_2+1,\\
        k_5&\le\max\{0,\ell_s-[k_2>0]\},& \ell_s &\le k_5+[k_2>0]+1,
      \end{aligned}\\
      \text{If $k_5>0$, then $k_2\ge 2$ and $\ell_s\ge 2$.}
    \end{gathered}
  \end{equation}
  Without loss of generality, we may assume $k_5+k_2\le k_0+k_1$.

  We consider $3$ cases:
  \begin{enumerate}
  \item We assume that $k_0>0$ and $k_2>0$. 
    From \eqref{eq:caterpillar-A-8-1-inequalities-left} and Lemma~\ref{lemma:floor}, we obtain
    \begin{align*}
      k_1&=k_0+1+\floor{\frac{s+1}3},&
      k_3&=k_0+1+\floor{\frac{s+2}3},&
      \ell_1&=k_0+1+t
    \end{align*}
    for some $s\in\{0,1,2\}$ and some $t\in\{0,1\}$.
    
    We replace
    $\calB(L,C^{\ell_1}L,C^{k_0}\calA\calB(C^{k_1}L,C^{k_3}L,L))$ as in
    \liref{lemma:switching-7forks-to-7leaves}{item:switching-7leaves-to-7forks:F7} and obtain a tree $T'$
    fulfilling the LC. 

    Replacing $\calB(C^{k_2}L, C^{k_4}L, L)$ in $T'$ as in
    \liref{lemma:switching-7forks-to-7leaves}{item:switching-7leaves-to-7forks:F6}, we obtain a tree $T''$. We
    have
    \begin{align*}
      \frac{m(T'')}{m(T)}&\ge 0.5181\cdot 1.9302>1.00003&|T''|=|T|,
    \end{align*}
    a contradiction.
  \item We assume that $k_0>0$ and $k_2=0$. By
    \eqref{eq:caterpillar-A-8-1-inequalities-right}, this implies $k_5=0$ and
    $k_4=1$.

    We assume first that $s\ge 2$ and replace
    $\calB(L,C^{\ell_1}L,C^{k_0}\calA\calB(C^{k_1}L,C^{k_3}L,L))$ in $T$ as in
    \liref{lemma:switching-7forks-to-7leaves}{item:switching-7leaves-to-7forks:F7} and obtain a tree $T'$
    fulfilling the LC. 

    We now replace $\calB(L,C^{\ell_s}L,\calA\calB(C^{k_2}L,C^{k_4}L,L))$ in $T'$ as in
    \liref{lemma:switching-7forks-to-7leaves}{item:switching-7leaves-to-7forks:F10} and obtain a tree $T''$.

    We conclude that 
    \begin{align*}
      \frac{m(T'')}{m(T)}&\ge 1.95\cdot 0.5181>1.01,&|T''|=|T'|,
    \end{align*}
    a contradiction.

    Thus we have $s=1$. By \eqref{eq:caterpillar-A-8-1-inequalities-left}, we
    have $2\le k_1\le k_3$. As $k_4<k_1$, and thus
    $\rho(C^{k_4}L)>\rho(C^{k_1}L)$, we obtain
    \begin{equation*}
      2=\rho(C^{k_2}L)+\rho(L)\le \rho(C^{k_3}L)+\rho(L)+0.1153\le
      \rho(C^2L)+\rho(L)+0.1153<2
    \end{equation*}
    from
    Lemma~\ref{lemma:chain-parameter} and
    \liref{lemma:exchange}{item:exchange-B}. This is a
    contradiction.
  \item We assume that $k_0=0$. By
    \eqref{eq:caterpillar-A-8-1-inequalities-left} this implies $k_1\le k_3\le
    2$ and $\ell_1\le 2$. Consequently, we have $k_5+k_2\le k_0+k_1\le 2$, thus 
    $k_5=0$, $k_2\le k_4\le 2$ and $\ell_s\le 2$ by
    \eqref{eq:caterpillar-A-8-1-inequalities-right}.

    We assume first that $s\ge 2$ and
    replace $\calB(L,C^{\ell_1}L,\calA\calB(C^{k_1}L, C^{k_3}L,L))$ in $T$ as in
    \liref{lemma:switching-7forks-to-7leaves}{item:switching-7leaves-to-7forks:F8} and obtain a tree $T'$.

    We now replace $\calB(L,C^{\ell_s}L,\calA\calB(C^{k_2}L,C^{k_4}L,L))$ in $T'$ as in
    \liref{lemma:switching-7forks-to-7leaves}{item:switching-7leaves-to-7forks:F10} and obtain a tree $T''$.
    We conclude that 
    \begin{align*}
      \frac{m(T'')}{m(T)}&\ge 1.95\cdot 0.516>1.006,&|T''|=|T'|,
    \end{align*}
    a contradiction.

    Thus we have $s=1$. It follows that  $27\le n=
    13+7(k_1+k_3+k_2+k_4+\ell_1)\le 83$. In each of the possible cases remaining, it turns out
    that $m(T)<m(T_n^*)$ for the tree $T_n^*$ given in
    Figure~\ref{fig:optimal-trees-6} (or $m(T)<m(T_{34,1}^*)=m(T_{34,2}^*)$ for
    $n=34$), contradiction.
  \end{enumerate}

  So we have shown that $s=0$ and that $T$ therefore has the shape as in
  Figure~\ref{fig:optimal-trees-2}. We set $k=k_0+k_1+k_2+k_3+k_4$ and obtain
  $n=9+7k$ and $n\equiv 2\pmod 7$ in particular.

  We set $a=k_1+k_3$ and $b=k_2+k_4$ and assume that $a\le b$. From
  \liref{lemma:outline-ends}{item:outline-ends-LCT1CT2-T-large} and
  Lemma~\ref{lemma:floor}, we see that 
  \begin{align*}
    k_1&=\lfloor a/2\rfloor,& k_3&=\lfloor (a+1)/2\rfloor,& 
    k_2&=\lfloor b/2\rfloor,& k_4&=\lfloor (b+1)/2\rfloor.
  \end{align*}
  If $b\ge a+2$, we have $k_4\ge \lfloor (a+3)/2\rfloor>k_3$ and therefore 
  $k_1=\floor{a/2}<\floor{b/2}=k_2\le k_1$
  by \liref{lemma:exchange}{item:exchange-symmetric} and Lemma~\ref{lemma:chain-parameter}, a
  contradiction. Thus $b\in\{a,a+1\}$ and $k_1\le k_2\le k_3\le k_4\le k_1+1$
  in both cases.

  By Lemma~\ref{lemma:rho-bounds-A_8_1-optimal} and \liref{lemma:outline-ends}{item:outline-ends-LCT1CT2-T-large}, we have (in analogy to~\eqref{eq:caterpillar-A-8-1-inequalities-left} and~\eqref{eq:caterpillar-A-8-1-inequalities-right})
  \begin{align}
    \label{eq:caterpillar-A-8-1-s-0-k-0}
    k_0&\le \max\{0, k_1-[k_2>0]\},&
    k_4&\le k_0+[k_1>0]+1.
  \end{align}

  If $k_1=0$, we have $k_0=0$ and $0=k_0\le k_1\le k_2\le k_3\le k_4\le k_0+1=1$ by
  \eqref{eq:caterpillar-A-8-1-s-0-k-0} and $n\le 30$, i.e.,
  $k_j=\floor{(n-9+7j)/35}$ for $j\in\{0,1,2,3,4\}$ by Lemma~\ref{lemma:floor}.

  If $k_1>0$, we have $n\ge 37$ and \eqref{eq:caterpillar-A-8-1-s-0-k-0} yields
  \begin{equation*}
    k_0+1\le k_1\le k_2\le k_3\le k_4\le k_0+2
  \end{equation*}
  and therefore $k_j+[j=0]=\lfloor{(k+1+j)/5}\rfloor$ for $j\in\{0,1,2,3,4\}$
  by Lemma~\ref{lemma:floor}.
\end{proof}

\begin{proposition}\label{proposition:A-8-1-classification-optimal}
  Let $T\notin\calS$ be an optimal tree of order $n$ containing a  $CL$ as a
  rooted subtree. Then $n\equiv
  1\pmod 7$, $n\equiv 2\pmod 7$, $n\equiv 4\pmod 7$ or $n\equiv 5\pmod 7$  and $T$ has the shape described in
  Theorem~\ref{theorem:global-structure} for these congruence classes.
\end{proposition}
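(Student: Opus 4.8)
The plan is to follow the template of the proof of Proposition~\ref{proposition:A-8-1-free-classification-optimal}, running a case analysis on the outline graph $T'$ of $T$. First I would record the standing facts available under our hypotheses: since $T\notin\calS$ is optimal and contains a $CL$, Theorem~\ref{theorem:local-structure-again} gives that $T$ fulfils the LC, Lemma~\ref{lemma:two-leaves-force-leave-everywhere} gives that every vertex of type $B$ is adjacent to a leaf, Lemma~\ref{lemma:outline-ends-LCFT3T4} gives that $T$ has no rooted subtree of shape $C^kF$ with $k>0$, and $|T|=n\ge 8$. By Lemma~\ref{lemma:outline-properties} the non-leaves of $T'$ are exactly the type-$B$ vertices, hence have degree $3$ or $4$ by the LC, and the degree of such a vertex in $T'$ equals its degree in $T$.

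The first step is to rule out a vertex of degree $3$ in $T'$. If one existed, Lemma~\ref{lemma:outline-properties} would force $n\equiv 0\pmod 7$ and $T$ to have the shape of Figure~\ref{fig:optimal-trees-7} with $k=(n-7)/7$; but $n\equiv 0\pmod 7$ and $n\ge 8$ give $k\ge 1$, so $T$ would contain a rooted subtree $C^kF$ with $k\ge 1$, a contradiction. Hence every internal vertex of $T'$ has degree exactly $4$, and I would split into three exhaustive cases according to whether $T'$ has order $1$, exactly one vertex of degree $4$, or at least two vertices of degree $4$. If $T'$ has order $1$ it is a single special leaf, so by Lemma~\ref{lemma:outline-properties} either $T=C^{(n-1)/7}L$ with $n\equiv 1\pmod 7$ or $T=C^{(n-4)/7}F$ with $n\equiv 4\pmod 7$; these are precisely the shapes of Theorem~\ref{theorem:global-structure}\,(1) and (4). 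If $T'$ has at least two vertices of degree $4$, then Lemma~\ref{lemma:caterpillar-A-8-1} applies verbatim and delivers $n\equiv 2\pmod 7$ together with the shape of Figure~\ref{fig:optimal-trees-2} and the stated values of the $k_j$, i.e.\ Theorem~\ref{theorem:global-structure}\,(2).

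The remaining case needs a direct argument: $T'$ has a unique internal vertex $v$, of degree $4$, so its four neighbours in $T'$ are special leaves (as in the corresponding case of Proposition~\ref{proposition:A-8-1-free-classification-optimal}). None of these is a $C^kF$ with $k>0$, and by Lemma~\ref{lemma:outline-ends-LCFT3T4} $v$ cannot be adjacent both to an $L$ and to a copy of $F=C^0F$; since $v$ is adjacent to a leaf, all four branches are of the form $C^{a_i}L$ with $a_i\ge 0$ and at least one $a_i=0$. Writing the branches (sorted) as $L=C^{a_1}L,C^{a_2}L,C^{a_3}L,C^{a_4}L$ with $a_1=0\le a_2\le a_3\le a_4$ and $a_3\ge 1$ (the latter by LC6), I would apply the second part of Lemma~\ref{lemma:simple-chain-exchange} with $CS_1=C^{a_4}L$, $S_3$ the explicit leaf and $S_4=C^{a_3}L$ (so that $\rho(S_4)>\rholimit>2/3$); this yields $\rho(C^{a_4-1}L)\ge\rho(C^{a_2}L)$, hence $a_4\le a_2+1$ by the monotonicity assertion of Lemma~\ref{lemma:chain-parameter}. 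Thus $a_2\le a_3\le a_4\le a_2+1$, and the second part of Lemma~\ref{lemma:floor} with $d=3$ and $k=a_2+a_3+a_4$ gives $a_{j+2}=\lfloor(k+j)/3\rfloor$; since $n=5+7k$ this is exactly $n\equiv 5\pmod 7$ with $T$ the tree of Figure~\ref{fig:optimal-trees-5} and $k_j=\lfloor(n-5+7j)/21\rfloor$, i.e.\ Theorem~\ref{theorem:global-structure}\,(5). The sub-cases with two leaves among the branches are captured by this same formula, and those with three leaves are excluded by LC6 (which is one reason it matters that $T\notin\calS$).

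Since Lemma~\ref{lemma:caterpillar-A-8-1} already carries the bulk of the combinatorics, the proposition itself is mostly bookkeeping. The two places I expect to need care are (i) checking that a degree-$3$ vertex in the outline really is incompatible with containing a $CL$, which rests entirely on the absence of $C^kF$-subtrees, and (ii) matching the floor expressions in the single-degree-$4$-vertex case with Theorem~\ref{theorem:global-structure}\,(5), where the chain-exchange Lemma~\ref{lemma:simple-chain-exchange} and the identity in Lemma~\ref{lemma:floor} do the work.
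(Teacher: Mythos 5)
Your proof is correct and follows essentially the same route as the paper: split on the outline graph $T'$ according to its degree sequence, ruling out degree $3$, handling $|T'|=1$ via Lemma~\ref{lemma:outline-properties}, handling one degree-$4$ vertex directly, and handling two or more via Lemma~\ref{lemma:caterpillar-A-8-1}. The only cosmetic differences are that you rule out a degree-$3$ vertex by pointing to the resulting $C^kF$ with $k\geq 1$ (where the paper instead observes that the shape of Figure~\ref{fig:optimal-trees-7} contains no $CL$ at all), and in the single-degree-$4$-vertex case you invoke Lemma~\ref{lemma:simple-chain-exchange} directly rather than its packaged consequence Lemma~\ref{lemma:outline-ends}\,(2); both variants are valid and lead to the same chain of inequalities and the same application of Lemma~\ref{lemma:floor}.
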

\begin{proof}
  Let $T'$ be the outline of $T$. If $T'$ has a vertex of degree $3$,
  Lemma~\ref{lemma:outline-properties} shows that $T$ does not contain a
  $CL$ as a rooted subtree.

  If $T'$ has at least two vertices of degree $4$, then there are at least two
  vertices of degree $4$ which are adjacent to at least $3$ special leaves. In
  this case, Lemma~\ref{lemma:caterpillar-A-8-1} yields the required
  result.

  We now consider the case that $T'$ has exactly one vertex $v$ of degree
  $4$. Its neighbours are special leaves $T_0$, $T_1$, $T_2$, $T_3$. By
  Lemma~\ref{lemma:two-leaves-force-leave-everywhere}, we have $T_3=L$ after
  suitable reordering. As $v$ is in the outline of $T$, we have
  $F\notin\{T_0,T_1,T_2\}$. By Lemma~\ref{lemma:outline-ends-LCFT3T4}, we must
  have $T_j=C^{k_j}L$ for some $k_j\ge 0$ for $j\in\{0,1,2\}$. Thus $T$ is of
  the shape given in Figure~\ref{fig:optimal-trees-5}.
  This yields
  $n=7(k_0+k_1+k_2)+5$; in particular $n\equiv 5\pmod 7$.

  Without loss of
  generality, we may assume that $k_0\le k_1\le k_2$. By
  \liref{lemma:outline-ends}{item:outline-ends-LCT1CT2-T-large},
  we have
  \begin{equation*}
    k_0\le k_1\le k_2\le k_0+1.
  \end{equation*}
  From Lemma~\ref{lemma:floor}, we conclude that
  \begin{equation*}
    k_j=\floor{\frac{k_0+k_1+k_2+j}{3}}=\floor{\frac{\frac{n-5}7+j}{3}}
  \end{equation*}
  for $0\le j\le 2$, as required.

  Finally we consider the case that $T'$ has order $1$. This case has been
  considered in Lemma~\ref{lemma:outline-properties}.
\end{proof}

\begin{proof}[Proof of Theorem~\ref{theorem:global-structure}]
  Let $T$ be an optimal tree of order $n$. If $T\notin\calS$, then there are two possibilities: $T$ can be $CL$-free
  or it can contain a $CL$ as a rooted subtree. 
  Then Propositions~\ref{proposition:A-8-1-free-classification-optimal} and
  \ref{proposition:A-8-1-classification-optimal} respectively show that $T$ has the shape
  given in Theorem~\ref{theorem:global-structure} with the parameters as given
  by the theorem. For $n \in \{8,9,12,16\}$, the trees in the exceptional set $\calS$ still have this shape. For $n \in \{6,10,13,20\}$ or $n < 4$, however, it is not possible for a tree of order $n$ to have the shape shown in Figure~\ref{fig:optimal-trees} (since $n$ is too small). In these cases, the optimal tree has to be an element of the 
exceptional set $\calS$, which gives us a unique optimal tree for $n \neq 6$ and two optimal trees for $n = 6$. Finally, let us remark that the asymptotic formul\ae{} given in Theorem~\ref{thm:upper} follow easily from the structure of the trees by means of Lemma~\ref{lemma:chain-parameter}.
\end{proof}

\bibliographystyle{abbrv}
\bibliography{max-card-matching}

\section*{Appendix: Tables of replacements}
\begin{table}[htbp]
  \centering
  \begin{tabular}{@{}r|>{$}l<{$}rrr@{}}
      &T&$n:=|T|$&$m(T)$&$m(T_n^*)$
      \\\hline\refstepcounter{replacement}\label{replacement-full:4-claw-is-bad}(\thereplacement)
    &    \mathcal{B}(L, L, L, L, \allowbreak\mathcal{A}\mathcal{B}(L, L, L))&10&19&
    21
\\
    &    \mathcal{B}(L, L, L, L, A_{6}^*)&11&24&
    30
\\\hline\refstepcounter{replacement}\label{replacement-full:light-no-degree-4-new}(\thereplacement)
    &    \mathcal{B}(A_7^*, F, F, L)&17&213&
    216
\\\hline\refstepcounter{replacement}\label{replacement-full:heavy-trees}(\thereplacement)
    &    \allowbreak\mathcal{A}\mathcal{B}(L, L, \allowbreak\mathcal{A}\mathcal{B}(L, L, \allowbreak\mathcal{A}\mathcal{B}(L, L, \allowbreak\mathcal{A}\mathcal{B}(L, L, L))))&17&209&
    216
    \end{tabular}
\bigskip

    \caption{Replacements for trees: $|T|=|T_n^*|=n$ and $m(T)<m(T_n^*)$
      hold.}
\label{tab:replacements-optimal}
\end{table}

\begin{table}[htbp]
  \centering
  \begin{sideways}%
    \tolerance=10000\hbadness=10000%
    \newlength{\widthorig}%
    \newlength{\widthreplacement}%
    \newlength{\fracspacer}%
    \newlength{\fracdepth}%
    \settowidth{\widthorig}{$\mathcal{A}\mathcal{B}(A_{24}^*, A_{24}^*,A_{24}^*)$}%
    \settowidth{\widthreplacement}{$\mathcal{B}(L, F, \allowbreak\mathcal{A}\mathcal{B}(F, F, CF))$}%
    \settoheight{\fracspacer}{$\frac23$}%
    \settodepth{\fracdepth}{$\frac23$}%
    \addtolength{\fracspacer}{\fracdepth}%
    \newcommand{\insertfracspacer}{\raisebox{-1.2\fracdepth}{\rule{0pt}{1.2\fracspacer}}}%
    \begin{tabular}{@{}r|>{\hangindent1em$}p{\widthorig}<{$}rrr|>{\hangindent1em$}p{\widthreplacement}<{$}rr|>{$}c<{$}@{}}
      &T&$|T|$&$m(T)$&$m_0(T)$&
      T'&$m(T')$&$m_0(T')$&
      \\\hline\refstepcounter{replacement}\label{replacement:good-ends-replacements}(\thereplacement)
&    \allowbreak\mathcal{B}A_{3}^*&4&1&2&
    \mathcal{B}(L, L, L)&3&1&
    \alpha<2
\\
&    \mathcal{B}(L, A_{3}^*)&5&3&2&
    \mathcal{B}(L, L, L, L)&4&1&
    \alpha<1
\\
&    \mathcal{B}(A_{3}^*, A_{3}^*)&7&4&4&
    \mathcal{B}(L, C_LL)&7&4&
    \alpha\ge 0
\\
&    \mathcal{B}(A_{3}^*, A_{3}^*, A_{3}^*)&10&12&8&
    \mathcal{B}(L, F, F)&21&9&
    \alpha\ge 0
\\\hline\refstepcounter{replacement}\label{replacement:rho-A-2-3}(\thereplacement)
&    \allowbreak\mathcal{A}\mathcal{B}(F, F, A_{24}^*)&34&59049&39366&
    C_{CL}^{2}C_L^{2}F&58999&41839&
    \alpha>\frac{50}{2473}\insertfracspacer
\\
&    \allowbreak\mathcal{A}\mathcal{B}(F, A_{14}^*, A_{14}^*)&34&59049&39366&
    C_{CL}^{2}C_L^{2}F&58999&41839&
    \alpha>\frac{50}{2473}\insertfracspacer
\\
&    \allowbreak\mathcal{A}\mathcal{B}(F, A_{14}^*, A_{24}^*)&44&1594323&1062882&
    CC_{CL}^{2}C^{2}L&1618650&1139139&
    \alpha\ge 0
\\
&    \allowbreak\mathcal{A}\mathcal{B}(F, A_{24}^*, A_{24}^*)&54&43046721&28697814&
    C^{2}\allowbreak\mathcal{A}\mathcal{B}(L, C^{2}L, C^{3}L)&44259488&31126973&
    \alpha\ge 0
\\
&    \allowbreak\mathcal{A}\mathcal{B}(A_{14}^*, A_{14}^*, A_{14}^*)&44&1594323&1062882&
    CC_{CL}^{2}C^{2}L&1618650&1139139&
    \alpha\ge 0
\\
&    \allowbreak\mathcal{A}\mathcal{B}(A_{14}^*, A_{14}^*, A_{24}^*)&54&43046721&28697814&
    C^{2}\allowbreak\mathcal{A}\mathcal{B}(L, C^{2}L, C^{3}L)&44259488&31126973&
    \alpha\ge 0
\\
&    \allowbreak\mathcal{A}\mathcal{B}(A_{14}^*, A_{24}^*, A_{24}^*)&64&1162261467&774840978&
    C^{9}L&1209774005&850782533&
    \alpha\ge 0
\\
&    \allowbreak\mathcal{A}\mathcal{B}(A_{24}^*, A_{24}^*, A_{24}^*)&74&31381059609&20920706406&
    C^{10}F&33062296902&23251305273&
    \alpha\ge 0
\\\hline\refstepcounter{replacement}\label{replacement:light}(\thereplacement)
&    \mathcal{B}(L, A_{10}^*)&12&34&21&
    \mathcal{B}(L, L, C_L^{2}L)&41&15&
    \alpha<\frac{7}{6}\insertfracspacer
\\\hline\refstepcounter{replacement}\label{replacement:light-2-3-replacements}(\thereplacement)
&    \mathcal{B}(L, A_{14}^*)&16&135&81&
    \mathcal{B}(L, L, C_L^{3}L)&153&56&
    \alpha<\frac{18}{25}\insertfracspacer
\\
&    \mathcal{B}(L, A_{24}^*)&26&3645&2187&
    \mathcal{B}(L, L, C_LC_{CL}CL)&4235&1551&
    \alpha<\frac{295}{318}\insertfracspacer
\\\hline\refstepcounter{replacement}\label{replacement:light-2-3-replacements-A-10}(\thereplacement)
&    \mathcal{B}(A_{10}^*, A_7^*, A_7^*)&25&2512&1344&
    \mathcal{B}(L, C_LC_{CL}CL)&2684&1551&
    \alpha\ge 0
\\
&    \mathcal{B}(A_{10}^*, A_{10}^*, A_7^*)&28&6573&3528&
    \mathcal{B}(F, CL, C^{2}L)&7759&3696&
    \alpha\ge 0
\\
&    \mathcal{B}(A_{10}^*, A_{10}^*, A_{10}^*)&31&17199&9261&
    \mathcal{B}(F, CL, C^{2}F)&20967&9999&
    \alpha\ge 0
\\\hline\refstepcounter{replacement}\label{replacement:light-2-3-replacements-A-7}(\thereplacement)
&    \mathcal{B}(A_7^*, F, F)&16&141&72&
    \mathcal{B}(L, L, C_L^{3}L)&153&56&
    \alpha<\frac{3}{4}\insertfracspacer
\\
&    \mathcal{B}(A_7^*, A_7^*, F)&19&368&192&
    \mathcal{B}(L, L, C_L^{3}F)&418&153&
    \alpha<\frac{50}{39}\insertfracspacer
\\
&    \mathcal{B}(A_7^*, A_7^*, A_7^*)&22&960&512&
    \mathcal{B}(L, C_L^{4}F)&989&571&
    \alpha\ge 0
\\\hline\refstepcounter{replacement}\label{replacement:heavy}(\thereplacement)
&    \mathcal{B}(L, L, L, C_LL)&9&15&4&
    \mathcal{B}(L, A_7^*)&13&8&
    \alpha>\frac{1}{2}\insertfracspacer
\\
&    \mathcal{B}(L, L, C_L^{4}L)&20&571&209&
    \mathcal{B}(L, F, A_{14}^*)&567&243&
    \alpha>\frac{2}{17}\insertfracspacer
\\\hline\refstepcounter{replacement}\label{replacement:outline-ends-LCFT3T4}(\thereplacement)
&    \mathcal{B}(L, F, CA_{14}^*)&27&5751&2430&
    \mathcal{B}(L, F, \allowbreak\mathcal{A}\mathcal{B}(F, F, CF))&5742&2457&
    \alpha>\frac{1}{3}\insertfracspacer
\\
&    \mathcal{B}(L, F, CA_{24}^*)&37&155277&65610&
    \mathcal{B}(L, F, \allowbreak\mathcal{A}\mathcal{B}(F, F, \allowbreak\mathcal{A}\mathcal{B}(F, F, CF)))&155007&66420&
    \alpha>\frac{1}{3}\insertfracspacer

    \end{tabular}
\end{sideways}
\bigskip

\caption{Replacements for rooted subtrees: $|T'|=|T|$, $\type(T')=\type(T)$
  and $m(T')+\alpha m_0(T')>m(T)+\alpha m_0(T)$ hold for the given range of
  $\alpha$. Here, the additional abbreviation $C_L^kS=\calA\calB(L,L,C_L^{k-1}S)$
  and $C_{CL}^kS=\calA\calB(L,CL,C_L^{k-1}S)$ for $k\ge 1$ have been used,
  where, as usual,  $C_L^0S=S$ and $C_{CL}^0S=S$.}
\label{tab:replacements-alpha-optimal}
\end{table}

\end{document}

%%% Local Variables: 
%%% mode: latex
%%% TeX-master: t
%%% End: 

% LocalWords:  FWF Bipartiteness eq LC contrapositions CTCTCTCT LCT CT Hosoya
% LocalWords:  Stellenbosch Merrifield equimatchable unrooted Patashnik